\newtheorem{theorem}{Theorem}[section]
\newtheorem{proposition}[theorem]{Proposition}
\newtheorem{corollary}[theorem]{Corollary}
\newtheorem{lemma}[theorem]{Lemma}
\theoremstyle{definition}
\newtheorem{definition}[theorem]{Definition}
\newtheorem{example}[theorem]{Example}
\newtheorem{conjecture}[theorem]{Conjecture}
\newtheorem{remark}[theorem]{Remark}
\newcommand{\bbZ}{{\mathbb Z}} % the set of integer numbers
\newcommand{\bbF}{{\mathbb F}} % the unit circle (the one dimensional torus)
\newcommand{\bG}{{\mathbf G}}
\newcommand{\cA}{{\mathcal A}}
\newcommand{\cB}{{\mathcal B}}
\newcommand{\cC}{{\mathcal C}}
\newcommand{\cD}{{\mathcal D}}
\newcommand{\cE}{{\mathcal E}}
\newcommand{\cF}{{\mathcal F}}
\newcommand{\cH}{{\mathcal H}}
\newcommand{\cK}{{\mathcal K}}
\newcommand{\cL}{{\mathcal L}}
\newcommand{\cN}{{\mathcal N}}
\newcommand{\cO}{{\mathcal O}}
\newcommand{\cT}{{\mathcal T}}
\newcommand{\id}{\mathrm{id}}
\DeclareMathOperator*{\colim}{colim}
\def\maprt#1{\smash{\,\mathop{\longrightarrow}\limits^{#1}\,}}
\newcommand{\Aut}{{\rm Aut}}
\newcommand{\Inn}{{\rm Inn}}
\newcommand{\Hom}{\mathrm{Hom}}
\newcommand{\Coind}{\mathrm{Coind}}
\newcommand{\Ind}{\mathrm{Ind}}
\newcommand{\Mor}{\mathrm{Mor}}
\newcommand{\Ext}{\mathrm{Ext}}
\newcommand{\Tor}{\mathrm{Tor}}
\newcommand{\Res}{\mathrm{Res}}
\newcommand{\Ob}{\mathrm{Ob}}
\newcommand{\Ini}{\mathrm{Ini}}
\newcommand{\Fin}{\mathrm{Fin}}
\begin{document}
\title{LHS-spectral sequences for regular extensions of categories}
 
\author{Erg\"un Yal{\c c}{\i}n}
\address{Department of Mathematics, Bilkent University, 06800 
Bilkent, Ankara, Turkey}

\email{yalcine@fen.bilkent.edu.tr}

\begin{abstract}  In [F. Xu, On the cohomology rings of small categories, J. Pure Appl. Algebra 212 (2008), 2555-2569], a LHS-spectral sequence for target regular extensions of small categories is constructed.
We extend this construction to ext-groups and 
construct a similar spectral sequence for source regular extensions 
(with right module coefficients). As a special case of these LHS-spectral sequences, we obtain
three different versions of S\l omi\' nska's
spectral sequence for the cohomology of regular EI-categories. 
We show that many well-known spectral sequences 
related to the homology decompositions of finite groups, centric linking systems, and
the orbit category of fusion systems can be obtained as the LHS-spectral sequence of an extension.  
\end{abstract}

 \thanks{2020 {\it Mathematics Subject Classification.} Primary: 18G40; Secondary: 55R35, 18G15, 20J06, 55N91}

\keywords{Extensions of categories, Spectral sequence, Cohomology of small categories,
$p$-Local finite groups, Fusion systems, Orbit category}

\maketitle

%------------------------

\setcounter{tocdepth}{1}
\tableofcontents
%\tableofcontents[hideallsubsections]

\section{Introduction}

A functor $\pi: \cC \to \cD$ between two small categories is called \emph{target regular} if the following conditions hold:
\begin{enumerate}
\item  $\Ob (\cC)=\Ob (\cD)$ and $\pi$ is the identity map on objects,
\item For each $x, y\in \Ob (\cC)$, the group $K(y):=\ker \{ \pi _{y,y}: \Aut_{\cC} (y) \to \Aut _{\cD} (y) \}$
acts freely on $\Mor _{\cC} (x, y)$, and
\item the map $\pi_{x,y}: \Mor _{\cC} (x,y) \to \Mor _{\cD} (x, y)$ 
induces a bijection  $$K(y) \backslash \Mor _{\cC} (x, y) \cong \Mor _{\cD} (x, y).$$
\end{enumerate}
 If $\pi :\cC \to \cD$ is a target regular functor with kernel $\cK :=\{K(x)\}_{x\in \Ob (\cC)}$, then we write $$\cE: \cK \maprt{i} \cC \maprt{\pi} \cD $$ where $\cK$ is considered as a category (a disjoint union of group categories) and $i$ is the functor which is identity on objects and such that $i_x : K(x) \to \Aut _{\cC} (x)$ is the inclusion map for every $x\in \cC$. We call such a sequence of functors a \emph{target regular extension of categories}.

We say that the functor $\pi : \cC \to \cD$ is \emph{source regular} if the opposite functor $\pi ^{op} : \cC^{op} \to \cD ^{op}$ is target regular. The kernel $\cK$ is defined in a similar way and the extension $\cE: \cK \maprt{i} \cC \maprt{\pi} \cD$ is called a source regular extension.
 
There are many examples of source and target regular extensions of categories that appear naturally. Let $G$ be a discrete group and $\cH$ be a collection of subgroups of $G$ (closed under conjugation). The \emph{orbit category} of $G$ over the collection $\cH$ is the category $\cO _{\cH} (G)$ whose objects are subgroups $H \in \cH$, and whose morphisms $H \to K$ are given by $G$-maps $G/H \to G/K$. We can identify the set of $G$-maps $G/H \to G/K$ with the set of right cosets 
$\{ Kg \, | \, g\in G, gHg^{-1} \leq K\}$.  The \emph{transporter category} is the category $\cT_{\cH} (G)$ whose objects are subgroups $H \in \cH$, and whose morphisms $H \to K$ are given by $N_G (H, K)=\{ g \in G \, |\, gHg^{-1} \leq K\}$. The quotient map $N_G(H, K)\to K \backslash N_G (H, K)$ defines a target regular extension of categories 
\begin{equation}\label{eqn:Intro-SubExt}
\cE:  \{ K\} _{K \in \cH} \maprt{i} \cT _{\cH } (G) \maprt{\pi}  \cO _{\cH} (G).
\end{equation}

The \emph{fusion category} of a discrete group $G$ over a collection $\cH$ is the category $\cF _{\cH } (G)$ whose objects are subgroups $H \in \cH$, and whose morphisms $H \to K$ are given by conjugation maps $c_g: H\to K$ with $g\in G$, defined by $c_g (h) = ghg^{-1} $ for every $h\in H$.
There is a source regular extension 
\begin{equation}\label{eqn:Intro-CentExt}
\cE: \{ C_G (H) \} _{H \in \cH} \maprt{i} \cT _{\cH } (G) \maprt{\pi}  \cF _{\cH} (G)
\end{equation}
induced by the quotient map $N_G (H, K) \to N_G (H, K)/ C_G (H)\cong\Mor _{\cF _{\cH } (G)} (H, K)$. 
There are also many other examples of target and source regular extensions related to fusion orbit category 
and linking systems. We introduce them in Sections \ref{sect:Transporter} and \ref{sect:Linking}.

In this paper we construct spectral sequences for target and source regular extensions of categories similar to  
Lyndon–Hochschild–Serre spectral sequences for group extensions. 
The cohomology version of a LHS-spectral sequence for target regular extensions was constructed by Xu \cite{Xu-CohSmall} (see also 
Oliver and Ventura \cite[Prop A.11]{OliverVentura}). One can also find a construction of LHS-spectral sequences for homology groups
of pre-cofibered functors in Quillen's lecture notes in \cite[p. 139]{Penner-Book}.  
We extend these constructions to ext-groups and construct a similar spectral sequence 
for source regular extensions with right module coefficients.  
 
Let $R$ be a commutative ring with unity. A contravariant functor $M: \cC ^{op} \to R$-Mod is called 
a \emph{right $R\cC$-module}. A functor $M: \cC \to R$-Mod is called a \emph{left $R\cC$-module}. 
Since we are mostly interested in right $R\cC$-modules, and since it is good to fix a choice, 
\emph{when we say $M$ is an $R\cC$-module we always mean it is a right $R\cC$-module.} 
The cohomology of small category $\cC$ with coefficients in an $R\cC$-module $M$ is defined by
$$H^* (\cC; M) :=\Ext ^* _{R\cC} (\underline R, M),$$ where for each $n\geq 0$, $\Ext^n_{R\cC} (\underline R, M)$ denotes the $n$-th 
ext-group over the category of $R\cC$-modules and 
$\underline R$ denotes the constant module (see Section \ref{sect:Cohomology} for details). 
 
\begin{theorem}[Xu \cite{Xu-CohSmall}]\label{thm:Intro-SSXu} Let $\cE : \cK \maprt{i} \cC \maprt{\pi} \cD $ be a target regular extension. Then
for every $R\cC$-module $M$, there is a spectral sequence
$$ E_2 ^{p,q}= H^p ( \cD;  H^q ( \cK ; M) ) \Rightarrow H^{p+q} ( \cC ;  M)$$
where $H^q (\cK; M)$ denotes the $R\cD$-module that sends $x\in \Ob (\cC)$ to the cohomology group
$H^q (K(x); \Res_{i_x} M)$.
\end{theorem}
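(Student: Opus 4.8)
The plan is to obtain the spectral sequence as the Grothendieck spectral sequence of a composite of functors, in direct analogy with the classical Lyndon--Hochschild--Serre argument. Recall that $R\cC$-Mod and $R\cD$-Mod are functor categories into $R$-Mod, hence have enough injectives, and that $H^*(\cC;-)$ and $H^*(\cD;-)$ are by definition the right derived functors of $\Hom_{R\cC}(\underline R,-)$ and $\Hom_{R\cD}(\underline R,-)$.

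\textbf{Inflation and its right adjoint.} Since $\pi$ is the identity on objects, pullback along $\pi$ defines an exact functor $\pi^*\colon R\cD\text{-Mod}\to R\cC\text{-Mod}$ with $(\pi^*N)(x)=N(x)$, the $\cC$-morphisms acting through $\pi$; in particular $\pi^*(\underline R)=\underline R$. I would then describe its right adjoint $\pi_*$ explicitly: put $(\pi_*M)(x):=M(x)^{K(x)}=H^0(K(x);\Res_{i_x}M)$, so that $\pi_*M=H^0(\cK;M)$. The $R\cD$-module structure on $H^0(\cK;M)$ is defined by lifting each $\cD$-morphism to a $\cC$-morphism; condition (3), namely the bijection $K(y)\backslash\Mor_\cC(x,y)\cong\Mor_\cD(x,y)$, is exactly what makes the lift well defined up to the $K$-action, and this is the ambiguity that disappears on $K$-fixed points. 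The adjunction $\Hom_{R\cD}(N,\pi_*M)\cong\Hom_{R\cC}(\pi^*N,M)$ then follows: a $\cC$-equivariant map $\pi^*N\to M$ restricts at each $x$ to a map whose image automatically lands in $M(x)^{K(x)}$ because $K(x)$ acts trivially on $(\pi^*N)(x)$, and the $\cD$-equivariance of the resulting map together with (3) recovers the original $\cC$-equivariance. As $\pi^*$ is exact, $\pi_*$ preserves injectives.

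\textbf{Factoring through $\pi_*$ and applying Grothendieck.} Using $\underline R=\pi^*\underline R$ and the adjunction one gets $\Hom_{R\cC}(\underline R,M)\cong\Hom_{R\cD}(\underline R,\pi_*M)$, i.e. $\Hom_{R\cC}(\underline R,-)=\Hom_{R\cD}(\underline R,-)\circ\pi_*$. Since $\pi_*$ sends injectives to injectives, hence to $\Hom_{R\cD}(\underline R,-)$-acyclics, the Grothendieck spectral sequence of the composite reads
$$E_2^{p,q}=H^p(\cD;R^q\pi_*M)\;\Longrightarrow\;H^{p+q}(\cC;M).$$
It then remains to identify $R^q\pi_*M$ with $H^q(\cK;M)$ as an $R\cD$-module, i.e. to show $(R^q\pi_*M)(x)=H^q(K(x);\Res_{i_x}M)$. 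Fix $x$ and regard $K(x)$ as a one-object category with the inclusion $i_x$ into $\cC$. Evaluation at $x$ together with the $\Aut_\cC(x)$-action, restricted to $K(x)$, is exactly the exact functor $\Res_{i_x}\colon R\cC\text{-Mod}\to RK(x)\text{-Mod}$, and $(\pi_*-)(x)=(-)^{K(x)}\circ\Res_{i_x}$. The key claim is that $\Res_{i_x}$ preserves injectives: its left adjoint is the left Kan extension $\mathrm{Lan}_{i_x}$, which at an object $y$ is a tensor product over $RK(x)$ with $R\Mor_\cC(y,x)$, and by condition (2) the set $\Mor_\cC(y,x)$ is a \emph{free} $K(x)$-set, so $R\Mor_\cC(y,x)$ is a free $RK(x)$-module and $\mathrm{Lan}_{i_x}$ is exact. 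Hence for an injective resolution $M\to I^\bullet$ over $R\cC$, the complex $\Res_{i_x}I^\bullet$ is an injective resolution of $\Res_{i_x}M$ over $RK(x)$, and since evaluation at $x$ is exact,
$$(R^q\pi_*M)(x)=H^q\big((\pi_*I^\bullet)(x)\big)=H^q\big((I^\bullet(x))^{K(x)}\big)=H^q(K(x);\Res_{i_x}M);$$
naturality in $x$, i.e. the asserted $R\cD$-module structure, is automatic from functoriality.

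\textbf{Main obstacle.} The Grothendieck machinery is formal once the inputs are in place; the real content is (a) pinning down the right adjoint $\pi_*$ and verifying the adjunction, where condition (3) of a target regular extension is needed even to make $H^0(\cK;M)$ into an $R\cD$-module, and (b) the assertion that $\Res_{i_x}$ preserves injectives, which is exactly where the freeness in condition (2) enters (through the $RK(x)$-freeness of $R\Mor_\cC(y,x)$ and hence the exactness of $\mathrm{Lan}_{i_x}$). These are precisely the two defining hypotheses of a target regular extension, so the argument does not go through for arbitrary functors that are bijective on objects.
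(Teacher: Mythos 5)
Your argument is correct, and while it shares its skeleton with the paper's proof, the key technical step is carried out along a genuinely different route. Both proofs run the Grothendieck spectral sequence on the factorization $\Hom_{R\cC}(\underline R,-)\cong\Hom_{R\cD}(\underline R,-)\circ\Coind_\pi$ (your $\pi_*$ is exactly the coinduction functor $\Coind_\pi$, right adjoint to $\Res_\pi$, and it preserves injectives for the same formal reason). Where you diverge is in identifying $R^q\Coind_\pi$. The paper first computes, for an arbitrary functor $F$, that $R^q\Coind_F(M)$ is the comma-category cohomology $H^q(F/-;M\circ\pi_{\cC})$ (Proposition \ref{pro:RightDerFunc}), obtaining the Gabriel--Zisman spectral sequence (Theorem \ref{thm:GZ}), and only then specializes: for a target regular $\pi$ the functor $J_x:K(x)\to\pi/x$ has a left adjoint, so $H^q(\pi/x;-)\cong H^q(K(x);-)$ (Lemma \ref{lem:IsomComma}). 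You instead use regularity from the start to write $\Coind_\pi M$ as the objectwise fixed-point functor $x\mapsto M(x)^{K(x)}$ and compute its derived functors objectwise, the crucial input being that $\Res_{i_x}$ preserves injectives because $\Ind_{i_x}$ is exact — which you correctly trace to the $RK(x)$-freeness of $R\Mor_\cC(y,x)$ guaranteed by condition (2). This is closer to the classical LHS argument for groups, avoids comma categories entirely, and is arguably more transparent; what the paper's detour buys is the general Gabriel--Zisman machinery, which it reuses for the ext-group version (Theorem \ref{thm:XuSSExt}, with arbitrary $N$ in place of $\underline R$; your argument as written would need $\Hom_{R\cD}(N,-)$ in the outer slot, which it accommodates without change) and, in dual form, for the source regular spectral sequence. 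The one place you are too quick is the final sentence: the $R\cD$-module structure on $R^q\pi_*M$ is indeed automatic, but matching it with the explicitly described structure on $H^q(\cK;M)$ (the composite $M(\widetilde\varphi)_*K(\widetilde\varphi)^*$ of Lemma \ref{lem:InducedHom}) requires observing that $I^\bullet(\widetilde\varphi):I^\bullet(y)\to I^\bullet(x)$ is equivariant along $K(\widetilde\varphi):K(x)\to K(y)$ and hence computes that composite on fixed-point complexes; the paper spends Lemmas \ref{lem:InducedHom} and \ref{lem:IsomCommaFunc} on exactly this compatibility, and a sentence of justification belongs in your write-up as well.
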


We review Xu's construction, provide more details about the module structure of $H^q (\cK; M)$, and show that there is an ext-group version of the LHS-spectral sequence for extensions of categories.

\begin{theorem}\label{thm:Intro-SSXuExt} Let $\cE : \cK \maprt{i} \cC \maprt{\pi} \cD $ be a target regular extension.
Then for every $R\cD$-module $N$ and $R\cC$-module $M$, there is a spectral sequence
$$ E_2 ^{p,q}= \Ext ^p _{R\cD} (N,  H^q ( \cK ; M) ) \Rightarrow \Ext ^{p+q} _{R \cC} ( \Res_{\pi} N, M)$$
where $H^q (\cK; M)$ denotes the $R\cD$-module that sends $x\in \Ob (\cC)$ to the cohomology group
$H^q (K(x), \Res_{i_x} M)$. 
\end{theorem}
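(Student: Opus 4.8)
The plan is to realize the spectral sequence as the Grothendieck spectral sequence of a composite of two functors, in the same spirit as Xu's proof of Theorem~\ref{thm:Intro-SSXu} but carrying along the full adjunction instead of only the limit functor.

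First I would record the relevant adjunction, which exists for any functor $\pi\colon\cC\to\cD$: restriction $\Res_\pi\colon R\cD\text{-Mod}\to R\cC\text{-Mod}$ is \emph{exact}, and it admits a right adjoint $\Coind_\pi\colon R\cC\text{-Mod}\to R\cD\text{-Mod}$, namely the right Kan extension along $\pi^{op}$; so there is a natural isomorphism $\Hom_{R\cC}(\Res_\pi N, M)\cong\Hom_{R\cD}(N,\Coind_\pi M)$. Since $\Res_\pi$ is exact, $\Coind_\pi$ sends injective $R\cC$-modules to injective $R\cD$-modules, in particular to $\Hom_{R\cD}(N,-)$-acyclic modules; both $\Coind_\pi$ and $\Hom_{R\cD}(N,-)$ are additive and left exact, and $R\cC\text{-Mod}$ has enough injectives. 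Hence the Grothendieck spectral sequence of the composite $\Hom_{R\cD}(N,-)\circ\Coind_\pi=\Hom_{R\cC}(\Res_\pi N,-)$ exists:
$$E_2^{p,q}=\Ext^p_{R\cD}\bigl(N,\,R^q\Coind_\pi M\bigr)\;\Longrightarrow\;\Ext^{p+q}_{R\cC}(\Res_\pi N, M).$$
It lives in the first quadrant, hence converges. (Concretely: choose an injective resolution $M\to I^\bullet$ over $R\cC$; then $\Hom_{R\cC}(\Res_\pi N, I^\bullet)\cong\Hom_{R\cD}(N,\Coind_\pi I^\bullet)$, where $\Coind_\pi I^\bullet$ is a complex of injective $R\cD$-modules with $H^q(\Coind_\pi I^\bullet)\cong R^q\Coind_\pi M$, and the displayed spectral sequence is the hyperext spectral sequence of this complex; if explicit edge maps are wanted, one filters the double complex $\Hom_{R\cD}(P_\bullet,\Coind_\pi I^\bullet)$ with $P_\bullet\to N$ a projective resolution over $R\cD$ instead.)

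It then remains to identify the $R\cD$-module $R^q\Coind_\pi M$ with $H^q(\cK;M)$, and here — and only here — target regularity enters; this is essentially the computation underlying Xu's theorem, carried out together with the $R\cD$-module structure on $H^q(\cK;M)$ in our review of Xu's construction. Derived right Kan extensions are computed objectwise, so for $x\in\Ob(\cD)=\Ob(\cC)$ the module $(R^q\Coind_\pi M)(x)$ is the $q$-th higher limit of $M$ over the comma category $x/\pi^{op}$, whose objects are pairs $(c,\beta)$ with $c\in\Ob(\cC)$ and $\beta\in\Mor_\cD(c,x)$ (using that $\pi$ is the identity on objects). By conditions (2) and (3) in the definition of a target regular extension, the full subcategory of $x/\pi^{op}$ on the single object $(x,\id_x)$ is precisely the one-object category of the group $K(x)$, and its inclusion into $x/\pi^{op}$ is initial; moreover the comma categories controlling this initiality are, again by (2) and (3), action groupoids of $K(x)$ acting freely and transitively on the fibres $\pi_{c,x}^{-1}(\beta)\subseteq\Mor_\cC(c,x)$, hence equivalent to the trivial category. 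By the cofinality theorem for higher limits, $(R^q\Coind_\pi M)(x)$ is then computed over $K(x)$ and equals $H^q(K(x);\Res_{i_x}M)$. Naturality in $x$ for the $R\cD$-module structures is checked directly: a morphism $\beta\colon x\to x'$ of $\cD$ has a lift $\widetilde\beta\colon x\to x'$ in $\cC$, the map $M(\widetilde\beta)$ sends $K(x')$-invariants into $K(x)$-invariants (for $k\in K(x)$ one has $\widetilde\beta k=k'\widetilde\beta$ for some $k'\in K(x')$, since both sides lie over $\beta$ and $K(x')$ permutes that fibre transitively), and the induced map is independent of the chosen lift because two lifts differ by an element of $K(x')$, which acts as the identity on $H^*(K(x');-)$. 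Substituting $R^q\Coind_\pi M\cong H^q(\cK;M)$ into the $E_2$-page proves the theorem; taking $N=\underline R$, so that $\Res_\pi N=\underline R$, recovers Theorem~\ref{thm:Intro-SSXu}.

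The construction of the spectral sequence itself (the first two paragraphs) is formal — it works for any functor $\pi$ — once one has the adjunction $(\Res_\pi,\Coind_\pi)$ and the exactness of $\Res_\pi$. I expect the main obstacle to be the objectwise identification $R^q\Coind_\pi M\cong H^q(\cK;M)$ and, especially, its compatibility with the $\cD$-module structures on the two sides: this is precisely where the freeness and transitivity of the $K(y)$-actions built into the notion of a target regular extension are indispensable, since they are what force the comma categories appearing in the Kan extension formula to be cohomologically trivial.
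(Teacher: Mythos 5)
Your proposal is correct and follows essentially the same route as the paper: the Grothendieck spectral sequence for $\Hom_{R\cD}(N,-)\circ\Coind_\pi$ (the paper's Theorem \ref{thm:GZ}), followed by the objectwise identification $R^q\Coind_\pi M\cong H^q(K(x);\Res_{i_x}M)$ via contractibility of the relevant comma categories (the cofinality argument the paper attributes to Xu in Lemma \ref{lem:IsomComma}, for which it also offers an adjoint-functor alternative), and the well-definedness of the $R\cD$-module structure via the triviality of the inner $K(x')$-action on $H^*(K(x');-)$ (Lemma \ref{lem:InducedHom}). The only place you are thinner than the paper is in verifying that the objectwise isomorphisms actually intertwine the two $R\cD$-module structures (the paper's Lemma \ref{lem:IsomCommaFunc}, which uses a natural transformation between composite functors and the first homotopy property), but you correctly flag this as the point requiring care.
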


If $\pi : \cC \to \cD$ is a source regular functor, then $\pi ^{op} : \cC ^{op} \to \cD ^{op} $ is a target regular functor. 
So the spectral sequence above gives a spectral sequence for source regular extensions with coefficients 
in left $R\cC$-module $M$ and a left $R\cD$-module $N$ (see Corollary \ref{cor:XuSSExt-Source}). 
However if $\pi : \cC \to \cD$ is source regular functor and if we want to relate the cohomology 
of $\cC$ and $\cD$ with right module coefficients, then 
the spectral sequence above can not be used. For this case, we construct the following 
spectral sequence involving the homology groups of the groups in the kernel $\cK=\{ K(x) \}$. 

\begin{theorem}\label{thm:Intro-SSGYGen}
Let $\cE : \cK \maprt{i} \cC \maprt{\pi}\cD $ be a source regular extension. For every 
$R\cC$-module $N$ and $R\cD$-module $M$,  
there is a spectral sequence
$$ E_2 ^{p,q}= \Ext ^p _{R \cD} ( H_q ( \cK; N), M) \Rightarrow \Ext ^{p+q} _{R\cC} (N,  \Res _{\pi} M)$$
where $H_q ( \cK; N)$ denotes the (right) $R\cD$-module that sends $x \in \Ob(\cC)$ to the homology group 
$H_q (K(x); \Res_{i_x} N)$.
\end{theorem}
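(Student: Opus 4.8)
The plan is to realise this spectral sequence as the Grothendieck (hyper-$\Ext$) spectral sequence attached to a factorisation of $\Hom_{R\cC}(-,\Res_{\pi}M)$ through $R\cD$-modules. Let $\mathrm{LK}_{\pi}\colon R\cC\text{-Mod}\to R\cD\text{-Mod}$ denote the left Kan extension along $\pi^{op}$, i.e.\ the left adjoint of the exact restriction functor $\Res_{\pi}$. The adjunction yields a natural isomorphism
$$\Hom_{R\cC}(N,\Res_{\pi}M)\;\cong\;\Hom_{R\cD}(\mathrm{LK}_{\pi}N,M),$$
so the theorem reduces to two facts: that this factorisation can be derived in the standard way, and that the left derived functors of $\mathrm{LK}_{\pi}$ are exactly the coefficient systems $H_{q}(\cK;-)$.

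For the first fact I would argue as follows. Since $\Res_{\pi}$ is exact, $\mathrm{LK}_{\pi}$ carries projective $R\cC$-modules to projective $R\cD$-modules, because then $\Hom_{R\cD}(\mathrm{LK}_{\pi}P,-)\cong\Hom_{R\cC}(P,\Res_{\pi}-)$ is exact. Choose a projective resolution $P_{\bullet}\twoheadrightarrow N$ over $R\cC$; then $Q_{\bullet}:=\mathrm{LK}_{\pi}P_{\bullet}$ is a complex of projective $R\cD$-modules concentrated in non-negative degrees with $H_{q}(Q_{\bullet})=L_{q}\mathrm{LK}_{\pi}N$. By the adjunction isomorphism, $\Hom_{R\cD}(Q_{\bullet},M)\cong\Hom_{R\cC}(P_{\bullet},\Res_{\pi}M)$ as cochain complexes, so the cohomology of $\Hom_{R\cD}(Q_{\bullet},M)$ is $\Ext^{*}_{R\cC}(N,\Res_{\pi}M)$. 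On the other hand, taking a Cartan--Eilenberg resolution of the bounded-below complex $Q_{\bullet}$ and applying $\Hom_{R\cD}(-,M)$ produces a first-quadrant double complex; one of its two spectral sequences degenerates to the cohomology of $\Hom_{R\cD}(Q_{\bullet},M)$ (as $Q_{\bullet}$ consists of projectives), while the other has $E_{2}$-term $\Ext^{p}_{R\cD}(H_{q}(Q_{\bullet}),M)$. This gives
$$E_{2}^{p,q}=\Ext^{p}_{R\cD}\bigl(L_{q}\mathrm{LK}_{\pi}N,\,M\bigr)\ \Longrightarrow\ \Ext^{p+q}_{R\cC}(N,\Res_{\pi}M),$$
with convergence automatic since the $E_{2}$-page lies in the first quadrant.

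The substantive step is the identification $L_{q}\mathrm{LK}_{\pi}N\cong H_{q}(\cK;N)$. Fix $x\in\Ob(\cC)=\Ob(\cD)$. Evaluation at $x$ is exact and therefore commutes with left derived functors, so it suffices to analyse the functor $\mathrm{ev}_{x}\circ\mathrm{LK}_{\pi}$. Writing $(\mathrm{LK}_{\pi}N)(x)$ as the colimit of $N\circ\mathrm{pr}$ over the comma category $\pi^{op}\downarrow x$, I would use the source-regular hypotheses in the following form: $\pi_{x,y}$ is surjective for every $y$, and each fibre $\pi_{x,y}^{-1}(f)$ is a single free orbit of $K(x)$ acting by precomposition — this is exactly the content of the target-regularity of $\pi^{op}$ at $x$. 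It follows that the one-object full subcategory of $\pi^{op}\downarrow x$ on $(x,\id_{x})$, whose endomorphism monoid is the group $K(x)$, is cofinal: the comma categories one must check are $K(x)$-torsors with a unique morphism between any two objects. Hence $(\mathrm{LK}_{\pi}N)(x)\cong N(x)_{K(x)}$, naturally in $N$; equivalently, there is an equality of functors
$$\mathrm{ev}_{x}\circ\mathrm{LK}_{\pi}\;=\;(-)_{K(x)}\circ\Res_{i_{x}}\colon\ R\cC\text{-Mod}\longrightarrow R\text{-Mod},$$
where $\Res_{i_{x}}\colon R\cC\text{-Mod}\to RK(x)\text{-Mod}$ is the exact functor $N\mapsto\Res_{i_{x}}N$ of the statement and $(-)_{K(x)}$ denotes $K(x)$-coinvariants. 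Since $\Res_{i_{x}}$ is exact and the left derived functors of coinvariants are group homology, deriving both sides gives $(L_{q}\mathrm{LK}_{\pi}N)(x)\cong H_{q}\bigl(K(x),\Res_{i_{x}}N\bigr)$.

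The point I expect to require the most care is promoting these pointwise isomorphisms to an isomorphism of $R\cD$-modules $L_{q}\mathrm{LK}_{\pi}N\cong H_{q}(\cK;N)$: one must check that they are natural in $x$ for morphisms of $\cD$, matching the canonical structure maps of $L_{q}\mathrm{LK}_{\pi}N$ with those defining $H_{q}(\cK;N)$ — the latter obtained by lifting a morphism of $\cD$ to $\cC$ and using freeness of the $\cK$-action. For $q=0$ this naturality is already contained in the cofinality argument; for $q>0$ one transports it along a functorial (for instance, bar-type) resolution. Substituting $H_{q}(\cK;N)$ for $L_{q}\mathrm{LK}_{\pi}N$ in the spectral sequence of the second paragraph then completes the proof. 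This is the ``source-regular, right-coefficient'' counterpart of Theorems~\ref{thm:Intro-SSXu}--\ref{thm:Intro-SSXuExt}: in the target-regular case one instead uses the adjunction between $\Res_{\pi}$ and its \emph{right} adjoint, whose derived functors come out as the cohomology $H^{q}(\cK;-)$, and it is target-regularity that renders those derived functors computable.
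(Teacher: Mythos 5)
Your proposal is correct and takes essentially the same route as the paper: there, the spectral sequence is obtained by applying the Grothendieck spectral sequence to the composite of $(\Ind_\pi(-))^{op}$ with $\Hom_{R\cD}(-,M)$ (Theorem \ref{thm:VarGZ}), and $L_q\Ind_\pi N$ is identified first with the homology of the comma categories $x\backslash\pi$ (Proposition \ref{pro:LeftDerFunc}) and then with $H_q(K(x);\Res_{i_x}N)$ via an adjoint to the inclusion $K(x)\to x\backslash\pi$ (Lemma \ref{lem:IsomCommaFuncHomology}). Your Cartan--Eilenberg double complex is just the standard proof of that Grothendieck spectral sequence, your cofinality claim for the one-object subcategory on $(x,\id_x)$ is exactly the contractibility furnished by the paper's adjoint pair, and, like the paper, you correctly isolate the $R\cD$-naturality of the pointwise identifications as the step requiring care.
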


We prove Theorems \ref{thm:Intro-SSXuExt} and \ref{thm:Intro-SSGYGen}  by studying the derived functors of induction and coinduction functors. Associated to a functor $F: \cC\to \cD$, there is a restriction functor $$\Res_F: R\cD\text{-Mod} \to R\cC\text{-Mod}$$ defined by precomposition with $F$. The induction functor and coinduction functors $$\Ind _F, \Coind _F : R\cC\text{-Mod} \to R\cD\text{-Mod}$$ in the other direction
are defined as the left and right adjoints of the restriction functor.  
The derived functors of induction and coinduction functors can be calculated by 
using the cohomology and homology of certain comma categories (see 
Propositions \ref{pro:RightDerFunc} and \ref{pro:LeftDerFunc}). 
Using these calculations, we obtain two versions of the Gabriel-Zisman spectral sequence (see Theorems \ref{thm:GZ} and \ref{thm:VarGZ}).
These spectral sequences, with some modification, gives the spectral sequences in Theorems \ref{thm:Intro-SSXuExt} and \ref{thm:Intro-SSGYGen}.

In Section \ref{sect:RegularEI} we apply these results to regular EI-categories. A small category $\cC$ is called an EI-category if every endomorphism in $\cC$ is an isomorphism. The set of isomorphism classes $[\cC]$ of an EI-category $\cC$ is a poset with the order relation given by
$[x]\leq [y] $ if $\Mor _{\cC} (x, y)$ is nonempty. Consider the functor $\cC\to [\cC]$ that sends each $x\in \Ob (\cC)$ to its isomorphism class $[x]$.
We say that the EI-category $\cC$ is target (resp. source) regular if the functor $\cC\to [\cC]$ is target (resp. source) regular.
This gives three different versions of LHS-spectral sequences for regular EI-categories (see Theorem \ref{thm:RegularSS}).  

The subdivision category $S(\cC)$ of an EI-category $\cC$ is the category whose objects are the chains of composable morphisms in $\cC$:
$$ \sigma :=(  \sigma_0 \xrightarrow{\alpha_1} \sigma_1 \xrightarrow{\alpha_2} \cdots  \xrightarrow{} \sigma_{n-1} \xrightarrow{\alpha_n} \sigma_n)$$
where each $\alpha_i$ is a non-isomorphism. The morphisms in $S(\cC)$ are defined by inclusion of subchains via a sequence of isomorphisms (see Definition \ref{def:Subdivision}). The subdivision category $S(\cC)$ is a source regular category, hence its opposite category $s(\cC)$ is target regular. For every $R\cC$-module $M$, there is an isomorphism $$H^* (s(\cC); \Res_{\Ini} M) \cong H^* (\cC; M)$$ where $\Ini: s(\cC) \to \cC$ is the functor that sends a chain $\sigma \in s(\cC)$ to the first object $\sigma_0 \in \cC$ in the chain (see Propositions \ref{pro:SC-Cohomology} and \ref{pro:SC-SourceRegular}). 

For each $i$, let  $\pi_i : \Aut _{s(\cC)} (\sigma) \to \Aut _{\cC} (\sigma _i)$ denote the group homomorphism defined by
restricting an automorphism of $\sigma \in s(\cC)$ to the automorphism of the $i$-th object in the chain.
Applying the spectral sequence in Theorem \ref{thm:RegularSS} to the category $s(\cC)$, and using the isomorphism above, 
we obtain the following spectral sequence. 

\begin{theorem}[{S\l omi\' nska \cite{Slominska-Homotopy}}]\label{thm:Intro-SlomSS}
Let $\cC$ be an EI-category and $M$ be an $R\cC$-module. Then, for each $q \geq 0$, the assignment 
$[\sigma] \to H^q ( \Aut_{s(\cC)} (\sigma); \Res _{\pi_0} M( \sigma_ 0 ))$ defines an $R[s(\cC)]$-module, denoted by $\cA ^q$, and 
there is a spectral sequence
$$ E_2 ^{p,q}= H^p ( [s(\cC)] ;  \cA^q) \Rightarrow H^{p+q}  ( \cC ;  M).$$ 
\end{theorem}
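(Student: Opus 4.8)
The plan is to obtain this spectral sequence by specializing the cohomology (target regular) version of the LHS-spectral sequence for regular EI-categories, i.e. the relevant case of Theorem~\ref{thm:RegularSS} (itself an instance of Xu's Theorem~\ref{thm:Intro-SSXu}), to the category $s(\cC)$, and then translating the $E_2$-page and the abutment back to $\cC$ via the two propositions on the subdivision category. By Proposition~\ref{pro:SC-SourceRegular} the subdivision category $S(\cC)$ is source regular, so its opposite $s(\cC)$ is a target regular EI-category, and there is a target regular extension
$$ 1 \to \cK \maprt{i} s(\cC) \maprt{\pi} [s(\cC)] \to 1, \qquad K(\sigma)=\ker\{\Aut_{s(\cC)}(\sigma)\to \Aut_{[s(\cC)]}([\sigma])\}. $$
Since $[s(\cC)]$ is a poset, all of its automorphism groups are trivial; hence $K(\sigma)=\Aut_{s(\cC)}(\sigma)$ and the inclusion $i_\sigma\colon K(\sigma)\hookrightarrow \Aut_{s(\cC)}(\sigma)$ is the identity.

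Applying the cohomology LHS-spectral sequence of this extension to the $R[s(\cC)]$-module $\Res_{\Ini}M$ gives
$$ E_2^{p,q}=H^p\bigl([s(\cC)];\, H^q(\cK;\Res_{\Ini}M)\bigr)\ \Longrightarrow\ H^{p+q}\bigl(s(\cC);\Res_{\Ini}M\bigr). $$
By Proposition~\ref{pro:SC-Cohomology} the abutment is $H^{p+q}(\cC;M)$, which is the claimed target. For the $E_2$-page, recall that $H^q(\cK;\Res_{\Ini}M)$ is by construction the $R[s(\cC)]$-module sending $\sigma$ to $H^q\bigl(K(\sigma);\Res_{i_\sigma}\Res_{\Ini}M\bigr)$. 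Now $\Res_{i_\sigma}\Res_{\Ini}M=\Res_{\Ini\circ i_\sigma}M$, and since $i_\sigma=\id$ and the homomorphism induced on $\Aut_{s(\cC)}(\sigma)$ by the functor $\Ini$ is, by definition, the map $\pi_0$, this module over $K(\sigma)=\Aut_{s(\cC)}(\sigma)$ is exactly $\Res_{\pi_0}M(\sigma_0)$. Therefore
$$ H^q(\cK;\Res_{\Ini}M)(\sigma)=H^q\bigl(\Aut_{s(\cC)}(\sigma);\Res_{\pi_0}M(\sigma_0)\bigr), $$
so the general construction already shows that the assignment $[\sigma]\mapsto H^q(\Aut_{s(\cC)}(\sigma);\Res_{\pi_0}M(\sigma_0))$ is a well-defined $R[s(\cC)]$-module, namely $\cA^q=H^q(\cK;\Res_{\Ini}M)$, and the spectral sequence above is precisely $E_2^{p,q}=H^p([s(\cC)];\cA^q)\Rightarrow H^{p+q}(\cC;M)$.

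The one point that requires care — and which I expect to be the main, though essentially routine, obstacle — is verifying that the $R[s(\cC)]$-module structure on $H^q(\cK;\Res_{\Ini}M)$ produced by the LHS-machinery agrees, on morphisms of $[s(\cC)]$, with the structure one writes down directly on $\cA^q$; concretely, this means unwinding how a subchain inclusion in $S(\cC)$ acts on the groups $\Aut_{s(\cC)}(\sigma)$ and on the restricted coefficients, while keeping track of the contravariance of $M$ and of the passage between $S(\cC)$ and its opposite $s(\cC)$. Everything else is a direct substitution into Theorem~\ref{thm:RegularSS} and Propositions~\ref{pro:SC-Cohomology} and~\ref{pro:SC-SourceRegular}.
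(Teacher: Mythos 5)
Your proposal is correct and follows essentially the same route as the paper: apply the target regular LHS-spectral sequence (Theorem~\ref{thm:RegularSS}(1)) to $s(\cC)$ with coefficients $\Res_{\Ini}M$, identify the abutment via Proposition~\ref{pro:SC-Cohomology}, and identify the $E_2$-coefficients as $\Res_{\pi_0}M(\sigma_0)$ (which is the paper's Lemma~\ref{lem:Modules}). The module-structure compatibility you flag as the main point of care is exactly what the paper delegates to that lemma, so there is no gap.
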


This spectral sequence is a reformulation of a spectral sequence due to S\l omi\' nska \cite[Cor 1.13]{Slominska-Homotopy}.  The trivial coefficient version of this spectral sequence was constructed by Linckelmann \cite[Thm 3.1]{Linckelmann-OnH}
and used for studying the gluing problem in block theory. 

In Section \ref{sect:Transporter}, we apply the spectral sequences above to the extensions involving the transporter category $\cT_{\cH} (G)$. The LHS-spectral sequences for the extensions in \ref{eqn:Intro-SubExt} and \ref{eqn:Intro-CentExt} give spectral sequences
that can be considered as the subgroup and centralizer decompositions for the transporter category (see Propositions \ref{pro:SubgroupDec} and \ref{pro:CentralizerDec}).  Applying the spectral sequence in Theorem \ref{thm:Intro-SlomSS} to the transporter category $\cT_{\cH} (G)$
gives a spectral sequence that can be considered as the normalizer decomposition for the transporter category 
(see Proposition \ref{pro:NormalizerDec}). When we take the coefficients to be the constant functor $\bbF_p$ and $\cH$ to be an ample collection,
these spectral sequences for the transporter category give spectral sequences for mod-$p$ homology decompositions of the classifying space $BG$.

In Section \ref{sect:Linking}, we consider $p$-local finite groups $(S, \cF, \cL)$ and obtain spectral sequences for the mod-$p$ 
homology decompositions of the centric linking system $\cL$ (with nontrivial coefficients). In particular we show that  there is a target 
regular extension of categories 
$$ \cE:  \{ P\} _{P\in \cF^c} \xrightarrow{j} \cL \xrightarrow{\widetilde \pi} \cO ^c (\cF) $$ 
which gives a spectral sequence
that converges to the cohomology of $\cL$ with nontrivial coefficients in the following form:

\begin{theorem}\label{thm:IntroSubgroupSS}
Let $(S, \cF, \cL)$ be a $p$-local finite group. For every $R\cL$-module $M$, there is 
a spectral sequence
$$ E_2 ^{p,q}= H^p (\cO ^c (\cF)  ;  \cH^q_M )  \Rightarrow H^{p+q} (\cL ; M)$$ 
where $\cH^q _M$ is the $R\cO ^c (\cF)$-module such that  for every $P\in \cF^c$, $\cH^q_M (P)=H^q (P; \Res _{j_P} M (P))$.  
\end{theorem}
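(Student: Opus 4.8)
The plan is to obtain this spectral sequence as the Xu spectral sequence of Theorem~\ref{thm:Intro-SSXu} applied to a suitable target regular extension, so that the real content is the construction of that extension. Recall that a centric linking system $\cL$ over $\cF$ carries a structure functor $\pi:\cL\to\cF^c$ which is the identity on objects, together with distinguished monomorphisms $\delta_P:P\to\Aut_{\cL}(P)$ for $P\in\cF^c$. Let $\widetilde\pi:\cL\to\cO^c(\cF)$ be the composite of $\pi$ with the canonical projection $\cF^c\to\cO^c(\cF)$ sending $f\in\Hom_{\cF}(P,Q)$ to its class $[f]\in\Inn(Q)\backslash\Hom_{\cF}(P,Q)$; this composite is the identity on objects and surjective on all morphism sets. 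I claim that
\[
1\to\{P\}_{P\in\cF^c}\xrightarrow{j}\cL\xrightarrow{\widetilde\pi}\cO^c(\cF)\to 1,\qquad j_P=\delta_P,
\]
is a target regular extension of categories.

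Verifying the three defining conditions is where I would spend the effort. Condition (1) is the last sentence above. For the kernel, $K(P)=\ker(\widetilde\pi_{P,P})$ is the preimage in $\Aut_{\cL}(P)$ of $\Inn(P)\le\Aut_{\cF}(P)$; since $\pi$ is surjective and $\pi(\delta_P(g))=c_g$, this preimage is exactly $\delta_P(P)\cong P$. I would stress that the passage to the \emph{orbit} category (rather than to $\cF^c$) is essential here: on $\cF^c$ the kernel at $P$ would only be $\delta_P(Z(P))=\ker\pi_{P,P}$, and in addition $\Inn(Q)$ does not in general act freely on $\Hom_{\cF}(P,Q)$, so $\cF^c\to\cO^c(\cF)$ is not regular. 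For condition (2), $K(Q)=\delta_Q(Q)$ acts on $\Mor_{\cL}(P,Q)$ by post-composition, and this action is free because every morphism in $\cL$ is an epimorphism, so $\delta_Q(g)\circ\varphi=\varphi$ forces $\delta_Q(g)=\id_Q$ and hence $g=1$. For condition (3), I must show that $\widetilde\pi_{P,Q}$ induces a bijection $\delta_Q(Q)\backslash\Mor_{\cL}(P,Q)\cong\Inn(Q)\backslash\Hom_{\cF}(P,Q)$; this I would obtain by combining axioms (A) and (C) of a centric linking system, namely that $\pi_{P,Q}$ is surjective with fibres the orbits of the free pre-composition action of $\delta_P(Z(P))$, and that $\varphi\circ\delta_P(z)=\delta_Q(\pi(\varphi)(z))\circ\varphi$ for $z\in Z(P)$. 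Together these show that two morphisms of $\cL$ have the same image under $\widetilde\pi_{P,Q}$ precisely when they lie in one $\delta_Q(Q)$-orbit; this is the familiar identification of $\cO^c(\cF)$ as the quotient of $\cL$ obtained by dividing out the distinguished subgroups (cf.\ \cite{OliverVentura}).

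Granting the extension, the theorem is immediate: apply Theorem~\ref{thm:Intro-SSXu} with $\cC=\cL$, $\cD=\cO^c(\cF)$, $\cK=\{P\}_{P\in\cF^c}$ and the given $R\cL$-module $M$. The resulting spectral sequence is $E_2^{p,q}=H^p(\cO^c(\cF);H^q(\cK;M))\Rightarrow H^{p+q}(\cL;M)$, and by the description of the coefficient module in Theorem~\ref{thm:Intro-SSXu} the $R\cO^c(\cF)$-module $H^q(\cK;M)$ sends $P$ to $H^q(K(P);\Res_{j_P}M(P))=H^q(P;\Res_{j_P}M(P))$, which is exactly $\cH^q_M$. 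I expect the one genuine obstacle to be condition (3) --- checking carefully that $\cO^c(\cF)$ really is the regular quotient of $\cL$ by $\{\delta_P(P)\}$ --- which amounts to a bookkeeping argument with linking-system axioms (A) and (C) together with the fact that morphisms of $\cL$ are epimorphisms (which, if preferred, can itself be deduced by a counting argument using that $P$ and $Q$ are $\cF$-centric); everything else is formal once that is in place.
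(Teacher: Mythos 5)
Your proposal is correct and follows essentially the same route as the paper: the paper proves exactly this statement (Proposition \ref{pro:SubgroupSS}) by first establishing in Lemma \ref{lemma:ExtLinkingTarget} that $1\to\{P\}_{P\in\cF^c}\xrightarrow{j}\cL\xrightarrow{\widetilde\pi}\cO^c(\cF)\to 1$ is a target regular extension (freeness via morphisms of $\cL$ being epimorphisms, the bijection on orbits via the linking-system axioms, citing \cite{BLO2} where you argue directly from axioms (A) and (C)) and then applying the LHS-spectral sequence for target regular extensions. The only point the paper treats that you omit is the explicit identification of the induced $R\cO^c(\cF)$-module structure on $\cH^q_M$ via axiom (4) of Definition \ref{def:LinkingSystem}, but this is not required by the statement as phrased.
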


This is proved as Proposition \ref{pro:SubgroupSS} in the paper. If we apply Theorem  \ref{thm:Intro-SlomSS} to the linking system $\cL$ and to the orbit category $\cO (\cF^c)$, we obtain two spectral sequences. These spectral sequences use the cohomology of the 
category  $\overline{s}d (\cF^c)$ which is the category  of  $\cF$-conjugacy classes of chains in $\cF^c$ (see  Definition \ref{def:CatConjClasses}). 
 
\begin{theorem}\label{thm:Intro-NormDecLinking}
Let $(S, \cF, \cL)$ be a $p$-local finite group. For every $R\cL$-module $M$,
there is a spectral sequence 
$$E_2 ^{p.q} = H^p (\overline{s}d (\cF^c) ; \cA_{\cL} ^q) \Rightarrow H^{p+q} (\cL ; M)$$ 
where $\cA_{\cL} ^q$ is the $R\overline{s}d (\cF^c)$-module such that $\cA_{\cL} ^q ([\sigma])= H^q (\Aut _{\cL } (\sigma); \Res _{\pi_0} M(P_0))$
for every $[\sigma]=[P_0<\cdots<P_n]$ in $\overline{s}d(\cF^c)$.
\end{theorem}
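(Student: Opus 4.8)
The plan is to obtain Theorem \ref{thm:Intro-NormDecLinking} as a direct application of the general S\l omi\'nska-type spectral sequence in Theorem \ref{thm:Intro-SlomSS} to the EI-category $\cC = \cL$, the centric linking system of the $p$-local finite group $(S,\cF,\cL)$. First I would check that $\cL$ is an EI-category: every morphism in $\cL$ is (the lift of) an injective group homomorphism between $p$-subgroups, and in the linking system all morphisms are epi and mono (by the axioms of a transporter system / linking system), so every endomorphism is an isomorphism. Next I would identify the set of isomorphism classes $[\cL]$ with the poset $\cF^c$ of $\cF$-centric subgroups of $S$ up to $\cF$-conjugacy, and identify the subdivision poset $[s(\cL)]$ with $\overline{s}d(\cF^c)$, the poset of $\cF$-conjugacy classes of chains $P_0 < P_1 < \cdots < P_n$ of $\cF$-centric subgroups (here $<$ means proper inclusion up to isomorphism, which is exactly the non-isomorphism condition in the definition of $S(\cC)$ since morphisms in $\cL$ between non-isomorphic subgroups are never isomorphisms). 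This is where I would be careful: the correspondence between chains of non-isomorphisms in $\cL$ and chains of proper containments in $\cF^c$, and between the $\Aut$-action on such chains and the $\cF$-conjugation action, needs to be spelled out using that any morphism in $\cL$ lifting an $\cF$-isomorphism restricts appropriately along subgroups.

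With these identifications in place, Theorem \ref{thm:Intro-SlomSS} applied to $\cC = \cL$, $M$ an $R\cL$-module, gives for each $q$ an $R[s(\cL)]$-module $\cA^q$ with $\cA^q([\sigma]) = H^q(\Aut_{s(\cL)}(\sigma); \Res_{\pi_0} M(\sigma_0))$ and a spectral sequence
$$E_2^{p,q} = H^p([s(\cL)]; \cA^q) \Rightarrow H^{p+q}(\cL; M).$$
Then I would simply translate notation: $[s(\cL)] = \overline{s}d(\cF^c)$, the chain $\sigma = (P_0 \xrightarrow{} \cdots \xrightarrow{} P_n)$ of non-isomorphisms in $\cL$ corresponds to $[\sigma] = [P_0 < \cdots < P_n]$, the first object $\sigma_0$ corresponds to $P_0$, and $\Aut_{s(\cL)}(\sigma) = \Aut_{\cL}(\sigma)$ in the notation of the theorem (the automorphism group of the chain in the subdivision category of $\cL$, which the statement writes as $\Aut_{\cL}(\sigma)$). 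Setting $\cA_{\cL}^q := \cA^q$ under this dictionary gives exactly the asserted $R\overline{s}d(\cF^c)$-module with $\cA_{\cL}^q([\sigma]) = H^q(\Aut_{\cL}(\sigma); \Res_{\pi_0} M(P_0))$ and the claimed spectral sequence.

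The one genuine point requiring work — and the main obstacle — is verifying that $\cL$ is source regular as an EI-category, i.e.\ that the hypotheses of Theorem \ref{thm:Intro-SlomSS} (which is stated for an arbitrary EI-category $\cC$) really do apply without further conditions; in fact Theorem \ref{thm:Intro-SlomSS} as stated holds for all EI-categories since it is obtained via the target regular category $s(\cC)$, so what actually needs checking is only the EI property of $\cL$ together with the finiteness needed for the subdivision construction to behave well (chains of $p$-subgroups of $S$ have bounded length, so $S(\cL)$ has finitely many isomorphism classes of objects and each is a finite group). I would also note that $\pi_0: \Aut_{\cL}(\sigma) \to \Aut_{\cL}(P_0) = \Aut_{\cL}(P_0)$ is the restriction-to-first-object homomorphism, matching $\pi_0$ in Theorem \ref{thm:Intro-SlomSS}, and that $\Res_{\pi_0} M(P_0)$ makes sense because $M(P_0)$ is an $R\Aut_{\cL}(P_0)$-module by restricting the $R\cL$-module $M$. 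Once these identifications are recorded, the proof is a one-line citation of Theorem \ref{thm:Intro-SlomSS}; accordingly I would keep the argument short and concentrate the exposition on the dictionary between $s(\cL)$ and $\overline{s}d(\cF^c)$.
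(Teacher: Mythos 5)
Your proposal is correct and follows essentially the same route as the paper: the paper also obtains the result by applying the S\l omi\'nska spectral sequence (Theorem \ref{thm:SlomSS}) to the EI-category $\cL$, identifying the poset $[s(\cL)]$ with $\overline{s}d(\cF^c)$ via the factorization of morphisms in $\cL$ as an isomorphism followed by a distinguished inclusion $\iota_{P'}^{Q}$, and matching $\Aut_{s(\cL)}(\sigma)$ with $\Aut_{\cL}(\sigma)$. Your observation that Theorem \ref{thm:Intro-SlomSS} applies to any EI-category (since $S(\cC)$ is automatically source regular) and that only the EI property of $\cL$ plus the chain dictionary need verification is exactly the content of the paper's argument.
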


This spectral sequence can also be constructed as the Bousfield-Kan spectral sequence
for the normalizer decomposition for $|\cL|$ introduced by Libman \cite{Libman-NormDec}. 
Finally we apply Theorem \ref{thm:Intro-SlomSS} to the fusion orbit category $\cO (\cF^c)$
and obtain a spectral sequence of the following form:

\begin{theorem}\label{thm:IntroOrbitFusion}
Let $(S, \cF, \cL)$ be a $p$-local finite group. Then for every right $R\cO (\cF^c)$-module $M$, there is a spectral sequence
$$E_2 ^{p.q} = H ^p ( \overline{s}d (\cF^c) ; \cA_{\cO} ^q) 
\Rightarrow H^{p+q} (\cO (\cF ^c) ; M)$$ 
where $\cA_{\cO} ^q$ is the left $R\overline{s}d (\cF^c)$-module such that
 $\cA_{\cO} ^q ([\sigma])= H^q (\Aut _{s(\cO (\cF^c))} (\sigma); \Res _{\pi_0} M(P_0))$. 
\end{theorem}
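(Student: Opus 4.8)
The plan is to obtain this spectral sequence as the special case of the S\l omi\' nska spectral sequence of Theorem~\ref{thm:Intro-SlomSS} attached to the EI-category $\cC=\cO(\cF^c)$; the homological content is contained in that theorem, so the work is to verify its hypotheses and to identify its indexing category with $\overline{s}d(\cF^c)$.

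First I would check that $\cO(\cF^c)$ is a finite EI-category. Finiteness is clear since $\cF$ is a fusion system over a finite $p$-group; and it is an EI-category because any endomorphism of $P\in\cF^c$ in $\cO(\cF^c)$ is represented by some $\varphi\in\Hom_{\cF}(P,P)$, which, being an injective endomorphism of a finite group, is an automorphism, so its class is invertible in $\cO(\cF^c)$. Applying Theorem~\ref{thm:Intro-SlomSS} with $\cC=\cO(\cF^c)$ and the given right $R\cO(\cF^c)$-module $M$ then yields a spectral sequence
$$E_2^{p,q}=H^p\bigl([s(\cO(\cF^c))];\cA^q\bigr)\Rightarrow H^{p+q}(\cO(\cF^c);M),$$
with $\cA^q([\sigma])=H^q\bigl(\Aut_{s(\cO(\cF^c))}(\sigma);\Res_{\pi_0}M(\sigma_0)\bigr)$. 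Since the first object $\sigma_0$ of a chain is what the statement denotes $P_0$, this functor $\cA^q$ is exactly the module $\cA_{\cO}^q$; so it only remains to rewrite the indexing category.

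Next I would identify $[s(\cO(\cF^c))]$ with $\overline{s}d(\cF^c)$. Unwinding the subdivision, an object of $S(\cO(\cF^c))$ is a chain of non-isomorphisms in the fusion orbit category, and, replacing each term by the image of the composite of the morphisms lying above it in the chain (images of $\cF$-morphisms of $\cF$-centric subgroups are again $\cF$-centric), every such chain is isomorphic in $S(\cO(\cF^c))$ to one represented by a strictly increasing chain $P_0<\cdots<P_n$ of $\cF$-centric subgroups of $S$; two such chains are isomorphic in $S(\cO(\cF^c))$ exactly when they are $\cF$-conjugate. Hence $[S(\cO(\cF^c))]$ is the poset $\overline{s}d(\cF^c)$ of $\cF$-conjugacy classes of such chains from Section~\ref{sect:Linking}, and $[s(\cO(\cF^c))]=[S(\cO(\cF^c))]^{op}=\overline{s}d(\cF^c)^{op}$. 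Substituting this in, and noting that passing to the opposite turns the right $R[s(\cO(\cF^c))]$-module $\cA^q$ into the \emph{left} $R\overline{s}d(\cF^c)$-module $\cA_{\cO}^q$, gives the asserted spectral sequence. This is the same identification already used in Section~\ref{sect:Linking} for the normalizer decomposition of $\cL$ (Theorem~\ref{thm:Intro-NormDecLinking}), applied now with $\cO(\cF^c)$ in place of $\cL$.

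As the statement is essentially a specialization of Theorem~\ref{thm:Intro-SlomSS}, I do not expect a serious obstacle; the main point requiring care is the identification $[s(\cO(\cF^c))]\cong\overline{s}d(\cF^c)^{op}$ and, with it, the left/right bookkeeping for the coefficient functor $\cA_{\cO}^q$.
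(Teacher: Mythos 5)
Your proposal is correct and follows the paper's proof exactly: apply Theorem \ref{thm:Intro-SlomSS} to the EI-category $\cO(\cF^c)$ and identify the indexing poset with $\overline{s}d(\cF^c)$ by straightening each chain, using the factorization of every morphism of $\cO^c(\cF)$ into an isomorphism followed by (the image of) an inclusion --- this is the paper's Lemma \ref{lem:Last}. The only quibble is the extra ``op'' in your identification: with the paper's conventions the arrows of $s(\cO(\cF^c))$ and of $\overline{s}d(\cF^c)$ both run from a chain to its subchains, so $[s(\cO(\cF^c))]\cong\overline{s}d(\cF^c)$ directly; this is pure bookkeeping of left versus right modules and does not affect the argument.
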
 

This spectral sequence has some formal similarity to the spectral sequence 
constructed in \cite[Thm 1.7]{Yalcin-Sharp} for studying the sharpness problem. 
However, the 
$E_2$-terms of the spectral sequence obtained here
is in general different than the one constructed in \cite[Thm 1.7]{Yalcin-Sharp}.

\emph{Notation:} Throughout the paper, when we say $\cH$ is a collection, we always assume that 
$\cH$ is closed under conjugation. $R$ always denotes an associative commutative ring with unity. 
We say $R$ is a $p$-local ring if all the primes other than $p$ are invertible in $R$.  
When we say $M$ is an $R\cC$-module without further specifications, we always mean that $M$ is a right $R\cC$-module.
We denote the cohomology of the category $\cC$
with coefficients in a left $R\cC$-module $M$ and in a right $R\cC$-module $M$ both by $H^* (\cC; M)$. This is the notation used by L\" uck in \cite{Lueck-Book}  and we follow this convention throughout the paper. 
 
 {\bf Acknowledgements:}  We thank the anonymous  referee for the careful reading of the paper and for many corrections and valuable suggestions. 
 In particular, some of the references in the current version were provided by the referee. We also thank the editor 
 for the smooth handling of the paper.

%----------------------

\section{Modules over small categories}\label{sect:Modules} 

In this preliminary section we introduce necessary definitions for doing homological
algebra over small categories. In the second part of the section we introduce the induction 
and coinduction functors
associated to a functor $F: \cC \to \cD$.   
 For more details on this material, we refer the reader to 
\cite{Lueck-Book}, \cite{Richter-Book}, \cite{Schubert-Book}, \cite{KashiwaraSchapira-Book},
\cite{Webb-Survey}, and \cite{HPY}.

\subsection{$R\cC$-modules} Let $\cC$ be a small category. We denote the set of objects in $\cC$ by $\Ob (\cC)$, and the set of morphisms from
$x$ to $y$ by $\Mor_{\cC} (x,y)$ or simply by $\cC(x,y)$. 
For each $x\in \Ob (\cC)$, the group of automorphisms of $x$ in $\cC$ is denoted by $\Aut_{\cC} (x)$.
If we reverse all the arrows in $\cC$, the category we obtain is called the opposite category of $\cC$, denoted by $\cC^{op}$.

\begin{definition} Let $R$ be a commutative ring with unity. A contravariant functor  $M:\cC^{op} \to R$-Mod
is called a \emph{right $R\cC$-module}. A covariant functor $M: \cC \to R\text{-Mod}$
is called a \emph{left $R\cC$-module}.
%Since a left $R\cC$-module is a right $R\cC^{op}$-module, it makes sense to fix a choice of a left or right module category when we are introducing the definitions for $R\cC$-modules. 
 \emph{When we say $M$ is an $R\cC$-module, it will always mean that it is a right $R\cC$-module, unless otherwise stated explicitly.} \end{definition}

Let $M$ be an $R\cC$-module. For each object $x\in \Ob (\cC)$, the $R$-module $M(x)$ can be considered as a right
$R \Aut _{\cC} (x)$-module where the action of $a \in \Aut (x)$ is given by the $R$-module homomorphism $M(a) : M(x) \to M(x)$. 
An $R\cC$-module homomorphism $f: M \to N$ is a natural transformation of functors. 
The category of $R\cC$-modules is an abelian category and the usual 
notions of kernel, cokernel, and exact sequence exist and they are defined objectwise (see \cite[\S 1.6]{Weibel-Book}).
For example, a short exact sequence of $R\cC$-modules $M_1\to M_2 \to M_3$ is exact if for every $x\in \Ob(\cC)$, the sequence 
$$M_1 (x)  \to M_2 (x) \to M_3 (x)$$ is exact. The kernel $\ker f$ of a homomorphism $f: M \to N$ is the $R \cC$-module such that for every
$x \in \Ob (\cC)$, we have $(\ker f) (x) =\ker \{ f(x) : M(x) \to N(x) \}$. 
In the category of $R\cC$-modules, there are enough projectives. Projective modules are constructed using representable functors as follows:
 
\begin{definition} For each $x\in \Ob (\cC)$, let $R\cC(?, x)$ denote the $R\cC$-module that sends  every $y\in \Ob (\cC)$ to the free $R$-module $R\cC (y, x)$.
For each morphism $\varphi : y\to z$, the induced map $R\cC(z, x) \to R\cC (y,x)$ is defined by precomposition with $\varphi$.
\end{definition}

Using the Yoneda lemma, one can prove the following:

\begin{lemma}\label{lem:YonedaRight}
For every $x\in \Ob(\cC)$ and every $R\cC$-module $M$, there is an isomorphism of $R$-modules
$$\Hom _{R \cC } (R\cC (? , x) , M)  \cong M(x).$$ 
Consequently, for every $x \in \Ob (\cC)$, the $R\cC$-module $R\cC (? , x) $ is a projective module.
\end{lemma}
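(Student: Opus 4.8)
The plan is to deduce the displayed isomorphism from the classical Yoneda lemma, using the universal property of free $R$-modules to pass between $R$-linear and set-theoretic naturality, and then to read off projectivity from the fact that exactness of sequences of $R\cC$-modules is tested objectwise (as recalled earlier in this section).

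First I would write down the two maps explicitly. Given an $R\cC$-module homomorphism $\eta : R\cC(?,x) \to M$, set $\Phi(\eta) := \eta_x(\id_x) \in M(x)$, where $\eta_x : R\cC(x,x) \to M(x)$ is the component of $\eta$ at $x$ and $\id_x \in \cC(x,x)$ is regarded as a basis element of the free $R$-module $R\cC(x,x)$. Conversely, given $m \in M(x)$, define for each $y \in \Ob(\cC)$ the $R$-linear map $\Psi(m)_y : R\cC(y,x) \to M(y)$ by $\Psi(m)_y(\varphi) = M(\varphi)(m)$ on a basis element $\varphi \in \cC(y,x)$ (note $\varphi : y \to x$, so $M(\varphi) : M(x) \to M(y)$), extended $R$-linearly. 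Here one uses that $R\cC(y,x)$ is free on $\cC(y,x)$, so such a map exists and is unique, and that an arbitrary $R$-linear map out of it is determined by its values on basis elements; this is the point where $R$-linear naturality reduces to ordinary naturality of the underlying family of set maps.

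Second, I would check that $\Psi(m) = (\Psi(m)_y)_y$ is a natural transformation, i.e.\ an $R\cC$-module homomorphism: for $\psi : z \to y$ and $\varphi \in \cC(y,x)$, going one way around the naturality square gives $M(\psi)M(\varphi)(m)$ and the other way gives $M(\varphi\psi)(m)$, and these agree since $M$ is contravariant and the induced map $R\cC(y,x) \to R\cC(z,x)$ is precomposition with $\psi$. That $\Phi$ and $\Psi$ are mutually inverse is then immediate: $\Phi(\Psi(m)) = M(\id_x)(m) = m$, and conversely naturality of $\eta$ applied to each $\varphi : y \to x$ forces $\eta_y(\varphi) = M(\varphi)\bigl(\eta_x(\id_x)\bigr)$ on basis elements, hence $\eta = \Psi(\Phi(\eta))$. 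Since the $R$-module structure on $\Hom_{R\cC}(R\cC(?,x),M)$ is the pointwise one, $\Phi$ is visibly $R$-linear, so it is an isomorphism of $R$-modules.

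Third, for the consequence I would note that $M \mapsto M(x)$ is the evaluation functor on the category of $R\cC$-modules, which is exact because kernels, cokernels and exactness are all defined objectwise. The isomorphism above is natural in $M$ (this is routine to check on the formulas for $\Phi$ and $\Psi$), so $\Hom_{R\cC}(R\cC(?,x), -)$ is isomorphic to this exact evaluation functor, whence $R\cC(?,x)$ is a projective object. I do not expect any genuine obstacle: the only points needing care are keeping track of the contravariance of $M$ so that $M(\varphi)$ lands in the correct module, and invoking the universal property of the free $R$-module $R\cC(y,x)$ to legitimately reduce everything to the classical Yoneda computation.
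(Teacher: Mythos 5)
Your proof is correct and is exactly the standard Yoneda-lemma argument that the paper invokes without writing out (the paper simply says the lemma follows from the Yoneda lemma and gives no further details). Both the explicit mutually inverse maps and the deduction of projectivity from objectwise exactness of evaluation are the intended route.
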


\begin{proof} See \cite[Prop 4.4]{Webb-Survey}.
\end{proof}

Using Lemma \ref{lem:YonedaRight}, one can also show that for each $R\cC$-module $M$, there is a projective 
$R\cC$-module $P$ and surjective map $P \to M$ (see \cite[\S 9.16, \S 9.19]{Lueck-Book}).
This shows that the category of $R\cC$-modules has enough projectives.
There are also enough injectives in the category of $R\cC$-modules (see \cite[p. 43]{Weibel-Book}).

%---------------

\subsection{Restriction and induction functors}

\begin{definition}\label{def:Restriction} Given a functor $F: \cC \to \cD$ between two small categories, the \emph{restriction functor} $$\Res_F : R\cD\text{-Mod} \to R\cC\text{-Mod}$$ is defined by precomposing with $F$, i.e. for an $R\cD$-module $M$, 
$\Res_F M:=M \circ F^{op}$.
\end{definition}

In the other direction there is an induction functor $\Ind _F : R\cC\text{-Mod} \to R\cD\text{-Mod}$
which is defined to be the functor that is left adjoint to the restriction functor $\Res_F(-)$. 
To show the existence of the left adjoint, one defines the induction 
functor either by using left Kan extensions or by using tensor products. We explain both approaches.
 
\begin{definition}\label{def:IndKanExt} 
For every $R\cC$-module $M$, we define  $\Ind _F (M)$ 
to be the left Kan extension $LK_{F^{op}} (M)$ of the functor $M: \cC ^{op} \to R$-Mod 
via the functor $F ^{op} : \cC ^{op}  \to \cD ^{op} $. 
\end{definition} 

For this definition to make sense, 
one needs to show that the left Kan extension  $LK_{F^{op}} (M)$ exists for every $R\cC$-module $M$. 
For this we need to recall the explicit description of Kan extensions 
using colimits over comma categories.  

\begin{definition}\label{def:Comma}
Let $F: \cC\to \cD$ be a functor between two small categories. For every $d\in \Ob (\cD)$, the comma category $d \backslash F$ is the category whose objects are the pairs $(c, f)$ where $c\in \Ob (\cC)$ and $f \in \Mor _{\cD} ( d , F(c) )$, and whose morphisms $(c, f) \to (c', f')$ are given by the morphisms $\varphi : c\to c'$ in $\cC$ such that $f= f' \circ F(\varphi)$. The comma category $F /d$ is defined in a similar way with objects $(c, f)$ where $c\in \Ob (\cC)$ and $f\in \Mor _{\cD} (F(c), d)$.
\end{definition}

If $F : \cC\to \cD$ and $M: \cC \to \cE$ are functors where $\cE$ is a cocomplete category (i.e., where the colimits exists), then the left Kan extension $LK_F (M)$ exists and for every $d\in \Ob(\cD)$, $$LK_F (M) (d) \cong  \colim _{(c, f) \in F/d} (M \circ \pi_{\cC})$$
where $\pi_{\cC} : F/d \to \cC$ is the functor that sends $(c, f) \in \Ob (F/d)$ to $c \in \Ob (\cC)$ (see \cite[Thm 4.1.4]{Richter-Book}, 
 \cite[Thm 2.3.3]{KashiwaraSchapira-Book}).  Since the category of $R$-modules is cocomplete (see \cite[Prop 8.4.3]{Schubert-Book}), the left Kan extension $LK _{F^{op} } (M)$ exists for any $R\cC$-module $M$.  Explicit description of left Kan extensions gives the following formula for induction functor.

\begin{lemma}\label{lem:IndKanFormula} Let $F: \cC \to \cD$ be a functor and $M$ be a $R\cC$-module. Then for every $d\in \cD$, 
$$(\Ind_F M)(d) \cong \colim _{(c, f) \in F^{op} /d} (M \circ \pi_{\cC}) \cong \colim _{ (c, f) \in (d\backslash F) ^{op}} (M \circ \pi_{\cC})$$
where in the last expression $\pi_{\cC}$ is the functor $(d \backslash F)^{op} \to \cC ^{op}$ that 
sends the object $(c, f)$ in $d \backslash F$ to $c \in \Ob(\cC)$.
\end{lemma}

Since by definition the left Kan extension is the left adjoint of the restriction functor (see  \cite[Def 2.3.1, Thm 2.3.3]{KashiwaraSchapira-Book}), 
we have  the following:

\begin{proposition}\label{pro:IndLeftAdjoint} Let $F: \cC\to \cD$ be a functor.
For every $R\cC$-module $M$ and $R\cD$-module $N$, there is a natural 
isomorphism $$\Hom _{R\cD} (\Ind _F M, N ) \cong \Hom _{R \cC} (M, \Res_F N).$$
\end{proposition}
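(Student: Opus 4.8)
The plan is to establish the adjunction $\Hom_{R\cD}(\Ind_F M, N) \cong \Hom_{R\cC}(M, \Res_F N)$ by unwinding the definition of $\Ind_F M$ as a tensor product $M \otimes_{R\cC} R\cD(??, F(?))$ and combining the standard tensor–hom adjunction with the Yoneda lemma (Lemma \ref{lem:YonedaRight}). First I would observe that a homomorphism of $R\cD$-modules $\Ind_F M \to N$ is, by the explicit description of $(\Ind_F M)(d)$, the same as a collection of $R$-linear maps $M(c)\otimes_R R\cD(d, F(c)) \to N(d)$, natural in $d\in\Ob(\cD)$, that kill the relations $M(\varphi)(m)\otimes f - m\otimes(F(\varphi)\circ f)$. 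By the tensor–hom adjunction over $R$, such a family corresponds to a family of $R$-linear maps $M(c) \to \Hom_R(R\cD(d,F(c)), N(d))$. The naturality in $d$ together with the killed relations should repackage exactly into the statement that, for each $c$, we get an $R$-module map $M(c)\to N(F(c)) = (\Res_F N)(c)$, natural in $c$; here the identification $\Hom_R(R\cD(?,F(c)),N)\cong N(F(c))$ as $R\cD$-modules-in-the-variable-$?$ evaluated appropriately is precisely the content of the Yoneda Lemma \ref{lem:YonedaRight} applied at the object $F(c)\in\Ob(\cD)$.

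More structurally, I would prefer to phrase this as: $\Ind_F$ is a special case of the tensor product functor $-\otimes_{R\cC} B$ against the bimodule $B = R\cD(??, F(?))$, and any such tensor product functor is left adjoint to $\Hom_{R\cD}(B, -)$ by the general tensor–hom adjunction for bimodules over small categories. It then remains to identify $\Hom_{R\cD}(R\cD(??,F(?)), N)$, as an $R\cC$-module in the first variable, with $\Res_F N$. For fixed $c\in\Ob(\cC)$, the bimodule evaluated at $c$ in the $\cC$-variable is the representable $R\cD$-module $R\cD(?, F(c))$, so $\Hom_{R\cD}(R\cD(?,F(c)), N) \cong N(F(c))$ by Lemma \ref{lem:YonedaRight}; and one checks the $R\cC$-module structure in $c$ induced by precomposition with morphisms of $\cC$ (through $F$) matches that of $\Res_F N = N\circ F^{op}$. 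Chaining the two natural isomorphisms gives the claim, and naturality in both $M$ and $N$ is inherited from the naturality of each step.

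The main obstacle I anticipate is purely bookkeeping: carefully verifying that the Yoneda identification is natural in the $\cC$-variable $c$ and respects the module structures, so that the isomorphism of $R$-modules $\Hom_{R\cD}(B, N)(c)\cong (\Res_F N)(c)$ assembles into an isomorphism of $R\cC$-modules rather than merely a pointwise one. Equivalently, one must confirm that the relations defining $J$ in the tensor product correspond exactly to the naturality constraints on the adjunct family of maps $M(c)\to N(F(c))$. This is routine but needs to be done with some care about variances; I would organize it by first establishing the general tensor–hom adjunction $\Hom_{R\cD}(M'\otimes_{R\cC} B, N)\cong \Hom_{R\cC}(M', \Hom_{R\cD}(B,N))$ for an arbitrary $R\cC$-$R\cD$-bimodule $B$ (a standard fact, cf.\ \cite{Lueck-Book}), and only then specializing $B = R\cD(??,F(?))$ and invoking Yoneda to rewrite the right-hand side. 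Alternatively, one can bypass the explicit argument entirely by noting that $\Ind_F$ is by construction the left Kan extension along $F^{op}$ of modules, whose defining universal property \emph{is} this adjunction; but since the paper introduces $\Ind_F$ via tensor products, I would give the tensor–hom plus Yoneda argument for self-containedness.
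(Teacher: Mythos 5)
Your proposal matches the paper's proof: the paper likewise invokes the tensor--hom adjunction for the bimodule $R\cD(??,F(?))$ (citing \cite[9.21]{Lueck-Book} and \cite[Thm 15.1.3]{Richter-Book}) and then applies Lemma \ref{lem:YonedaRight} to identify $\Hom_{R\cD}(R\cD(??,F(?)),N)$ with $N(F(?))=\Res_F N$. The approach and the two key ingredients are essentially identical, so no further comparison is needed.
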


As a consequence of the adjointness of the induction and restriction functors, we have:

\begin{corollary}\label{cor:ResProj} For every functor $F: \cC \to \cD$, the induction functor $\Ind _F : R\cC\text{-Mod} \to R\cD\text{-Mod}$ takes projectives to projectives.
 \end{corollary}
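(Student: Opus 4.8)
The plan is to deduce this directly from the adjunction in Proposition \ref{pro:IndLeftAdjoint} together with the standard characterization of projectivity in terms of exactness of the Hom functor. Recall that an object $P$ of an abelian category is projective precisely when $\Hom(P,-)$ sends epimorphisms to epimorphisms (equivalently, is an exact functor, since $\Hom(P,-)$ is always left exact). So let $M$ be a projective $R\cC$-module; I want to show $\Ind_F M$ is a projective $R\cD$-module.

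First I would take an arbitrary epimorphism $g: N_1 \to N_2$ of $R\cD$-modules; since epimorphisms in the category of $R\cD$-modules are detected objectwise, $\Res_F g: \Res_F N_1 \to \Res_F N_2$ is again an epimorphism of $R\cC$-modules (for each $c\in\Ob(\cC)$, $(\Res_F g)(c) = g(F(c))$ is surjective). Next I would apply $\Hom_{R\cD}(\Ind_F M, -)$ to $g$ and use the natural isomorphism of Proposition \ref{pro:IndLeftAdjoint} to identify this with the map $\Hom_{R\cC}(M,\Res_F N_1) \to \Hom_{R\cC}(M, \Res_F N_2)$ induced by $\Res_F g$. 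Because $M$ is projective over $R\cC$ and $\Res_F g$ is an epimorphism, this latter map is surjective. By naturality of the adjunction isomorphism in the second variable, the square comparing $\Hom_{R\cD}(\Ind_F M, g)$ with $\Hom_{R\cC}(M, \Res_F g)$ commutes, so $\Hom_{R\cD}(\Ind_F M, g)$ is surjective as well. Hence $\Hom_{R\cD}(\Ind_F M, -)$ preserves epimorphisms, which is exactly the statement that $\Ind_F M$ is projective.

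There is essentially no obstacle here — the only points requiring a word of care are the objectwise nature of epimorphisms in $R\cC$-Mod and $R\cD$-Mod (already recorded in Section \ref{sect:Modules}, where exactness is defined objectwise) and the naturality of the isomorphism in Proposition \ref{pro:IndLeftAdjoint}, which is asserted there. If one prefers, one can instead invoke the general categorical fact that any left adjoint between abelian categories whose right adjoint is exact must preserve projectives; since $\Res_F$ is exact (again because exactness is checked objectwise and $\Res_F$ just reindexes), this gives the corollary immediately. I would state the short direct argument above rather than citing the abstract principle, since it keeps the paper self-contained.
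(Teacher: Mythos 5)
Your argument is correct and is essentially the paper's proof: the paper simply invokes the general principle that a left adjoint whose right adjoint is exact preserves projectives (citing the exactness of $\Res_F$), which is precisely the abstract fact you unpack at the end of your proposal. The explicit verification via $\Hom_{R\cD}(\Ind_F M,-)$ and the naturality of the adjunction in Proposition \ref{pro:IndLeftAdjoint} is a faithful expansion of that one-line argument, not a different route.
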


\begin{proof}  Since the restriction functor $\Res_F(-)$ preserves exact sequences, its left adjoint, the induction functor 
$\Ind _F (-)$, takes projectives to projectives. 
\end{proof}

Since taking colimits is not an exact functor, the induction functor $\Ind _F (-)$ is not exact in general. 
In Section \ref{sect:DerivedInduction}, we calculate its left derived functors in terms of homology groups of the comma category 
$d\backslash F$.

%--------------

\subsection{Tensor products over $R\cC$}

We recall the definition of the tensor product of two modules over $R\cC$.
 
\begin{definition}[{\cite[\S 9.12]{Lueck-Book}}]\label{def:Tensor}
Let $M$ be a right $R\cC$-module and $N$ be a left $R\cC$-module. The \emph{tensor product}
of $M$ and $N$ over $R\cC$ is the $R$-module 
$$M \otimes _{R\cC} N := \Bigl ( \bigoplus _{x \in \Ob (\cC) }  M(x) \otimes N (x)  \Bigr ) /J$$
where $J$ is the ideal generated by the elements of the form $$M(\varphi) (m) \otimes n - m\otimes N(\varphi) (n)$$
over all $\varphi \in \cC(x, y)$, $m\in M(y)$, and $n\in N(x)$.
\end{definition} 
 
The induction functor $\Ind _F(-)$ associated to a functor $F: \cC \to \cD$ is defined using the tensor 
product as follows:

\begin{definition}\label{def:Induction}   For a functor $F: \cC \to \cD$, let  $R\cD(??, F(?))$ denote the 
functor $\cD^{op} \times \cC \to R\text{-Mod}$ that sends $(d , c) \in \Ob (\cD) \times \Ob (\cC)$ 
to the $R$-module $R\cD (d , F(c))$.  We can consider $R\cD (??, F(?))$ as an $R\cC$-$R\cD$-bimodule where the left $R\cC$-module structure and the  right $R\cD$-module structure are defined by restricting $R\cD(??, F(?))$ to the corresponding module categories.
%The \emph{induction functor} $$\Ind _F : R\cC\text{-Mod} \to R\cD\text{-Mod}$$ is the functor that sends 
For every $R\cC$-module $M$, the $R\cD$-module $\Ind _F M$ is defined by $$\Ind _F M :=M \otimes _{R\cC}  R\cD(??, F(?)).$$
\end{definition}

The definition of tensor product in Definition \ref{def:Tensor} gives the following 
explicit description for $(\Ind _F M) (d)$ for every $d \in \Ob (\cD)$:
 $$(\Ind _F M) (d) = \Bigl ( \bigoplus _{c \in \Ob (\cC) } 
M(c) \otimes_R R \cD (d, F(c)) \Bigr ) /J$$
where $J$ is the ideal generated by the elements of the form $M(\varphi) (m) \otimes f - m \otimes (F(\varphi) \circ f)$ over all  $\varphi \in \cC (c, c')$,
$m \in M(c')$, and  $f \in \cD (d , F(c))$. 

\begin{example} Let $G$ be a group and $H$ be a subgroup of $G$. If  $F:\mathbf{H} \to \mathbf{G}$ is the functor 
between two group categories induced the inclusion map, then the induction functor $\Ind _F(-)$ coincides with
the usual induction $\Ind_H ^G (-)$ defined by $\Ind _H ^G M = M \otimes _{RH} RG$.
\end{example}

The induction functor $\Ind _F (-)$  defined using tensor products is left adjoint to the restriction functor 
(see \cite[\S 9.22]{Lueck-Book}).
By the uniqueness of the left adjoints up to unique isomorphism (see \cite[Cor 16.4.4]{Schubert-Book}), we can conclude that
the two definitions of induction using Kan extensions and using tensor products 
coincide. One can see this also by observing that the formula for $\Ind (M) (d)$ in both definitions give the same module.

We end this section with a well-known lemma on tensor product of modules over categories. 
We will use this lemma in our proofs in the paper.

\begin{lemma}\label{lem:TensorIsom} Let $M$ be a left $R\cC$-module and $N$ be a right 
$R\cC$-module. Then for every 
$z \in \Ob(\cC)$,
there are natural isomorphism of $R$-modules:
\begin{enumerate}
\item[(i)] $R\cC (?, z) \otimes _{R\cC} M \cong M (z).$
\item[(ii)] $N \otimes _{R\cC}  R\cC(z, ?) \cong N (z).$
\end{enumerate}
\end{lemma}

\begin{proof} We prove the first one, the argument for the second one is similar. 
By the definition of tensor product over $R \cC$, for every $z\in \Ob (\cC)$, we have 
$$R\cC (?, z) \otimes _{R\cC} M = \Bigl (\bigoplus _{x \in \Ob(\cC) } R\cC (x, z) \otimes _R M(x)  \Bigr )/J$$
where $J$ is the ideal generated by $f \otimes M(\varphi) (m) -  (f \circ \varphi) \otimes m$ over all 
$\varphi \in \cC(x, y)$, $m\in M(x)$, and $f \in \cC (y, z)$.
Note that in the quotient module, an element $[\tau \otimes m]$, with $\tau \in \cC (x, z)$ and $m \in M(x)$,
is equal to $[\id _z \otimes M(\tau) (m)]$, and two elements $[\id _z  \otimes m]$ and $[\id _z \otimes m']$ with $m, m' \in M(z)$ 
being equal only if $m=m'$. Hence, $R \cC (? , z) \otimes _{R\cC} M \cong M(z)$. The naturality is clear from this argument.
\end{proof}

A proof of Lemma \ref{lem:TensorIsom} can also be given using category algebras since $R\cC(?, z) =1_z \cdot R\cC$ and
$R\cC (z, ?) = R\cC\cdot 1_z$ (see \cite{Webb-Survey} for details).

\subsection{Coinduction functor}
Let $F: \cC \to \cD$ be a functor between two small categories. Associated to $F$, 
there is a coinduction functor $$\Coind _F : R\cC\text{-Mod} \to R\cD\text{-Mod}$$ 
defined as the functor which is right adjoint to the restriction functor. As in the case 
of the induction functor, the coinduction functor can be defined  by using the limit 
description of Kan extensions. There is also a definition of it using hom-functor and bimodules.

\begin{definition}[{\cite[\S 9.15]{Lueck-Book}}]\label{def:BimoduleCoInd}
Let $R \cD (F(?), ??)$ denote the functor
$$\cC ^{op} \times \cD \to R\text{-Mod}$$ that sends a pair $(c, d) \in \Ob (\cC) \times \Ob (\cD)$ 
to the $R$-module $R\cD (F(c), d)$. Note that for each $d\in \Ob (\cD)$, $R\cD (F (?) , d)$ is a
right $R\cC$-module, and for each $c\in \Ob(\cC)$, $R\cD (F(c), ??)$ is a left $R\cD$-module.
Because of this two sided module structure, we say $R\cD (F(?), ??)$ is an
$R\cD$-$R\cC$-bimodule.
\end{definition}

The coinduction functor is defined as follows:

\begin{definition}\label{def:Coind}  
Let $F: \cC \to \cD$ be a functor between two small categories. 
For an $R\cC$-module $M$, $\Coind _F M$ is defined 
to be the $R\cD$-module such that 
$$(\Coind _F M) (??)= \Hom _{R\cC} ( R \cD( F(?), ?? ) , M).$$
\end{definition}
 
We have the following adjointness property:
 
\begin{proposition}[{\cite[\S 9.22]{Lueck-Book}}]\label{pro:CoindRightAdjoint} 
For every $R\cD$-module $N$ and every $R\cC$-module $M$, there is 
a natural isomorphism $$\Hom _{R\cD} (N, \Coind _F M) \cong \Hom _{R \cC} (\Res_F N, M).$$
Moreover, the  coinduction functor $\Coind_F : R\cC\text{-Mod} \to R\cD\text{-Mod}$ 
takes injectives to injectives. 
\end{proposition}

\begin{proof} 
The first sentence follows from the adjointness of the tensor product and the Hom functor (see \cite[Thm 15.1.3]{Richter-Book}), 
The second sentence follows from the adjointness property and from the fact that the restriction
functor is exact. 
\end{proof}

As in the case of the induction functor, the coinduction functor can be defined using Kan extensions.  The right Kan extension
$RK_{F} (M)$ of  the functor $M: \cC \to \cE$ along a functor $F: \cC\to \cD$ is defined to be the right adjoint of the restriction 
functor $\Res _F(-)$. The right Kan extension $RK_F (M)$ exists if the category $\cE$ is complete (if limits in $\cE$ exist) and
we have 
$$RK_{F} (M) (d) \cong \lim _{(c, f) \in d \backslash F} (M \circ \pi_{\cC})$$
where $\pi_{\cC} : d \backslash F \to \cC$ is the functor that sends $(c,f: d \to F(c))$ to $c$ in $\Ob(\cC)$ (see \cite[Def 2.3.1, Thm 2.3.3]{KashiwaraSchapira-Book}).

Since the category of $R$-modules is complete, the right Kan extension $RK_{F^{op}} (M)$ exists 
for every functor $F: \cC \to \cD$ and $R\cC$-module $M$. By the uniqueness of right adjoints, these two definitions coincide.
We have the following formula for the coinduction functor.
 
\begin{lemma}\label{lem:CoindFormula} Let $F: \cC \to \cD$ be a functor and $M$ be an $R\cD$-module. Then, for every $d \in \Ob (\cD)$, 
$$(\Coind_F M) (d) \cong \lim _{(c, f) \in d \backslash F^{op}  } (M \circ \pi_{\cC}) \cong \lim _{(c,f)  \in (F/d) ^{op} } (M\circ \pi _{\cC})$$
where $\pi_{\cC} : (F/d)^{op} \to \cC^{op}$ in the last expression is the functor that sends $(c, f) \in \Ob(F/d)$ to $c \in \Ob(\cC)$.   
\end{lemma}
 
%---------------
 
\section{Cohomology of small categories}\label{sect:Cohomology}

In this section we introduce the basic definitions for cohomology of small categories. 
The definition of cohomology of a small category with coefficients in a natural system 
was introduced by Baues and Wirsching in \cite{BauesWirsching}. The definition of 
cohomology of small categories with coefficients in right and left $R\cC$-modules 
was introduced earlier (see, for example, \cite{Quillen-KTheory}). More details 
on this material can be found in \cite{BauesWirsching}, 
\cite[Chp 16]{Richter-Book}, \cite{Webb-Survey}, and \cite{Huseinov}.

\subsection{Ext-groups and cohomology of a small category} Let $N$ be an $R\cC$-module. 
The assignment $M \to \Hom_{R\cC} (N, M)$ defines a covariant functor $\Hom _{R\cC} (N, -)$ 
from the abelian category  of $R\cC$-modules to $R$-modules.
This functor is left exact. For each $n\geq 0$, we denote its $n$-th right derived functor 
by $R^n \Hom_{R\cC} (N, -)$.

\begin{definition}\label{def:ExtGroup} Let $M, N$ be $R\cC$-modules. For every $n \geq 0$, 
the \emph{ext-group of $N$ and $M$} is defined by
$$ \Ext ^n _{R\cC} (N, M) :=[R^n \Hom _{R\cC} (N, -)](M).$$ 
\end{definition}

By the balancing theorem \cite[Thm 2.7.6]{Weibel-Book}, the ext-group $\Ext _{R\cC} ^n (N, M)$ can also 
be calculated as the  $n$-th cohomology of the cochain complex $\Hom _{R\cC} (P_* , M)$ where 
$P_*$ is an $R\cC$-projective resolution of $N$.

\begin{definition}\label{def:CohCat} Let $\cC$ be a small category. The \emph{constant functor} $\underline R$ over 
$\cC$ is the $R\cC$-module 
that sends every object $x\in \Ob (\cC)$ to $R$, and every morphism to the  identity map $\id _R: R \to R$. 
For every $R\cC$-module $M$, for $n \geq 0$, the \emph{$n$-th cohomology group of $\cC$}
with coefficients in $M$ is defined by $$ H^n (\cC ; M) :=\Ext  ^n _{R \cC} (\underline R, M).$$ 
\end{definition}

The cohomology group $H^n (\cC; M)$ can also be viewed as the $n$-th derived functor of the limit functor over $\cC$.
The category of $R\cC$-modules is complete and the limit of an $R\cC$-module $M$ over $\cC$ is defined as follows:
$$ \lim _{\cC} M := \{ (m_x) \in \prod _{x \in \cC} M(x) \, | \,  M(\alpha) (m_y)=m_x \text{ for every }\alpha : x\to y \text{ in } \cC\}. $$
The limit functor $\lim _{\cC} (-)$ is a left exact functor and the $n$-th right derived functor of $\lim _{\cC} (-)$ is called 
the \emph{$n$-th higher limit of $M$}, and it is denoted by $\lim _{\cC} ^n M$. From the definitions it follows that:

\begin{proposition} For every $R\cC$-module $M$, there is an isomorphism $$\Hom_{R\cC}  (\underline R, M)\cong \lim _{\cC} M.$$ 
Hence for every $n \geq 0$, there is an isomorphism $\underset{\cC\ }{\lim{}^n} M \cong H^n (\cC; M).$
\end{proposition}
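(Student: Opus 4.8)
The plan is to prove the two claimed isomorphisms separately, starting with the identification $\Hom_{R\cC}(\underline R, M) \cong \lim_{\cC} M$ and then bootstrapping to the derived functors. First I would unwind the definition: an $R\cC$-module homomorphism $f : \underline R \to M$ is a natural transformation, so it is a family of $R$-module maps $f_x : R \to M(x)$, one for each $x \in \Ob(\cC)$, subject to the naturality square for every morphism $\alpha : x \to y$ in $\cC$. Each $f_x$ is determined by the single element $m_x := f_x(1) \in M(x)$, and since $\underline R$ sends every morphism to $\id_R$, the naturality condition $f_x \circ \underline R(\alpha) = M(\alpha) \circ f_y$ becomes exactly $m_x = M(\alpha)(m_y)$. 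Thus $f \mapsto (m_x)_{x \in \Ob(\cC)}$ is a bijection onto the set $\{(m_x) \in \prod_{x} M(x) \mid M(\alpha)(m_y) = m_x \text{ for all } \alpha : x \to y\}$, which is precisely $\lim_{\cC} M$ by the definition given just above the statement; one checks readily that this bijection is $R$-linear. Hence $\Hom_{R\cC}(\underline R, -)$ and $\lim_{\cC}(-)$ are naturally isomorphic functors on the category of $R\cC$-modules.

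For the second assertion, I would argue that since the two functors $\Hom_{R\cC}(\underline R, -)$ and $\lim_{\cC}(-)$ from $R\cC$-Mod to $R$-Mod are naturally isomorphic (as just established), and both are left exact, their right derived functors are naturally isomorphic as well. Concretely, pick any injective coresolution $0 \to M \to I^0 \to I^1 \to \cdots$ of $M$ in the category of $R\cC$-modules, which exists because that category has enough injectives (noted in Section~\ref{sect:Modules}). Applying $\Hom_{R\cC}(\underline R, -)$ to this coresolution gives a cochain complex whose $n$-th cohomology is $\Ext^n_{R\cC}(\underline R, M) = H^n(\cC; M)$ by Definitions~\ref{def:ExtGroup} and \ref{def:CohCat}; applying $\lim_{\cC}(-)$ gives a cochain complex whose $n$-th cohomology is $\lim_{\cC}^n M$ by the definition of the higher limit as the $n$-th right derived functor of $\lim_{\cC}(-)$. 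The natural isomorphism $\Hom_{R\cC}(\underline R, -) \cong \lim_{\cC}(-)$ induces an isomorphism of these two cochain complexes, hence an isomorphism on cohomology in every degree: $H^n(\cC; M) \cong \lim_{\cC}^n M$.

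I do not expect any serious obstacle here; the statement is indeed "easy to show," as the paper says. The only point requiring a small amount of care is making sure the passage from "naturally isomorphic left exact functors" to "naturally isomorphic derived functors" is invoked correctly — one should either cite the standard fact that derived functors depend only on the functor (up to natural isomorphism), or, more elementarily, just observe directly that the isomorphism of functors applied termwise to a fixed injective coresolution $I^*$ produces an isomorphism of complexes $\Hom_{R\cC}(\underline R, I^*) \cong \lim_{\cC} I^*$ and take cohomology. Either route is routine, so the proof can be kept to a few lines.
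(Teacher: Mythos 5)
Your proof is correct and is exactly the routine argument the paper has in mind (the paper omits the proof entirely, stating only that the result is ``easy to show''): the Yoneda-style identification of a natural transformation $\underline R \to M$ with a compatible family $(m_x)$ gives the natural isomorphism $\Hom_{R\cC}(\underline R,-)\cong \lim_{\cC}(-)$, and applying both functors to an injective coresolution identifies the derived functors. No gaps.
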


If $\cC$ is equal to the group category $\bG$ for a group $G$, then an $R\bG$-module $M$ is a module over the group ring $RG$, and
the cohomology of $\cC$ with coefficients in $M$ coincides with the cohomology of the group $G$ with coefficients in the $RG$-module $M$ 
(see \cite{Brown-Book} for the definition of cohomology of a group).

%----------------

\subsection{Homology of a small category}

Homology of a category is defined using tor-groups
$\Tor _* ^{R\cC} (M, N)$. The tor-groups over a category is called the functor homology and 
it is a well studied area. We refer the reader to 
 \cite[Chp 15]{Richter-Book}, \cite{Vespa-Survey}, \cite{DjamentTouze-Functor}, \cite{Huseinov}
 for more details on this material.

\begin{definition}\label{def:HigherTor} 
Let $M$ be a left $R\cC$-module. 
The functor $- \otimes _{R\cC} M : N \to N \otimes _{R\cC} M$ which sends a right $R\cC$-module $N$ to the $R$-module $N \otimes _{R\cC} M$
is right exact. For a right $R\cC$-module $N$ and for every $n \geq 0$,  the $n$-th tor-group is defined by
$$\Tor _n ^{R\cC} (N, M):= L_n (-\otimes _{R\cC} M) (N)$$ where
$L_n (-\otimes _{R\cC} M)$ is the $n$-th left derived functor of the functor $-\otimes _{R\cC} M$.
\end{definition}

By the definition of left derived functors, tor-groups can be calculated as follows: 
Let $N$ be a right $R\cC$-module and $P_* \to N$ be a right $R\cC$-projective resolution of 
$N$. Then for every left $R\cC$-module $M$, the tor-group 
$\Tor _n ^{R\cC} (N, M)$ is the $n$-th homology group of the chain complex
$P_* \otimes _{R\cC} M$. Tor-groups can also be computed using a double complex.

\begin{theorem}[{Weibel, \cite[Thm 2.7.2]{Weibel-Book}}]\label{thm:BalancingTor} 
Let $N$ be a right $R\cC$-module and $M$ be a left $R\cC$-module.
Then $$\Tor_n ^{R\cC} (N, M):=L_n(N \otimes _{R\cC} -) (M) \cong L_n (-\otimes _{R\cC} M) (N)\cong 
H_n (Tot(P_* \otimes _{R\cC} Q_*))$$
where $P_* \to N$ is a right $R\cC$-projective resolution of 
$N$ and $Q_* \to M$ is a left $R\cC$-projective resolution of $M$. 
\end{theorem}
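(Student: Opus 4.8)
The statement to prove is Theorem~\ref{thm:BalancingTor}, the balancing theorem for tor-groups over a small category $\cC$. The plan is to transfer the classical balancing theorem for modules over a ring (Weibel \cite[Thm 2.7.2]{Weibel-Book}) to the setting of $R\cC$-modules, using a double-complex argument, and the only real inputs needed are that projective $R\cC$-modules (on either side) are flat and that the category of $R\cC$-modules has enough projectives.

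First I would fix a right $R\cC$-projective resolution $P_* \to N$ and a left $R\cC$-projective resolution $Q_* \to M$, which exist since both the category of right $R\cC$-modules and the category of left $R\cC$-modules have enough projectives (as recorded after Lemma~\ref{lem:YonedaRight}). Form the first-quadrant double complex $P_* \otimes_{R\cC} Q_*$ with the two induced differentials, and consider its total complex $\mathrm{Tot}(P_* \otimes_{R\cC} Q_*)$. The key step is to run the two spectral sequences of this double complex. Filtering in one direction: because each $Q_j$ is a projective left $R\cC$-module, hence flat, the functor $-\otimes_{R\cC} Q_j$ is exact, so taking homology of the columns first collapses the $E^1$-page onto the row $q=0$, where it becomes $H_*(P_*\otimes_{R\cC} M)$; the spectral sequence therefore degenerates and yields $H_n(\mathrm{Tot}) \cong H_n(P_*\otimes_{R\cC} M) = L_n(-\otimes_{R\cC} M)(N)$. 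Filtering in the other direction and using that each $P_i$ is a projective (hence flat) right $R\cC$-module, the same collapse gives $H_n(\mathrm{Tot}) \cong H_n(N\otimes_{R\cC} Q_*) = L_n(N\otimes_{R\cC} -)(M)$. Composing the two isomorphisms proves the claim.

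The one nontrivial point to verify carefully — and the step I expect to be the main (if modest) obstacle — is the flatness claim: that a projective $R\cC$-module is flat, i.e. tensoring with it is exact. For right $R\cC$-modules this follows because $-\otimes_{R\cC} M$ preserves finite direct sums and, by Lemma~\ref{lem:TensorIsom2}, $R\cC(z,?)\otimes_{R\cC} M \cong M(z)$ is exact in $M$ (exactness of $R\cC$-modules being computed objectwise), so the representable projectives are flat; a general projective is a direct summand of a direct sum of representables, and flatness passes to direct sums and summands. The left-module version is the symmetric statement using Lemma~\ref{lem:TensorIsom1}. Once flatness is in hand, the double-complex/two-spectral-sequences argument is entirely formal, and one could alternatively phrase the degeneration via the standard comparison theorem for a resolution against a complex of flats rather than invoking spectral sequences explicitly; either way the proof is short. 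I would also remark that this simultaneously identifies $\Tor_n^{R\cC}(N,M)$ with the $n$-th homology of $\mathrm{Tot}(P_*\otimes_{R\cC} Q_*)$, which is the form used later in constructing the spectral sequence of Theorem~\ref{thm:VarGZ}.
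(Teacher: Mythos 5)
Your proposal is correct and follows essentially the same route as the paper, which simply cites Weibel's balancing theorem and sketches the identical double-complex argument (taking homology of $\mathrm{Tot}(P_*\otimes_{R\cC}Q_*)$ in both orders, using flatness of projectives on each side); your added justification that projectives are flat via the representables and Lemmas \ref{lem:TensorIsom1}--\ref{lem:TensorIsom2} is a welcome detail the paper leaves implicit. Only a cosmetic slip: in your flatness paragraph the roles of the two lemmas are swapped (the isomorphism $N\otimes_{R\cC}R\cC(z,?)\cong N(z)$ of Lemma \ref{lem:TensorIsom2} shows the \emph{left} representables are flat, while Lemma \ref{lem:TensorIsom1} handles the right representables), which does not affect the argument.
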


The homology groups of the category $\cC$ are defined as follows:

\begin{definition}\label{def:HomCat}
Let $\cC$ be a small category. \emph{The $n$-th homology group of $\cC$} with coefficient 
in a right $R\cC$-module $N$ is defined by $H_n (\cC, N) := \Tor ^{R\cC} _n (N, \underline{R})$. For a left $R\cC$-module $M$, the 
$n$-th homology group of $\cC$ with coefficients in $M$ is defined by $H_n (\cC, M) := \Tor ^{R\cC} _n (\underline{R}, M)$. 
\end{definition}

%----------------
 
\subsection{Classifying space of a category} For a small category $\cC$, the nerve $\cN \cC$ is the simplicial 
set whose $n$-simplices are given by an $n$-fold 
sequence of composable morphisms 
$$\sigma:=(x_0 \maprt{\alpha_1} x_1 \maprt{\alpha_2} x_2 \maprt{\alpha_3} \cdots \maprt{\alpha_n} x_n)$$
and the face maps $d_i: (\cN \cC)_n \to (\cN \cC)_{n-1}$ are defined by deleting the $i$-th object in the chain
and the degeneracy maps $s_i : (\cN \cC) _n \to (\cN \cC)_{n+1}$ are 
defined by adding the identity map $\id_{x_i} : x_i \to x_i$  to the chain
(see \cite[Section III.2.2]{AKO-Book}  
for details). 

The classifying space $B\cC$ of $\cC$ is defined to be the geometric realization of the nerve $\cN \cC$.
The topological space $B\cC$ is a CW-complex, and for any abelian group $A$, we denote its cellular (and singular) cohomology 
by $H^* (B\cC; A)$. The cohomology group $H^* (B\cC; A)$ is isomorphic to the cohomology of $\cC$ with coefficients
in the constant functor $\underline A$ (see \cite[16.2.3]{Richter-Book}).

If $F: \cC \to \cD$ is a functor, then it defines a simplicial map between corresponding nerves and it gives 
a continuous map $BF: B\cC\to B\cD$ between the classifying spaces. We have the following:

\begin{proposition}\label{pro:Homotopic} Let $F, F': \cC \to \cD$ be two functors. If there is a natural 
transformation $\mu: F \to F'$, then the induced maps $BF, BF' : B \cC \to B\cD$ are homotopic.
\end{proposition}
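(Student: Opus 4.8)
The plan is to exhibit the homotopy geometrically, by realizing the natural transformation $\mu$ as a functor out of a cylinder. Let $\mathbf{I}$ denote the category with two objects $0,1$ and a single non-identity morphism $\iota\colon 0\to 1$; its nerve $\cN\mathbf{I}$ is the standard $1$-simplex $\Delta^1$, so $B\mathbf{I}\cong[0,1]$. The first step is the purely categorical observation that a natural transformation $\mu\colon F\to F'$ is the same data as a functor
\[
\Phi\colon \cC\times \mathbf{I}\longrightarrow \cD,
\]
defined on objects by $\Phi(x,0)=F(x)$, $\Phi(x,1)=F'(x)$ and on morphisms by $\Phi(\alpha,\id_0)=F(\alpha)$, $\Phi(\alpha,\id_1)=F'(\alpha)$, and $\Phi(\alpha,\iota)=F'(\alpha)\circ\mu_x=\mu_y\circ F(\alpha)$ for $\alpha\colon x\to y$. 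I would first record that the two formulas for $\Phi(\alpha,\iota)$ agree by the naturality square of $\mu$, and that $\Phi$ is functorial (a short direct check using that in $\cC\times\mathbf{I}$ the only composites involving $\iota$ are $\iota\circ\id = \id\circ\iota = \iota$). By construction, precomposing $\Phi$ with the inclusions $\cC\cong \cC\times\{0\}\hookrightarrow \cC\times\mathbf{I}$ and $\cC\cong\cC\times\{1\}\hookrightarrow\cC\times\mathbf{I}$ recovers $F$ and $F'$ respectively.

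Next I would pass to classifying spaces. The nerve functor takes products of small categories to products of simplicial sets: an $n$-simplex of $\cN(\cC\times\mathbf{I})$ is exactly a pair consisting of an $n$-chain in $\cC$ and an $n$-chain in $\mathbf{I}$, so there is a natural isomorphism $\cN(\cC\times\mathbf{I})\cong \cN\cC\times\cN\mathbf{I}$ of simplicial sets, compatible with the two endpoint inclusions. Applying geometric realization and using that realization carries the product $\cN\cC\times\Delta^1$ to $B\cC\times[0,1]$, I obtain a homeomorphism $B(\cC\times\mathbf{I})\cong B\cC\times[0,1]$ under which the images of $\cC\times\{0\}$ and $\cC\times\{1\}$ are $B\cC\times\{0\}$ and $B\cC\times\{1\}$. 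Then $B\Phi$, transported through this homeomorphism, is a continuous map $H\colon B\cC\times[0,1]\to B\cD$ with $H(-,0)=BF$ and $H(-,1)=BF'$, which is precisely a homotopy $BF\simeq BF'$.

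The only point that needs a word of care is the interchange of geometric realization with the product $\cN\cC\times\cN\mathbf{I}$. This is not a genuine obstacle here: since $\cN\mathbf{I}=\Delta^1$ is a finite (hence locally finite) simplicial set, the canonical map $|\cN\cC\times\Delta^1|\to |\cN\cC|\times|\Delta^1|$ is a homeomorphism with no point-set-topology hypotheses required (alternatively, one may work throughout in the category of compactly generated spaces, which is the standard convention for $B\cC$). An entirely equivalent route, avoiding even this remark, would be to write down the homotopy $H$ directly on the level of simplicial sets using the prism decomposition of $\Delta^n\times\Delta^1$, producing the simplicial homotopy $\cN\cC\times\Delta^1\to\cN\cD$ from $\mu$ by the usual explicit formulas; I would mention this as an alternative but carry out the cylinder-functor argument above, as it is shorter and conceptually cleaner. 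Either way, all remaining verifications are routine.
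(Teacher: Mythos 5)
Your argument is correct, and it is the standard cylinder-functor proof: the paper itself gives no argument for this proposition but simply cites \cite[Proposition III.2.1]{AKO-Book}, and the proof given there (going back to Segal) is exactly the one you write out, namely encoding $\mu$ as a functor $\cC\times\mathbf{I}\to\cD$ and using that the nerve preserves products and that realization commutes with the product with $|\Delta^1|$. Your attention to the realization-of-products point (local finiteness of $\Delta^1$, or working in compactly generated spaces) is exactly the right caveat, so nothing is missing.
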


\begin{proof} See \cite[Prop III.2.1]{AKO-Book}.
\end{proof}

As a consequence, we obtain that if  $\cC$ and $\cD$ are equivalent categories
then $B\cC$ and $B\cD$ are homotopy equivalent, hence $H^* (B\cC; A)\cong H^* (B\cD; A)$ for every abelian group $A$.
In particular, if the category $\cC$ has an initial object, then $B\cC$ is contractible and $H^* (B\cC; A)\cong H^* (pt; A)$.

\section{Functoriality and Shapiro's lemma}

\subsection{Functoriality of the cohomology of a category}

A special case of the cohomology of a small category is the cohomology of a group. In that case, for every group 
homomorphism $f: G \to H$, there is an induced homomorphism  $f^*: H^* (H ; M) \to H^* (G; \Res_f M)$. 
For the cohomology of small categories, there is a similar induced homomorphism. 

\begin{proposition}\label{pro:Functorial} Let $F: \cC \to \cD$ be a functor between two small categories. 
Then for every $R\cD$-module $M$, the functor $F$
induces a graded $R$-module homomorphism $$F^* : H^* (\cD ; M) \to H^* (\cC; \Res _F M).$$ 
\end{proposition}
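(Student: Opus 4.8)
The plan is to construct $F^*$ on the level of resolutions and then pass to cohomology, exactly as one does for the restriction map in group cohomology. Two elementary facts drive the argument. First, the restriction functor $\Res_F$ is exact: it is computed objectwise by precomposition with $F$, and exactness of sequences of $R\cC$-modules is tested objectwise. Second, $\Res_F$ sends the constant $R\cD$-module to the constant $R\cC$-module, i.e. $\Res_F \underline{R}_{\cD} = \underline{R}_{\cC}$, since for every $c \in \Ob(\cC)$ one has $(\Res_F \underline{R}_{\cD})(c) = \underline{R}_{\cD}(F(c)) = R$ with every morphism acting as $\id_R$.

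Next I would fix an $R\cD$-projective resolution $Q_* \to \underline{R}_{\cD}$ and an $R\cC$-projective resolution $P_* \to \underline{R}_{\cC}$. Applying the exact functor $\Res_F$ to $Q_* \to \underline{R}_{\cD}$ produces an exact complex $\Res_F Q_* \to \underline{R}_{\cC}$, that is, a resolution of $\underline{R}_{\cC}$ by (not necessarily projective) $R\cC$-modules. By the comparison theorem for resolutions in the abelian category of $R\cC$-modules there is a chain map $\psi_* : P_* \to \Res_F Q_*$ lifting $\id_{\underline{R}_{\cC}}$, and any two such lifts are chain homotopic. For an $R\cD$-module $M$ I would then form the composite of cochain maps
$$\Hom_{R\cD}(Q_*, M) \xrightarrow{\ \Res_F\ } \Hom_{R\cC}(\Res_F Q_*, \Res_F M) \xrightarrow{\ (\psi_*)^*\ } \Hom_{R\cC}(P_*, \Res_F M),$$
where the first arrow sends a homomorphism $g$ to $\Res_F g$ (well defined because $\Res_F$ is a functor) and the second is precomposition with $\psi_*$. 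Taking $n$-th cohomology and identifying the two ends with $H^n(\cD; M) = \Ext^n_{R\cD}(\underline{R}_{\cD}, M)$ and $H^n(\cC; \Res_F M) = \Ext^n_{R\cC}(\underline{R}_{\cC}, \Res_F M)$ via the computation of ext-groups by projective resolutions (the balancing theorem) yields the graded homomorphism $F^* : H^*(\cD; M) \to H^*(\cC; \Res_F M)$.

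The only real point to verify — more a bookkeeping task than an obstacle — is that $F^*$ does not depend on the choices of $P_*$, $Q_*$, or $\psi_*$. This is the usual invariance argument: two projective resolutions of the same module are chain homotopy equivalent over the identity, the two candidate lifts $\psi_*$ are chain homotopic, and chain homotopic maps induce the same map on cohomology; combining these shows the composite above is canonical on cohomology. Naturality of $F^*$ in $M$ is clear from the construction, and functoriality in $F$ — i.e. $(G\circ F)^* = F^*\circ G^*$ under $\Res_{G\circ F} = \Res_F \circ \Res_G$ — follows by composing lifts. Two variants of the same proof are worth noting: one may instead take an $R\cD$-injective coresolution $M \to I^*$, observe that $\Res_F I^*$ is an $R\cC$-coresolution of $\Res_F M$, compare it with an $R\cC$-injective coresolution, and apply $\Hom_{R\cC}(\underline{R}_{\cC}, -)$; or, most briefly, one may recall that an exact functor of abelian categories induces a natural map on Yoneda $\Ext$-groups and apply this to $\Res_F$ together with $\Res_F \underline{R}_{\cD} = \underline{R}_{\cC}$.
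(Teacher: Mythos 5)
Your proof is correct, and it reaches the same map by a route that is adjoint to, but formally different from, the paper's. The paper constructs $F^*$ on the induction side: it takes the counit-type map $\mu : \Ind_F \underline R_{\cC} \to \underline R_{\cD}$ supplied by the adjunction of Proposition \ref{pro:IndLeftAdjoint}, uses Corollary \ref{cor:ResProj} to see that $\Ind_F P_*$ is a complex of projective $R\cD$-modules, lifts $\mu$ to a chain map $\mu_* : \Ind_F P_* \to Q_*$, and then converts $\Hom_{R\cD}(\Ind_F P_*, M)$ into $\Hom_{R\cC}(P_*, \Res_F M)$ via the adjunction. You instead stay entirely on the restriction side: the objectwise exactness of $\Res_F$ makes $\Res_F Q_* \to \underline R_{\cC}$ an exact complex, and the comparison theorem produces $\psi_* : P_* \to \Res_F Q_*$ over the identity. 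Under the adjunction isomorphism $\Hom_{R\cD}(\Ind_F P_*, Q_*) \cong \Hom_{R\cC}(P_*, \Res_F Q_*)$ your $\psi_*$ and the paper's $\mu_*$ correspond to one another, so the two composites agree on cohomology. Your version is marginally more economical, since it needs only the trivial exactness of $\Res_F$ and never introduces $\Ind_F$ or the fact that it preserves projectives; the paper's formulation pays off later, in Lemma \ref{lem:CommDiagram} and Proposition \ref{pro:FirstHomotopy}, where the maps $\mu_F$ and $\Ind_{\eta}$ are manipulated explicitly to prove the first homotopy property. Your well-definedness and naturality remarks are the standard ones and are fine as stated.
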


\begin{proof} This is well-known, but for completeness we add a proof here.  
Let $\underline R_{\cC}$ and $\underline R_{\cD}$ denote the constant functors 
for the categories $\cC$ and $\cD$, respectively. There is an isomorphism 
$j: \underline R_{\cC} \to \Res _F \underline R_{\cD}$ such that for each $x\in \Ob(\cC)$, 
$j(x): \underline R _{\cC} (x)\to \underline R_{\cD} (F(x))$ is the identity map $\id_R: R \to R$. Let 
$\mu: \Ind _F \underline R _{\cC} \to \underline R_{\cD}$ the $R\cD$-module homomorphism 
associated to the map $j$ under the isomorphism in Proposition \ref{pro:IndLeftAdjoint}. 

Let $P_*$ be an $R\cC$-projective resolution of $\underline R_{\cC}$, and $Q_*$ be an $R\cD$-projective 
resolution of $\underline R _{\cD}$. Consider the chain complex $\Ind _F P_* \to \Ind _F \underline R_{\cC}$ 
obtained by applying the functor $\Ind _F (-)$ to the complex $P_* \to \underline R$. By Corollary \ref{cor:ResProj},
$\Ind _F P_*$ is a chain complex of projective $R\cD$-modules. Hence there is a chain map 
$\mu_* :  \Ind _F P_* \to Q_*$ covering  the homomorphism
$\mu : \Ind _F \underline R _{\cC}  \to \underline R_{\cD}$. For every $R\cD$-module $M$, this 
induces a homomorphism 
$$H^* (\cD; M) \cong H^* ( \Hom _{R\cD } (Q_* , M) ) \xrightarrow{\mu^*} H^* ( \Hom _{R\cD} ( \Ind _F P_* , M)).$$
By Definition \ref{def:CohCat} and Proposition \ref{pro:IndLeftAdjoint}, we have
$$H^* ( \Hom _{R\cD} (\Ind_F P_*, M)) \cong  H^* (\Hom _{R\cC} (P_* , \Res _F M))  \cong H^* (\cC; \Res_F M).$$
Combining these, we obtain the homomorphism $F^* : H^* (\cD ; M) \to H^* (\cC; \Res _F M).$
\end{proof}

More generally, in \cite{Webb-Bisets} a notion of biset functor for categories is defined and it is shown that 
for a field $R$, $H^* (\cC; R)$ is  a biset functor on $\mathbb{B} _R ^{\text{all}, 1}$ (see \cite[Thm 6.7]{Webb-Bisets}).

\subsection{Functoriality of the homology of a category}

For a functor $F:\cC \to \cD$, the induction functor for a left $R\cC$-module is defined in a similar way to the definition of
the induction functor for a right $R\cC$-module. Let $R\cD(F(?), ??)$ denote 
the $R\cD$-$R\cC$-bimodule defined in Definition \ref{def:BimoduleCoInd}.
For a left $R\cC$-module $M$, we have
$$\Ind _F M := R\cD(F(?), ??)\otimes _{R\cC} M.$$

We have the following well-known fact for the induction functor.

\begin{lemma}\label{lem:TensorAdjoint} 
Let $F:\cC\to \cD$ be a functor. Then for every right $R\cD$-module $N$ and $R\cC$-module $M$, there is
an isomorphism of $R$-modules $$ N\otimes _{R\cD} (\Ind _F M) \cong (\Res _F N) \otimes _{R\cC} M.$$
\end{lemma}

\begin{proof} This follows easily from definitions and from Lemma \ref{lem:TensorIsom}.
\end{proof}

We now show that homology of a category is functorial.

\begin{proposition}[{\cite[Cor 3.11]{Vespa-Survey}}]\label{pro:FunctorialHom}
Let $F: \cC\to \cD$ be a functor. For every $R\cD$-module $N$, there is a
graded $R$-module homomorphism
$$F_* : H_* ( \cC; \Res _F N ) \to H_* (\cD; N).$$ 
\end{proposition}

\begin{proof}  
Let  $\mu: \Ind _F \underline R _{\cC} \to \underline R_{\cD}$ denote the $R\cD$-module homomorphism between 
corresponding constant functors (as left modules).
Let $P_*$ be a left $R\cC$-projective resolution 
of $\underline R_{\cC}$, and $Q_*$ be a left
$R\cD$-projective resolution of $\underline R _{\cD}$.
There is a chain map $\mu_*: \Ind _F P_* \to Q_*$ covering $\mu$. 
By Definition \ref{def:HomCat} and
Lemma \ref{lem:TensorAdjoint} , we have
$$H_* (\cC; \Res _F N) \cong H_* (\Res _F N \otimes _{R\cC} P_*) \cong
H_* ( N \otimes _{R\cD} \Ind _F P_*).$$
The chain map $\mu_*$ induces a graded $R$-module homomorphism 
$$H_* ( N  \otimes _{R\cD} \Ind _F P_*  )  \xrightarrow{\mu_*}  H_* (N \otimes _{R\cD}  Q_* ) \cong H_* (\cD; N).$$
Combining these, we obtain the homomorphism $F_* : H_* (\cC ; \Res _F N) \to H^* (\cD ; N).$ 
\end{proof}

%------------------

\subsection{Shapiro's isomorphism}\label{sect:Shapiro}

Let $H$ be a subgroup of a group $G$, and $F : \mathbf{H} \to \mathbf{G}$ be the functor defined by the inclusion map
$H \to G$. Then we have
$\Ind _F M \cong M \otimes _{RH} RG$. In this case $\Ind _H ^G (-)$ is an exact functor
since $RG$ is free as an $RH$-module.  There are other examples of functors $F:\cC\to \cD$ where 
the associated induction functor $\Ind _F (-)$ is exact. An interesting example of a such functor
is the inclusion functor between two orbit categories $\cO_{\cH} (H) \to \cO _{\cH} (G)$ where $H$ is a subgroup of $G$
(see \cite[Lem 2.9]{HPY}). A similar exactness result is proved in 
\cite[Prop 4.5]{Yalcin-Sharp} for functors $\overline \cF _{\cH} (H) \to \overline \cF_{\cH} (G)$ between two fusion orbit 
categories. 

When the induction functor is exact, then there is an isomorphism of ext-groups, called \emph{Shapiro's isomorphism}:
 
\begin{proposition}\label{pro:IndShapiro} Let $F:\cC \to \cD$ be a functor between two small categories. 
Assume that the induction functor $\Ind _F : R\cC\text{-Mod} \to R\cD\text{-Mod}$ is exact. Then 
for every $R\cC$-module $N$ and $R\cD$-module $M$, there is an isomorphism 
$$\Ext^* _{R\cD} (\Ind _F N, M) \cong \Ext ^*_{R\cC} (N, \Res_F M).$$
\end{proposition}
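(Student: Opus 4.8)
The plan is to combine the adjointness of Proposition \ref{pro:IndLeftAdjoint} with a standard Grothendieck-type argument for composition of derived functors, using the exactness hypothesis to collapse one of the two functors involved. First I would recall that by Proposition \ref{pro:IndLeftAdjoint} the functor $\Ind_F(-)$ is left adjoint to $\Res_F(-)$, so for every $R\cC$-module $N$ and $R\cD$-module $M$ there is a natural isomorphism $\Hom_{R\cD}(\Ind_F N, M)\cong \Hom_{R\cC}(N,\Res_F M)$. I want to derive this isomorphism in the variable $M$ (equivalently, using the projective resolution of $N$ or $\Ind_F N$).

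The cleanest route is the following. Take an $R\cC$-projective resolution $P_*\to N$. By Corollary \ref{cor:ResProj}, $\Ind_F P_*$ is a complex of projective $R\cD$-modules. Because $\Ind_F(-)$ is assumed exact, $\Ind_F P_* \to \Ind_F N$ is again a resolution, hence $\Ind_F P_*$ is an $R\cD$-projective resolution of $\Ind_F N$. Therefore $\Ext^*_{R\cD}(\Ind_F N, M)$ is computed by the cochain complex $\Hom_{R\cD}(\Ind_F P_*, M)$. Applying the adjunction isomorphism of Proposition \ref{pro:IndLeftAdjoint} termwise — it is natural, so it is a cochain isomorphism — identifies this with $\Hom_{R\cC}(P_*, \Res_F M)$, whose cohomology is $\Ext^*_{R\cC}(N,\Res_F M)$ since $P_*$ is an $R\cC$-projective resolution of $N$. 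Naturality in $M$ (and in $N$) follows from the naturality in Proposition \ref{pro:IndLeftAdjoint} together with the comparison theorem for projective resolutions.

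I expect the only real point requiring care — and thus the "main obstacle," though it is mild — is verifying that $\Ind_F P_*$ is genuinely a projective \emph{resolution} of $\Ind_F N$: this is exactly where the exactness hypothesis on $\Ind_F(-)$ is used, ensuring that $\Ind_F$ preserves the exactness of $P_*\to N\to 0$ in each degree, so that $H_0(\Ind_F P_*)\cong \Ind_F N$ and $H_i(\Ind_F P_*)=0$ for $i>0$. (Note $\Ind_F$ is always right exact as a left adjoint, so only vanishing of the higher homology is at issue, and that is precisely the content of "exact" here.) Once this is in hand, everything else is the formal derived-functor bookkeeping sketched above; one could alternatively phrase it via a degenerate Grothendieck spectral sequence for the composite $\Hom_{R\cD}(-,M)\circ\Ind_F$, but the direct resolution argument is shorter and avoids invoking more machinery than needed.
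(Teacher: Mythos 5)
Your proposal is correct and follows essentially the same route as the paper: take an $R\cC$-projective resolution $P_*\to N$, use exactness of $\Ind_F(-)$ together with Corollary \ref{cor:ResProj} to see that $\Ind_F P_*\to\Ind_F N$ is an $R\cD$-projective resolution, and then apply the adjunction of Proposition \ref{pro:IndLeftAdjoint} termwise. Your added remark correctly isolates where the exactness hypothesis enters, matching the paper's argument.
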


\begin{proof} This follows easily from the adjointness of induction and restriction functors.
\end{proof}

One special case where the induction functor $\Ind_F (-)$ is exact is the case where $F$ has a right adjoint. 
We explain this case in detail in Section \ref{sect:AdjointPair}. 
 
There is also a version of Shapiro's lemma for coinduction functor: 

\begin{proposition}\label{pro:CoindShapiro} Let $F:\cC \to \cD$ be a functor between two small categories. 
Assume that the coinduction functor $\Coind _F (-)$ is exact. Then 
for every $R\cD$-module $N$ and every $R\cC$-module $M$, there is a natural isomorphism 
$$\Ext^* _{R\cD} (N, \Coind _F M) \cong \Ext ^*_{R\cC} (\Res _F N, M).$$
\end{proposition}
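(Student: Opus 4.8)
The plan is to mirror the proof of Shapiro's isomorphism for induction (Proposition~\ref{pro:IndShapiro}), but with injective coresolutions in place of projective resolutions, exploiting the adjunction $\Hom _{R\cD} (N, \Coind_F M) \cong \Hom _{R\cC} (\Res_F N, M)$ from Proposition~\ref{pro:CoindRightAdjoint}. First I would choose an $R\cC$-injective coresolution $M \to I^*$ of $M$. By Corollary~\ref{cor:Injectives}, the coinduction functor takes injectives to injectives, so each $\Coind_F I^n$ is an injective $R\cD$-module.

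The key step is to observe that, because $\Coind_F(-)$ is assumed exact, applying it to the exact complex $0 \to M \to I^0 \to I^1 \to \cdots$ yields an exact complex $0 \to \Coind_F M \to \Coind_F I^0 \to \Coind_F I^1 \to \cdots$; that is, $\Coind_F I^*$ is an $R\cD$-injective coresolution of $\Coind_F M$. Then I would compute
$$\Ext^* _{R\cD} (N, \Coind_F M) \cong H^* (\Hom _{R\cD} (N, \Coind_F I^*)) \cong H^* (\Hom _{R\cC} (\Res_F N, I^*)) \cong \Ext^*_{R\cC} (\Res_F N, M),$$
where the middle isomorphism is the adjunction of Proposition~\ref{pro:CoindRightAdjoint} applied degreewise, and is compatible with the coboundary maps by naturality of that adjunction. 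The naturality of the resulting isomorphism in $N$ and $M$ follows from the naturality of the adjunction together with the functoriality of $\Ext$.

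The main obstacle — really the only subtle point — is the exactness of $\Coind_F I^*$. Here it is essential that we have assumed $\Coind_F(-)$ is exact; without that hypothesis one only gets that $\Coind_F(-)$ is left exact (as a right adjoint), so $\Coind_F I^*$ need not be a coresolution and one would instead obtain a spectral sequence (the Gabriel--Zisman spectral sequence of Section~\ref{sect:GabrielZisman}) rather than an isomorphism. A secondary point to check is that the adjunction isomorphism genuinely commutes with the differentials of the two cochain complexes, but this is immediate from naturality of the bijection $\Hom_{R\cD}(N, \Coind_F(-)) \cong \Hom_{R\cC}(\Res_F N, -)$ in the second variable.
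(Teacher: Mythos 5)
Your proposal is correct and follows essentially the same route as the paper: take an $R\cC$-injective coresolution $M \to I^*$, note via Corollary~\ref{cor:Injectives} and the exactness hypothesis that $\Coind_F I^*$ is an $R\cD$-injective coresolution of $\Coind_F M$, and then apply the adjunction of Proposition~\ref{pro:CoindRightAdjoint} degreewise. Your additional remarks on compatibility with the differentials and on what fails without exactness are accurate but not needed beyond what the paper records.
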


\begin{proof} This follows from the adjointness of coinduction and restriction functors.
\end{proof}

If we take $N=\underline{R}$ in Proposition \ref{pro:CoindShapiro}, we obtain 
Shapiro's isomorphism for the cohomology of categories. 

\begin{corollary}[{\cite[Lem 3.1]{JM-Elementary}, \cite[Lem 3.5]{LeviRagnarsson}}]\label{cor:CoindShapiro} 
Let $F:\cC \to \cD$ be a functor between two small categories. 
Assume that the coinduction functor $\Coind _F : R\cC\text{-Mod} \to R\cD\text{-Mod}$ is exact. Then 
for every $R\cC$-module $M$, there is an isomorphism 
$$H^* (\cD; \Coind _F M) \cong H^* (\cC ; M).$$
\end{corollary}

 %---------------------------------
 
 \section{Gabriel-Zisman Spectral Sequence}\label{sect:GabrielZisman}
  
In this section, we first review the construction of the Gabriel-Zisman spectral sequence 
for cohomology of categories (see \cite[Appendix II, Thm 3.6]{GZ-Book}).  Then we construct 
a version of the Gabriel-Zisman spectral sequence 
involving the derived functors of the induction functor.  

 \subsection{Derived functors of the coinduction functor}
Let $F : \cC \to \cD$ be a functor and $M$ be an $R\cC$-module. For each $d\in \Ob (\cD)$, 
let $F/d$ denote the comma category and  $\pi _{\cC} : (F/d) ^{op} \to \cC^{op}$ is the functor 
that sends $(c, f)\in F/d$ to $c \in \Ob (\cC)$. For each $n \geq 0$, let $H^n (F/d; M \circ \pi _{\cC} )$ 
denote the $n$-th cohomology
of the comma category $F/d$ with coefficients in the (right) $R(F/d)$-module
$M \circ \pi _{\cC}$.
 By Proposition \ref{pro:Functorial}, for every morphism $\varphi : d \to d'$ in $\cD$, there is an induced $R$-module
homomorphism  
$$(F/\varphi)^* : H^n (F/d' ; M \circ \pi_{\cC} ^{d'} ) \to H^n (F/d; M \circ \pi _{\cC} ^d ). $$

\begin{lemma}\label{lem:RD-ModuleStructure}
The assignment $d \to H^n (F/d ; M \circ \pi  _{\cC} )$, together with the induced 
homomorphisms $(F/\varphi) ^*$ for every $\varphi: d \to d'$
defines an $R\cD$-module. We denote this $R\cD$-module by $H^n (F/-; M \circ \pi_{\cC} )$.
\end{lemma}

\begin{proof} This is clear from the definitions.
\end{proof}

We have the following:

\begin{proposition}\label{pro:RightDerFunc} Let $F : \cC \to \cD$ be a functor and $M$ be an $R\cC$-module. 
Then the coinduction functor $Coind_F(-)$ is left exact  and its $n$-th right derived functor calculated 
at $M$, $(R^n \Coind _F) (M)$, is isomorphic to the $R\cD$-module
$H^n (F/- ; M \circ \pi_{\cC})$. 
\end{proposition}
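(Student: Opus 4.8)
The plan is to compute the derived functors of $\Coind_F$ by applying $\Coind_F$ to an injective coresolution of $M$ and identifying the resulting cohomology objectwise with the cohomology of the comma categories $F/d$. First I would record that $\Coind_F(-)$ is left exact: this is immediate from Proposition~\ref{pro:CoindRightAdjoint}, since a functor that is right adjoint to an (exact) functor is automatically left exact. Next, fix an $R\cC$-module $M$ and choose an $R\cC$-injective coresolution $0 \to M \to I^0 \to I^1 \to \cdots$. By Definition~\ref{def:ExtGroup}/the definition of right derived functors, $(R^n\Coind_F)(M)$ is the $n$-th cohomology of the cochain complex $\Coind_F I^0 \to \Coind_F I^1 \to \cdots$ of $R\cD$-modules, and cohomology of a complex of $R\cD$-modules is computed objectwise. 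So it suffices to fix $d \in \Ob(\cD)$ and identify $H^n\big((\Coind_F I^*)(d)\big)$ with $H^n(F/d;\, M\circ\pi_\cC)$.

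The key step is the objectwise identification. Using the Kan-extension description of $\Coind_F$ from the lemma preceding Lemma~\ref{lem:RD-ModuleStructure}, we have a natural isomorphism
$$
(\Coind_F M)(d) \cong \lim_{(c,f)\in (F/d)^{op}} (M\circ\pi_\cC),
$$
and the limit over $(F/d)^{op}$ of an $R(F/d)$-module $L$ is, by the Proposition identifying $\Hom_{R\cC}(\underline R,-)$ with $\lim$ (combined with Definition~\ref{def:CohCat} in degree $0$), just $H^0(F/d; L)$. Thus $(\Coind_F M)(d) \cong H^0(F/d; M\circ\pi_\cC)$, which is the $n=0$ case of the claim. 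For the general case, the point I would emphasize is that restriction along $\pi_\cC \colon (F/d)^{op}\to\cC^{op}$ sends $R\cC$-injectives to $R(F/d)$-injectives: this is an instance of Corollary~\ref{cor:Injectives}, because $M\circ\pi_\cC = \Res_{\pi_\cC}M$ and $\Res_{\pi_\cC}$ has the right adjoint $\Coind_{\pi_\cC}$, so its left adjoint-preservation of exactness is automatic — more precisely, $\Res_{\pi_\cC}$ is exact, hence its \emph{right} adjoint preserves injectives, but what we need is that $\Res_{\pi_\cC}$ itself preserves injectives, which follows because $\Res_{\pi_\cC}$ has an exact left adjoint $\Ind_{\pi_\cC}$ (restriction is exact). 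Granting that, $\Res_{\pi_\cC}I^*$ is an $R(F/d)$-injective coresolution of $\Res_{\pi_\cC}M$, so
$$
H^n\big((\Coind_F I^*)(d)\big) \cong H^n\big(\lim_{(F/d)^{op}}\Res_{\pi_\cC}I^*\big) \cong H^n(F/d;\, M\circ\pi_\cC),
$$
where the last isomorphism is the definition of higher limits / cohomology of the category $F/d$ via an injective coresolution.

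Finally I would check that the identification is natural in $d$, i.e. that the $R\cD$-module structure on $(R^n\Coind_F)(M)$ coming from the derived functor matches the structure on $H^n(F/-;M\circ\pi_\cC)$ described in Lemma~\ref{lem:RD-ModuleStructure}: a morphism $\varphi\colon d\to d'$ induces $F/\varphi\colon F/d\to F/d'$, and one checks the induced map on limits of $\Coind_F I^*$ agrees with $H^n(F/\varphi)$ from Proposition~\ref{pro:Functorial}. This is a diagram-chase using the explicit limit formula and the functoriality of Kan extensions; it is routine but is where care is needed. The main obstacle is the claim that $\Res_{\pi_\cC}$ preserves injectivity — once that is in hand, everything else is bookkeeping with the Kan-extension formula and the definition of category cohomology via injective coresolutions.
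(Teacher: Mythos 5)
Your overall architecture is the standard one and, as far as one can tell, the one the paper intends (the paper states the proposition with no proof beyond the preceding Kan-extension formula): identify $(\Coind_F M)(d)$ with $\lim_{(F/d)^{op}}\Res_{\pi_\cC}M = H^0(F/d;M\circ\pi_\cC)$, apply $\Coind_F$ to an injective coresolution $I^*$ of $M$, and argue that $\Res_{\pi_\cC}I^*$ still computes the higher limits over $(F/d)^{op}$. The problem is that your justification of the pivotal claim --- that $\Res_{\pi_\cC}$ sends injective $R\cC$-modules to injective (or at least $\lim$-acyclic) $R(F/d)$-modules --- is wrong as written. You correctly reduce this to the exactness of the left adjoint $\Ind_{\pi_\cC}$, but then justify that exactness by the parenthetical ``restriction is exact,'' which is a non sequitur: exactness of $\Res_{\pi_\cC}$ only tells you that its left adjoint preserves projectives and its right adjoint preserves injectives; it says nothing about $\Ind_{\pi_\cC}$ being exact. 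Indeed the paper explicitly warns that induction functors are \emph{not} exact in general, precisely because they are computed by colimits over comma categories. (The appeal to Corollary~\ref{cor:Injectives} is also misplaced; that corollary concerns $\Coind$, not $\Res$.)

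The claim you need is true, but it requires an actual argument specific to the functor $\pi_\cC\colon F/d\to\cC$. For each $c\in\Ob(\cC)$ one checks that the comma category $c\backslash\pi_\cC$ decomposes as a disjoint union, indexed by $h\in\cD(F(c),d)$, of components each having the initial object $\bigl((c,h),\id_c\bigr)$: every object $\bigl((c',f'),g\bigr)$ receives a unique morphism from $\bigl((c,f'\circ F(g)),\id_c\bigr)$, and the assignment of the index $f'\circ F(g)$ is constant along morphisms. Consequently, by the colimit formula for induction,
$$(\Ind_{\pi_\cC}N)(c)\;\cong\;\bigoplus_{h\in\cD(F(c),d)}N(c,h),$$
which is manifestly exact in $N$. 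Hence $\Ind_{\pi_\cC}$ is exact, $\Res_{\pi_\cC}$ preserves injectives, and the rest of your argument (objectwise computation, $n=0$ case, naturality in $d$ via $F/\varphi$) goes through. Without some version of this comma-category computation the proof is incomplete at exactly the point you yourself flag as the main obstacle.
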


\begin{proof} By Lemma \ref{lem:CoindFormula}, for every $d \in \Ob (\cD)$, 
$$(\Coind_F M) (d) \cong \lim _{(c,f)  \in (F/d) ^{op} } (M\circ \pi _{\cC}).$$
This gives that 
$$(R^n \Coind _F) (M) (d) \cong \underset{(F/d)^{op}}{\lim {}^n}  \, (M\circ \pi _{\cC} ) \cong H^n (F/d ; M \circ \pi_{\cC})$$
for every $d \in \Ob (\cD)$. Since these isomorphisms are natural, this gives the desired result.
\end{proof}

%-----------------------------

\subsection{Gabriel-Zisman spectral sequence}

We first recall the Grothendieck spectral sequence defined for a composition of functors.

\begin{theorem}[{\cite[Thm 5.8.3]{Weibel-Book}}]\label{thm:Grothendieck}
Let $\cA$, $\cB$, and $\cC$ be abelian categories such that both $\cA$ and $\cB$ have enough injectives.
Suppose that $G: \cA \to  \cB$ and $F : \cB \to \cC$ are left exact functors such that  $G$ sends injective objects of $\cA$
to $F$-acyclic objects of $\cB$ ($B \in \Ob(\cB)$ is $F$-acyclic if $R^i F (B)=0$ for $i\neq 0$). Then for every $A\in \cA$, there exists a convergent first quadrant 
cohomology spectral sequence:
$$E_2 ^{p,q} = (R^p F) \bigl ( (R^q G )(A) \bigr ) \Rightarrow R^{p+q} (F \circ G) (A).$$
\end{theorem}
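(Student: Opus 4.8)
The plan is to run the standard double-complex argument via Cartan--Eilenberg resolutions. First I would fix an injective resolution $A \to I^\bullet$ in $\cA$, so that by definition $(R^q G)(A) = H^q(G(I^\bullet))$. Applying $G$ produces a cochain complex $G(I^\bullet)$ in $\cB$ concentrated in nonnegative degrees; since $G$ is merely left exact this is not a resolution of $G(A)$, which is precisely why one must resolve the entire complex. Using that $\cB$ has enough injectives, I would choose a Cartan--Eilenberg resolution $G(I^\bullet) \to J^{\bullet,\bullet}$: a first-quadrant double complex of injective objects of $\cB$, augmented over $G(I^\bullet)$, with the property that for each degree the horizontal boundaries, cycles, and cohomology of $J^{\bullet,\bullet}$ provide injective resolutions of the corresponding objects built from $G(I^\bullet)$; in particular $q \mapsto H^p_{\mathrm{hor}}(J^{\bullet,q})$ is an injective resolution of $H^p(G(I^\bullet)) = (R^p G)(A)$, and each column $J^{n,\bullet}$ is an injective resolution of $G(I^n)$.

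Next I would apply $F$ to $J^{\bullet,\bullet}$ and consider the two spectral sequences of the first-quadrant double complex $F(J^{\bullet,\bullet})$, both converging to $H^\bullet(\mathrm{Tot}\, F(J^{\bullet,\bullet}))$. Taking vertical cohomology first, the column $J^{n,\bullet}$ is an injective resolution of $G(I^n)$, and since $G$ carries the injective object $I^n$ to an $F$-acyclic object, $H^q(F(J^{n,\bullet})) = (R^q F)(G(I^n))$ vanishes for $q>0$ and equals $F(G(I^n))$ for $q=0$. Hence this spectral sequence already collapses onto the bottom row at $E_1$, giving $H^n(\mathrm{Tot}\, F(J)) \cong H^n(F(G(I^\bullet))) = H^n((F\circ G)(I^\bullet)) = R^n(F\circ G)(A)$, the last equality being the definition of the right derived functor of the left exact composite $F\circ G$ computed on the injective resolution $I^\bullet$ of $A$. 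Taking horizontal cohomology first, all the $J^{p,q}$ are injective, so the horizontal short exact sequences of boundaries, cycles, and cohomology split and are therefore preserved by $F$; thus $H^p_{\mathrm{hor}}(F(J^{\bullet,q})) = F(H^p_{\mathrm{hor}}(J^{\bullet,q}))$, and taking vertical cohomology of this $E_1$ page — using that $q \mapsto H^p_{\mathrm{hor}}(J^{\bullet,q})$ is an injective, hence $F$-acyclic, resolution of $(R^p G)(A)$ — yields $E_2^{p,q} = (R^q F)\bigl((R^p G)(A)\bigr)$. Transposing the double complex (equivalently, relabelling $p \leftrightarrow q$) puts this in the asserted form, and comparison with the total cohomology computed from the first filtration gives the convergent first-quadrant spectral sequence $E_2^{p,q} = (R^p F)((R^q G)(A)) \Rightarrow R^{p+q}(F\circ G)(A)$; convergence and the first-quadrant property follow because $I^\bullet$, hence $J^{\bullet,\bullet}$, lives in nonnegative degrees.

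The main technical point I would spell out carefully is the existence of the Cartan--Eilenberg resolution and its defining properties, together with the observation that $F$ commutes with horizontal cohomology because injectivity forces the relevant sequences to split; the remainder is routine bookkeeping with the two spectral sequences of a double complex. Since all of this is classical I would simply refer to \cite[\S 5.7]{Weibel-Book} for the construction of Cartan--Eilenberg resolutions and for the spectral sequences of a first-quadrant double complex, and present the above only at the level of an outline.
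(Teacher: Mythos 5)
The paper does not prove this theorem; it is quoted verbatim from Weibel \cite[Thm 5.8.3]{Weibel-Book} and used as a black box. Your proposal is the standard Cartan--Eilenberg double-complex argument, which is exactly the proof given in the cited source, and it is correct: the two spectral sequences of $F(J^{\bullet,\bullet})$ are handled properly, the $F$-acyclicity hypothesis is used precisely where needed to collapse the first spectral sequence onto $R^{\ast}(F\circ G)(A)$, and the splitting of the horizontal boundary/cycle sequences by injectivity justifies commuting $F$ with horizontal cohomology to identify the $E_2$-page of the second.
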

 
Applying the Grothendieck Spectral sequence to the composition of the coinduction functor and the hom-functor,
we obtain a spectral sequence which is an ext-group version of  Gabriel-Zisman spectral sequence \cite[Appendix II, Thm 3.6]{GZ-Book}.

\begin{theorem}\label{thm:GZ} Let $F : \cC\to \cD$ be a functor between two small categories.
Let $N$ be an $R\cD$-module and $M$ be an $R\cC$-module. Then there is a first quadrant cohomology spectral sequence
$$E_2 ^{p,q} = \Ext ^p _{R\cD} \bigl ( N, H^q (F/- ; M \circ \pi _{\cC} ) \bigr ) \Rightarrow \Ext ^{p+q} _{R\cC} (\Res_F N, M).$$
\end{theorem} 

\begin{proof}  Let $N$ be a fixed $R\cD$-module. Consider the functors
$$\Coind(-) : R\cC\text{-Mod} \to R\cD\text{-Mod}$$
defined by $M \to \Coind _F M$, and 
$$\Hom _{R\cD} (N, -) : R\cD\text{-Mod} \to R\text{-Mod}$$
defined by $L \to \Hom _{R\cD} (N, L)$. By Proposition  \ref{pro:CoindRightAdjoint}, the composition 
of these functors is isomorphic to the functor 
$\Hom_{R\cC} (  \Res_F N, -)$.
It is clear that both $\Coind_F (-)$ and $\Hom_{R\cD} (N, -)$ are left exact functors.
By Proposition \ref{pro:CoindRightAdjoint}, $\Coind_F (-)$ takes injective $R\cC$-modules to injective $R\cD$-modules. Hence 
we can apply Theorem \ref{thm:Grothendieck} to obtain a spectral sequence 
$$E_2 ^{p,q} = \Ext_{R\cD} ^p \bigl (N,  R^q (\Coind _F (-))  (M) \bigr ) \Rightarrow  \Ext_{R\cC}  ^{p+q} ( \Res_F N, M).$$
By Proposition \ref{pro:RightDerFunc}, the $n$-th right derived functors 
of $\Coind_F(-)$ are isomorphic to the cohomology groups $H^n (F/ - ; M \circ \pi_{\cC} )$. This completes the proof.
\end{proof}

Taking $M=\underline R$, we obtain the following spectral sequence for the cohomology of categories.

\begin{corollary}\label{cor:GZ}
For every $R\cC$-module $M$, there is a spectral sequence
$$E_2 ^{p, q} =H^p (\cD ; H^q (F/-; M \circ \pi _{\cC} ) ) \Rightarrow H^{p+q} ( \cC; M).$$
\end{corollary}

This spectral sequence is usually called the cohomology version of the Gabriel-Zisman spectral sequence. The homology
version is stated in \cite[Appendix II, Thm 3.6]{GZ-Book}. See also \cite{GNT} for the Baues-Wirsching cohomology
version of the Gabriel-Zisman spectral sequence.

 %--------------------
 
\subsection{Derived functors of the induction functor}\label{sect:DerivedInduction}

The homology groups of a small category $\cC$ can be interpreted as the derived functors of 
the colimit functor over $\cC$. To explain this, we first recall the definition of the colimit over a category.

\begin{definition} Let $M$ an  $R\cC$-module. The colimit  of $M$ is the $R$-module 
$$\colim _{\cC} M =\Bigl (\bigoplus \limits _{x \in \cC} M(x) \Bigr )/J $$ where 
$J=\langle M(\varphi) (m_x) - m_x \, | \, m_x \in M(x),\ \varphi \in \cC(y, x)\rangle.$
\end{definition} 

It is easy to see that for every $R\cC$-module $M$, we have $\colim _{\cC} M \cong M \otimes _{R\cC}  \underline{R}$.
This shows that the colimit functor  $\colim_{\cC}: R\cC\text{-Mod} \to R\cD\text{-Mod}$  is right exact. The left derived functors of 
the colimit functor are called the \emph{higher colimits} over the category $\cC$, denoted by $\colim ^{\cC} _n (-)$ for $n \geq 0$. 
The following is an easy consequence of  the definitions above.

\begin{lemma}\label{lem:HigherColimits} 
For a left $R\cC$-module $M$, and for every $n \geq 0$, we have $H_n (\cC; M) \cong \colim ^{\cC} _n (M)$.
\end{lemma}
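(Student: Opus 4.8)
The plan is to unwind the definitions and reduce the statement to the balancing theorem for tor-groups (Theorem~\ref{thm:BalancingTor}) together with the identification of the colimit functor with tensoring by the constant functor $\underline{R}$. First I would recall that, by Definition~\ref{def:HomCat}, for a left $R\cC$-module $M$ one has $H_n(\cC; M) = \Tor^{R\cC}_n(\underline{R}, M)$, and by Theorem~\ref{thm:BalancingTor} this tor-group is isomorphic to the $n$-th left derived functor $L_n(\underline{R} \otimes_{R\cC} -)(M)$ of the right exact functor $\underline{R} \otimes_{R\cC} - : R\cC\text{-Mod} \to R\text{-Mod}$ on the category of left $R\cC$-modules.

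Next I would show that this functor is naturally isomorphic to the colimit functor $\colim_{\cC}(-)$; this is the left-module analogue of the isomorphism $\colim_{\cC} N \cong N \otimes_{R\cC} \underline{R}$ noted above for right modules. Concretely, by Definition~\ref{def:Tensor}, $\underline{R} \otimes_{R\cC} M$ is the quotient of $\bigoplus_{x \in \Ob(\cC)} R \otimes_R M(x)$ by the submodule generated by the elements $r \otimes m - r \otimes M(\varphi)(m)$ over all $\varphi \in \cC(x,y)$, $r \in R$, $m \in M(x)$; using the canonical isomorphism $R \otimes_R M(x) \cong M(x)$ and the $R$-linearity of each $M(\varphi)$, one sees that this is precisely the presentation defining $\colim_{\cC} M$, and the resulting isomorphism is plainly natural in $M$. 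Since naturally isomorphic functors have naturally isomorphic left derived functors, it follows that $L_n(\underline{R} \otimes_{R\cC} -)(M) \cong L_n(\colim_{\cC})(M) = \colim^{\cC}_n(M)$, the last equality being the definition of the higher colimit. Chaining the three isomorphisms yields $H_n(\cC; M) \cong \colim^{\cC}_n(M)$.

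There is essentially no obstacle here: the content is purely formal. The only point requiring a little care is the identification of $\underline{R} \otimes_{R\cC} -$ with $\colim_{\cC}(-)$ on left $R\cC$-modules, where one must match the relations in the tensor-product presentation with those in the colimit presentation (keeping track of the variance convention for left modules) and check that the comparison is natural, so that it descends to derived functors.
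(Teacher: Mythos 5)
Your proposal is correct and follows essentially the same route as the paper: both identify $H_n(\cC;M)$ with $\Tor_n^{R\cC}(\underline R,M)$, invoke the balancing theorem to compute this as a left derived functor in the left-module variable, and identify tensoring with the constant functor with the colimit functor. The paper phrases the last two steps by chaining isomorphisms along an explicit projective resolution $P_*\to M$, while you phrase them as a natural isomorphism of right exact functors inducing an isomorphism of derived functors; the content is identical, and your extra care with the left-module version of $\colim_{\cC}(-)\cong \underline R\otimes_{R\cC}(-)$ is a reasonable (if minor) refinement of what the paper leaves implicit.
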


Let $F: \cC \to \cD$ be a functor and $N$ be an $R\cC$-module. By Lemma \ref{lem:IndKanFormula}, 
for every $d \in \Ob (\cD)$, 
\begin{equation}\label{eqn:Isom}
(\Ind _F N )(d) \cong \colim _{ (c, f) \in (d\backslash F)^{op} } (N \circ \pi_{\cC}).
\end{equation}

Since taking colimits is a right exact functor, this isomorphism gives that the induction functor 
$\Ind_F (-)$ is right exact, and the left derived functors of $\Ind _F (-)$
are the higher colimits over the category $(d \backslash F)^{op}$. We conclude the following:

\begin{lemma}\label{lem:AtdIsom}
Let $F: \cC \to \cD$ be a functor. For every $R\cC$-module $N$, and every $d \in \Ob (\cD)$, 
there is an isomorphism
$$L_n (\Ind _F) (N) (d)\cong  H_n  ( d\backslash F ; N \circ \pi_{\cC})$$
where $N \circ \pi_{\cC}$ is the right $R(d\backslash F)$-module defined by the composition 
$N \circ \pi_{\cC} : (d\backslash F) ^{op} \to \cC^{op} \to R$-Mod.
\end{lemma}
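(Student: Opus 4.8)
\textbf{Proof proposal for Lemma \ref{lem:AtdIsom}.}

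The plan is to identify the left derived functors of $\Ind_F(-)$ objectwise, using the description of $(\Ind_F N)(d)$ as a colimit over $(d\backslash F)^{op}$ established in Lemma \ref{lem:IndKanFormula} (equation \eqref{eqn:Isom}), and then to recognize that colimit, together with its derived functors, as the homology of the comma category $d\backslash F$ via Lemma \ref{lem:HigherColimits}. The key point is that evaluation at $d$ is an exact functor, so it commutes with taking left derived functors.

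First I would fix $d\in\Ob(\cD)$ and consider the evaluation functor $\mathrm{ev}_d : R\cD\text{-Mod}\to R\text{-Mod}$, $L\mapsto L(d)$. Since kernels and cokernels of $R\cD$-module maps are computed objectwise, $\mathrm{ev}_d$ is exact; in particular it sends projective $R\cC$-modules through $\Ind_F(-)$ to ... more precisely, for a projective resolution $P_*\to N$ of the right $R\cC$-module $N$, Corollary \ref{cor:ResProj} gives that $\Ind_F P_*$ is a complex of projective $R\cD$-modules, and since $\Ind_F(-)$ is right exact (it is a left adjoint, or see the colimit description), $L_n(\Ind_F)(N)=H_n(\Ind_F P_*)$. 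Applying $\mathrm{ev}_d$ and using its exactness,
$$L_n(\Ind_F)(N)(d) \cong H_n\bigl((\Ind_F P_*)(d)\bigr).$$
Next I would rewrite $(\Ind_F P_*)(d)$ using \eqref{eqn:Isom}: for each $n$,
$$(\Ind_F P_n)(d)\cong \colim_{(c,f)\in (d\backslash F)^{op}} (P_n\circ\pi_{\cC}),$$
naturally in $n$, so the whole complex $(\Ind_F P_*)(d)$ is isomorphic to $\colim_{(d\backslash F)^{op}}(P_*\circ\pi_{\cC})$.

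The remaining step is to observe that $P_*\circ\pi_{\cC}$ is a projective resolution of $N\circ\pi_{\cC}$ as a right $R(d\backslash F)$-module. Exactness is immediate since $\pi_{\cC}$ acts by precomposition and exactness of $R\cC$-module complexes is objectwise; projectivity of each $P_n\circ\pi_{\cC}$ follows because restriction along the functor $\pi_{\cC}^{op}:(d\backslash F)^{op}\to\cC^{op}$ is the same as $\Res_{\pi_{\cC}}$, and $\Res_{\pi_{\cC}}$ sends projectives to projectives (its left adjoint $\Ind_{\pi_{\cC}}$ is exact, or one checks directly on representables that $\Res_{\pi_{\cC}} R\cC(?,x)$ is a summand of a direct sum of representables of $d\backslash F$; this is a standard fact, cf.\ the argument for Lemma \ref{lem:YonedaRight}). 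Granting this, $H_n\bigl(\colim_{(d\backslash F)^{op}}(P_*\circ\pi_{\cC})\bigr)$ is by definition $\colim^{d\backslash F}_n(N\circ\pi_{\cC})$, which by Lemma \ref{lem:HigherColimits} equals $H_n(d\backslash F; N\circ\pi_{\cC})$. Chaining the isomorphisms gives the claim.

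The main obstacle I anticipate is the bookkeeping in the second step: one must check that the isomorphisms in \eqref{eqn:Isom}, applied degreewise to $P_*$, assemble into an isomorphism of \emph{chain complexes} and not merely of the individual terms — that is, that the colimit description is natural in the module argument. This is true because $(\Ind_F-)(d)$ is a functor and \eqref{eqn:Isom} is a natural isomorphism of functors $R\cC\text{-Mod}\to R\text{-Mod}$, so applying it to the morphisms $P_{n+1}\to P_n$ gives commuting squares; but it is worth stating explicitly. The verification that $\Res_{\pi_{\cC}}$ preserves projectives is routine but should also be cited or spelled out, since it is what makes the higher-colimit interpretation legitimate.
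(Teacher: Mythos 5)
Your argument is correct and takes essentially the same route as the paper, whose proof is a one-line citation of the colimit formula in \eqref{eqn:Isom} and Lemma \ref{lem:HigherColimits}; what you add is the one step the paper leaves implicit, namely that $P_n\circ\pi_{\cC}=\Res_{\pi_{\cC}}P_n$ is again projective, and your direct check on representables is the right justification (indeed $\Res_{\pi_{\cC}}R\cC(?,x)\cong\bigoplus_{g\in\cD(d,F(x))}R(d\backslash F)\bigl(?,(x,g)\bigr)$, since $\pi_{\cC}$ is a discrete opfibration). One small correction: exactness of the \emph{left} adjoint $\Ind_{\pi_{\cC}}$ does not imply that $\Res_{\pi_{\cC}}$ preserves projectives --- for that via adjunction you would need its \emph{right} adjoint $\Coind_{\pi_{\cC}}$ to be exact (which it is here, being a product over fibers) --- so drop that clause and rely on the representable computation.
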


\begin{proof} This follows from the isomorphism in (\ref{eqn:Isom}) and from Lemma \ref{lem:HigherColimits}.
\end{proof}

Every morphism $\varphi : d\to d'$ in $\cD$ defines a functor $\varphi\backslash F: d' \backslash F \to d \backslash F$.
By Proposition \ref{pro:FunctorialHom},  this functor induces a graded $R$-module homomorphism  
$$ (\varphi \backslash F)_* : H_* (d' \backslash F ; N \circ \pi_{\cC} ^{d'}) \to H_* (d \backslash F; N \circ \pi _{\cC} ^d). $$
Together with these induced homomorphisms, the assignment $d \to H^n (d \backslash F ; N \circ \pi _{\cC} )$
defines a right $R\cD$-module. We denote this $R\cD$-module with $H_n (-\backslash F ; N \circ \pi_{\cC} )$.
Since the isomorphism in \ref{eqn:Isom} is natural in $d$, the isomorphism in Lemma \ref{lem:AtdIsom}
is natural in $d$. Hence we can conclude the following:

\begin{proposition}\label{pro:LeftDerFunc} Let $F : \cC \to \cD$ be a functor and $N$ be an $R\cC$-module.  
Then for each $n \geq 0$, there is an isomorphism of $R\cD$-modules
$$L_n (\Ind _F (-)) (N) \cong H_n ( -\backslash  F ;  N \circ \pi _{\cC})$$
where $H_n (-\backslash F ; N \circ \pi_{\cC} )$ denotes $R\cD$-module defined above.
\end{proposition}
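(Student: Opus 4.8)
The plan is to promote the objectwise isomorphism of Lemma \ref{lem:AtdIsom} to an isomorphism of functors $\cD^{op}\to R$-Mod, i.e.\ to check that it is natural in $d$. First I would set up the two functorialities that make both sides of the claimed isomorphism into $R\cD$-modules. On the left, $L_n(\Ind_F(-))(N)$ is a derived functor evaluated at the fixed module $N$, and the $R\cD$-module structure is the one carried by $\Ind_F(-)$ itself: choose an $R\cC$-projective resolution $P_*\to N$, so that $L_n(\Ind_F(-))(N)\cong H_n(\Ind_F P_*)$ as an $R\cD$-module, where the differentials and the face structure are $R\cD$-module maps because $\Ind_F(-)$ is a functor into $R\cD$-Mod. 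On the right, the $R\cD$-module structure on $H_n(-\backslash F;N\circ\pi_{\cC})$ is exactly the one described just before the statement: a morphism $\varphi:d\to d'$ induces a functor $\varphi\backslash F:d'\backslash F\to d\backslash F$, and Proposition \ref{pro:FunctorialHom} turns this into $H_*(\varphi\backslash F):H_*(d'\backslash F;N\circ\pi_{\cC}^{d'})\to H_*(d\backslash F;N\circ\pi_{\cC}^d)$; Lemma \ref{lem:RD-ModuleStructure}-style bookkeeping shows this is a genuine contravariant functor on $\cD$.

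Next I would assemble the isomorphism. The key input is the chain of identifications already in the excerpt: Lemma \ref{lem:IndKanFormula} gives $(\Ind_F N)(d)\cong\colim_{(c,f)\in(d\backslash F)^{op}}(N\circ\pi_{\cC})$, and this colimit presentation is natural in $d$ because a morphism $\varphi:d\to d'$ induces the functor $\varphi\backslash F$ on comma categories and the colimit is functorial for such functors of indexing categories (the universal property of the colimit yields the comparison map, uniquely). So \eqref{eqn:Isom} holds as an isomorphism of $R\cD$-modules, not just objectwise. Applying this with $N$ replaced by each $P_n$ in the projective resolution, and noting that $\Ind_F$ being right exact (Lemma \ref{lem:IndKanFormula} together with right-exactness of colimits) means $\Ind_F P_*$ computes $L_*(\Ind_F(-))(N)$, I get $L_n(\Ind_F(-))(N)(d)\cong H_n(\colim_{(d\backslash F)^{op}}(P_*\circ\pi_{\cC}))$. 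By Lemma \ref{lem:HigherColimits} (equivalently Definition \ref{def:HomCat} and Theorem \ref{thm:BalancingTor}), $P_*\circ\pi_{\cC}$ is an $R(d\backslash F)$-projective resolution of $N\circ\pi_{\cC}$, so $H_n(\colim_{(d\backslash F)^{op}}(P_*\circ\pi_{\cC}))\cong H_n(d\backslash F;N\circ\pi_{\cC})$. Chaining these identifications gives the desired isomorphism at each $d$.

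The one point requiring care — and the step I expect to be the main obstacle — is compatibility of all of these identifications with the $\cD$-action, i.e.\ that for $\varphi:d\to d'$ the square relating $L_n(\Ind_F(-))(N)(d')\to L_n(\Ind_F(-))(N)(d)$ and $H_*(\varphi\backslash F)$ commutes. This is where I need that the comparison maps induced by $\varphi\backslash F$ are the \emph{same} maps whether I view them through the colimit presentation, through $\Ind_F$ applied to $R\cD(\varphi, -)$, or through the functoriality of homology of categories in Proposition \ref{pro:FunctorialHom}. The cleanest way to handle this is to observe that $P_n\circ\pi_{\cC}^{d'}$ pulls back along $\varphi\backslash F$ to $P_n\circ\pi_{\cC}^{d}$ (the functors $\pi_{\cC}^{d'}\circ(\varphi\backslash F)$ and $\pi_{\cC}^{d}$ agree on the nose), so Proposition \ref{pro:FunctorialHom} applied to the functor $\varphi\backslash F$ with coefficient module $P_n\circ\pi_{\cC}^{d'}$ produces exactly the map $\colim_{(d'\backslash F)^{op}}(P_n\circ\pi_{\cC})\to\colim_{(d\backslash F)^{op}}(P_n\circ\pi_{\cC})$ defining the $R\cD$-module $\Ind_F P_n$. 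Granting this, naturality is immediate on the chain level, hence on homology, and the proof concludes. Since the excerpt explicitly signals this naturality (\emph{``Since the isomorphism in \eqref{eqn:Isom} is natural in $d$...''}), I would state it as the content of Lemma \ref{lem:AtdIsom}'s naturality and cite the standard functoriality of colimits over a variable indexing category rather than grinding through the diagram chase.
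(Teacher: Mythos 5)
Your overall strategy --- establish the objectwise isomorphism via the colimit formula and then verify naturality in $d$ against the two $R\cD$-module structures --- is exactly the paper's route: the paper invokes Lemma \ref{lem:AtdIsom} and then simply asserts that the isomorphism in \eqref{eqn:Isom} is natural in $d$. Your final paragraph, identifying that the comparison map for $\varphi:d\to d'$ must be the same whether seen through the colimit presentation, through the bimodule structure of $\Ind_F$, or through Proposition \ref{pro:FunctorialHom} applied to $\varphi\backslash F$ (using that $\pi_{\cC}^{d}\circ(\varphi\backslash F)=\pi_{\cC}^{d'}$ on the nose), is a correct and welcome elaboration of the step the paper leaves implicit.

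The one flawed point is your justification of the claim that $P_*\circ\pi_{\cC}=\Res_{\pi_{\cC}}P_*$ is an $R(d\backslash F)$-projective resolution of $N\circ\pi_{\cC}$. This does not follow from Lemma \ref{lem:HigherColimits}, Definition \ref{def:HomCat}, or Theorem \ref{thm:BalancingTor}, none of which concern restriction of projectives; and restriction along an arbitrary functor does \emph{not} preserve projectives (restrict $R\bG(?,\ast)$ along $\mathbf{G}\to\mathbf{1}$, or $\underline{R}$ along $\mathbf{1}\leftarrow\mathbf{G}$, to see the general failure). The claim is nevertheless true for the comma-category projection $\pi_{\cC}$, and it is precisely what upgrades the composite $\colim_{(d\backslash F)^{op}}\circ\Res_{\pi_{\cC}}$ from a Grothendieck spectral sequence to the isomorphism of Lemma \ref{lem:AtdIsom}. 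What is actually needed is the natural decomposition
$$\Res_{\pi_{\cC}}R\cC(?,x)\;\cong\;\bigoplus_{g\in\cD(d,F(x))}R(d\backslash F)\bigl(?,(x,g)\bigr),$$
coming from the bijection $\cC(c,x)\cong\coprod_{g}\Mor_{d\backslash F}\bigl((c,f),(x,g)\bigr)$ that places $\alpha:c\to x$ in the summand indexed by $g=F(\alpha)\circ f$; this shows that restrictions of representables, hence of all projectives, are projective, and exactness of $\Res_{\pi_{\cC}}$ then makes $P_*\circ\pi_{\cC}\to N\circ\pi_{\cC}$ a projective resolution. Either supply this argument, or simply quote Lemma \ref{lem:AtdIsom} for the objectwise statement (as your opening sentence suggests you intend) and reserve your effort for the naturality, which is the only genuinely new content of the proposition.
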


%----------------------

\subsection{A version of the Gabriel-Zisman spectral sequence}

As a consequence of the results in this section, we obtain the following spectral sequence:

\begin{theorem}\label{thm:VarGZ} Let $F: \cC \to \cD$ be a functor, and let $N$ be a right 
$R\cC$-module and $M$ be a right $R\cD$-module. Then there is a spectral sequence
$$E_2 ^{p,q} = \Ext ^p _{R\cD} ( H_q ( -\backslash F ; N \circ \pi_{\cC} ), M )  \Rightarrow \Ext ^{p+q} _{R \cC } (N, \Res_F M)$$
where $\pi_{\cC} : d\backslash F \to \cC$ is the functor that sends $(c, f) \in \Ob(d\backslash F)$ to $c \in \Ob(\cC)$. 
\end{theorem}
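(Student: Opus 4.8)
The plan is to obtain this spectral sequence as a Grothendieck spectral sequence for the composition of two functors, in exact parallel to the proof of Theorem \ref{thm:GZ}, but using the induction functor in place of the coinduction functor. Fix the right $R\cC$-module $N$. Consider the functors
$$\Ind_F(-) : R\cC\text{-Mod} \to R\cD\text{-Mod}, \qquad \Hom_{R\cD}(-, M) : (R\cD\text{-Mod})^{op} \to R\text{-Mod},$$
or rather their composite. The first is right exact (Lemma \ref{lem:IndKanFormula} and the discussion after it), and $\Hom_{R\cD}(-,M)$ is a left exact contravariant functor. The key adjunction point is that by Proposition \ref{pro:IndLeftAdjoint} we have $\Hom_{R\cD}(\Ind_F N, M) \cong \Hom_{R\cC}(N, \Res_F M)$, so the composite functor $N \mapsto \Hom_{R\cD}(\Ind_F N, M)$ is naturally isomorphic to the functor $\Hom_{R\cC}(-, \Res_F M)$ evaluated at $N$.

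Next I would set up the Grothendieck spectral sequence. Since $\Ind_F(-)$ is right exact and covariant while $\Hom_{R\cD}(-,M)$ is left exact and contravariant, the composite is a contravariant functor whose right derived functors (computed via a projective resolution $P_* \to N$ of $N$) assemble into a first-quadrant cohomology spectral sequence
$$E_2^{p,q} = (R^p \Hom_{R\cD}(-,M))\bigl( (L_q \Ind_F)(N) \bigr) \Rightarrow R^{p+q}\bigl(\Hom_{R\cC}(-,\Res_F M)\bigr)(N).$$
Concretely: take an $R\cC$-projective resolution $P_* \to N$; by Corollary \ref{cor:ResProj} (and Corollary \ref{cor:Injectives}'s analogue — here Corollary \ref{cor:ResProj} itself) each $\Ind_F P_n$ is projective, so the hyper-(co)homology spectral sequence of the double complex $\Hom_{R\cD}(\Ind_F P_*, Q^*)$, where $Q^* $ ranges over nothing — actually one only needs that $\Ind_F$ sends projectives to $\Hom_{R\cD}(-,M)$-acyclic objects, which holds because projectives are $\Ext$-acyclic. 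Then one filtration of $\mathrm{Tot}$ gives, using that $\Ind_F P_*$ is a complex of projectives, the right-hand side $\Ext^{p+q}_{R\cD}(\Ind_F N, M) \cong \Ext^{p+q}_{R\cC}(N, \Res_F M)$ via the adjunction; the other filtration gives $E_2^{p,q} = \Ext^p_{R\cD}(H_q(\Ind_F P_*), M)$, where $H_q(\Ind_F P_*) = L_q(\Ind_F)(N)$.

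Finally I would identify the two sides with the terms in the statement. The abutment $\Ext^{p+q}_{R\cD}(\Ind_F N, M) \cong \Ext^{p+q}_{R\cC}(N,\Res_F M)$ is immediate from Proposition \ref{pro:IndLeftAdjoint}. For the $E_2$-term, Proposition \ref{pro:LeftDerFunc} gives an isomorphism of $R\cD$-modules $L_q(\Ind_F)(N) \cong H_q(-\backslash F; N\circ\pi_{\cC})$, and substituting this yields $E_2^{p,q} = \Ext^p_{R\cD}(H_q(-\backslash F; N\circ\pi_{\cC}), M)$, as claimed. The main technical point to handle carefully is the contravariance in the second variable: one must check that the Grothendieck-type machinery still applies — equivalently, run the standard hyper-Ext double-complex argument directly rather than quoting Theorem \ref{thm:Grothendieck} verbatim, since that theorem is phrased for covariant left-exact functors. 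The simplest clean route is: form the first-quadrant double complex $C^{p,q} = \Hom_{R\cD}(\Ind_F P_q, I^p)$ for an injective resolution $M \to I^*$, note $\Ind_F P_q$ projective and $I^p$ injective so one spectral sequence of $\mathrm{Tot}(C)$ degenerates to $\Ext^*_{R\cD}(\Ind_F N, M)$, while the other has $E_1^{p,q} = \Hom_{R\cD}(H_q(\Ind_F P_*), I^p)$ and hence $E_2^{p,q} = \Ext^p_{R\cD}(L_q(\Ind_F)(N), M)$; then apply the two identifications above. I expect this bookkeeping with the double complex (and confirming the convergence of a first-quadrant spectral sequence) to be the only real obstacle, and it is entirely routine.
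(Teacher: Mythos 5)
Your proposal is correct and matches the paper's argument in all essentials: both identify $\Hom_{R\cC}(-,\Res_F M)$ as the composite of $\Ind_F(-)$ with $\Hom_{R\cD}(-,M)$ via Proposition \ref{pro:IndLeftAdjoint}, use that $\Ind_F(-)$ preserves projectives, and invoke Proposition \ref{pro:LeftDerFunc} to identify $L_q(\Ind_F)(N)$ with $H_q(-\backslash F; N\circ\pi_{\cC})$. The paper handles the contravariance you worry about by regarding $(\Ind_F(-))^{op}$ as a left exact functor between the opposite module categories (taking injectives to injectives) so that Theorem \ref{thm:Grothendieck} applies verbatim, whereas you run the equivalent double-complex argument by hand; this is a difference of presentation only.
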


\begin{proof} 
By Proposition \ref{pro:IndLeftAdjoint},  the functor $\Hom _{R\cC} (-; \Res _F M)$ 
is isomorphic to the composition of 
functors $$(\Ind_F(-))^{op}: (R\cC\text{-Mod})^{op} \to (R\cD\text{-Mod})^{op}$$ and 
$$\Hom _{R\cD}(-; M): (R\cD\text{-Mod})^{op} \to R\text{-Mod}.$$
Both of these functors are left exact and the functor $(\Ind _F(-))^{op}$ takes injectives to injectives, because
$\Ind _F (-)$ takes projectives to projectives. Applying Theorem \ref{thm:Grothendieck}, we
obtain a spectral sequence 
$$E_2 ^{p,q} =\Ext _{R \cD} ^p ( R^q ((\Ind _F (-) )^{op} ) (N) , M)  \Rightarrow  \Ext ^{p+q} _{R \cC } (N, \Res _F M).$$
By Proposition \ref{pro:LeftDerFunc},
for every $R\cC$-module $N$,
$$R^q ((\Ind _F (-) )^{op} ) (N) \cong L^q (\Ind _F (-) ) (N) \cong H_q ( -\backslash F  ; N \circ \pi _{\cC} ),$$
hence the proof is complete.
\end{proof}

%----------------------

\section{Isomorphism theorems for the cohomology of categories}

In this section we review some of the isomorphism theorems for the cohomology 
of small categories that we will be using in the rest of the paper.

\subsection{The Jackowski-S\l omi\' nska  theorem} 

One of the important isomorphism theorems for cohomology of small categories is the 
Jackowski-S\l omi\' nska cofinality theorem. We give a proof for this theorem as
a consequence of the spectral sequence in Theorem \ref{thm:VarGZ}.

\begin{definition} Let $R$ be a commutative ring with unity. A small category $\cC$ is called \emph{$R$-acyclic} if 
$H^* (B\cC ; R) \cong H^* (pt; R)$.
A functor $F: \cC \to \cD$ between two small categories is \emph{right cofinal over $R$}
if for every $d \in \Ob (\cD)$, the comma category $d\backslash F$ is 
$R$-acyclic.  
\end{definition}

\begin{theorem}[Jackowski-S\l omi\' nska \cite{JackowskiSlominska}]\label{thm:JackSlom} 
Let $F: \cC \to \cD$ be a functor between two small categories.
If $F$ is right cofinal over $R$, then for every $R\cD$-module $M$, the homomorphism
$$ F^* : H^* (\cD ; M)\xrightarrow{\cong}  H^* (\cC; \Res_F M)$$
induced by the functor $F$, is an isomorphism.
\end{theorem}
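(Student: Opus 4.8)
The plan is to deduce the Jackowski--S\l omi\'nska cofinality theorem directly from the spectral sequence of Theorem~\ref{thm:VarGZ} by taking $N=\underline R_{\cC}$ and $M$ an arbitrary $R\cD$-module, after first identifying the map that the spectral sequence produces with the functorial map $F^*$. Concretely, I would apply Theorem~\ref{thm:VarGZ} with $N=\underline R_{\cC}$; this yields a first-quadrant spectral sequence
$$E_2^{p,q}=\Ext^p_{R\cD}\bigl(H_q(-\backslash F;\underline R_{\cC}\circ\pi_{\cC}),M\bigr)\Rightarrow \Ext^{p+q}_{R\cC}(\underline R_{\cC},\Res_F M)=H^{p+q}(\cC;\Res_F M),$$
using Definition~\ref{def:CohCat} to identify the abutment.

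\textbf{Key steps.} First I would observe that the $R(d\backslash F)$-module $\underline R_{\cC}\circ\pi_{\cC}$ is simply the constant module $\underline R_{d\backslash F}$, since $\underline R_{\cC}$ is constant and precomposition with any functor takes the constant module to the constant module. Hence $H_q(d\backslash F;\underline R_{\cC}\circ\pi_{\cC})=H_q(d\backslash F;\underline R)\cong H_q(B(d\backslash F);R)$. By the right cofinality hypothesis, each comma category $d\backslash F$ is $R$-acyclic, so this homology group is $R$ for $q=0$ and $0$ for $q>0$. Therefore the $R\cD$-module $H_q(-\backslash F;\underline R_{\cC}\circ\pi_{\cC})$ vanishes for $q>0$, and for $q=0$ it is the $R\cD$-module $d\mapsto H_0(d\backslash F;\underline R)\cong R$; since $d\backslash F$ is connected (being $R$-acyclic, in particular non-empty and with $H_0=R$), the structure maps $H_0(\varphi\backslash F)$ are identity maps, so this is the constant module $\underline R_{\cD}$. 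Thus the spectral sequence collapses at $E_2$ onto the row $q=0$, giving isomorphisms
$$E_2^{p,0}=\Ext^p_{R\cD}(\underline R_{\cD},M)=H^p(\cD;M)\xrightarrow{\ \cong\ }H^p(\cC;\Res_F M)$$
for all $p$. Finally I would check that the edge homomorphism of the spectral sequence agrees with the functorially defined map $F^*$ of Proposition~\ref{pro:Functorial}; this follows by tracing the construction of the spectral sequence of Theorem~\ref{thm:VarGZ} (which uses the Grothendieck spectral sequence for the composite $\Hom_{R\cD}(-,M)\circ(\Ind_F(-))^{\mathrm{op}}$ applied to $\underline R_{\cC}$) against the construction of $F^*$ (which uses the counit $\mu\colon\Ind_F\underline R_{\cC}\to\underline R_{\cD}$), since both are induced by the same adjunction unit/counit.

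\textbf{Main obstacle.} The genuinely substantive point is the last one: verifying that the edge map of the spectral sequence is exactly $F^*$ rather than merely \emph{some} isomorphism $H^*(\cD;M)\cong H^*(\cC;\Res_F M)$. For the qualitative statement ``the cohomologies are isomorphic'' the collapse argument alone suffices, but to get the sharp statement that $F^*$ itself is an isomorphism one must unwind the Grothendieck spectral sequence edge map. I expect this to be a diagram-chase at the level of the double complex $\Hom_{R\cD}(J^*,M)$ where $J^*$ is obtained by applying $\Ind_F(-)$ to a projective resolution $P_*\to\underline R_{\cC}$ and comparing with a projective resolution $Q_*\to\underline R_{\cD}$ via a chain map covering $\mu$ --- precisely the setup already used in the proof of Proposition~\ref{pro:Functorial} --- so while it is routine it is the part that requires care. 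Everything else (the constant-module identification, the acyclicity input, the collapse) is immediate from the results quoted above.
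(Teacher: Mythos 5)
Your proposal is correct and follows essentially the same route as the paper: take $N=\underline R$ in the spectral sequence of Theorem~\ref{thm:VarGZ}, use the $R$-acyclicity of the comma categories $d\backslash F$ to kill all rows $q>0$ and identify the $q=0$ row with $H^*(\cD;M)$, and then identify the resulting edge homomorphism with $F^*$ (a verification the paper likewise defers as routine, using the same counit $\mu\colon\Ind_F\underline R_{\cC}\to\underline R_{\cD}$ and chain-level comparison you describe). Your extra remark that $H_0(-\backslash F;\underline R)$ is the constant module $\underline R_{\cD}$ because each comma category is connected is a worthwhile explicit point that the paper leaves implicit.
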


\begin{proof} If $F: \cC \to \cD$ is right cofinal over $R$, then for $q\geq 0$,
$$H_q ( -\backslash F ; \underline R  \circ \pi_{\cC } ) \cong H_q (B (-\backslash F ) ; R ) \cong
 \begin{cases} \underline{R} & \text{if  } \ q=0 \\ 0 &\text{if  } \ q>0.\end{cases}$$   
 If we take $N=\underline{R}$ in the spectral sequence in Theorem \ref{thm:VarGZ}, then for every  
$R\cD$-module $M$, this spectral sequence collapses at the $E_2$-page 
to the horizontal line at $q=0$. This gives that
the edge homomorphism $E_2 ^{*, 0} \to H^* (\cC; \Res _F M)$ is an isomorphism.
We have $E_2 ^{*, 0} \cong \Ext ^* _{R\cD} (\underline R , M)\cong H^* (\cD ; M)$. 
It is is easy to show that the edge homomorphism for this spectral sequence
coincides with the map $F^*$ under this identification.
\end{proof}

%------------------

\subsection{Isomorphisms induced by adjoint pairs}\label{sect:AdjointPair} 

\begin{definition}[{\cite[Def 16.4.1]{Schubert-Book}}]
Let  $F: \cC\to \cD$ and $G: \cD \to \cC$ be functors. The pair $(F, G)$ is called
\emph{a pair of adjoint functors} if there is an isomorphism $$\cD( F(?), ?? ) \maprt{\cong} \cC (?, G(??))$$
of bifunctors $\cC ^{op} \times \cD \to \text{Ens}$ where $\text{Ens}$ is the category of sets in a fixed
universe.
\end{definition}

In the above definition, the $R\cD$-$R\cC$-bimodule
 $R\cD(F(?), ??)$ is the functor $\cC ^{op} \times \cD \to R\text{-Mod}$
 that sends $(c , d) \in \Ob (\cC) \times \Ob (\cD)$ to the $R$-module $R\cD (F(c) ,d)$, and
 the $R\cD$-$R\cC$-bimodule
 $R\cC(?, G(??))$ is the functor $\cC^{op} \times \cD \to R\text{-Mod}$
 that sends $(c , d) \in \Ob (\cC) \times \Ob (\cD)$ to the $R$-module $R\cC (c, G(d))$.
The following is well-known (see \cite[Exercise 2.10]{KashiwaraSchapira-Book}). 
 
\begin{proposition}\label{pro:IndG=ResF} 
Let $(F, G)$ be a pair of adjoint functors where $F: \cC \to \cD$  
and $G: \cD \to \cC$. Then the following holds:
\begin{enumerate}
\item $\Ind _G (-) \cong \Res _F (-)$. As a consequence, $\Ind _G (-)$ is an exact functor. 
\item $\Coind _F(-) \cong \Res _G (-)$. As a consequence, $\Coind _F (-)$ is an exact functor. 
\end{enumerate}
\end{proposition}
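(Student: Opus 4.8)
The plan is to derive both statements directly from the uniqueness of adjoints, using the explicit bimodule descriptions of the induction and coinduction functors recalled earlier. For part (1), I would start from the definition $\Ind_G M = M\otimes_{R\cD} R\cC(??, G(?))$ for a left... wait—here $G:\cD\to\cC$, so $\Ind_G : R\cD\text{-Mod}\to R\cC\text{-Mod}$ is given by tensoring with the $R\cD$-$R\cC$-bimodule $R\cC(??, G(?))$, i.e.\ $(\Ind_G N)(c) = N\otimes_{R\cD} R\cC(c, G(?))$. The adjunction isomorphism $\cD(F(?),??)\cong\cC(?,G(??))$ of bifunctors $\cC^{op}\times\cD\to\text{Ens}$ linearizes to an isomorphism of $R\cD$-$R\cC$-bimodules $R\cD(F(?),??)\cong R\cC(?,G(??))$. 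Transposing indices (swapping the roles of the two variables, which is harmless since a bifunctor on $\cC^{op}\times\cD$ can be read either way), this says the bimodule controlling $\Ind_G$ is $R\cD(F(?),??)$ viewed with its natural $R\cD$-left / $R\cC$-right structure. Then for an $R\cD$-module $N$,
\[
(\Ind_G N)(c) \;\cong\; N\otimes_{R\cD} R\cD(F(c),??) \;\cong\; N(F(c)) \;=\; (\Res_F N)(c),
\]
where the middle isomorphism is Lemma \ref{lem:TensorIsom2}. This identification is natural in $N$ and in $c$, giving $\Ind_G(-)\cong\Res_F(-)$. Exactness of $\Ind_G(-)$ is then immediate, since $\Res_F(-)$ is computed objectwise and is therefore exact.

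For part (2), I would run the dual argument: $\Coind_F$ is defined (Definition \ref{def:Coind}) by $(\Coind_F M)(d) = \Hom_{R\cC}(R\cD(F(?),d), M)$ for an $R\cC$-module $M$. Using the bimodule isomorphism $R\cD(F(?),??)\cong R\cC(?,G(??))$ again, this becomes $(\Coind_F M)(d)\cong \Hom_{R\cC}(R\cC(?,G(d)), M)$, which by the Yoneda lemma (Lemma \ref{lem:YonedaRight}) is isomorphic to $M(G(d)) = (\Res_G M)(d)$, naturally in $M$ and $d$. Hence $\Coind_F(-)\cong\Res_G(-)$, and this functor is exact for the same objectwise reason. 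Alternatively, and perhaps cleaner to present, one can argue purely formally: $(F,G)$ being an adjoint pair of functors between small categories means precomposition gives $\Res_G$ as a right adjoint of $\Res_F$ on module categories and $\Res_F$ as a right adjoint of $\Res_G$; since $\Coind_F$ is \emph{defined} as the right adjoint of $\Res_F$ and right adjoints are unique up to natural isomorphism, $\Coind_F\cong\Res_G$, and symmetrically $\Ind_G$ is the left adjoint of $\Res_G$, which by the adjunction is $\Res_F$. I would likely give the formal argument as the main proof and mention the bimodule computation as the reason the formal argument is valid (i.e.\ to check that $\Res_F$ and $\Res_G$ really are adjoint as functors on module categories).

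The one point that needs care — the "hard part," though it is more a bookkeeping subtlety than a genuine obstacle — is making sure the variance conventions line up: the adjunction is phrased as $\cD(F(?),??)\cong\cC(?,G(??))$, and one must confirm that under the identification of $\Ind_G$ and $\Coind_F$ via their bimodules the left/right module structures match on both sides, so that Lemmas \ref{lem:TensorIsom2} and \ref{lem:YonedaRight} apply with the correct module in the correct slot. Once that is checked, everything else is a one-line application of Yoneda or of the tensor-hom identities, and the exactness claims are free.
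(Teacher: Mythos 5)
Your proposal is correct and follows essentially the same route as the paper: linearize the adjunction $\cD(F(?),??)\cong\cC(?,G(??))$ to an isomorphism of bimodules, then apply Lemma \ref{lem:TensorIsom2} for part (1) and the Yoneda lemma for part (2). The formal uniqueness-of-adjoints remark is a fine supplement, but as you note it still rests on the same bimodule identification, so it is not a genuinely different argument.
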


\begin{proof} Let $(F, G)$ be an adjoint pair and $N$ be an $R\cD$-module $N$. Then
by Lemma \ref{lem:TensorIsom} we have 
$$\Ind_G (N)=N \otimes _{R\cD} R\cC (?, G(??)) \cong N \otimes _{R\cD} R\cD (F(?), ??)
\cong  \Res _F (N).$$
 
For the second statement, observe that for an $R\cC$-module $M$,  we have 
\begin{align*}
\Coind_F (M)  & \cong \Hom _{R\cC} ( R\cD (F(?), ??), M) \cong  \Hom _{R\cC} ( R\cC (?, G(??)), M)  \\
& \cong M(G(??)) \cong \Res _G (M). 
\end{align*}
\end{proof}

Combining Proposition \ref{pro:IndG=ResF} with Shapiro's lemma in Corollary \ref{cor:CoindShapiro},
we obtain the following change of categories theorem.

\begin{theorem}[{\cite[Lem 5.1]{JMO}}]\label{thm:IsomAdjunction} Let $(F,G)$ be an adjoint pair 
where $F:\cC \to \cD$ and $G:\cD \to \cC$. Then 
for every $R\cC$-module $M$, the homomorphism induced by $G$,
$$G^* : H^* (\cC ; M) \xrightarrow{\cong} H^* ( \cD; \Res_G M),$$
is an isomorphism.
\end{theorem}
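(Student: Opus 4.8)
The plan is to deduce Theorem~\ref{thm:IsomAdjunction} by feeding the adjoint pair $(F,G)$ into the machinery already assembled in this section. First I would invoke Proposition~\ref{pro:IndG=ResF}(2), which tells us that $\Coind_F(-) \cong \Res_G(-)$ and, crucially, that $\Coind_F(-)$ is an exact functor. This is the whole point of having set up the adjoint-pair identities: it converts the abstract coinduction functor, whose derived functors would otherwise involve the cohomology of comma categories (Proposition~\ref{pro:RightDerFunc}), into an honest restriction functor along $G$, which is exact and hence has no higher derived functors.

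Next I would apply Shapiro's lemma for the coinduction functor. Since $\Coind_F(-)$ is exact, Corollary~\ref{cor:CoindShapiro} applies verbatim: for every $R\cC$-module $M$ there is an isomorphism $H^*(\cD; \Coind_F M) \cong H^*(\cC; M)$. Combining this with the isomorphism $\Coind_F M \cong \Res_G M$ from Proposition~\ref{pro:IndG=ResF}(2), we obtain $H^*(\cC; M) \cong H^*(\cD; \Res_G M)$, which is the desired isomorphism on the level of abstract $R$-modules.

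The remaining point — and the step that needs the most care — is to check that this composite isomorphism is in fact the natural map $G^*$ of Proposition~\ref{pro:Functorial}, rather than merely some isomorphism between the two groups. I would trace through the construction: the isomorphism in Corollary~\ref{cor:CoindShapiro} comes from the adjunction $\Hom_{R\cD}(\Res_F(-), -) \cong \Hom_{R\cD}(-, \Coind_F(-))$ applied to an injective coresolution, together with the fact (from Proposition~\ref{pro:RightDerFunc}, or directly) that an injective coresolution $M \to I^*$ of $M$ over $R\cC$ pushes forward to an injective coresolution $\Res_G M \cong \Coind_F M \to \Coind_F I^*$ over $R\cD$. One then identifies the edge/comparison map in this situation with the map induced by $G$ on cohomology, exactly as in the last sentence of the proof of Theorem~\ref{thm:JackSlom}. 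Concretely: $G^*$ is built (Proposition~\ref{pro:Functorial}, with the roles of the two categories as in the adjunction) from a chain map $\Ind_{G^{op}}$ of projective resolutions, but by Proposition~\ref{pro:IndG=ResF}(1) $\Ind_G(-) \cong \Res_F(-)$, so the same data on the Hom-complexes recovers the restriction-induced map; chasing the adjunction isomorphisms shows these agree. This naturality bookkeeping is routine but is the only real obstacle, since the existence of the isomorphism is immediate from the two cited results.
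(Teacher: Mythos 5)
Your proposal is correct and follows essentially the same route as the paper: Proposition~\ref{pro:IndG=ResF}(2) gives $\Coind_F(-)\cong\Res_G(-)$ and exactness, Corollary~\ref{cor:CoindShapiro} then yields $H^*(\cD;\Coind_F M)\cong H^*(\cC;M)$, and the composite is identified with $G^*$ (the paper disposes of this last step with a brief appeal to a double complex, while you sketch it via the adjunction and $\Ind_G\cong\Res_F$ — both are sketches of the same bookkeeping).
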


\begin{proof} By Proposition \ref{pro:IndG=ResF}, the coinduction functor $\Coind _G(-)$ is exact, hence 
by Lemma \ref{cor:CoindShapiro} there is an isomorphism
$$H^* ( \cD; \Coind _F M) \cong H^* ( \cC ; M).$$
By Proposition \ref{pro:IndG=ResF}, we can replace $\Coind _F M$ with $\Res_G M$. 
By considering a double complex it is easy to show that the induced map is the same 
as the homomorphism induced by $G$.
\end{proof}

Using the isomorphism in Theorem \ref{thm:IsomAdjunction}, in \cite{Jackowski-Transfer}, a transfer map is defined for cohomology
of categories with coefficients in pre-Green functors.

In the case where $F: \cC \to \cD$ is an embedding onto a full subcategory then the following is also true.

\begin{lemma} If $F: \cC \to \cD$ is an embedding onto a full subcategory, then 
\begin{enumerate}
\item $\Res _F \circ \Coind _F=\id_{R\cC}$. 
\item $\Res_F \circ \Ind _F = \id _{R\cC}$.
\end{enumerate}
\end{lemma}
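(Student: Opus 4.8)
The plan is to verify both identities directly from the bimodule descriptions of $\Coind_F$ and $\Ind_F$, using the fullness and faithfulness of $F$. First I would unpack what it means that $F:\cC\to\cD$ is an embedding onto a full subcategory: for all $x,y\in\Ob(\cC)$ the map $F_{x,y}:\cC(x,y)\to\cD(F(x),F(y))$ is a bijection, so $R\cC(x,y)\cong R\cD(F(x),F(y))$ as $R$-modules, naturally in both variables. I expect this naturality statement — that the family of these bijections assembles into an isomorphism of $R\cC$-$R\cC$-bimodules $R\cC(?,??)\cong R\cD(F(?),F(??))$ — to be the only real content; everything else is formal bookkeeping.

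For part (1), I would compute $(\Res_F\circ\Coind_F)(M)$ objectwise. By Definition \ref{def:Coind}, $(\Coind_F M)(d)=\Hom_{R\cC}(R\cD(F(?),d),M)$, so for $x\in\Ob(\cC)$,
\[
(\Res_F\Coind_F M)(x)=(\Coind_F M)(F(x))=\Hom_{R\cC}(R\cD(F(?),F(x)),M).
\]
Using the bimodule isomorphism $R\cD(F(?),F(x))\cong R\cC(?,x)$ from fullness, this becomes $\Hom_{R\cC}(R\cC(?,x),M)$, which by the Yoneda lemma (Lemma \ref{lem:YonedaRight}) is naturally isomorphic to $M(x)$. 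Tracking the induced maps on morphisms of $\cC$ shows the composite isomorphism is natural in $x$, hence $\Res_F\circ\Coind_F\cong\id_{R\cC}$. (If a strict equality is wanted one can note the Yoneda isomorphism and the fullness bijection are the canonical ones; I would state it as a natural isomorphism and not worry about strictness.)

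For part (2), I would argue dually using the tensor-product description. By Definition \ref{def:Induction}, $\Ind_F M=M\otimes_{R\cC}R\cD(??,F(?))$, so for $x\in\Ob(\cC)$,
\[
(\Res_F\Ind_F M)(x)=(\Ind_F M)(F(x))=M\otimes_{R\cC}R\cD(F(x),F(?)).
\]
Again fullness gives $R\cD(F(x),F(?))\cong R\cC(x,?)$ as left $R\cC$-modules (naturally in $x$), and then Lemma \ref{lem:TensorIsom2} — in the form $N\otimes_{R\cC}R\cC(z,?)\cong N(z)$ — identifies this with $M(x)$, naturally in $x$. So $\Res_F\circ\Ind_F\cong\id_{R\cC}$.

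The main obstacle, such as it is, is purely organizational: making sure the chain of isomorphisms (fullness bijection, then Yoneda or Lemma \ref{lem:TensorIsom2}) is natural in the object variable $x$, so that the objectwise isomorphisms glue to a natural isomorphism of functors rather than just a pointwise one. This is routine once the bimodule isomorphism induced by fullness is recorded explicitly, and I would spend one sentence on it rather than grinding through the diagram chase. No genuinely hard step is expected.
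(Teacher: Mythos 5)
Your proposal is correct and follows essentially the same route as the paper: identify $R\cD(F(?),F(c))$ with $R\cC(?,c)$ via fullness and faithfulness, then apply the Yoneda lemma (Lemma \ref{lem:YonedaRight}) for the coinduction case and Lemma \ref{lem:TensorIsom2} for the induction case. The only difference is that you spell out part (2) and the naturality bookkeeping, which the paper leaves as "proved in a similar way."
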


\begin{proof} See \cite[Thm 2.3.3]{KashiwaraSchapira-Book}.
\end{proof}

%-------------------

 \subsection{Equivalence of Categories} 
 
 Recall that two categories $\cC$ and $\cD$ are called \emph{equivalent} if there are  functors $F: \cC\to \cD$ and 
$G : \cD \to \cC$ such that $F\circ G \cong _{\eta} \id _{\cD}$ and $G \circ F \cong _{\mu} \id _{\cC}$ 
where $\eta$ and $\mu$ are natural isomorphisms.  
 
\begin{theorem}[{\cite[Thm 1.11]{BauesWirsching}}]\label{thm:EquivCat}
Let $F: \cC \to \cD$ be an equivalence of two small categories. Then for every 
$R\cD$-module $M$, the map induced by $F$ is an isomorphism
$$F^* : H^* (\cD ; M) \xrightarrow{\cong}
H^* (\cC ;  \Res_F M).$$
\end{theorem}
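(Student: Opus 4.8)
The plan is to deduce Theorem \ref{thm:EquivCat} from the adjunction machinery already in place, since an equivalence of categories is in particular an adjoint pair in both directions. First I would observe that if $F:\cC\to \cD$ is an equivalence with quasi-inverse $G:\cD\to \cC$ and natural isomorphisms $\eta: F\circ G \cong \id_{\cD}$ and $\mu: G\circ F\cong \id_{\cC}$, then $(F,G)$ is an adjoint pair: one can always adjust $\eta$ (or $\mu$) so that the triangle identities hold, and this is standard. Hence Theorem \ref{thm:IsomAdjunction} applies directly: for every $R\cC$-module $M$, the homomorphism induced by $G$,
$$G^* : H^* (\cC ; M) \xrightarrow{\cong} H^* (\cD; \Res_G M),$$
is an isomorphism.

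Next I would use that $G$ is itself an equivalence (with quasi-inverse $F$), so that the pair $(G,F)$ is also an adjoint pair. Applying Theorem \ref{thm:IsomAdjunction} again, this time with the roles of the categories and functors swapped, gives that for every $R\cD$-module $M$,
$$F^* : H^* (\cD ; M) \xrightarrow{\cong} H^* (\cC; \Res_F M)$$
is an isomorphism. This is exactly the statement to be proved, so in fact Theorem \ref{thm:EquivCat} is an immediate corollary of Theorem \ref{thm:IsomAdjunction} once one notes that an equivalence is an adjoint pair in both directions.

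There is a mild bookkeeping point worth addressing in the write-up: Theorem \ref{thm:IsomAdjunction} is stated for a genuine adjoint pair, whereas an equivalence only a priori comes with arbitrary natural isomorphisms $\eta,\mu$. The standard fix is to replace $\eta$ by $\eta' := \eta \circ (F\mu^{-1}G) \circ (\text{suitable}\ldots)$ so that $(F,G,\eta',\mu)$ satisfies the triangle identities; alternatively, one can just observe that the proof of Theorem \ref{thm:IsomAdjunction} only uses the bimodule isomorphism $R\cD(F(?),??)\cong R\cC(?,G(??))$, which follows from any equivalence since an equivalence induces bijections $\cD(F(c),d)\cong \cC(c,G(d))$ on morphism sets (apply $G$ and use $\mu$). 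Either way the hypothesis of Theorem \ref{thm:IsomAdjunction} is met.

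The main obstacle, such as it is, is purely expository rather than mathematical: making sure the reader sees that ``equivalence'' supplies the adjunction data needed for Theorem \ref{thm:IsomAdjunction}, and that one must invoke that theorem with $G$ (respectively $F$) playing the role of the left-hand functor in order to land on the asserted isomorphism $F^*$. I would also remark, as a sanity check consistent with the earlier discussion, that $BF:B\cC\to B\cD$ is a homotopy equivalence when $F$ is an equivalence of categories, so the constant-coefficient case is already visible topologically; Theorem \ref{thm:EquivCat} upgrades this to arbitrary module coefficients, which is precisely what the adjunction argument delivers.
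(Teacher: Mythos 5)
Your proposal is correct and follows essentially the same route as the paper: the paper's proof simply notes that an equivalence $F$ admits a left adjoint $L$ (citing Richter) and applies Theorem \ref{thm:IsomAdjunction} to the pair $(L,F)$, which is exactly your second application of that theorem with the quasi-inverse $G$ in the role of the left adjoint. Your first application (concluding $G^*$ is an isomorphism) is superfluous for the statement, and your remarks on promoting the equivalence to an adjoint equivalence are just a more explicit account of the same standard fact the paper outsources to a reference.
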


\begin{proof} In this case the functor $F: \cC\to \cD$ possesses a left adjoint  $L$ (see 
\cite[Thm 2.5.1]{Richter-Book}). Applying Theorem \ref{thm:IsomAdjunction} to the pair $(L, F)$, we
conclude that $F^*$ is an isomorphism.
\end{proof}
 
The category $\cC$ is \emph{skeletal} if any two objects in $\cC$ that are isomorphic are equal. 
A subcategory $\cC'$ of $\cC$ is called a \emph{skeleton} of $\cC$ if it is skeletal and 
if the inclusion functor $i: \cC' \to \cC$ is an equivalence. Every category $\cC$ has a 
skeleton, and up to isomorphism, the skeleton of a category is unique 
(see \cite[Propositions 16.3.4 and 16.3.7]{Schubert-Book}). 
We denote one of these choices of skeletons by $sk(\cC)$ 
and call it the skeleton of the category.  

\begin{corollary}\label{cor:Skeletal} Let $\cC$ be a small category, and let $i: sk(\cC) \to \cC$ 
denote the inclusion of the skeleton of $\cC$. Then for every $R\cC$-module $M$,
we have $H^*(\cC; M)\cong H^* (sk (\cC); \Res_i M) $.
\end{corollary}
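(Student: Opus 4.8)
The plan is to derive Corollary \ref{cor:Skeletal} as an immediate application of Theorem \ref{thm:EquivCat}. By definition of a skeleton, the inclusion functor $i: sk(\cC) \to \cC$ is an equivalence of categories. Theorem \ref{thm:EquivCat} applies to any equivalence $F: \cC' \to \cD'$ of small categories, stating that for every $R\cD'$-module $M$ the induced map $F^*: H^*(\cD'; M) \to H^*(\cC'; \Res_F M)$ is an isomorphism.

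So the proof is: apply Theorem \ref{thm:EquivCat} with $F = i$, $\cC' = sk(\cC)$, and $\cD' = \cC$. Since $i$ is an equivalence, for every $R\cC$-module $M$ the induced homomorphism
$$i^* : H^*(\cC; M) \xrightarrow{\cong} H^*(sk(\cC); \Res_i M)$$
is an isomorphism, which is exactly the statement of the corollary.

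There is essentially no obstacle here; the only thing to check is that the hypotheses of Theorem \ref{thm:EquivCat} are literally met, namely that $i: sk(\cC) \to \cC$ is an equivalence of small categories, and this holds by the definition of a skeleton given just before the corollary (together with the cited facts from \cite[Propositions 16.3.4 and 16.3.7]{Schubert-Book} that a skeleton exists). One might also remark, for the reader's comfort, that $sk(\cC)$ is small because it is a subcategory of the small category $\cC$. I would keep the proof to one or two sentences, simply invoking Theorem \ref{thm:EquivCat}.
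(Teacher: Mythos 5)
Your proposal is correct and is exactly the paper's (implicit) argument: the corollary is stated as an immediate consequence of Theorem \ref{thm:EquivCat}, using that the inclusion $i: sk(\cC)\to\cC$ is an equivalence by the definition of a skeleton. Nothing further is needed.
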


This corollary shows that when we are studying the cohomology of small categories,
we can replace the category with its skeleton and assume that it is skeletal. We will do this 
throughout the paper sometimes without mentioning that we are replacing the category with its skeleton.

\subsection{First homotopy property}

In this section we prove a version of the proposition proved in \cite[\S 18]{Penner-Book}, 
called the first homotopy property. We start with some definitions. 

Let $F, G: \cC\to \cD$ be two functors between small categories
and $\eta: F \Rightarrow G$ be a natural transformation from $F$ to $G$. Let $M$ be an $R\cD$-module.
For every morphism $\varphi : x \to y$ in $\cC$, there is a commuting diagram:
$$\xymatrixcolsep{4pc}\xymatrix{M (G(y))  \ar[r]^{M \circ \eta _y}  \ar[d]^{M (G (\varphi)) }  & M(F(y)) \ar[d]^{M(F(\varphi )) } \\ 
  M (G(x))  \ar[r]^{M \circ \eta _x}  & M(F(x)). }$$
This diagram gives that the composition $M \circ \eta$ induces an $R\cC$-module homomorphism
$$\Res_\eta : \Res _G M \to \Res _F M.$$ 
There is a similar homomorphism defined on the induction functors described as follows: The natural transformation $\eta$, defines
an $R\cC$-$R\cD$-bimodule homomorphism
$$R\cD (??, F(?)) \to R\cD (??, G(?))$$
defined by composition with $\eta$. For every $R\cC$-module $M$, this gives an $R\cD$-module homomorphism
$$\Ind _{\eta} : \Ind _F M \cong M \otimes _{R\cC} R\cD( ??, F(?)) \to M \otimes _{R\cC} R\cD (??, G(?)) \cong \Ind _G M.$$
These two homomorphisms commute with adjunction homomorphisms in the following sense:

\begin{lemma}\label{lem:CommDiagram} Let $F, G: \cC\to \cD$ be two functors and 
$\eta: F \Rightarrow G$ be a natural transformation. Then 
for every $R\cC$-module $M$ and $R\cD$-module $N$, the following diagram commutes:
$$
\xymatrix{\Hom _{R\cD} (\Ind _G M, N ) \ar[r]^{\Psi _G \cong}   \ar[d]^{\Hom(\Ind _{\eta}, -)}
& \Hom _{R \cC} (M, \Res_G N) \ar[d]^{\Hom(-, \Res_{\eta} )} \\
\Hom _{R\cD} (\Ind _F M, N ) \ar[r]^{\Psi _F \cong}  &  \Hom _{R \cC} (M, \Res_F N)} 
$$
\end{lemma}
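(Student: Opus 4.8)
The plan is to unwind both adjunction isomorphisms explicitly in terms of the bimodule descriptions of $\Ind_F$ and $\Coind_F$, and to track a general element around the square. Recall from the proof of Proposition \ref{pro:IndLeftAdjoint} that $\Psi_F$ is the composite
\[
\Hom_{R\cD}(\Ind_F M, N) \cong \Hom_{R\cD}(M \otimes_{R\cC} R\cD(??,F(?)), N) \cong \Hom_{R\cC}(M, \Hom_{R\cD}(R\cD(??,F(?)), N)) \cong \Hom_{R\cC}(M, \Res_F N),
\]
where the middle isomorphism is tensor-hom adjunction and the last is the Yoneda identification $\Hom_{R\cD}(R\cD(??,F(?)),N) \cong N(F(?)) = \Res_F N$. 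Concretely, $\Psi_F$ sends $g\colon \Ind_F M \to N$ to the $R\cC$-module map $\widetilde g\colon M \to \Res_F N$ whose component at $c \in \Ob(\cC)$ is
\[
\widetilde g_c(m) = g_{F(c)}\bigl(m \otimes \id_{F(c)}\bigr), \qquad m \in M(c),
\]
using that an element of $\Res_F N(c) = N(F(c))$ is pinned down by evaluating on $\id_{F(c)} \in R\cD(F(c),F(c))$. An entirely analogous formula holds for $\Psi_G$ with $F$ replaced by $G$.

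First I would record the explicit descriptions of the two vertical maps. The map $\Ind_\eta\colon \Ind_F M \to \Ind_G M$ is induced by the bimodule map $R\cD(??,F(?)) \to R\cD(??,G(?))$, $f \mapsto \eta_{?}\circ f$; hence on $(\Ind_F M)(d)$ it sends $[m \otimes f]$ to $[m \otimes (\eta_c \circ f)]$ for $m \in M(c)$, $f \in R\cD(d,F(c))$. The map $\Res_\eta\colon \Res_G N \to \Res_F N$ has component at $c$ equal to $N(\eta_c)\colon N(G(c)) \to N(F(c))$.

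Next I would chase an arbitrary $g \in \Hom_{R\cD}(\Ind_G M, N)$ around both ways. Going right then down: $\Psi_G(g)$ has component $c \mapsto \bigl(m \mapsto g_{G(c)}(m \otimes \id_{G(c)})\bigr)$, and post-composing with $\Res_\eta$ gives the $R\cC$-map with component $m \mapsto N(\eta_c)\bigl(g_{G(c)}(m \otimes \id_{G(c)})\bigr)$. Going down then right: $g \circ \Ind_\eta$ is an element of $\Hom_{R\cD}(\Ind_F M, N)$ whose component at $d$ sends $[m \otimes f] \mapsto g_d([m \otimes (\eta_c \circ f)])$; applying $\Psi_F$ we get the $R\cC$-map with component $m \mapsto g_{F(c)}(m \otimes \eta_c)$, where $m \otimes \eta_c$ means $m \otimes (\eta_c \circ \id_{F(c)}) = m \otimes \eta_c \in (\Ind_F M)(F(c))$. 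So the required equality is
\[
N(\eta_c)\bigl(g_{G(c)}(m \otimes \id_{G(c)})\bigr) = g_{F(c)}(m \otimes \eta_c).
\]
This follows from naturality of $g$ applied to the morphism $\eta_c\colon F(c) \to G(c)$ in $\cD$: the square relating $g_{F(c)}$ and $g_{G(c)}$ via $(\Ind_G M)(\eta_c)$ and $N(\eta_c)$ commutes, and $(\Ind_G M)(\eta_c)$ sends $m \otimes \id_{G(c)}$ (as an element living over $G(c)$) to $m \otimes \eta_c$, by the defining relation in the tensor product together with the identity $\eta_c = \id_{G(c)}\circ \eta_c$. Chasing this through gives the identity above.

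The only real subtlety — and the step I expect to be the main obstacle — is bookkeeping the difference between an element of $M(c) \otimes R\cD(d, F(c))$ viewed "over $d$" versus the reindexing that happens when one evaluates at $\id_{F(c)}$ or applies $(\Ind_G M)(\eta_c)$; one must be careful that $m \otimes \eta_c$ is simultaneously the image of $m\otimes \id_{F(c)}$ under $\Ind_\eta$ (at the object $F(c)$) and the image of $m\otimes \id_{G(c)}$ under $(\Ind_G M)(\eta_c)$. Once those identifications are made precise, the commutativity is exactly naturality of $g$, and the diagram commutes. (As a sanity check, naturality of the adjunction isomorphism in Proposition \ref{pro:IndLeftAdjoint} with respect to maps in the $\Ind_F M$ variable already gives commutativity of the square where the vertical maps are $\Hom(h, N)$ and $\Hom(\Res\text{-of-}h, N)$ for an arbitrary $h\colon \Ind_F M \to \Ind_G M$; the content here is just that for $h = \Ind_\eta$ the induced map on the restriction side is precisely $\Hom(-, \Res_\eta)$, which is the computation above.)
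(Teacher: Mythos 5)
Your proof is correct; the paper in fact states this lemma without proof, and your element chase through the explicit bimodule description of the adjunction is exactly the routine verification being omitted. The key identity $N(\eta_c)\bigl(g_{G(c)}(m\otimes\id_{G(c)})\bigr)=g_{F(c)}(m\otimes\eta_c)$ does follow from naturality of $g$ at $\eta_c$ as you say; the only quibble is that $(\Ind_G M)(\eta_c)(m\otimes\id_{G(c)})=m\otimes\eta_c$ comes from the contravariant functoriality of $R\cD(-,G(c))$ in the first variable (precomposition with $\eta_c$), not from the defining relations $J$ of the tensor product, which only govern the $\cC$-variable.
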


Now we are ready to state the main result of this section.

\begin{proposition}\label{pro:FirstHomotopy} Let $F, G: \cC\to \cD$ be two functors and 
$\eta: F \Rightarrow G$ be a natural transformation. Then for every $R\cD$-module $M$, the 
following diagram commutes:
$$\xymatrix@C=1.6em@R=1em{  &  H^*(\cC ; \Res_G M) \ar[dd]^{ (\Res _{\eta})_* }    \\  H^* (\cD; M) \ar[ru]^{G^*} \ar[rd]^{F^*}  & 
\\  & H^* (\cC; \Res _F M)    }$$
\end{proposition}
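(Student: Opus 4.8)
The plan is to upgrade the commutative square of Lemma~\ref{lem:CommDiagram} from the level of adjunction isomorphisms to the level of (co)chain complexes, and then read off the claimed triangle by applying $\Hom_{R\cD}(-,M)$ after inducing up a projective resolution. First I would fix an $R\cC$-projective resolution $P_* \to \underline{R}_\cC$ and $R\cD$-projective resolutions for $\underline{R}_\cD$; recall from the proof of Proposition~\ref{pro:Functorial} that $G^*$ is computed by choosing a chain map $\mu^G_* : \Ind_G P_* \to Q_*$ lifting the canonical map $\mu_G : \Ind_G \underline{R}_\cC \to \underline{R}_\cD$ (and similarly for $F$, with the \emph{same} target resolution $Q_*$). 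The key point is that $\Ind_\eta : \Ind_F(-) \Rightarrow \Ind_G(-)$ is a natural transformation of functors taking projectives to projectives (Corollary~\ref{cor:ResProj}), so $\Ind_\eta P_* : \Ind_F P_* \to \Ind_G P_*$ is a chain map of projective complexes covering the identity-compatible map $\Ind_F \underline{R}_\cC \to \Ind_G \underline{R}_\cC$; composing with $\mu^G_*$ gives a chain map $\Ind_F P_* \to Q_*$ lifting $\mu_F$, which by uniqueness of lifts up to chain homotopy may be taken to be $\mu^F_*$ (or differs from it by a homotopy, which is all we need).

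Next I would assemble the diagram. Applying $\Hom_{R\cD}(-,M)$ to $\mu^G_*$, to $\mu^F_* \simeq \mu^G_* \circ \Ind_\eta P_*$, and using the natural adjunction isomorphism $\Psi$ of Proposition~\ref{pro:IndLeftAdjoint} together with the identifications $\Hom_{R\cD}(\Ind_F P_*, M) \cong \Hom_{R\cC}(P_*, \Res_F M)$ and likewise for $G$, the maps $G^*$ and $F^*$ become, on cohomology, the composites
$$
H^*(\cD;M) \to H^*(\Hom_{R\cD}(Q_*,M)) \to H^*(\Hom_{R\cC}(P_*, \Res_G M)) \cong H^*(\cC;\Res_G M)
$$
and the analogous one with $\Res_F M$. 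The left-hand map $H^*(\cD;M) \to H^*(\Hom_{R\cD}(Q_*,M))$ is common to both. Lemma~\ref{lem:CommDiagram}, applied with $M$ replaced by $P_n$ for each $n$ and assembled over the complex, shows precisely that $\Psi_F \circ \Hom(\Ind_\eta, M) = \Hom(-, \Res_\eta) \circ \Psi_G$ as maps of cochain complexes; passing to cohomology, the square
$$
\xymatrix{ H^*(\Hom_{R\cC}(P_*,\Res_G M)) \ar[r]^{(\Res_\eta)_*} & H^*(\Hom_{R\cC}(P_*,\Res_F M)) \\ H^*(\Hom_{R\cD}(\Ind_G P_*, M)) \ar[u]^{\cong} \ar[r] & H^*(\Hom_{R\cD}(\Ind_F P_*, M)) \ar[u]^{\cong} }
$$
commutes, where the bottom horizontal map is induced by $\Ind_\eta P_*$. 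Splicing this with the factorization $\mu^F_* \simeq \mu^G_* \circ \Ind_\eta P_*$ gives exactly $F^* = (\Res_\eta)_* \circ G^*$ under the canonical identifications, which is the asserted triangle.

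The main obstacle I anticipate is bookkeeping rather than conceptual: one must be careful that the lift $\mu^F_*$ chosen to compute $F^*$ is genuinely chain-homotopic (not merely equal) to $\mu^G_* \circ \Ind_\eta P_*$, and that passing from the objectwise statement of Lemma~\ref{lem:CommDiagram} to a statement about the whole complex $P_*$ is compatible with all the differentials — this is where naturality of $\Psi$ in both variables is used essentially. It would also be worth noting, as in the proof of Theorem~\ref{thm:IsomAdjunction}, that one can instead argue via a double complex $\Hom_{R\cD}(\Ind_F P_* , J^*)$ where $\underline R_\cD \to J^*$ would be an injective coresolution — but the projective-resolution route above keeps everything parallel to Proposition~\ref{pro:Functorial} and avoids introducing new resolutions. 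Once the chain-level square commutes and the lift factorization is in place, the proposition follows by taking cohomology.
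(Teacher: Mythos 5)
Your proposal is correct and follows essentially the same route as the paper's proof: fix projective resolutions, use $\mu_F=\mu_G\circ\Ind_\eta$ to get a chain homotopy $(\mu_F)_*\simeq(\mu_G)_*\circ\Ind_\eta P_*$, and invoke the compatibility square of Lemma~\ref{lem:CommDiagram} to convert $\Ind_\eta$ into $\Res_\eta$ after applying $\Hom_{R\cD}(-,M)$. Your explicit justification of the chain homotopy via uniqueness of lifts from projective complexes is a welcome detail that the paper only asserts.
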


\begin{proof} Let $\underline R_{\cC}$ and $\underline R_{\cD}$ denote the constant functors for $\cC$ and $\cD$, respectively.
 For each functor $F: \cC\to \cD$, we have an $R\cC$-module homomorphism $j_F: \underline R_{\cC} \to \Res _F \underline R_{\cD}$ and an $R\cD$-module
homomorphism $\mu_F: \Ind _F \underline R _{\cC} \to \underline R_{\cD}$. These homomorphisms are 
 associated to each other under the isomorphism in Proposition \ref{pro:IndLeftAdjoint}. Let 
 $\eta : F \Rightarrow G$ be a natural transformation, then we have $j_F=\Res _{\eta} \circ j_G $ and $\mu _F=\mu_G \circ \Ind _{\eta}$.

Let $P_*$ be an $R\cC$-projective resolution of $\underline R_{\cC}$, and $Q_*$ be an $R\cD$-projective resolution of $\underline R _{\cD}$.
Let $(\mu _F)_*: \Ind _F P_* \to Q_*$ be a chain map covering  the homomorphism
$\mu _F: \Ind _F \underline R _{\cC}  \to \underline R_{\cD}$, and $(\mu _G)_*: \Ind _G P_* \to Q_*$ be a chain map covering  the homomorphism
$\mu _G: \Ind _G \underline R _{\cC}  \to \underline R_{\cD}$. Since $\mu _F=\mu_G \circ \Ind _{\eta}$, there is a chain homotopy between $(\mu_F)_*$ and
$(\mu _G )_* \circ \Ind_{\eta} $.
For every $R\cD$-module $M$, the homomorphism $F^*$ is defined by the composition
\begin{align*}
H^* (\cD; M) &\cong H^* ( \Hom _{R\cD } (Q_* , M) ) \xrightarrow{(\mu_F)^*} H^* ( \Hom _{R\cD} ( \Ind _F P_* , M)) \\
& \xrightarrow{(\Psi _F) _*\cong}   H^* (\Hom _{R\cC} (P_* , \Res _F M))  \cong H^* (\cC; \Res_F M).
\end{align*}
There is a similar description for the induced map $G^*$ using $(\mu _G )^*$ and $(\Psi _G )_*$.
Using the chain homotopy $(\mu_F)_* \simeq (\mu _G )_* \circ \Ind_{\eta} $
and the commuting diagram in Lemma \ref{lem:CommDiagram}, we conclude
\begin{align*}
(\Res _{\eta} )_* \circ G^* & = (\Res _{\eta} )_* \circ (\Psi_G)_* \circ (\mu _G)^*  
= (\Psi _F )_*  \circ  (\Ind _{\eta })^* \circ (\mu _G)^*  \\
& =(\Psi_F)_* \circ (\mu _F  )^* =F^*.
\end{align*}
\end{proof}

There is also a  homology group version of Proposition \ref{pro:FirstHomotopy} stated using the induced homomorphisms
on homology groups introduced in Proposition \ref{pro:FunctorialHom}. The proof is similar to the cohomology version.

%------------------------------

\section{LHS-spectral sequences for extensions of categories}\label{sect:Extensions} 
 
In group theory, an extension of a group $G$ with kernel $K$ 
is defined as a sequence of group homomorphism 
 $$1 \to K \maprt{i} \widehat G \maprt{\pi} G \to 1$$
such that  $i$ is injective and $\pi$ is a surjective with kernel 
equal to $i(K) \subseteq \widehat G$. There is an equivalence relation on such extensions and the equivalence classes 
of extensions of $G$ by $K$ forms a group, denoted by $\mathrm{Ext} (G, K)$. When $K=M$ is an abelian group, then there is a 
$\bbZ G$-module structure on $M$ induced by the conjugations in $\widehat G$. Given a group $G$ 
and a $\bbZ G$-module $M$, the equivalence classes of extensions of $G$ by $M$ is isomorphic 
to the cohomology group $H^2 (G, M)$. There is a similar extension theory for regular extensions of small categories.

\begin{definition}[{\cite[Def 1.1]{Hoff-Extensions}, \cite[Def A.5]{OliverVentura}}]\label{def:RegularExt} 
Let $\cC$ and $\cD$ be two small categories such that $\Ob (\cC)=\Ob(\cD)$. 
Let $\pi: \cC \to \cD$ be a functor that is the identity map on objects and surjective on morphism sets. 
For each $x\in \Ob (\cC)$, set $$K(x) :=\ker \{ \pi_{x,x} : \Aut _{\cC} (x) \to \Aut _{\cD} (x)\}.$$ 
The groups $K(x)$ and $K(y)$ act on the hom-set $\cC(x,y)$ via composition. 
\begin{enumerate}
\item 
The functor  $\pi: \cC \to \cD$ is  \emph{target regular}  if for every 
$x, y \in \Ob(\cC)$,  the left $K(y)$-action on $\cC (x,y)$ is free and 
$\pi _{x,y} : \cC (x,y) \to \cD (x,y)$ is the orbit map of this action. 
\item The functor  $\pi: \cC \to \cD$ is \emph{source regular}  if for every 
$x, y \in \Ob(\cC)$, the right $K(x)$-action on $\cC (x,y) $ is free and 
$\pi _{x,y} : \cC (x,y) \to \cD (x,y)$ is the orbit map of this action. 
\end{enumerate}
\end{definition}

Note that a functor $\pi: \cC\to \cD$ is source regular if the opposite functor $\pi ^{op} : \cC^{op} \to \cD ^{op}$ is target regular.
We will only  introduce the definitions for target regular extensions and assume that the analogous 
definitions for source regular extensions are also introduced.

If $\pi : \cC \to \cD$ is a target regular functor, then the set of subgroups $\cK:=\{ K(x)\}$ over $x\in \Ob (\cC)$  
can be considered as a (discrete)  category 
where $\cK (x, y)=K(x)$ when $x=y$ and empty otherwise. There is a functor 
$i: \cK \to \cC$ that sends every $k\in K(x)$ to the corresponding automorphism in $\Aut_{\cC} (x)$.
A \emph{target regular extension of categories} is a sequence of functors 
$$\cE: \cK \maprt{i} \cC \maprt{\pi} \cD $$
where $\pi$ is a target regular functor. We call the discrete category $\cK$ the \emph{kernel of the extension}. 
When $K(x)$ is an abelian group for every $x \in \Ob(\cC)$, then we say $\cE$ is an extension with abelian kernel. 
We identify the elements $k_x \in K(x)$ with their images under $i_x$ and write $k_x$ for the automorphism
$i_x(k_x)$ in $\Aut _{\cC} (x)$.

\begin{definition} Given a target regular extension $\cE$ as above, there is an induced action of 
$\cC$ on $\cK$ defined as follows:
For every morphism $\varphi: x \to y$ in $\cC$, and for every $k_x  \in K(x)$, 
there is a unique $k_y \in K(y)$ such that  $k_y \circ \varphi =\varphi \circ k_x $.
The assignment $k_x \to k_y$ defines a group homomorphism $K(\varphi): K(x) \to K(y)$. 
We call this \emph{the action of $\cC$ on $\cK$} induced by the extension $\cE$. 
The $\cC$-action on $\cK$ defines a functor $\cC \to Groups$ that sends an object 
$x\in \Ob (\cC)$ to $K(x)$ and a morphism $\varphi : x \to y$ in $\cC$ to  the group 
homomorphism $K(\varphi): K(x)\to K(y)$.  
\end{definition}

For a target regular extension, the action of $\cC$ on $\cK$ induces an action of $\cD$ on $\cK$ modulo the inner automorphisms
of the groups $K(x)$, defined as follows: For every morphism $\varphi : x \to y$ in $\cD$, let $\widetilde \varphi $
be a lifting of $\varphi$ in $\cC$, and let $K(\widetilde \varphi) : K(x)\to K(y)$ be the induced action
of $\widetilde \varphi$ on $\cK$. For a different choice of lifting $\widetilde \varphi$, say $\widetilde \varphi'$, 
we have $\widetilde \varphi '=k_y \circ \widetilde \varphi$ for some $k_y \in K(y)$. This gives that 
$K(\widetilde \varphi ')=c_{k_y} \circ K(\widetilde \varphi)$, hence  
$\varphi\in\cD (x, y)$ induces a well-defined group homomorphism $$K(x)/\Inn (K(x)) \to K(y)/\Inn (K(y)).$$
This defines the $\cD$-action on $\{ K(x)\}$ modulo the inner automorphisms.
If $\cE$ is an extension with an abelian kernel $\cK=\{M (x)\}$, then every morphism $\varphi : x \to y$ in $\cD$ induces a well-defined 
group homomorphism $M(\varphi) : M(x)\to M(y)$ which defines a left $\bbZ \cD$-module
structure on $M$. For a source regular extension, in a similar way we obtain a right $R\cD$-module structure on $M$.

\begin{definition}[{\cite[p. 20]{Webb-Survey}}] Two target regular extensions $\cE_1$ and $\cE_2$ are \emph{equivalent} 
if there is functor $F: \cC_1 \to \cC_2$ such that the following diagram commutes. 
$$\xymatrix@C=2.1em@R=1.2em{ & \cC_1 \ar[dd]^F \ar[dr]^{\pi_1} & \\   
\cK \ar[ur]^{i_1} \ar[dr]_{i_2}&  & \cD  \\ & \cC_2 \ar[ur]_{\pi_2} & }$$
In this case $F$ is an equivalence of categories and this relation is an
equivalence relation. 
\end{definition}

We have the following classification theorem for equivalence classes of regular extensions.

\begin{theorem} Let $\cD$ be a small category and $M$ be a left $\bbZ \cD$-module. 
The equivalence classes of target regular extensions $$\cE: M \maprt{i} \cC \maprt{\pi} \cD $$ 
with kernel $M$ is in one-to-one correspondence with the cohomology classes in $H^2 (\cD, M)$.
\end{theorem}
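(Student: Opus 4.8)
The plan is to run the classical argument that classifies group extensions with abelian kernel by $H^2$, carried out inside the bar-type cochain complex computing $H^*(\cD;M)$. Since $M$ is a (covariant) left $\bbZ\cD$-module, $H^*(\cD;M)$ is the cohomology of the complex $C^n(\cD;M)=\prod M(x_n)$, the product over all $n$-chains $x_0\maprt{\alpha_1}x_1\to\cdots\maprt{\alpha_n}x_n$ in $\cD$, with the alternating-sum differential whose last face $d_{n+1}$ is twisted by the action $M(\alpha_{n+1})$; concretely this is $\Hom_{\bbZ\cD}(-,M)$ applied to the projective left-module resolution of $\underline{\bbZ}$ with $B_n=\bigoplus_{x_0\to\cdots\to x_n}\bbZ\cD(x_n,?)$, using the covariant version of the Yoneda isomorphism of Lemma \ref{lem:YonedaRight}. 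Thus a normalized $2$-cocycle is a family $f(\alpha_1,\alpha_2)\in M(x_2)$, indexed by composable pairs $x_0\maprt{\alpha_1}x_1\maprt{\alpha_2}x_2$, vanishing whenever $\alpha_1$ or $\alpha_2$ is an identity and satisfying $f(\alpha_2,\alpha_3)-f(\alpha_2\alpha_1,\alpha_3)+f(\alpha_1,\alpha_3\alpha_2)-M(\alpha_3)\bigl(f(\alpha_1,\alpha_2)\bigr)=0$ for every composable triple $x_0\maprt{\alpha_1}x_1\maprt{\alpha_2}x_2\maprt{\alpha_3}x_3$, while $\delta$ on a $1$-cochain $b$ is $(\delta b)(\alpha_1,\alpha_2)=b(\alpha_2)-b(\alpha_2\alpha_1)+M(\alpha_2)\bigl(b(\alpha_1)\bigr)$.

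First I would attach a class to an extension. Given a target regular extension $\cE: 1\to M\maprt{i}\cC\maprt{\pi}\cD\to 1$, each fiber $\pi_{x,y}^{-1}(\varphi)$ is a free transitive $M(y)$-set via $m\mapsto i_y(m)\circ-$, so one can choose a set section $s=s_{x,y}:\cD(x,y)\to\cC(x,y)$ of $\pi_{x,y}$ normalized by $s(\id_x)=\id_x$. For composable $x\maprt{\varphi}y\maprt{\psi}z$ in $\cD$ there is then a unique $f_s(\varphi,\psi)\in M(z)$ with $s(\psi)\circ s(\varphi)=i_z\bigl(f_s(\varphi,\psi)\bigr)\circ s(\psi\varphi)$. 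Expanding the associativity identity $\bigl(s(\chi)\circ s(\psi)\bigr)\circ s(\varphi)=s(\chi)\circ\bigl(s(\psi)\circ s(\varphi)\bigr)$ in $\cC$ and using the defining relation $\beta\circ i_y(m)=i_z\bigl(M(\pi\beta)(m)\bigr)\circ\beta$ of the $\cD$-action on the abelian kernel yields exactly the cocycle identity above, and normalization of $s$ gives normalization of $f_s$. A second normalized section has the form $s'(\varphi)=i_y\bigl(b(\varphi)\bigr)\circ s(\varphi)$ for a unique normalized $1$-cochain $b$, and a direct unwinding gives $f_{s'}=f_s+\delta b$; likewise, an equivalence $F:\cC_1\to\cC_2$ of extensions over $\cD$ carries a section of $\pi_1$ to one of $\pi_2$ with the same cocycle. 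Hence $[\cE]\mapsto[f_s]\in H^2(\cD;M)$ is well defined on equivalence classes.

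Conversely, from a normalized $2$-cocycle $f$ I would build a category $\cC_f$ with $\Ob(\cC_f)=\Ob(\cD)$, with $\cC_f(x,y)=M(y)\times\cD(x,y)$ as a set, composition $(n,\psi)\circ(m,\varphi):=\bigl(n+M(\psi)(m)+f(\varphi,\psi),\ \psi\varphi\bigr)$ and identities $(0,\id_x)$; the cocycle identity is precisely what makes composition associative, and the vanishing of $f$ on identities gives the unit axioms. The second-coordinate projection $\pi_f:\cC_f\to\cD$ is the identity on objects, $m\mapsto(m,\id_x)$ embeds $M(x)$ as $K(x)=\ker\{(\pi_f)_{x,x}\}$, and $M(y)$ acts freely and transitively on each fiber of $\pi_f$ by $n'\cdot(n,\varphi)=(n'+n,\varphi)$, so $\cE_f: 1\to M\maprt{i}\cC_f\maprt{\pi_f}\cD\to 1$ is a target regular extension; one more short computation (again using $f$ vanishing on identities) shows that the $\bbZ\cD$-module structure it induces on its kernel is the given $M$. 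If $f'=f+\delta b$ with $b$ normalized, then $(m,\varphi)\mapsto\bigl(m-b(\varphi),\varphi\bigr)$ is an isomorphism $\cC_f\to\cC_{f'}$ commuting with $i$ and $\pi$, so the class of $\cE_f$ depends only on $[f]$. Finally, the two assignments are mutually inverse on equivalence classes: for $\cE$ with section $s$, the map $\cC_{f_s}\to\cC$, $(m,\varphi)\mapsto i_y(m)\circ s(\varphi)$, is an isomorphism of categories compatible with $i$ and $\pi$ (this is just the defining equation of $f_s$), and for a cocycle $f$ the canonical section $\varphi\mapsto(0,\varphi)$ of $\pi_f$ recovers $f$ exactly.

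I expect the only real obstacle to be notational discipline: keeping every sign, every twist by $M(-)$, and every normalization consistent across the cocycle identity, the associativity computation in $\cC$, the composition law in $\cC_f$, and the coboundary formula, so that the equation $\delta f=0$ literally becomes associativity and $f'-f=\delta b$ literally becomes equivalence of the corresponding extensions. There is no conceptual novelty beyond the group case; the one genuinely new point to watch is that the hom-sets $\cC(x,y)$ are not groups but only $M(y)$-torsors, so every ``free transitive'' statement must be made relative to the correct group $M(y)$, and the action $M(\psi)$ --- not $M(\varphi)$ --- must sit in the correct slot of the composition law.
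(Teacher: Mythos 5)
The paper states this theorem without proof (it is presented as a known classification result, in the spirit of \cite[Def A.5]{OliverVentura} and the group-extension case), so there is no internal argument to compare against. Your proposal is a correct and essentially complete proof by the standard torsor/cocycle method. I checked the two computations on which everything hinges: expanding $\bigl(s(\chi)\circ s(\psi)\bigr)\circ s(\varphi)=s(\chi)\circ\bigl(s(\psi)\circ s(\varphi)\bigr)$ with the commutation rule $\beta\circ i_y(m)=i_z\bigl(M(\pi\beta)(m)\bigr)\circ\beta$ gives $f(\psi,\chi)+f(\varphi,\chi\psi)=M(\chi)\bigl(f(\varphi,\psi)\bigr)+f(\psi\varphi,\chi)$, which is exactly your stated cocycle identity with the twist on the face deleting the last object; and the associativity of the composition law in $\cC_f$ reduces to the same identity. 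Your coboundary formula likewise matches the change-of-section computation $f_{s'}=f_s+\delta b$. The only points that genuinely needed care --- that the fibers of $\pi_{x,y}$ are $M(y)$-torsors rather than groups (this is precisely condition (1) of target regularity for an abelian kernel), that the twist by $M(-)$ sits on the correct face of the bar differential for a covariant coefficient system valued at the target $x_n$ of a chain, and that an equivalence of extensions carries sections to sections with the same cocycle --- are all handled correctly, so the two assignments are mutually inverse as claimed.
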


In the rest of the section we prove the necessary preliminary results to introduce the LHS-spectral sequence for target and 
source regular extensions. Let $\cE : \cK \maprt{i} \cC \maprt{\pi} \cD $ be a target regular extension. 
Then for every $x\in \Ob (\cD)$,  
there is a functor $$J_x : K(x) \to \pi/ x $$ from the group category $K(x)$ to the comma category $\pi/x$.
The functor $J_x$ sends the unique object $x$ of $K(x)$ to $(x, \id_x)$ in $\Ob (\pi/x)$, and sends a group element 
$k\in K(x)$ to the morphism $k: (x, \id_x) \to (x, \id_x)$ in $\pi/x$ defined by $k : x \to x$ in $i(K(x))\leq \Aut _{\cC} (x)$. 
The functor $J_x$ is defined in \cite[p. 9]{Quillen-KTheory} as part of the discussion on prefibred categories.
 
Let $M$ be a right $R\cC$-module. For every $x\in \Ob (\cD)$, consider the $RK(x)$-module $$\Res _{J_x}\Res _{ \pi _{\cC}} M$$
where $\pi _{\cC}: \pi/x \to \cC$ is the functor that sends $(c, \varphi: c \to x)$ in $\pi/x$ to $c\in \Ob (\cC)$.
Since the composition $\pi_{\cC} \circ J_x$ is equal to the inclusion functor $i_x : K(x) \to \cC$, 
there is an isomorphism of $RK(x)$-modules $\Res _{J_x} \Res _{ \pi _{\cC}} M \cong \Res _{i_x} M$. 
We have the following:

\begin{lemma}\label{lem:IsomComma} Let $\cE: \cK \to \cC \maprt{\pi} \cD $ 
be a target regular extension and $M$ be an $R\cC$-module.
Then for every $x\in \Ob (\cC)$,  the homomorphism
$$J_x ^* : H^* (\pi / x; \Res _{\pi _{\cC}} M )  \xrightarrow{\cong} H^* (K(x); \Res _{i_x} M )$$  
induced by the functor $J_x : K(x) \to \pi/x$ is an isomorphism.
\end{lemma}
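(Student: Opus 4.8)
The plan is to show that the functor $J_x : K(x) \to \pi/x$ is right cofinal over $R$ (indeed, that $\pi/x$ admits a deformation retraction onto the image of $J_x$), so that Theorem~\ref{thm:JackSlom} applies and gives the claimed isomorphism on cohomology with coefficients in any $R(\pi/x)$-module, in particular $\Res_{\pi_{\cC}} M$. Concretely, I would first note that the single object $(x,\id_x)$ of $\pi/x$ in the image of $J_x$ has automorphism group exactly $K(x)$, since by target regularity the fiber $\pi_{x,x}^{-1}(\id_x)$ inside $\Aut_{\cC}(x)$ is precisely $K(x)$. So $J_x$ identifies $K(x)$ with the full subcategory of $\pi/x$ on the object $(x,\id_x)$, and the composite $\pi_{\cC}\circ J_x = i_x$ confirms the coefficient module restricts correctly, $\Res_{J_x}\Res_{\pi_{\cC}}M \cong \Res_{i_x}M$, as already observed in the text.

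The heart of the argument is a retraction $\rho : \pi/x \to \pi/x$ landing in the subcategory $K(x)$, together with a natural transformation from $\id_{\pi/x}$ to (the inclusion of) $\rho$, which by Proposition~\ref{pro:Homotopic} makes $B(\pi/x)$ homotopy equivalent to $BK(x)$ compatibly. An object of $\pi/x$ is a pair $(c,\varphi\colon c\to x)$ with $\varphi\in\Mor_{\cC}(c,x)$; I want to send it to $(x,\id_x)$. For this I need, for every such $(c,\varphi)$, a morphism in $\pi/x$ from $(c,\varphi)$ to $(x,\id_x)$, i.e.\ a morphism $\psi\colon c\to x$ in $\cC$ with $\id_x\circ\pi(\psi)=\pi(\varphi)$, that is $\pi(\psi)=\pi(\varphi)$ in $\cD$. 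By target regularity, $\psi$ and $\varphi$ have the same image under $\pi$ exactly when $\psi=k\cdot\varphi$ for a unique $k\in K(x)$; in particular $\psi=\varphi$ itself works, giving a canonical morphism $\eta_{(c,\varphi)}\colon (c,\varphi)\to(x,\id_x)$ in $\pi/x$ represented by $\varphi$. One checks $\eta$ is natural in $(c,\varphi)$: a morphism $(c,\varphi)\to(c',\varphi')$ in $\pi/x$ is a $\theta\colon c\to c'$ with $\varphi'\circ\theta=\varphi$, and then the required square in $\pi/x$ commutes because both composites from $(c,\varphi)$ to $(x,\id_x)$ are represented by $\varphi=\varphi'\circ\theta$. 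Thus $\eta$ is a natural transformation from $\id_{\pi/x}$ to the constant functor at $(x,\id_x)$, so $B(\pi/x)$ is contractible; more precisely, the comma category $\pi/x$ has $(x,\id_x)$ as a \emph{terminal} object up to the $K(x)$-ambiguity, and one upgrades this to a statement about $K(x)$ by observing that the full subcategory on $(x,\id_x)$ is a retract via the zigzag above.

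Rather than fight the retraction bookkeeping directly, the cleanest route is to invoke Theorem~\ref{thm:JackSlom} with the roles reversed, or better, to use the standard fact that $\pi/x$ is equivalent, via $J_x$, to a category with the same nerve up to homotopy; but since Theorem~\ref{thm:JackSlom} is phrased for right cofinal functors, I would instead apply it to $J_x\colon K(x)\to \pi/x$ by checking that for every object $(c,\varphi)\in\pi/x$ the comma category $(c,\varphi)\backslash J_x$ is $R$-acyclic. An object of $(c,\varphi)\backslash J_x$ is a morphism $(c,\varphi)\to J_x(\ast)=(x,\id_x)$ in $\pi/x$, i.e.\ an element of $\{\psi\colon c\to x \mid \pi(\psi)=\pi(\varphi)\}$, which by target regularity is a free transitive $K(x)$-set, and the morphisms in $(c,\varphi)\backslash J_x$ are given by the $K(x)$-action; hence $(c,\varphi)\backslash J_x$ is (isomorphic to) the translation category of the regular $K(x)$-set, which has $|K(x)|$ objects, a terminal structure after choosing a basepoint, and contractible nerve. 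Therefore $J_x$ is right cofinal over $R$, and Theorem~\ref{thm:JackSlom} gives $J_x^*\colon H^*(\pi/x;\Res_{\pi_{\cC}}M)\xrightarrow{\cong}H^*(K(x);\Res_{i_x}M)$, using the identification $\Res_{J_x}\Res_{\pi_{\cC}}M\cong\Res_{i_x}M$ noted above. The main obstacle is purely bookkeeping: correctly identifying the comma category $(c,\varphi)\backslash J_x$ with a translation groupoid of a free $K(x)$-set and checking the acyclicity (equivalently, exhibiting the natural transformation to a terminal object) — no real difficulty, just care with the direction of the morphisms coming from the definition of $\pi/x$ and of target regularity.
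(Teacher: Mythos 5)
Your operative argument (the final paragraph) is correct: you verify that for every object $(c,\varphi)$ of $\pi/x$ the comma category $(c,\varphi)\backslash J_x$ is the translation groupoid of the fiber of $\pi_{c,x}$ over $\varphi$, which by target regularity is a free transitive $K(x)$-set, so this comma category has exactly one morphism between any two objects and is contractible; hence $J_x$ is right cofinal and Theorem~\ref{thm:JackSlom} gives the isomorphism, with $\Res_{J_x}\Res_{\pi_{\cC}}M\cong\Res_{i_x}M$ supplying the coefficient identification. This is exactly the route the paper attributes to Xu and cites in one sentence without carrying out; the proof actually written in the paper is different: it constructs an explicit left adjoint $r_x:\pi/x\to K(x)$ of $J_x$ (choosing lifts $\widetilde\varphi$ of morphisms in $\cD$ and sending a morphism $\alpha$ of $\pi/x$ to the unique $k_\alpha\in K(x)$ with $k_\alpha\circ\widetilde\varphi_1=\widetilde\varphi_2\circ\alpha$) and then invokes Theorem~\ref{thm:IsomAdjunction}. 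The two approaches cost about the same; the adjoint version has the mild advantage of not needing the cofinality spectral sequence and of producing data ($r_x$, $\eta$) reused later, while yours is shorter once Theorem~\ref{thm:JackSlom} is available.

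Two points to clean up, neither of which affects the final argument. First, an object of $\pi/x$ is a pair $(c,\varphi)$ with $\varphi\in\Mor_{\cD}(c,x)$, not $\Mor_{\cC}(c,x)$; consequently the condition defining an object of $(c,\varphi)\backslash J_x$ should read $\pi(\psi)=\varphi$ rather than $\pi(\psi)=\pi(\varphi)$. Second, your middle paragraph is not salvageable as stated: since the structure map $\varphi$ lives in $\cD$, there is no canonical morphism $(c,\varphi)\to(x,\id_x)$ in $\pi/x$ (only a $K(x)$-torsor of them), so there is no natural transformation from $\id_{\pi/x}$ to the constant functor at $(x,\id_x)$, and $B(\pi/x)$ is not contractible in general --- it is homotopy equivalent to $BK(x)$. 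You correctly abandon that route, but the claims ``$\eta$ is natural'' and ``$B(\pi/x)$ is contractible'' should simply be deleted; making the choice of lift coherent is precisely the bookkeeping the paper's adjoint construction carries out.
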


\begin{proof}  In \cite{Xu-CohSmall}, this is proved by showing that the functor $J_x$ is right cofinal 
(see \cite[Lem 4.2]{Xu-CohSmall}). Then the result follows from the Jackowski-S\l omi\' nska cofinality theorem. 
Alternatively, one can show that the functor $J_x$ has a left adjoint, and then  one can apply 
Theorem \ref{thm:IsomAdjunction} to conclude that the induced map $J_x^*$ is an isomorphism. 

The left adjoint $r_x: \pi /x \to K(x)$ of $J_x$ is defined as follows: Since $K(x)$ has a unique object 
$x\in K(x)$, $r_x$ sends every object of $\pi /x$ to $x$.
For each morphism $y \to x$ in $\cD$, choose a lifting $\widetilde \varphi :y \to x$ in $\cC$. 
Let $\alpha: (y_1, \varphi_1: y_1 \to x) \to (y_2 , \varphi_2 : y_2 \to x)$ be a morphism in $\pi/x$ 
defined by a morphism $\alpha : y_1 \to y_2$  in $\cC$ satisfying $\varphi _1=\varphi_2 \circ \pi (\alpha)$. 
Then there is a unique $k_{\alpha} \in K(x)$ such that $k_{\alpha} \circ \widetilde \varphi_1= \widetilde \varphi _2 \circ  \alpha$. We define $r_x(\alpha):=k_{\alpha}$.
For every $\beta : (y_2, \varphi_2: y_2 \to x)  \to (y_3, \varphi_3 : y_3 \to x)$, we have 
$$\widetilde \varphi_3 \circ (\beta \circ \alpha) = ( \widetilde \varphi_3 \circ \beta) \circ \alpha 
=(k_{\beta} \circ \widetilde \varphi _2) \circ \alpha =
k_{\beta} \circ (\widetilde \varphi _2 \circ \alpha)=k_\beta k_\alpha \circ \widetilde \varphi_1.$$
By the uniqueness of $k_{\beta \circ \alpha} \in K(x)$, we have $k_{\beta \circ \alpha}=k_\beta k_\alpha$. 
Hence $r_x$ defines a functor.

It is clear that $r_x \circ J_x=\id _{K(x)}$. We need to show that there is a natural transformation  
$\eta: \id _{\pi/x} \Rightarrow J_x \circ r_x$. We define
$$\eta _{(y, \varphi : y \to x)} : (y, \varphi: y \to x) \to (x, \id _x : x\to x )$$
to be the morphism defined by the morphism $\widetilde \varphi : y \to x$ in $\cC$. 
It is easy to check that this defines a natural transformation.
\end{proof} 

\begin{remark} Note that for any $R\cC$-module $M$, the $K(x)$-module $\Res _{i_x} M$ can be thought
as the restriction of the $R\Aut _{\cC} (x)$-module $M(x)$ to $K(x)$ via the inclusion map
$K(x) \to \Aut _{\cC} (x)$. When we think of $\Res _{i_x} M$ in this way we write it as $M(x)|_{K(x)}$. 
We sometimes just write $M(x)$ for this $K(x)$-module when it is clear from the context that
we mean the $K(x)$-module. 
\end{remark}

We will now show that the isomorphism in Lemma \ref{lem:IsomComma} is actually an isomorphism 
of right $R\cD$-modules with suitably defined $R\cD$-module structures on both sides of the isomorphism.
On the left side of the isomorphism we will use the $R\cD$-module $H^* (\pi /- ; \Res _{\pi_{\cC}} M)$ introduced
in Lemma \ref{lem:RD-ModuleStructure}.

For the right hand side we define the $R\cC$-module structure as follows: 
Let $\varphi : x\to z$ be a morphism in $\cD$, and let $\widetilde \varphi : x \to y$ be a lifting of $\varphi$ in $\cC$.   
Let $K(\widetilde \varphi): K(x) \to K(y)$ denote the group homomorphism induced by the $\cC$-action on $\cK$.
Note that the $R$-module homomorphism $M (\widetilde \varphi)$ gives an $RK(x)$-homomorphism
$\Res _{K(\widetilde \varphi)} M(y) \to M(x)$.

\begin{lemma}\label{lem:InducedHom}  Let $\cE : \cK \maprt{i} \cC \maprt{\pi} \cD $ 
be a target regular extension and $M$ be an $R\cC$-module.
Let $\varphi : x\to y$ be a morphism in $\cD$. 
Consider the following composition of graded $R$-module homomorphisms
$$H^* (K(y); M(y) ) \xrightarrow{K(\widetilde \varphi)^* } H^* (K(x) ; \Res _{K(\widetilde \varphi) } 
M(y) ) \xrightarrow{M(\widetilde \varphi)_* }
H^* (K(x); M(x)).$$
where the first map is the homomorphism induced by the group
homomorphism $K(\widetilde \varphi)$ and the second map is the homomorphism 
induced by the homomorphism $M(\widetilde \varphi): M(y)\to M(x)$.
Then the composition $M(\widetilde \varphi)_* K(\widetilde \varphi)^*$ 
does not depend on the choice of the lifting $\widetilde \varphi $ for $\varphi$.
\end{lemma}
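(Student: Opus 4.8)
\emph{Plan.} The idea is to reduce the assertion to the first homotopy property (Proposition~\ref{pro:FirstHomotopy}), applied inside the comma category $\pi/y$, with the natural transformation supplied by the chosen lift. Write $\Phi_{\widetilde\varphi}:=M(\widetilde\varphi)_*\circ K(\widetilde\varphi)^*\colon H^*(K(y);M(y))\to H^*(K(x);M(x))$ for the composite in the statement, and recall the functors $J_x\colon K(x)\to\pi/x$, $J_y\colon K(y)\to\pi/y$ from just before Lemma~\ref{lem:IsomComma}, the functor $\pi/\varphi\colon\pi/x\to\pi/y$ and its induced map $H^*(\pi/\varphi)$ from Lemma~\ref{lem:RD-ModuleStructure}, and the projection $\pi_{\cC}\colon\pi/y\to\cC$.

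First I would note that a lift $\widetilde\varphi\colon x\to y$ of $\varphi$ is exactly a morphism $(x,\varphi)\to(y,\id_y)$ in $\pi/y$, and that this morphism defines a natural transformation $\eta\colon(\pi/\varphi)\circ J_x\Rightarrow J_y\circ K(\widetilde\varphi)$ between the two functors $K(x)\to\pi/y$; naturality at $k_x\in K(x)$ unwinds to $K(\widetilde\varphi)(k_x)\circ\widetilde\varphi=\widetilde\varphi\circ k_x$, which is the defining property of $K(\widetilde\varphi)$. Next I would apply Proposition~\ref{pro:FirstHomotopy} to $\eta$ and the $R(\pi/y)$-module $\Res_{\pi_{\cC}}M$, obtaining $(\Res_\eta)_*\circ(J_y\circ K(\widetilde\varphi))^*=((\pi/\varphi)\circ J_x)^*$. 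Using functoriality of cohomology of categories together with the identities $\pi_{\cC}\circ J_y=i_y$ and $\pi_{\cC}\circ(\pi/\varphi)\circ J_x=i_x$, the right-hand composite becomes $J_x^*\circ H^*(\pi/\varphi)$ and the left-hand composite becomes $(\Res_\eta)_*\circ K(\widetilde\varphi)^*\circ J_y^*$; a direct inspection of the induced coefficient modules identifies $\Res_\eta$ at the object level with $M(\widetilde\varphi)\colon\Res_{K(\widetilde\varphi)}M(y)\to\Res_{i_x}M$, so that $(\Res_\eta)_*\circ K(\widetilde\varphi)^*=\Phi_{\widetilde\varphi}$. This yields $\Phi_{\widetilde\varphi}\circ J_y^*=J_x^*\circ H^*(\pi/\varphi)$, and since $J_y^*$ is an isomorphism by Lemma~\ref{lem:IsomComma}, I conclude $\Phi_{\widetilde\varphi}=J_x^*\circ H^*(\pi/\varphi)\circ(J_y^*)^{-1}$, an expression that does not mention $\widetilde\varphi$.

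I expect the main obstacle to be purely bookkeeping: matching up the various restricted coefficient modules (that $\Res_{J_y\circ K(\widetilde\varphi)}\Res_{\pi_{\cC}}M$ is $\Res_{K(\widetilde\varphi)}M(y)$ and $\Res_{(\pi/\varphi)\circ J_x}\Res_{\pi_{\cC}}M$ is $\Res_{i_x}M$), checking $\pi_{\cC}\circ(\pi/\varphi)=\pi_{\cC}$ and the two composites above, and verifying that $\eta$ is genuinely natural so that Proposition~\ref{pro:FirstHomotopy} applies verbatim. As a sanity check, a more computational route bypasses the comma categories entirely: two lifts $\widetilde\varphi,\widetilde\varphi'$ of $\varphi$ satisfy $\widetilde\varphi'=k\circ\widetilde\varphi$ for a unique $k\in K(y)$ since $\pi$ is target regular, hence $K(\widetilde\varphi')=c_k\circ K(\widetilde\varphi)$ and $M(\widetilde\varphi')=M(\widetilde\varphi)\circ M(k)$, and $\Phi_{\widetilde\varphi'}=\Phi_{\widetilde\varphi}$ then follows from the standard fact that conjugation by a group element acts trivially on the cohomology of that group; this variant, however, demands more care with the module-theoretic conventions than the argument via Proposition~\ref{pro:FirstHomotopy}.
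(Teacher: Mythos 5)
Your proposal is correct, but it proves the lemma by a genuinely different route from the paper. The paper's own proof is exactly your closing ``sanity check'': it writes a second lift as $\widetilde\varphi'=k_y\circ\widetilde\varphi$ with $k_y\in K(y)$, deduces $K(\widetilde\varphi')=c_{k_y}\circ K(\widetilde\varphi)$ and $M(\widetilde\varphi')=M(\widetilde\varphi)\circ M(k_y)$, and concludes by the standard fact (Brown) that the pair $(c_{k_y},M(k_y))$ induces the identity on $H^*(K(y);M(y))$ --- a short, self-contained computation. Your main argument instead establishes the stronger identity $\Phi_{\widetilde\varphi}=J_x^*\circ H^*(\pi/\varphi)\circ (J_y^*)^{-1}$ via the natural transformation $\eta\colon(\pi/\varphi)\circ J_x\Rightarrow J_y\circ K(\widetilde\varphi)$ and Proposition~\ref{pro:FirstHomotopy}; this is precisely the mechanism the paper deploys one step later to prove Lemma~\ref{lem:IsomCommaFunc}, so you are in effect proving that lemma first and reading off the present one as a corollary (using that $J_y^*$ is an isomorphism, Lemma~\ref{lem:IsomComma}, which is available without circularity). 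What your route buys is that the independence of the lift is explained conceptually --- $\Phi_{\widetilde\varphi}$ is identified with a map defined without reference to any lift --- and the compatibility statement of Lemma~\ref{lem:IsomCommaFunc} comes for free; what it costs is reliance on the heavier homotopy-property machinery and on the unstated (though standard) functoriality $(G_1\circ G_2)^*=G_2^*\circ G_1^*$ of the induced maps from Proposition~\ref{pro:Functorial}, where the paper's direct computation needs only elementary group cohomology.
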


\begin{proof} If $\widetilde \varphi '$ is another lifting of $\varphi :x \to y$, then 
$\widetilde \varphi'=k_y \circ \widetilde \varphi$ for some $k_y \in K(y)$.
From the definition of $K(\widetilde \varphi)$, we see that 
$K(\widetilde \varphi ')= c_{k_y} \circ K(\widetilde \varphi)$ where $c_{k_y}: K(y) \to K(y)$
is the conjugation map defined by $c_{k_y} (u)=k_y u k_y ^{-1}$. 
The composition $K(\widetilde \varphi)^* M(\widetilde \varphi) _*$ is the map induced by 
a pair $(K(\widetilde \varphi ), M(\widetilde \varphi))$ in the sense described 
in \cite[Sect III.8]{Brown-Book}. We have 
\begin{equation}
\begin{split}
M(\widetilde \varphi ')_* K(\widetilde \varphi ')^* &= (M(\widetilde \varphi) \circ 
M(k _y ) ) _*  (c_{k_y} \circ K(\widetilde \varphi ) )^*
=M(\widetilde \varphi)_*  M(k_y)_*  K(\widetilde \varphi )^*  c_{k_y}^*\\
&=M(\widetilde \varphi)_* K(\widetilde \varphi )^* M(k_y) _* c_{k_y}^*.
\end{split}
\end{equation}
The composition $M(k)_* c_k^*$ defines the action of $k \in K(x)$ on  $H^*(K(x); M(x))$. 
By \cite[Prop II.8.3]{Brown-Book}, this action is trivial, hence the composition 
$M(\widetilde \varphi)_* K(\widetilde \varphi )^*$ does not depend on the choice of the lifting $\widetilde \varphi$.
\end{proof}

\begin{definition}
The assignment $d \to H^* (K(d) ; M (d))$ together with the induced 
homomorphisms $H^* (K(y); M(y))\to H^* (K(x); M(x))$ introduced in Lemma \ref{lem:InducedHom}  defines 
a right $R\cD$-module. We denote this right $R\cD$-module by $H^* ( \cK ; M)$. 
\end{definition}
 
Now we are ready to prove:

\begin{lemma}\label{lem:IsomCommaFunc} With the definitions given above, the isomorphism 
in Lemma \ref{lem:IsomComma} induces an isomorphism of right 
$R\cD$-modules $$H^* (\pi /- ;  \Res_{\pi_{\cC} } M ) \cong H^* (\cK ; M).$$
\end{lemma}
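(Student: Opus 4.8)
\textbf{Proof proposal for Lemma \ref{lem:IsomCommaFunc}.}

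The plan is to show that the family of isomorphisms $J_x^* : H^*(\pi/x; \Res_{\pi_{\cC}} M) \xrightarrow{\cong} H^*(K(x); \Res_{i_x} M)$ from Lemma \ref{lem:IsomComma} is natural with respect to the morphisms of $\cD$, where on the source we use the $R\cD$-module structure of $H^*(\pi/-; \Res_{\pi_{\cC}} M)$ from Lemma \ref{lem:RD-ModuleStructure} and on the target the structure of $H^*(\cK; M)$ from the definition preceding the lemma. Since both module structures are defined objectwise by $R$-module homomorphisms and we already know the maps $J_x^*$ are $R$-linear isomorphisms at each object, it suffices to check that for every morphism $\varphi : x \to y$ in $\cD$ the square
$$
\xymatrix@C=4pc{
H^*(\pi/y; \Res_{\pi_{\cC}} M) \ar[r]^-{J_y^*} \ar[d]_{H^*(\pi/\varphi)} & H^*(K(y); M(y)) \ar[d]^{M(\widetilde\varphi)_* K(\widetilde\varphi)^*} \\
H^*(\pi/x; \Res_{\pi_{\cC}} M) \ar[r]^-{J_x^*} & H^*(K(x); M(x))
}
$$
commutes, where $\widetilde\varphi : x \to y$ is any lift of $\varphi$ to $\cC$ (the right vertical map is well-defined by Lemma \ref{lem:InducedHom}).

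First I would fix a lift $\widetilde\varphi$ and produce an explicit natural transformation at the level of the group categories and comma categories that induces the two vertical maps. On the comma-category side, $H^*(\pi/\varphi)$ is induced by the functor $\pi/\varphi : \pi/x \to \pi/y$ sending $(c, f : c \to x)$ to $(c, \varphi \circ f : c \to y)$ together with the canonical identification of the pulled-back coefficient module (both $\Res_{\pi_{\cC}} M$ restricted along $\pi/\varphi$ and $\Res_{\pi_{\cC}} M$ on $\pi/x$ are $M \circ \pi_{\cC}$, since $\pi_{\cC} \circ (\pi/\varphi) = \pi_{\cC}$). On the group side, the composite $M(\widetilde\varphi)_* K(\widetilde\varphi)^*$ is precisely the map induced by the pair $(K(\widetilde\varphi) : K(x) \to K(y),\ M(\widetilde\varphi) : M(y) \to M(x))$ in the sense of \cite[Sect. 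III.8]{Brown-Book}, i.e. the functor $\mathbf{K(\widetilde\varphi)} : K(x) \to K(y)$ together with the $RK(x)$-module homomorphism $\Res_{K(\widetilde\varphi)} M(y) \to M(x)$. The key combinatorial step is then to verify the compatibility of these data: namely that the square of functors
$$
\xymatrix@C=3pc{
K(x) \ar[r]^{J_x} \ar[d]_{\mathbf{K(\widetilde\varphi)}} & \pi/x \ar[d]^{\pi/\varphi} \\
K(y) \ar[r]^{J_y} & \pi/y
}
$$
commutes up to a natural transformation that is compatible with the coefficient homomorphism $M(\widetilde\varphi)$. Concretely, $\pi/\varphi \circ J_x$ sends the object of $K(x)$ to $(x, \varphi : x \to y)$ in $\pi/y$, while $J_y \circ \mathbf{K(\widetilde\varphi)}$ sends it to $(y, \id_y)$; the morphism $\widetilde\varphi : x \to y$ in $\cC$ defines a morphism $(x,\varphi) \to (y,\id_y)$ in $\pi/y$, and by the defining relation $K(\widetilde\varphi)(k) \circ \widetilde\varphi = \widetilde\varphi \circ k$ for $k \in K(x)$ (which is the definition of the $\cC$-action on $\cK$), this morphism is natural. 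One then invokes the first homotopy property, Proposition \ref{pro:FirstHomotopy}, applied to the two functors $\pi/\varphi \circ J_x$ and $J_y \circ \mathbf{K(\widetilde\varphi)}$ with this natural transformation, together with the fact that the coefficient homomorphism tracks through, to conclude that the induced maps on cohomology agree; combined with functoriality of $H^*(-;-)$ under composition and the definition of the two vertical maps, this gives commutativity of the square.

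The main obstacle I expect is bookkeeping the coefficient modules correctly through all four corners: each of the four functors in the square pulls back a different presentation of the relevant coefficient system, and one must check that the natural transformation used in Proposition \ref{pro:FirstHomotopy} is compatible not just as a transformation of functors between categories but also with the $R$-module maps on coefficients (i.e. that the diagram of coefficient homomorphisms analogous to the one appearing just before Lemma \ref{lem:CommDiagram} commutes). Once the explicit lift $\widetilde\varphi$ is fixed and one chases the relation $K(\widetilde\varphi)(k)\circ\widetilde\varphi = \widetilde\varphi\circ k$ through, this reduces to a direct verification; independence of the choice of lift on the target side is already handled by Lemma \ref{lem:InducedHom}, so no further care is needed there. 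Finally, since $J_x^*$ and $J_y^*$ are isomorphisms at each object and the squares commute for all $\varphi$, the family $\{J_x^*\}$ is an isomorphism of $R\cD$-modules, which is the assertion.
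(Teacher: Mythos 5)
Your proposal is correct and follows essentially the same route as the paper: reduce to the commutativity of the square for each $\varphi$, construct the natural transformation $\eta$ between $\pi/\varphi \circ J_x$ and $J_y \circ K(\widetilde\varphi)$ using the lift $\widetilde\varphi$ and the relation $K(\widetilde\varphi)(k)\circ\widetilde\varphi = \widetilde\varphi\circ k$, apply Proposition \ref{pro:FirstHomotopy}, and identify $\Res_\eta$ with $M(\widetilde\varphi)$. No substantive differences from the paper's argument.
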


\begin{proof} We need to show that for every morphism $\varphi : x \to y$ in $\cD$, 
the following diagram commutes
$$\xymatrixcolsep{4pc}\xymatrix{ H^* (\pi/y; \Res _{\pi_{\cC} ^y} M ) \ar[r]^{(\pi /\varphi ) ^*} 
\ar[d]^{J_y ^*} &  H^* (\pi /x ; \Res_{\pi _{\cC} ^x}  M) \ar[d]^{J_x ^* } \\
H^* (K(y); M (y) ) \ar[r]^{M(\widetilde \varphi)_* K(\widetilde \varphi )^*} &  H^* ( K(x)  ;  M(x)).} 
$$
Let $\varphi : x\to y$ be a morphism in $\cD$. Consider the following diagram: 
$$\xymatrix{ K(x) \ar[r]^{J_x} \ar[d]^{K(\widetilde \varphi)} & \pi / x  \ar[d]^{\pi / \varphi }   
\\ K(y)  \ar[r]^{J_y} & \pi /y }$$
This diagram does not commute but there is a natural transformation $\eta : F \Rightarrow G$ where $F= \pi /\varphi \circ J_x$
and $G=J_y \circ K(\widetilde \varphi )$. The functors $F$ and $G$ send the unique object $x$ of $K(x)$ to 
$F(x)=(\pi/\varphi)((x, \id_x))=(x, \varphi: x\to y)$
and $G(x) =J_y (y)=(y, \id_y: y \to y)$. We define $\eta _x: F(x) \to G(x)$ to be the morphism given 
by $\widetilde \varphi : x \to y$. For $k_x \in K(x)$, $F(k_x)=(k_x: (x, \varphi)\to (x, \varphi))$ and 
$G(k_x)=(k_y: (y, \id_y)\to (y, \id_y))$ where $k_y =K(\widetilde \varphi )(k_x)$.
Since $k_y \circ \widetilde \varphi =\widetilde \varphi \circ k_x$, we have 
$G(k_x) \circ \eta _x =\eta_x \circ F(k_x)$. So, $\eta$ is a natural
transformation from $F$ to $G$.

Applying Proposition  \ref{pro:FirstHomotopy} to $\eta: F \Rightarrow G$, we obtain that $(\Res_{\eta} )_* \circ G^*=F^*$. This gives that
$$(\Res _{\eta} )_* \circ K(\widetilde \varphi )^* \circ J_y ^* =J_x^* \circ  (\pi /\varphi)^*.$$
The commutativity of the above cohomology diagram follows from the fact  that
the homomorphism $\Res _{\eta} : \Res _G \Res _{\pi_{\cC}} M \to \Res _F \Res _{\pi _{\cC} } M $ coincides with the 
$RK(x)$-module homomorphism $M(\widetilde \varphi) : \Res _{K(\widetilde \varphi ) } M(y) \to M(x)$.    
\end{proof}

Combining Lemma \ref{lem:IsomCommaFunc} with the Gabriel-Zisman spectral sequence proved in 
Theorem \ref{thm:GZ} gives the following theorem which is the ext-group version of LHS-spectral sequence 
due to Xu \cite{Xu-CohSmall}.

\begin{theorem}\label{thm:XuSSExt} Let $\cE : \cK \maprt{i} \cC \maprt{\pi} \cD $ be a target regular extension.
Then for every $R\cC$-module $M$ and $R\cD$-module $N$, there is a spectral sequence
$$ E_2 ^{p,q}= \Ext ^p _{R\cD} (N,  H^q ( \cK ; M) ) \Rightarrow \Ext ^{p+q} _{R \cC} ( \Res_{\pi} N, M).$$
\end{theorem}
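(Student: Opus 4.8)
The plan is to obtain this spectral sequence by specializing the Gabriel--Zisman spectral sequence of Theorem \ref{thm:GZ} to the functor $F=\pi:\cC\to\cD$, and then rewriting the $E_2$-term using the identification of comma-category cohomology with the cohomology of the kernel groups established in Lemma \ref{lem:IsomCommaFunc}.

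First I would apply Theorem \ref{thm:GZ} with $F=\pi$. Since $\pi$ is a functor between small categories, for every $R\cD$-module $N$ and every $R\cC$-module $M$ it produces a first-quadrant cohomology spectral sequence
$$E_2^{p,q}=\Ext^p_{R\cD}\bigl(N,\,H^q(\pi/-;\,\Res_{\pi_{\cC}}M)\bigr)\ \Rightarrow\ \Ext^{p+q}_{R\cC}(\Res_{\pi}N,\,M),$$
where $H^q(\pi/-;\,\Res_{\pi_{\cC}}M)$ denotes the $R\cD$-module of Lemma \ref{lem:RD-ModuleStructure} and $\pi_{\cC}\colon\pi/x\to\cC$ is the projection. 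The abutment is already of the desired shape, so nothing more is needed there.

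The remaining task is to identify the coefficient $R\cD$-module appearing in the $E_2$-term. For this I would invoke Lemma \ref{lem:IsomCommaFunc}, which gives an isomorphism of right $R\cD$-modules $H^*(\pi/-;\,\Res_{\pi_{\cC}}M)\cong H^*(\cK;M)$. It is important here that this lemma provides an \emph{isomorphism of $R\cD$-modules}, not merely an objectwise isomorphism of groups: the first homotopy property (Proposition \ref{pro:FirstHomotopy}) was used precisely to make the isomorphism natural in the $\cD$-variable. Since $\Ext$-groups depend only on the isomorphism class of their arguments, substituting $H^q(\cK;M)$ for $H^q(\pi/-;\,\Res_{\pi_{\cC}}M)$ in the $E_2$-page converts the above into
$$E_2^{p,q}=\Ext^p_{R\cD}\bigl(N,\,H^q(\cK;M)\bigr)\ \Rightarrow\ \Ext^{p+q}_{R\cC}(\Res_{\pi}N,\,M),$$
which is the assertion.

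I do not anticipate a real obstacle at this stage: the substantive work has already been carried out upstream --- in Theorem \ref{thm:GZ} (obtained from Grothendieck's spectral sequence applied to $\Hom_{R\cD}(N,-)\circ\Coind_{\pi}(-)$, using Corollary \ref{cor:Injectives} and Proposition \ref{pro:RightDerFunc}) and in Lemma \ref{lem:IsomCommaFunc} (the cofinality/left-adjoint argument of Lemma \ref{lem:IsomComma} together with the $R\cD$-module compatibility checks of Lemmas \ref{lem:InducedHom} and \ref{lem:IsomCommaFunc}). The one point to state carefully when writing the proof is that the $R\cD$-module structure on $H^q(\pi/-;\,\Res_{\pi_{\cC}}M)$ used in Theorem \ref{thm:GZ} is exactly the one appearing in Lemma \ref{lem:IsomCommaFunc}, so that the replacement in the $E_2$-term is legitimate; beyond that the argument is a direct combination of two earlier results.
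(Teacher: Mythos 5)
Your proposal is correct and follows exactly the paper's own argument: the paper also applies Theorem \ref{thm:GZ} with $F=\pi$ and then invokes Lemma \ref{lem:IsomCommaFunc} to replace the $R\cD$-module $H^q(\pi/-;\,M\circ\pi_{\cC})$ by $H^q(\cK;M)$ in the $E_2$-term. Your additional remark that the replacement is legitimate because Lemma \ref{lem:IsomCommaFunc} gives an isomorphism of $R\cD$-modules (not just an objectwise one) is precisely the point the paper's preparatory lemmas are designed to establish.
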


\begin{proof} 
By Theorem \ref{thm:GZ}, for every $R\cC$-module $M$ and $R\cD$-module $N$, there is a spectral sequence
$$E_2 ^{p, q} =\Ext^p _{R \cD} (N, H^q ( \pi /-; M \circ \pi _{\cC} ) ) \Rightarrow \Ext^{p+q} _{R\cC} (\Res _{\pi} N,  M).$$
By Lemma \ref{lem:IsomCommaFunc}, we can replace  the $R\cD$-module
$H^* (\pi /-; M \circ \pi _{\cC} )$ with $H^* (\cK ; M).$
This gives the desired spectral sequence.
\end{proof}

If we take $M=\underline{R}$ in Theorem \ref{thm:XuSSExt}, we recover the LHS-spectral sequence due to Xu.

\begin{theorem}[Xu, \cite{Xu-CohSmall}]\label{thm:XuSSCoh} Let $\cE : \cK \to \cC \to \cD $ be a target regular extension.
Then for every $R\cC$-module $M$,  there is a spectral sequence
$$ E_2 ^{p,q}= H^p ( \cD;  H^q ( \cK; M)) \Rightarrow H^{p+q} ( \cC ;  M).$$
\end{theorem}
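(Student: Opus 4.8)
The plan is to derive this statement as the special case $N=\underline{R}$ of Theorem~\ref{thm:XuSSExt}, where $\underline{R}=\underline{R}_{\cD}$ denotes the constant $R\cD$-module. First I would invoke Theorem~\ref{thm:XuSSExt} with this choice of $N$: for the given target regular extension $\cE$ and every $R\cC$-module $M$, one obtains a spectral sequence
$$E_2^{p,q}=\Ext^p_{R\cD}(\underline{R},\,H^q(\cK;M))\Rightarrow \Ext^{p+q}_{R\cC}(\Res_{\pi}\underline{R},\,M).$$

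Next I would identify the two sides with the terms appearing in the statement. On the $E_2$-page, Definition~\ref{def:CohCat} gives $\Ext^p_{R\cD}(\underline{R},\,H^q(\cK;M))=H^p(\cD;H^q(\cK;M))$, which is precisely the asserted $E_2$-term. For the abutment, it remains to observe that $\Res_{\pi}\underline{R}_{\cD}=\underline{R}_{\cC}$: since $\pi$ is the identity on objects and $\underline{R}$ sends every object to $R$ and every morphism to $\id_R$, precomposition with $\pi^{op}$ returns the constant functor over $\cC$ (indeed $\Res_F\underline{R}_{\cD}=\underline{R}_{\cC}$ for any functor $F\colon\cC\to\cD$). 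Hence $\Ext^{p+q}_{R\cC}(\Res_{\pi}\underline{R},M)\cong \Ext^{p+q}_{R\cC}(\underline{R},M)=H^{p+q}(\cC;M)$ by Definition~\ref{def:CohCat} again, and the spectral sequence acquires the form claimed.

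There is essentially no obstacle: all the content is already in Theorem~\ref{thm:XuSSExt}, and through it in the Gabriel--Zisman spectral sequence of Theorem~\ref{thm:GZ} together with the identification of the $R\cD$-module $H^*(\pi/-;\Res_{\pi_{\cC}}M)$ with $H^*(\cK;M)$ from Lemma~\ref{lem:IsomCommaFunc}. The only points worth spelling out are the two identifications above, both immediate from the definitions. Alternatively, one could obtain the same conclusion directly by specializing the Gabriel--Zisman spectral sequence of Corollary~\ref{cor:GZ} to the functor $\pi$ and then applying Lemma~\ref{lem:IsomCommaFunc}, but routing through Theorem~\ref{thm:XuSSExt} is the cleanest route.
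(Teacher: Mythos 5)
Your proposal is correct and matches the paper's own derivation, which obtains Theorem \ref{thm:XuSSCoh} by specializing Theorem \ref{thm:XuSSExt} to the constant module (the paper's phrase ``take $M=\underline R$'' is a slip for $N=\underline R$, which is exactly the substitution you make). The two identifications you spell out, $\Res_\pi\underline R_{\cD}=\underline R_{\cC}$ and $\Ext^p_{R\cD}(\underline R,-)=H^p(\cD;-)$, are the entire content of the reduction, so nothing is missing.
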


We call this spectral sequence the LHS-spectral sequence for a target regular extension. This spectral sequence is also 
constructed in \cite[Prop A.11]{OliverVentura}. 

\begin{remark} A small category $\cC$ is called a finite category if the set of all morphisms in $\cC$ is finite.
In \cite[Prop 4.4]{Xu-OnLocal}, Xu shows that if $\cC$ is a finite category and $k$ is a field, then 
for every $k\cC$-module $M$ and $N$, there is an isomorphism 
$$ \Ext ^* _{k\cC} (M, N) \cong \Ext ^* _{k F(\cC)} (\underline k , \Res _{\nabla } \Hom _k (M, N) ),$$
where $F(\cC)$ denotes the category of factorizations in $\cC$ (see \cite[p. 88]{BauesWirsching}) and  
$\nabla : F(\cC) \to \cC \times \cC ^{op}$ is the  functor that takes $\varphi: x \to y$ in $F(\cC)$ to the pair $(y,x)$ in
$\cC \times \cC^{op}$. This suggests that with some extra work it should be possible to derive 
the ext-group version of the LHS-spectral sequence from the cohomology version when $\cC$ is a finite category. 
 \end{remark}

For source regular extensions, there is a similar spectral sequence with coefficients 
in a left $R\cC$-module $M$ obtained by applying Theorem \ref{thm:XuSSExt} 
to the opposite extension $\cE^{op}$.

\begin{corollary}\label{cor:XuSSExt-Source} Let $\cE : \cK \maprt{i} \cC \maprt{\pi} \cD $ 
be a source regular extension. Then for every left $R\cC$-module $M$ and left $R\cD$-module 
$N$, there is a spectral sequence $$ E_2 ^{p,q}= \Ext ^p _{R\cD} (N,  H^q ( \cK ; M) ) 
\Rightarrow \Ext ^{p+q} _{R \cC} ( \Res_{\pi} N, M)$$
where $H^q ( \cK; M)$ is the left $R\cD$-module whose structure is defined to be 
the right $R\cD^{op}$-structure of $H^q (\cK; M)$ coming from the opposite extension 
$\cE^{op}: \cK \to \cC^{op} \to \cD^{op} $.
\end{corollary}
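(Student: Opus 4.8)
The plan is to obtain this spectral sequence directly from Theorem~\ref{thm:XuSSExt} by passing to opposite categories, so that the proof is essentially a translation exercise. The first step is to record that since $\pi:\cC\to\cD$ is source regular, the opposite functor $\pi^{op}:\cC^{op}\to\cD^{op}$ is target regular by Definition~\ref{def:RegularExt}, and that its kernel may be identified with the same collection $\cK=\{K(x)\}$: the inversion map gives an isomorphism $\Aut_{\cC^{op}}(x)\cong\Aut_{\cC}(x)$ carrying $\ker\{\pi^{op}_{x,x}\}$ onto $\ker\{\pi_{x,x}\}=K(x)$. Thus $\cE^{op}:1\to\cK\maprt{i}\cC^{op}\maprt{\pi^{op}}\cD^{op}\to1$ is a target regular extension.

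Next I would invoke the standard dictionary identifying a left $R\cC$-module (a covariant functor $\cC\to R\text{-Mod}$) with a right $R\cC^{op}$-module, and likewise over $\cD$; this is an isomorphism of abelian categories, so $\Ext^*$ computed over $R\cC$ with left-module coefficients agrees with $\Ext^*$ computed over $R\cC^{op}$ with right-module coefficients, and similarly over $R\cD$. Under this dictionary one checks (using the convention $\Res_F L = L\circ F$ for left modules and $\Res_F L = L\circ F^{op}$ for right modules, consistent with Definition~\ref{def:Restriction}) that $\Res_{\pi^{op}}N = N\circ\pi = \Res_{\pi}N$, and that the right $R\cD^{op}$-module $H^q(\cK;M)$ attached to the target regular extension $\cE^{op}$ by Theorem~\ref{thm:XuSSExt} — which on objects sends $x$ to $H^q(K(x),\Res_{i_x}M)$ — corresponds, under the dictionary, to exactly the left $R\cD$-module described in the statement.

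With these identifications in hand, the last step is to apply Theorem~\ref{thm:XuSSExt} to $\cE^{op}$ with the right $R\cC^{op}$-module $M$ and right $R\cD^{op}$-module $N$, yielding
$$E_2^{p,q}=\Ext^p_{R\cD^{op}}(N,H^q(\cK;M))\Rightarrow\Ext^{p+q}_{R\cC^{op}}(\Res_{\pi^{op}}N,M),$$
and then to rewrite each term via the dictionary: the $E_2$-page becomes $\Ext^p_{R\cD}(N,H^q(\cK;M))$ with $N$ and $H^q(\cK;M)$ regarded as left $R\cD$-modules, and the abutment becomes $\Ext^{p+q}_{R\cC}(\Res_{\pi}N,M)$, which is the asserted spectral sequence.

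The main point requiring care — hence the chief obstacle, though it is purely a matter of bookkeeping — is to verify that the $R\cD^{op}$-module structure on $H^q(\cK;M)$ produced by Theorem~\ref{thm:XuSSExt} for the extension $\cE^{op}$, built from the lift-independent maps of Lemma~\ref{lem:InducedHom}, genuinely matches the $R\cD^{op}$-structure the statement refers to, and that the restriction and $\Ext$ identifications are compatible with the variance conventions throughout. Once the conventions are pinned down, all of these match on the nose and no further argument is needed.
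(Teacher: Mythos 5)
Your proposal is correct and matches the paper's own (essentially one-line) argument: the corollary is obtained by applying Theorem~\ref{thm:XuSSExt} to the opposite extension $\cE^{op}$, with left modules over $\cC$ and $\cD$ reinterpreted as right modules over $\cC^{op}$ and $\cD^{op}$. The module-structure verification you flag as the "chief obstacle" is in fact dispensed with by the statement itself, which simply \emph{defines} the left $R\cD$-structure on $H^q(\cK;M)$ to be the right $R\cD^{op}$-structure coming from $\cE^{op}$.
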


Note that if $\cE : \cK \maprt{i} \cC \maprt{\pi} \cD$ is a source regular extension and if we want to
relate the cohomology of $\cC$ and $\cD$ with right module coefficients, the spectral sequences above 
cannot be used. For this we construct a spectral sequence involving the homology groups of the groups $K(x)$. 
We now explain this spectral sequence:

Let $\cE:  \cK \maprt{i} \cC \maprt{\pi} \cD $ be a source regular extension.  For every 
$\varphi : x\to y$ in $\cD$, choose a lifting $\widetilde \varphi : x \to y$ in $\cC$. 
The morphism $\widetilde \varphi$ induces a group homomorphism 
$K(\widetilde \varphi): K(y) \to K(x)$ defined by $K (\widetilde \varphi )(k_y)=k_x$ 
where $k_x\in K(x)$ is the unique element satisfying $k_y \circ \widetilde \varphi=\widetilde \varphi \circ k_x$. 
For every $q\geq 0$, this induces homomorphisms
$$ H_q (K(y); M(y) ) \xrightarrow{K(\widetilde \varphi)_* } H_q (K(x) ; \Res _{K(\varphi) } M(y) ) \xrightarrow{M(\varphi)_* }
H_q (K(x); M(x)).$$
One can show  in a similar way to the way it was done in Lemma \ref{lem:InducedHom} that this homomorphism 
does not depend on the lifting $\widetilde \varphi$ for $\varphi$. Hence it defines a right $R\cD$-module structure 
for the assignment $d \to H_q (K(d), M(d))$, denoted by $H_q (\cK; M)$. 

\begin{lemma}\label{lem:IsomCommaFuncHomology}  There is an isomorphism of right 
$R\cD$-modules $$H_* (-\backslash \pi ;  \Res_{\pi_{\cC} } M ) \cong H_* (\cK ; M).$$
\end{lemma}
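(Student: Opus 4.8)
The plan is to mirror the proof of Lemma~\ref{lem:IsomCommaFunc} (the target regular case), but with homology in place of cohomology and with the comma category $x\backslash \pi$ in place of $\pi/x$. Recall that for a source regular extension $\cE: 1 \to \cK \maprt{i} \cC \maprt{\pi} \cD \to 1$, the opposite functor $\pi^{op}: \cC^{op} \to \cD^{op}$ is target regular, and for $x\in \Ob(\cD)$ we have $x\backslash \pi = (\pi^{op}/x)^{op}$. First I would, for each $x\in\Ob(\cD)$, produce a functor $J_x: K(x) \to x\backslash \pi$ from the group category $K(x)$ to the comma category $x\backslash \pi$: send the unique object to $(x, \id_x: x\to x)$ and a group element $k\in K(x)$ to the morphism determined by $k\in i(K(x)) \leq \Aut_{\cC}(x)$ (note that for source regular extensions $K(x)$ acts on the \emph{source}, so this is the morphism in $x\backslash \pi$ rather than $\pi/x$). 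As in Lemma~\ref{lem:IsomComma}, $J_x$ admits a right adjoint $r_x: x\backslash \pi \to K(x)$ built by choosing, for each $\varphi: x\to y$ in $\cD$, a lift $\widetilde\varphi: x\to y$ in $\cC$; the argument that $r_x$ is a well-defined functor with $r_x \circ J_x = \id_{K(x)}$ and a natural transformation $J_x \circ r_x \Rightarrow \id_{x\backslash\pi}$ is the exact dual of what appears in Lemma~\ref{lem:IsomComma}, so I would just cite that lemma applied to $\pi^{op}$ (or indicate that the proof is dual). Since $J_x$ has a right adjoint, by the homology-group dual of Theorem~\ref{thm:IsomAdjunction} (equivalently by applying Theorem~\ref{thm:IsomAdjunction} to $\pi^{op}$ and then passing to homology via the opposite-category identification), the induced map $(J_x)_*: H_*(K(x); \Res_{i_x} M) \xrightarrow{\cong} H_*(x\backslash\pi; \Res_{\pi_{\cC}} M)$ is an isomorphism, where $\pi_{\cC}: x\backslash \pi \to \cC$ sends $(c, f: x\to F(c))$ to $c$ and $\Res_{J_x}\Res_{\pi_{\cC}}M \cong \Res_{i_x}M$ because $\pi_{\cC} \circ J_x = i_x$.

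Next I would upgrade this family of isomorphisms to an isomorphism of right $R\cD$-modules. The left-hand side carries the $R\cD$-module structure of $H_*(-\backslash \pi; \Res_{\pi_{\cC}}M)$: for $\varphi: x\to y$ in $\cD$, the functor $\varphi\backslash \pi: x\backslash\pi \to y\backslash\pi$ (dual to $\pi/\varphi$) induces $H_*(\varphi\backslash\pi)$ via Proposition~\ref{pro:FunctorialHom}; the right-hand side carries the structure $H_*(\cK; M)$ whose induced maps $H_q(K(y); M(y)) \to H_q(K(x); M(x))$ are $M(\widetilde\varphi)_* \circ K(\widetilde\varphi)_*$ and are lift-independent by the argument indicated just before the lemma (dual to Lemma~\ref{lem:InducedHom}). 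I would then set up the square
$$\xymatrixcolsep{4pc}\xymatrix{ H_* (K(y); M(y)) \ar[r]^{(J_y)_*} \ar[d]_{M(\widetilde\varphi)_* K(\widetilde\varphi)_*} & H_* (y\backslash\pi; \Res_{\pi_{\cC}}M) \ar[d]^{H_*(\varphi\backslash\pi)} \\ H_* (K(x); M(x)) \ar[r]^{(J_x)_*} & H_* (x\backslash\pi; \Res_{\pi_{\cC}}M) }$$
and check commutativity by exhibiting a natural transformation between the two composites $K(x) \to y\backslash\pi$, namely $\varphi\backslash\pi \circ J_x$ and $J_y \circ K(\widetilde\varphi)$, whose component at the unique object is the morphism in $y\backslash\pi$ given by $\widetilde\varphi: x\to y$; naturality at $k_x\in K(x)$ follows from $k_y \circ \widetilde\varphi = \widetilde\varphi \circ k_x$ where $k_y = K(\widetilde\varphi)(k_x)$. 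Applying the homology-group version of Proposition~\ref{pro:FirstHomotopy} (mentioned at the end of Section~\ref{sect:DerivedInduction}'s discussion, stated as existing right after Proposition~\ref{pro:FirstHomotopy}) to this natural transformation gives the commutativity of the square, once one identifies the homomorphism $\Res_\eta$ with the $RK(x)$-module map $M(\widetilde\varphi): \Res_{K(\widetilde\varphi)}M(y) \to M(x)$. This is the place I expect to need a little care: orienting the natural transformation in the correct direction (source versus target, and which way the comma-category functor $\varphi\backslash\pi$ runs), since the dualization swaps several arrows at once; but it is bookkeeping, not a genuine difficulty.

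The main obstacle, then, is not conceptual but organizational: making sure that every dualization — $\pi \leftrightarrow \pi^{op}$, $\pi/x \leftrightarrow x\backslash\pi$, cohomology $\leftrightarrow$ homology, left module $\leftrightarrow$ right module, adjoint on one side $\leftrightarrow$ adjoint on the other — is applied consistently, so that the final $R\cD$-module structure on $H_*(\cK; M)$ that emerges is the \emph{right} module structure described in the statement (with induced maps $M(\widetilde\varphi)_* \circ K(\widetilde\varphi)_*$) and not its opposite. In particular I would double-check that the source regularity of $\pi$ is what makes $J_x: K(x) \to x\backslash\pi$ (rather than $\to \pi/x$) the natural functor, because the kernel groups $K(x)$ act freely on the source of morphisms out of $x$. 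Once these identifications are pinned down, the proof is a direct transcription of Lemmas~\ref{lem:IsomComma}, \ref{lem:InducedHom}, and \ref{lem:IsomCommaFunc} with homology replacing cohomology, so I would keep the written proof short, citing those lemmas and Proposition~\ref{pro:FirstHomotopy}'s homology analogue, and only spelling out the natural transformation in the commuting square.
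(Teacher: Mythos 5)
Your proposal is correct and follows exactly the route the paper takes: the paper's own proof of this lemma is a one-line instruction to repeat the argument of Lemma~\ref{lem:IsomCommaFunc} for homology using the homology version of Proposition~\ref{pro:FirstHomotopy}, which is precisely what you spell out (objectwise isomorphism via $J_x$ and its adjoint, then naturality in $d$ via the natural transformation and the first homotopy property). The only quibble is the bookkeeping you yourself flag: in the source regular setting $\varphi\backslash\pi$ runs $y\backslash\pi\to x\backslash\pi$ and $K(\widetilde\varphi)$ runs $K(y)\to K(x)$, so the two composites to compare are $(\varphi\backslash\pi)\circ J_y$ and $J_x\circ K(\widetilde\varphi)$ out of $K(y)$, matching the square you drew rather than the directions stated in your prose.
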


\begin{proof} A proof can be given by repeating the arguments in the proof
of Lemma \ref{lem:IsomCommaFunc} for homology groups and using the 
homology groups version of  Proposition  \ref{pro:FirstHomotopy}.
\end{proof}

We have the following theorem.
  
\begin{theorem}\label{thm:SSGYGen}
Let $\cE :  \cK \maprt{i} \cC \maprt{\pi}\cD $ be a source regular extension. For every right
$R\cD$-module $M$ and every right $R\cC$-module $N$,  
there is a spectral sequence
$$ E_2 ^{p,q}= \Ext ^p _{R \cD} ( H_q ( \cK; N), M) \Rightarrow \Ext ^{p+q} _{R\cC} (N, \Res _{\pi} M).$$
Here $H_q ( \cK; N)$ is the right $R\cD$-module whose structure is introduced above.
\end{theorem}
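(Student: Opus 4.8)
The plan is to mirror the proof of Theorem~\ref{thm:XuSSExt} but using the induction-side Gabriel-Zisman spectral sequence from Theorem~\ref{thm:VarGZ} in place of the coinduction-side one from Theorem~\ref{thm:GZ}. Recall that $\pi^{op}:\cC^{op}\to\cD^{op}$ is target regular, so for every $d\in\Ob(\cD)$ the comma category $d\backslash\pi$ (for the source regular functor $\pi$) is exactly the opposite of the comma category $\pi^{op}/d$ appearing in the target regular setting. Applying Theorem~\ref{thm:VarGZ} to $F=\pi:\cC\to\cD$, with $N$ a right $R\cC$-module and $M$ a right $R\cD$-module, gives a spectral sequence
$$E_2^{p,q}=\Ext^p_{R\cD}\bigl(H_q(-\backslash\pi;N\circ\pi_{\cC}),M\bigr)\Rightarrow \Ext^{p+q}_{R\cC}(N,\Res_\pi M),$$
where $\pi_{\cC}:d\backslash\pi\to\cC$ sends $(c,f)$ to $c$. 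The abutment already has the desired form, so the entire task reduces to identifying the $R\cD$-module $H_q(-\backslash\pi;N\circ\pi_{\cC})$ with $H_q(\cK;N)$.

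First I would invoke Lemma~\ref{lem:IsomCommaFuncHomology}, which is stated precisely for this purpose: it asserts an isomorphism of right $R\cD$-modules $H_*(-\backslash\pi;\Res_{\pi_{\cC}}N)\cong H_*(\cK;N)$, where the right-hand side is the $R\cD$-module $d\mapsto H_*(K(d),\Res_{i_d}N)$ with transition maps $M(\widetilde\varphi)_*K(\widetilde\varphi)_*$ built from a chosen lift $\widetilde\varphi$ of each $\varphi\in\cD(x,y)$ and shown to be independent of the lift. Substituting this isomorphism into the $E_2$-term of the spectral sequence above immediately yields
$$E_2^{p,q}=\Ext^p_{R\cD}\bigl(H_q(\cK;N),M\bigr)\Rightarrow \Ext^{p+q}_{R\cC}(N;\Res_\pi M),$$
which is exactly the claimed spectral sequence. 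So the proof is essentially a one-line citation of Theorem~\ref{thm:VarGZ} followed by the substitution from Lemma~\ref{lem:IsomCommaFuncHomology}, exactly parallel to how Theorem~\ref{thm:XuSSExt} is deduced from Theorem~\ref{thm:GZ} and Lemma~\ref{lem:IsomCommaFunc}.

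The only genuine content, and hence the main obstacle, lies in Lemma~\ref{lem:IsomCommaFuncHomology} itself — i.e.\ in checking that the base-change isomorphism $H_*(d\backslash\pi;N\circ\pi_{\cC})\cong H_*(K(d),\Res_{i_d}N)$ is natural with respect to morphisms of $\cD$, with the transition maps matching the explicitly described $R\cD$-structure on $H_*(\cK;N)$. As the proof of that lemma indicates, this is handled by repeating the argument of Lemma~\ref{lem:IsomCommaFunc} in homology: one uses that the functor $J_x^{op}:K(x)\to (d\backslash\pi)$ (dual to the $J_x$ there) is cofinal — or has an adjoint — so that the induced map on homology is an isomorphism by the dual of the Jackowski-S\l omi\'nska theorem, and then one verifies the naturality square using the homology version of Proposition~\ref{pro:FirstHomotopy} (the first homotopy property) applied to the natural transformation relating $\pi/\varphi\circ J_x$ and $J_y\circ K(\widetilde\varphi)$. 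The well-definedness of the transition map independent of the lift $\widetilde\varphi$ is the homology analogue of Lemma~\ref{lem:InducedHom}, relying on the triviality of the conjugation action of $K(x)$ on $H_*(K(x),M(x))$. Once Lemma~\ref{lem:IsomCommaFuncHomology} is in hand, no further work is required, so in the write-up of Theorem~\ref{thm:SSGYGen} I would simply state both ingredients and perform the substitution.
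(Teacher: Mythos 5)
Your proposal is exactly the paper's proof: the paper deduces Theorem \ref{thm:SSGYGen} by applying the induction-side Gabriel--Zisman spectral sequence of Theorem \ref{thm:VarGZ} to $\pi$ and then identifying the $E_2$-term via Lemma \ref{lem:IsomCommaFuncHomology}, whose proof in turn is (as you describe) the homology analogue of Lemmas \ref{lem:InducedHom} and \ref{lem:IsomCommaFunc} using the homology version of Proposition \ref{pro:FirstHomotopy}. Your account of the content hidden in Lemma \ref{lem:IsomCommaFuncHomology} matches the paper's intended argument, so there is nothing to add.
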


\begin{proof}   
The proof follows from the spectral sequence in Theorem \ref{thm:VarGZ} and from 
Lemma \ref{lem:IsomCommaFuncHomology}.
\end{proof}

 If we take $M=\underline{R}$,  we obtain the following:
 
 \begin{corollary}[G\" undo\u gan-Yal{\c c}{\i}n \cite{GundoganYalcin}]\label{cor:SSGY} 
Let $\cE :  \cK \maprt{i} \cC \maprt{\pi} \cD $ be a source regular extension. 
Then, for every right $R\cD$-module $M$,
there is a spectral sequence
$$ E_2 ^{p,q}= \Ext^p _{R \cD} ( H_q ( \cK; R) , M ) \Rightarrow H^{p+q} (\cC ; \Res _{\pi} M).$$ 
\end{corollary}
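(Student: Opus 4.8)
The plan is to obtain this as the special case $N=\underline R$ of Theorem~\ref{thm:SSGYGen}. So the first step is simply to feed the constant functor $\underline R$ over $\cC$ (which is a right $R\cC$-module) into that theorem as the module $N$. This yields, for every right $R\cD$-module $M$, a spectral sequence
$$E_2^{p,q} = \Ext^p_{R\cD}\bigl(H_q(\cK;\underline R),\, M\bigr) \Rightarrow \Ext^{p+q}_{R\cC}\bigl(\underline R;\, \Res_\pi M\bigr).$$
Two identifications then remain. For the abutment, Definition~\ref{def:CohCat} gives $\Ext^{p+q}_{R\cC}(\underline R; \Res_\pi M) \cong H^{p+q}(\cC; \Res_\pi M)$, which is exactly the target of the asserted spectral sequence.

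For the $E_2$-page, I would observe that for each $x\in\Ob(\cC)$ the restricted module $\Res_{i_x}\underline R$ is the trivial $RK(x)$-module $R$, so on objects the $R\cD$-module $H_q(\cK;\underline R)$ defined just before Theorem~\ref{thm:SSGYGen} sends $x$ to $H_q(K(x);R)$. It then has to be checked that the $R\cD$-module structure carried by $H_q(\cK;\underline R)$ — given on a morphism $\varphi\colon x\to y$ of $\cD$ by the composite $M(\widetilde\varphi)_*\circ K(\widetilde\varphi)_*$ for a lift $\widetilde\varphi$ of $\varphi$ — coincides with the structure of the module $H_q(\cK;R)$ named in the statement. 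This is immediate once one notes that replacing $N$ by $\underline R$ makes $\underline R(\widetilde\varphi)=\id_R$, so the factor $M(\widetilde\varphi)_*$ becomes the identity and only the homomorphism $K(\widetilde\varphi)_*\colon H_q(K(y);R)\to H_q(K(x);R)$ induced by $K(\widetilde\varphi)\colon K(y)\to K(x)$ survives; that is precisely the definition of the transition maps of $H_q(\cK;R)$. Substituting these two identifications into the displayed spectral sequence gives the corollary.

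I do not expect a genuine obstacle here: the content of the corollary is entirely contained in Theorem~\ref{thm:SSGYGen}, and the only thing to verify is the bookkeeping of the coefficient module structure just described, which is routine. Alternatively, the same statement can be read off directly from Theorem~\ref{thm:VarGZ} applied to $F=\pi$ and $N=\underline R$, combined with the identification $H_*(-\backslash\pi;\Res_{\pi_\cC}\underline R)\cong H_*(\cK;R)$ of Lemma~\ref{lem:IsomCommaFuncHomology}.
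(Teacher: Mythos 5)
Your proposal is correct and matches the paper's derivation: the corollary is obtained by specializing Theorem~\ref{thm:SSGYGen} to the constant right $R\cC$-module $\underline R$ (the paper's preamble says ``take $M=\underline{R}$,'' but, as you correctly identify, it is the variable $N$ that must be set to $\underline R$, since $M$ remains the arbitrary right $R\cD$-module). Your verification that the transition maps of $H_q(\cK;\underline R)$ reduce to the maps $K(\widetilde\varphi)_*$ is exactly the routine bookkeeping the paper leaves implicit.
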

 
We call this spectral sequence
LHS-spectral sequence for a source regular extension.  This spectral sequence was used 
in \cite[Thm 1.3]{GundoganYalcin}  to relate the cohomology of a $p$-local finite group 
$(S, \cF, \cL)$ to the cohomology of the transporter category $\cT^c _S(\pi)$ 
for the infinite group $\pi$ realizing the fusion system $\cF$.

 %%%%%%%%%%%%%%%%%
 
 \section{Spectral sequences for regular EI-categories}\label{sect:RegularEI}

\begin{definition} A small category $\cC$ is called an \emph{EI-category} if every endomorphism in $\cC$ is an isomorphism.
\end{definition}

Many interesting categories that appear in representation theory, such as the orbit category, the fusion systems, 
and the linking systems are EI-categories. We introduce these examples of EI-categories 
in Sections \ref{sect:Transporter} and  \ref{sect:Linking}.

Let $\cC$ be an EI-category. For every $x \in \Ob (\cC)$, we denote the isomorphism class of $x$ by $[x]$ and the 
set of isomorphism classes of objects in $\cC$ by $[\cC]$.
The set $[\cC]$ is a partially ordered set with the order relation given by 
$$[x]\leq [y] \ \text{  if   } \  \cC (x, y) \neq \emptyset.$$ 
We consider the partially ordered set $[\cC]$ as a category with morphisms given by the order relation.
There is a functor $\pi: \cC\to [\cC]$ that sends each $x\in \cC$ to its isomorphism class $[x]$, and each morphism
$\varphi : x\to y$ to the order relation $[x]\leq [y]$.

In general the functor $\pi: \cC \to [\cC]$ does not define a target (or source) regular extension of categories. 
The first problem is that the object sets of $\cC$ and $[\cC]$ are not equal in general. We can remedy 
this by replacing $\cC$ with its skeletal category $sk (\cC)$.
By Corollary \ref{cor:Skeletal}, this adjustment does not change the cohomology of the category $\cC$. 
\emph{Throughout the paper, we will assume that every EI-category $\cC$ is skeletal, and we identify the 
equivalence class of each object $x \in \Ob (\cC)$ with itself and take $\Ob ([\cC]) = \Ob (\cC)$. 
 We write $x\leq y$ whenever $\cC(x, y)\neq \emptyset$.}
 
\begin{definition} An EI-category $\cC$ is called \emph{target regular} if for every $x, y \in \Ob (\cC)$, 
the automorphism group $\Aut _{\cC} (y)$ acts regularly (freely with one orbit)
on $\cC (x, y)$. An EI-category $\cC$ is called \emph{source regular} if for every $x, y \in \Ob (\cC)$, 
$\Aut _{\cC} (x)$ acts regularly on $\cC (x, y)$. 
\end{definition}

By definition, every (skeletal) EI-category $\cC$ is target (resp. source) regular if and only if 
the functor $\pi : \cC \to [\cC]$ is target (resp. source) regular. Hence the spectral sequences we obtained 
in the previous section applies to the functor $\pi: \cC \to [\cC]$ and gives the following spectral sequences.
 
\begin{theorem}\label{thm:RegularSS} 
\begin{enumerate}
\item Let $\cC$ be a target regular category and $M$ be an $R\cC$-module. Then, for every $n \geq 0$, the assignment 
$x \to H^n ( \Aut_{\cC} (x); M(x))$ defines an $R[\cC]$-module and 
there is a spectral sequence
$$ E_2 ^{p,q}= H^p ( [\cC];  H^q ( \Aut_{\cC} (-); M(-) ) ) \Rightarrow H^{p+q}  ( \cC ;  M).$$ 
\item Let $\cC$ be a source regular category and $M$ be a left $R\cC$-module. Then, for every $n \geq 0$, the assignment 
$x \to H^n ( \Aut_{\cC} (x); M(x))$ defines a left $R[\cC]$-module and 
there is a spectral sequence
$$ E_2 ^{p,q}= H^p ( [\cC];  H^q ( \Aut_{\cC} (-); M(-) ) ) \Rightarrow H^{p+q}  ( \cC ;  M)$$ 
where all the cohomology groups are cohomology with coefficients in a left module. 
\item Let $\cC$ be a source regular category and $M$ be a right $R[\cC]$-module. Then, for every $n \geq 0$, the assignment 
$x \to H_n ( \Aut_{\cC} (x); R)$ defines a right $R[\cC]$-module and 
there is a spectral sequence
$$ E_2 ^{p,q}= \Ext _{R[\cC] } ^p ( H_q ( \Aut_{\cC} (-); R ), M ) \Rightarrow H^{p+q}  ( \cC ;  \Res _{\pi} M).$$ 
\end{enumerate}
\end{theorem}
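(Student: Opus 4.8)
The plan is to derive all three spectral sequences by specializing the LHS-spectral sequences of Section~\ref{sect:Extensions} to the canonical functor $\pi : \cC \to [\cC]$. Recall that by our standing convention $\cC$ is skeletal with $\Ob([\cC]) = \Ob(\cC)$, and, as noted just above, $\cC$ is target (resp.\ source) regular precisely when $\pi$ is a target (resp.\ source) regular functor. So in each case we are handed a regular extension of categories $1 \to \cK \maprt{i} \cC \maprt{\pi} [\cC] \to 1$.

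The first step is to identify the kernel $\cK$. Since $[\cC]$ is a poset regarded as a category, the only endomorphism of any of its objects is the identity, so $\Aut_{[\cC]}(x)$ is trivial for every $x \in \Ob(\cC)$; hence
$$K(x) = \ker\{\pi_{x,x} : \Aut_{\cC}(x) \to \Aut_{[\cC]}(x)\} = \Aut_{\cC}(x),$$
and, for the functor $i_x : K(x) \to \cC$ attached to the extension, $\Res_{i_x} M$ is simply $M(x)$ viewed as an $R\Aut_{\cC}(x)$-module (and $\Res_{i_x}\underline R = \underline R$).

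With these identifications, part (1) is immediate from Xu's LHS-spectral sequence for the target regular extension $1 \to \cK \to \cC \to [\cC] \to 1$: Theorem~\ref{thm:XuSSCoh} produces $E_2^{p,q} = H^p([\cC]; H^q(\cK; M)) \Rightarrow H^{p+q}(\cC; M)$, and by the description given there $H^q(\cK; M)$ is precisely the $R[\cC]$-module $x \mapsto H^q(K(x); \Res_{i_x} M) = H^q(\Aut_{\cC}(x); M(x))$, whose module structure is the one defined through Lemma~\ref{lem:InducedHom}. For part (2), I would run the same argument with the source regular extension $1 \to \cK \to \cC \to [\cC] \to 1$, invoking the source regular LHS-spectral sequence obtained by setting $N = \underline R$ in Corollary~\ref{cor:XuSSExt-Source} (just as Theorem~\ref{thm:XuSSCoh} is the $N = \underline R$ case of Theorem~\ref{thm:XuSSExt}); here all cohomology groups carry left-module coefficients, and again $K(x) = \Aut_{\cC}(x)$. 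For part (3), I would apply the spectral sequence for source regular extensions from Corollary~\ref{cor:SSGY} with $\cD = [\cC]$ and right $R[\cC]$-module $M$: its $E_2$-term is $\Ext^p_{R[\cC]}(H_q(\cK; R), M)$, with $H_q(\cK; R)$ the right $R[\cC]$-module $x \mapsto H_q(K(x); R) = H_q(\Aut_{\cC}(x); R)$, and it converges to $H^{p+q}(\cC; \Res_{\pi} M)$.

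I do not expect a genuine obstacle: the mathematical work is entirely contained in Sections~\ref{sect:GabrielZisman}--\ref{sect:Extensions}, and the present statement is a direct specialization. The only point I would take care to spell out is that the $R[\cC]$-module structures appearing on the $E_2$-pages are the ones built abstractly in Section~\ref{sect:Extensions} from liftings of order relations $x \le y$ to morphisms $x \to y$ in $\cC$ (as in Lemma~\ref{lem:InducedHom}), and that these agree with the evident functorial structures on the assignments $x \mapsto H^q(\Aut_{\cC}(x); M(x))$ and $x \mapsto H_q(\Aut_{\cC}(x); R)$ named in the statement; this is routine once the definitions are unwound, and in particular it shows these assignments do define $R[\cC]$-modules, as asserted.
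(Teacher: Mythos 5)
Your proposal is correct and follows exactly the route the paper takes: the paper's proof also just specializes Theorem~\ref{thm:XuSSCoh}, Corollary~\ref{cor:XuSSExt-Source}, and Corollary~\ref{cor:SSGY} to the extension $1 \to \cK \to \cC \to [\cC] \to 1$, with the kernel identified as $K(x)=\Aut_{\cC}(x)$ because $[\cC]$ is a poset. Your additional remarks on the kernel identification and on matching the $R[\cC]$-module structures are accurate and only make explicit what the paper leaves implicit.
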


\begin{proof} The first two spectral sequences follow from Theorem \ref{thm:XuSSCoh} 
and Corollary \ref{cor:XuSSExt-Source}. The third one follows from Corollary \ref{cor:SSGY}.
\end{proof} 

For an abelian group $A$, the cohomology of the category $\cC$ with coefficients in $A$ is defined
as the cohomology of the classifying space $B\cC$ with coefficients in $A$. The cohomology group 
$H^* (B\cC; A)$ is isomophic to the cohomology of the category $\cC$ with coefficients 
in the constant functor $\underline A$. Here we can regard the constant functor as either a left
module or a right $R\cC$-module, with both cohomology groups being isomorphic to the cohomology 
group $H^* (B\cC; A)$. This gives the following spectral sequence for  source regular categories.
 
\begin{corollary}[{Linckelmann \cite[Thm 3.1]{Linckelmann-OnH}}]\label{cor:RegularSS} 
Let $\cC$ be a source regular category and $A$ be an abelian group. 
Then, for every $n \geq 0$, the assignment 
$x \to H^n ( \Aut_{\cC} (x); A)$ defines a left $R[\cC]$-module and 
there is a spectral sequence
$$ E_2 ^{p,q}= H^p ( [\cC];  H^q ( \Aut_{\cC} (-); A ) ) \Rightarrow H^{p+q}  ( B\cC ;  A).$$ 
\end{corollary}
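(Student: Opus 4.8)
The plan is to obtain this statement as the specialization of the second part of Theorem~\ref{thm:RegularSS} to constant coefficients. Since $\cC$ is source regular, the functor $\pi:\cC\to[\cC]$ is source regular, and its kernel $\cK=\{K(x)\}$ satisfies $K(x)=\ker\{\Aut_{\cC}(x)\to\Aut_{[\cC]}(x)\}=\Aut_{\cC}(x)$, because the poset $[\cC]$ has no nonidentity automorphisms. Working over $R=\bbZ$ (or over any ring $R$ of which $A$ is a module), I would take $M=\underline{A}$, the constant functor regarded as a left $R\cC$-module, and apply Theorem~\ref{thm:RegularSS}(2).

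For each $x$, the $\Aut_{\cC}(x)$-module $M(x)=A$ carries the trivial action, so $H^q(\Aut_{\cC}(x);M(x))=H^q(\Aut_{\cC}(x);A)$; and the abutment is $H^{p+q}(\cC;\underline{A})\cong H^{p+q}(B\cC;A)$ by the comparison between the cohomology of a category with constant coefficients and the cohomology of its classifying space recalled in Section~\ref{sect:Cohomology} (cf. \cite[16.2.3]{Richter-Book}). Substituting these identifications into the spectral sequence of Theorem~\ref{thm:RegularSS}(2) produces the asserted spectral sequence $E_2^{p,q}=H^p([\cC];H^q(\Aut_{\cC}(-);A))\Rightarrow H^{p+q}(B\cC;A)$.

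The only point requiring a small verification is that the left $R[\cC]$-module structure on $x\mapsto H^q(\Aut_{\cC}(x);M(x))$ provided by Theorem~\ref{thm:RegularSS}(2) --- which is the module $H^q(\cK;M)$ of Corollary~\ref{cor:XuSSExt-Source}, restricted through the opposite extension --- reduces, when $M=\underline{A}$, to the functor $H^q(\Aut_{\cC}(-);A)$ named in the statement. In the description of these structure maps, a lift $\widetilde\varphi$ of a relation in $[\cC]$ contributes a factor $M(\widetilde\varphi)$, which is the identity on $A$ since $M$ is constant; the structure maps therefore collapse to the maps in group cohomology induced by the homomorphisms $K(\widetilde\varphi)$ between automorphism groups, and these are independent of the chosen lift by the triviality of the action of inner automorphisms (Lemma~\ref{lem:InducedHom}). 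There is no genuine obstacle here beyond this bookkeeping; all the homological content is already contained in Theorem~\ref{thm:RegularSS}.
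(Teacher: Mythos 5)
Your proposal is correct and follows exactly the paper's route: the paper's proof is the single sentence that the corollary follows from Theorem~\ref{thm:RegularSS}(2) by taking $M=\underline{A}$, and your additional remarks (the identification of the kernel with $\Aut_{\cC}(x)$, the comparison $H^*(\cC;\underline{A})\cong H^*(B\cC;A)$, and the collapse of the module structure maps for constant coefficients) are just the bookkeeping the paper leaves implicit.
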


\begin{proof} This follows from the Statement (2) in Theorem  \ref{thm:RegularSS} after taking the left $R\cC$-module 
$M$ as the constant functor $\underline A$.
\end{proof}
  
One of the main examples of the source regular category is the subdivision category $S(\cC)$ of an EI-category $\cC$. 
We now give the definition of the subdivision category, following the terminology in \cite{Linckelmann-Orbit}.  
 
The simplex category $\Delta$ is the category whose objects are the totally ordered sets $[n]=\{ 0, 1, \dots, n\}$, where $n\geq 0$ 
is an integer, and the morphisms $[m]\to [n]$ are given by order preserving maps for all $m,n \geq 0$.  Let $\Delta _{inj} \subset \Delta$
denote the subcategory that has same objects as $\Delta$ where the morphisms $[m]\to [n]$ are given by 
order preserving injective maps for all $m,n \geq 0$.  

Let $\cC$ be a small category. A functor $\sigma : [n]\to \cC$ is a
chain of composable morphisms in $\cC$:  $$ \sigma := ( \sigma_0 
\xrightarrow{\alpha_1} \sigma_1 \xrightarrow{\alpha_2} \cdots  \to \sigma_{n-1} \xrightarrow{\alpha_n} \sigma_n )$$
where $\sigma (i)$ is denoted by $\sigma_i$ in the chain.
 We say a chain $\sigma$ is \emph{strict} if the objects $\sigma_i$ in the chain are pairwise non-isomorphic, 
 i.e., distinct, since we assumed that $\cC$ is skeletal.

\begin{definition}[{\cite[\S 2]{Linckelmann-Orbit}}]\label{def:Subdivision} Let $\cC$ be a small category. 
The \emph{subdivision category} $S(\cC)$ of $\cC$ 
is the category whose objects are strict chains $\sigma : [n] \to \cC$, where a morphism from 
$\tau : [m]\to \cC$ to $\sigma : [n] \to \cC$ is given by a pair $(j, \mu)$ such that $j: [m]\to [n]$ 
is a morphism in $\Delta _{inj}$ and 
$\mu : \tau \cong  \sigma \circ j$ is a natural isomorphism. The composition of two morphisms
 $(k, \nu) :\rho \to \tau$  and $(j, \mu) : \tau \to \sigma$ in $S(\cC)$ is defined by 
 $$ (j, \mu) \circ  (k, \nu) = (j \circ k, (\mu k) \circ \nu)$$
where $\mu k: \tau \circ k \cong \rho \circ j \circ k$ is the natural isomorphism obtained 
by precomposing $\mu$ with $k$. The opposite category $S(\cC)^{op}$ is denoted by $s(\cC)$.
\end{definition}

If $\cC$ is a poset considered as a category, then the subdivision category $S(\cC)$ is the poset category 
of the strict chains in $\cC$, with the order relation given by the inclusion of subchains.
In this case it is well-known that the realizations $B\cC$ and $BS(\cC)$ are homeomorphic 
by the well-known result on the subdivision of a simplicial complex. There is a similar observation for general 
subdivision categories obtained via a functor from $S(\cC)$ to $\cC$ which we now explain.

Let $\Fin: S(\cC)\to \cC$ denote the functor that sends
$\sigma : [n] \to \cC$ to the last object $\sigma_n$ in the chain and sends  a morphism  
$(j, \mu): \tau \to \sigma$ to the composition of morphisms
$$ \tau _m \xrightarrow{\mu _j} \sigma _{ j(m)}  \xrightarrow{\alpha_{j(m)+1} } 
\sigma_{j(m)+1} \xrightarrow{\alpha_{j(m)+2}} \cdots 
\xrightarrow{\alpha_n} \sigma _n.$$
Similarly one can define the functor $\Ini: S(\cC) ^{op} \to \cC$ that sends
$\sigma : [n] \to \cC$ in $S(\cC)$ to the first object $\sigma_0$ in the chain and sends 
a morphism  $(j, \mu): \tau \to \sigma$ to the composition of the following morphisms
$$\sigma _0  \xrightarrow{\alpha_1} \sigma _1 \xrightarrow{\alpha_2} \cdots \xrightarrow{\alpha_{j(0)} } \sigma _{j(0)}
\xrightarrow{\mu_0 ^{-1} } \tau _0.$$ 

Note that we can view the functor $\Ini$ as a (covariant) functor $\Ini : s(\cC) \to \cC$.
For this functor, we have the following observation due to S\l omi\' nska
\cite{Slominska-Homotopy}.

\begin{proposition}\label{thm:SlomHom} 
For any EI-category $\cC$, the functor $\Ini: s(\cC) \to \cC$ is right cofinal.  
\end{proposition}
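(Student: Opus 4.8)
The plan is to show that for every $c \in \Ob(\cC)$ the comma category $c \backslash \Ini$ is $R$-acyclic (in fact contractible), so that the claim follows from the definition of right cofinality. First I would unwind the definition: an object of $c \backslash \Ini$ is a pair $(\sigma, f)$ where $\sigma = (\sigma_0 \xrightarrow{\alpha_1} \cdots \xrightarrow{\alpha_n} \sigma_n)$ is a strict chain in $\cC$ and $f : c \to \sigma_0 = \Ini(\sigma)$ is a morphism in $\cC$; a morphism $(\sigma, f) \to (\sigma', f')$ in $c\backslash \Ini$ is a morphism $(j,\mu) : \sigma \to \sigma'$ in $s(\cC)$ — equivalently a morphism $\sigma' \to \sigma$ in $S(\cC)$, i.e.\ a pair with $j$ order-preserving injective and $\mu : \sigma' \cong \sigma \circ j$ — compatible with $f, f'$ via $\Ini$, meaning $\Ini(j,\mu) \circ f' = f$.

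Next I would construct a contracting homotopy by a ``cone'' argument. Given a pair $(\sigma, f)$ with $\sigma$ a strict chain starting at $\sigma_0$ and $f : c \to \sigma_0$, form the new chain obtained by prepending $c$ via $f$: if $c$ is not isomorphic to any $\sigma_i$ this is the genuinely longer strict chain $\widehat\sigma := (c \xrightarrow{f} \sigma_0 \xrightarrow{\alpha_1} \cdots \xrightarrow{\alpha_n} \sigma_n)$, and if $f$ happens to be an isomorphism onto $\sigma_0$ one instead uses $\sigma$ itself (after identifying $c$ with $\sigma_0$, which is legitimate since $\cC$ is skeletal — here one must be slightly careful that $f$ being an isomorphism forces $c = \sigma_0$). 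In either case $\Ini(\widehat\sigma) = c$ and the canonical map $c \to c$ is the identity, so $(\widehat\sigma, \id_c)$ is an object of $c\backslash\Ini$. The assignment $(\sigma,f) \mapsto (\widehat\sigma,\id_c)$ extends to an endofunctor $T$ of $c\backslash\Ini$: a morphism $(j,\mu):(\sigma',f') \to (\sigma,f)$ — I mean the underlying morphism $\sigma \to \sigma'$ in $S(\cC)$ — induces $\widehat\jmath : \widehat{\sigma'} \to \widehat\sigma$ by shifting indices and inserting $c$ in degree $0$, using the compatibility $\Ini(j,\mu)\circ f = f'$ to supply the required isomorphism in degree $0$. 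One then checks that there are natural transformations $\id_{c\backslash\Ini} \Rightarrow T \Leftarrow (\text{const}_{(\widehat{(\ast)},\id_c)})$ — the inclusion of $\sigma$ as the ``tail'' of $\widehat\sigma$ gives one, and the map collapsing $\widehat\sigma$ down to the constant chain $(c)$ (with $\id_c$) gives the other; actually it is cleaner to note $T$ factors through the full subcategory on objects of the form $(\widehat\tau,\id_c)$ and that subcategory has $(c \, \text{alone}, \id_c)$ as an initial object via the map $c \to \Ini(\widehat\tau) = c$. By Proposition~\ref{pro:Homotopic}, natural transformations induce homotopies of classifying spaces, so $B(c\backslash\Ini)$ is homotopy equivalent to a point; in particular it is $R$-acyclic.

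The main obstacle I anticipate is bookkeeping around the strictness condition and the skeletal assumption: prepending $c$ to a strict chain yields a strict chain only when $c$ is not isomorphic to an object already appearing in $\sigma$, and since in a skeletal EI-category a morphism $f : c \to \sigma_i$ can only be an isomorphism when $c = \sigma_i$, the ``exceptional'' case is exactly $f$ an isomorphism with target $\sigma_0$ — in that case $f \in \Aut_\cC(\sigma_0)$ and prepending would duplicate $\sigma_0$, so one uses $\sigma$ itself and absorbs $f$ into the degree-$0$ isomorphism. Handling both cases uniformly while keeping the assignment functorial (and the natural transformations genuinely natural) is the delicate part; once that is set up, the homotopy-equivalence conclusion is immediate from Proposition~\ref{pro:Homotopic}. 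An alternative I would keep in mind, if the functoriality of $T$ becomes awkward, is to instead exhibit a left adjoint to the inclusion of a point-like subcategory, or to apply Quillen's Theorem~A directly; but the cone construction above is the most direct route.
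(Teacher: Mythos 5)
Your overall strategy --- proving that each comma category $c\backslash\Ini$ is contractible via a cone construction that prepends $c$ to a chain, then invoking Proposition \ref{pro:Homotopic} --- is the standard one; the paper gives no proof of its own and simply cites S\l omi\' nska's Proposition 1.5, whose argument is of exactly this type. However, the step you yourself flag as delicate does fail as written. In the exceptional case where $f\colon c\to\sigma_0$ is an isomorphism (so $c=\sigma_0$ and $f\in\Aut_{\cC}(c)$), you set $T(\sigma,f)=(\sigma,\id_c)$, ``absorbing $f$ into the degree-$0$ isomorphism.'' But a morphism in $c\backslash\Ini$ between $(\sigma,f)$ and $(\sigma,\id_c)$ is a pair $(j,\mu)$ with $j=\id_{[n]}$ and $\mu$ a natural automorphism of the chain $\sigma$ whose degree-$0$ component is forced to be $f^{\pm1}$; naturality then demands some $\mu_1\in\Aut_{\cC}(\sigma_1)$ with $\mu_1\circ\alpha_1=\alpha_1\circ f^{\pm1}$, and such an extension of $f$ to an automorphism of the whole chain need not exist (e.g.\ when $\Aut_{\cC}(\sigma_1)$ is trivial and $\alpha_1\circ f\neq\alpha_1$). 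So on these objects there is in general no arrow connecting $(\sigma,f)$ to your $T(\sigma,f)$, and the cone transformation breaks down.

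The repair is to change the chain rather than only the augmentation: in the exceptional case set $T(\sigma,f)=(\sigma^f,\id_c)$ with $\sigma^f=(c\xrightarrow{\alpha_1\circ f}\sigma_1\to\cdots\to\sigma_n)$. Then $(\id_{[n]},(f^{-1},\id,\dots,\id))$ is a genuine natural isomorphism $\sigma\cong\sigma^f$ (the first square commutes since $(\alpha_1 f)f^{-1}=\alpha_1$), and it yields a morphism $(\sigma^f,\id_c)\to(\sigma,f)$ whose $\Ini$-image is $f$, as required; one must then still verify functoriality of the merged $T$, including the mixed case where an object with $f$ invertible maps to a subchain whose augmentation is not invertible. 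Two smaller points: (i) to see that prepending gives a strict chain whenever $f$ is not an isomorphism you must also rule out $c=\sigma_i$ for $i\geq1$, which needs the EI property (if $g\colon\sigma_0\to\sigma_i$ is the chain composite and $gf$ is an automorphism, then $f(gf)^{-1}g$ is an idempotent endomorphism of $\sigma_0$, hence the identity, forcing $f$ to be invertible and $c=\sigma_0$); (ii) your natural transformations run in the wrong direction --- in $c\backslash\Ini$ arrows go from a chain to its subchains, so they are $T\Rightarrow\id$ and $T\Rightarrow\mathrm{const}$, with $((c),\id_c)$ terminal rather than initial in the image of $T$ --- but this is harmless for concluding that $B(c\backslash\Ini)$ is contractible, hence $R$-acyclic.
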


\begin{proof} See \cite[Prop 1.5]{Slominska-Homotopy}.
\end{proof}

Applying Theorem \ref{thm:JackSlom}, we obtain: 

\begin{proposition}\label{pro:SC-Cohomology}
Let $\cC$ be an $EI$-category. Then for any $R\cC$-module $M$,
the map $$\Ini^* : H^* (\cC; M ) \xrightarrow{\cong} H^* (s(\cC) ; \Res _{\Ini} M) $$
induced by the functor $\Ini: s(\cC) \to \cC$ is an isomorphism.
\end{proposition}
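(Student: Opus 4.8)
The plan is to obtain this as a direct application of the Jackowski--S\l omi\'nska cofinality theorem (Theorem \ref{thm:JackSlom}). The functor in question is $\Ini : s(\cC) \to \cC$, so in the notation of Theorem \ref{thm:JackSlom} I would let the ``$\cC$'' there be $s(\cC)$, the ``$\cD$'' there be our EI-category $\cC$, and $F$ be $\Ini$. The hypothesis of that theorem is precisely that $F$ is right cofinal over $R$, i.e.\ that for every $x \in \Ob(\cC)$ the comma category $x \backslash \Ini$ satisfies $H^*(B(x\backslash \Ini); R) \cong H^*(pt; R)$. But this is exactly the content of Proposition \ref{thm:SlomHom}. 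Hence Theorem \ref{thm:JackSlom} applies verbatim and, for any $R\cC$-module $M$, produces the isomorphism $\Ini^* : H^*(\cC; M) \xrightarrow{\cong} H^*(s(\cC); \Res_{\Ini} M)$ induced by $\Ini$, which is the assertion.

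So from the vantage point of this section there is essentially nothing further to do: the only hypothesis to be checked has already been isolated as Proposition \ref{thm:SlomHom}, and I would simply invoke it and then cite Theorem \ref{thm:JackSlom}. The genuine difficulty has been pushed into Proposition \ref{thm:SlomHom} (proved in \cite{Slominska-Homotopy}); were one to reprove it here, the main obstacle would be the homotopy analysis showing that each comma category $x\backslash \Ini$ is $R$-acyclic, which S\l omi\'nska handles by an explicit contraction built from prepending $x$ to strict chains. Since the excerpt permits assuming Proposition \ref{thm:SlomHom}, the proof reduces to the two-line combination above.
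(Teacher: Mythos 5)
Your proposal is correct and is exactly the paper's argument: the paper's proof of Proposition \ref{pro:SC-Cohomology} also consists of invoking Proposition \ref{thm:SlomHom} to verify right cofinality of $\Ini$ and then applying Theorem \ref{thm:JackSlom}. Nothing is missing.
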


\begin{proof}  This follows from Proposition \ref{thm:SlomHom} and Theorem \ref{thm:JackSlom}.
\end{proof}

Proposition \ref{pro:SC-Cohomology} shows that for the purposes of calculating cohomology groups 
of the category $\cC$, we can replace $\cC$ with the category $s(\cC)$. Moreover the spectral sequence 
in Theorem \ref{thm:RegularSS} applies the functor $\pi : s(\cC) \to [s(\cC)]$. For this we first need 
to replace $s(\cC)$ with its skeleton $sk(s(\cC))$ and assume that
$\Ob (s(\cC) )=\Ob ([s(\cC)])$.  Under this assumption the following holds:

\begin{proposition}\label{pro:SubIsTargetRegular}\label{pro:SC-SourceRegular}
The subdivision category $S(\cC)$ is a source regular category. Hence $s(\cC)$ is target regular.
\end{proposition}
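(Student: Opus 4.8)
The plan is to verify the defining condition for a source regular EI-category directly from Definition~\ref{def:Subdivision}: for any two objects $\tau\colon[m]\to\cC$ and $\sigma\colon[n]\to\cC$ of $S(\cC)$ with $\Mor_{S(\cC)}(\tau,\sigma)\neq\emptyset$, the group $\Aut_{S(\cC)}(\tau)$ acts freely and transitively on $\Mor_{S(\cC)}(\tau,\sigma)$ by precomposition. First I would record that $S(\cC)$ is an EI-category: an endomorphism of $\sigma$ is a pair $(j,\mu)$ with $j\colon[n]\to[n]$ in $\Delta_{inj}$, and an order-preserving injection of $[n]$ into itself is forced to be $\id_{[n]}$, so $\mu$ is a natural automorphism of $\sigma$ and $(j,\mu)$ is invertible with inverse $(\id_{[n]},\mu^{-1})$. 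Consequently $\Aut_{S(\cC)}(\tau)$ is exactly the set of pairs $(\id_{[m]},\nu)$ with $\nu\colon\tau\Rightarrow\tau$ a natural isomorphism, and for $(j,\mu)\colon\tau\to\sigma$ the composition law of Definition~\ref{def:Subdivision}, applied with $k=\id_{[m]}$, specializes to $(j,\mu)\circ(\id_{[m]},\nu)=(j,\mu\circ\nu)$ (the whiskering factor $\mu k$ collapsing to $\mu$).

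The heart of the argument is a rigidity observation: if $(j,\mu)$ and $(j',\mu')$ both lie in $\Mor_{S(\cC)}(\tau,\sigma)$, then $j=j'$. Indeed $\mu$ and $\mu'$ furnish isomorphisms $\tau_i\cong\sigma_{j(i)}$ and $\tau_i\cong\sigma_{j'(i)}$ in $\cC$ for every $i\in[m]$, hence $\sigma_{j(i)}\cong\sigma_{j'(i)}$; since $\sigma$ is a strict chain in the (skeletal) category $\cC$, isomorphic objects appearing in $\sigma$ are literally equal, which forces $j(i)=j'(i)$. Granting this, transitivity and freeness are formal. Given morphisms $(j,\mu),(j,\mu')\colon\tau\to\sigma$, the composite $\nu:=\mu^{-1}\circ\mu'$ of the natural isomorphisms $\mu'\colon\tau\Rightarrow\sigma\circ j$ and $\mu^{-1}\colon\sigma\circ j\Rightarrow\tau$ is a natural automorphism of $\tau$, so $(\id_{[m]},\nu)\in\Aut_{S(\cC)}(\tau)$ and $(j,\mu)\circ(\id_{[m]},\nu)=(j,\mu\circ\mu^{-1}\circ\mu')=(j,\mu')$; and if $(j,\mu)\circ(\id_{[m]},\nu)=(j,\mu)$ then $\mu\circ\nu=\mu$ gives $\nu=\id_\tau$. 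Hence $\Aut_{S(\cC)}(\tau)$ acts regularly on every nonempty hom-set out of $\tau$, so $S(\cC)$ is source regular. Finally, reversing all arrows turns the condition ``automorphisms of the source act regularly on hom-sets'' into ``automorphisms of the target act regularly on hom-sets'' for $S(\cC)^{op}$, i.e.\ $s(\cC)=S(\cC)^{op}$ is target regular (equivalently, $\pi\colon S(\cC)\to[S(\cC)]$ being source regular forces $\pi^{op}$ to be target regular, using $[S(\cC)]^{op}=[s(\cC)]$).

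I expect the only point requiring care — the ``main obstacle'', modest as it is — to be bookkeeping with Definition~\ref{def:Subdivision}: keeping straight the opposite variances of the $\Delta_{inj}$-component $j$ and the natural-isomorphism component $\mu$, confirming that precomposition by an automorphism of $\tau$ is genuinely the stated composition formula with $k=\id_{[m]}$, and being explicit that the whole discussion takes place in the skeletal model of $\cC$, since that is precisely what upgrades ``$\sigma$ strict'' to ``the objects $\sigma_i$ are distinct'' and thereby makes the rigidity statement $j=j'$ go through. No input beyond the elementary properties of EI-categories and Definition~\ref{def:Subdivision} is needed — in particular no cofinality or spectral-sequence machinery.
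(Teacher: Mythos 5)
Your proof is correct. The paper itself offers no argument here — it simply cites Linckelmann \cite[Prop 2.3]{Linckelmann-Orbit} — so your direct verification supplies exactly the content that citation encapsulates: the identification of $\Aut_{S(\cC)}(\tau)$ with the pairs $(\id_{[m]},\nu)$, the rigidity step forcing $j=j'$ (which correctly uses that strict chains in the skeletal model have pairwise distinct, hence pairwise non-isomorphic, objects), and the resulting free and transitive precomposition action. The passage to $s(\cC)=S(\cC)^{op}$ being target regular is immediate from the paper's Definition \ref{def:RegularExt}, as you note.
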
  

\begin{proof} See \cite[Prop 2.3]{Linckelmann-Orbit}.
\end{proof}

Note that since we assumed that $s(\cC)$ is skeletal, there is a unique pre-chosen object for every
isomorphism class of objects. We write isomorphism classes as $[\sigma]$ meaning that $\sigma$ is the chosen object.  
For each $\sigma\in s(\cC)$, let $\Aut _{s(\cC) } (\sigma)$ denote the automorphism group 
of $\sigma$. If  $$ \sigma := ( \sigma_0 
\xrightarrow{\alpha_1} \sigma_1 \xrightarrow{\alpha_2} \sigma_2 \to\cdots \to \sigma_{n-1} \xrightarrow{\alpha_n} \sigma_n )$$
then an automorphism of $\sigma$ is a sequence of automorphisms $(a_i)$ with $a_i \in \Aut _{\cC} (\sigma_i)$ such that
$$ a_i \circ \alpha_i =\alpha_i \circ a_{i-1}$$ for each $i=1, \dots, n-1.$
Let $\pi_i : \Aut _{s(\cC)} (\sigma) \to \Aut _{\cC} (\sigma _i)$ denote the group homomorphism that
sends $(a_i) \in \Aut _{s(\cC) } (\sigma)$ to $a_i \in \Aut _{\cC} (\sigma_i) $. 

Let $M$ be an $R\cC$-module and $\sigma \in s(\cC)$. Note that $M(\sigma_0)$ is 
an $\Aut _{\cC} (\sigma_0)$-module. Restriction of this module via 
$\pi_0: \Aut _{s(\cC)} (\sigma) \to \Aut _{\cC} (\sigma _0)$ gives an $R\Aut _{s(\cC)} (\sigma)$-module
$\Res _{\pi_0 } ( M(\sigma_0))$. We have the following:

\begin{lemma}\label{lem:Modules} With the definitions given above, for every $R\cC$-module $M$ 
and every $\sigma \in s(\cC)$, $(\Res _{\Ini} M) (\sigma)= \Res _{\pi_0 } ( M(\sigma_0)) $.
\end{lemma}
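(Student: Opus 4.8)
The statement to prove is Lemma~\ref{lem:Modules}: for every $R\cC$-module $M$ and every $\sigma\in s(\cC)$, one has $(\Res_{\Ini} M)(\sigma) = \Res_{\pi_0}(M(\sigma_0))$ as $R\Aut_{s(\cC)}(\sigma)$-modules. The plan is to unwind both sides directly from the definitions, since this is essentially a bookkeeping statement about restriction functors and the explicit description of the functor $\Ini$.

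First I would recall that, by Definition~\ref{def:Restriction}, $\Res_{\Ini} M = M\circ \Ini^{op}$, so evaluating at an object $\sigma\in s(\cC)$ gives $(\Res_{\Ini}M)(\sigma)=M(\Ini(\sigma))=M(\sigma_0)$, using the defining property of $\Ini: s(\cC)\to\cC$ that it sends a chain $\sigma=(\sigma_0\xrightarrow{\alpha_1}\cdots\xrightarrow{\alpha_n}\sigma_n)$ to its first object $\sigma_0$. So the two sides agree as $R$-modules, and the content of the lemma is purely about the $\Aut_{s(\cC)}(\sigma)$-module structure. By the general discussion after Definition~\ref{def:Restriction} (or the remark following Lemma~\ref{lem:IsomComma}), for any $R\cC$-module $N$ and any object $c$, the $R\Aut_{\cC}(c)$-action on $N(c)$ is by the maps $N(a)$ for $a\in\Aut_{\cC}(c)$; and for a functor $F:\cA\to\cB$ and object $a\in\cA$, the $\Aut_{\cA}(a)$-action on $(\Res_F N)(a)=N(F(a))$ is by the maps $N(F(b))$ for $b\in\Aut_{\cA}(a)$, i.e.\ it is obtained from the $\Aut_{\cB}(F(a))$-action on $N(F(a))$ by restricting along the group homomorphism $\Aut_{\cA}(a)\to\Aut_{\cB}(F(a))$ induced by $F$.

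Applying this with $F=\Ini$, $\cA=s(\cC)$, $\cB=\cC$, and $a=\sigma$: the $\Aut_{s(\cC)}(\sigma)$-module structure on $(\Res_{\Ini}M)(\sigma)=M(\sigma_0)$ is the restriction of the $\Aut_{\cC}(\sigma_0)$-module $M(\sigma_0)$ along the group homomorphism $\Aut_{s(\cC)}(\sigma)\to\Aut_{\cC}(\sigma_0)$ induced by $\Ini$ on automorphism groups. So the only remaining point is to identify this induced homomorphism with $\pi_0:\Aut_{s(\cC)}(\sigma)\to\Aut_{\cC}(\sigma_0)$. I would verify this by computing $\Ini$ on an automorphism of $\sigma$: an automorphism of $\sigma$ is a pair $(j,\mu)$ with $j=\id_{[n]}$ (since injective order-preserving self-maps of $[n]$ are the identity) and $\mu:\sigma\cong\sigma$ a natural isomorphism, i.e.\ a tuple $(a_i)$ with $a_i\in\Aut_{\cC}(\sigma_i)$ commuting with the $\alpha_i$, exactly as in the description of $\Aut_{s(\cC)}(\sigma)$ given just before the lemma. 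By the formula for $\Ini$ on morphisms — which on a morphism $(j,\mu):\tau\to\sigma$ is the composite ending with $\mu_0^{-1}:\sigma_{j(0)}\to\tau_0$ — when $j=\id$ and $\tau=\sigma$ this composite collapses to $\mu_0^{-1}=a_0^{-1}$... here I must be careful about the variance convention: since $\Ini$ is written as a covariant functor $s(\cC)\to\cC$ and $\mu_0:\tau_0\to\sigma_{j(0)}$ in the original $S(\cC)$-convention, tracing through the opposite yields precisely $a_0\in\Aut_{\cC}(\sigma_0)$, i.e.\ $\Ini(a_i)_i = a_0 = \pi_0((a_i)_i)$. Thus the induced homomorphism on automorphism groups is $\pi_0$, and the two module structures coincide.

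The only genuinely delicate point — the main (modest) obstacle — is keeping the variance straight: $\Ini$ is defined above as a functor out of $S(\cC)^{op}=s(\cC)$, the morphism formula for $\Ini$ is stated with a $\mu_0^{-1}$ in it, and $\Res_{\Ini}$ involves $\Ini^{op}$, so there are several dualizations to track. I would handle this by pinning down one convention at the outset (e.g.\ write everything in terms of $s(\cC)$ and its automorphism groups directly) and checking the composition law $\pi_0((a_i)(b_i))=\pi_0((a_i))\pi_0((b_i))$ explicitly against the composition rule for morphisms in $S(\cC)$ from Definition~\ref{def:Subdivision}; everything else is a direct unwinding with no real content.
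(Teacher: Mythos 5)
Your proposal is correct; the paper in fact states this lemma without any proof, treating it as immediate from the definitions, and your unwinding --- identifying the underlying $R$-modules via $\Ini(\sigma)=\sigma_0$ and then checking that $\Ini$ restricted to automorphisms of $\sigma$ is exactly $\pi_0$ --- is precisely the intended argument. The one delicate point you flag, namely that the formula for $\Ini$ on a morphism produces $\mu_0^{-1}$ rather than $\mu_0$, is resolved just as you indicate by the identification of $\Aut_{s(\cC)}(\sigma)=\Aut_{S(\cC)}(\sigma)^{op}$ with the group of tuples $(a_i)$ used throughout the paper, so there is no gap.
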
 

Combining the observations on this section, we obtain the following spectral sequence:

\begin{theorem}[{S\l omi\' nska \cite[Cor 1.13]{Slominska-Homotopy}}]\label{thm:SlomSS}
Let $\cC$ be an EI-category and $M$ be an $R\cC$-module. Then, for every $n \geq 0$, the assignment 
$[\sigma] \to H^n ( \Aut_{s(\cC)} (\sigma); \Res _{\pi_0} M( \sigma _0 ))$ defines an $R[s(\cC)]$-module, 
denoted by $\cA^n$, and there is a spectral sequence
$$ E_2 ^{p,q}= H^p ( [s(\cC)] ; \cA^q ) \Rightarrow H^{p+q}  ( \cC ;  M).$$ 
\end{theorem}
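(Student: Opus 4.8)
The plan is to obtain this spectral sequence by applying the target regular EI-category spectral sequence of Theorem \ref{thm:RegularSS}(1) not to $\cC$ itself but to the subdivision category $s(\cC)$. The first step is to invoke Proposition \ref{pro:SC-Cohomology}, which gives a natural isomorphism $H^*(\cC; M) \cong H^*(s(\cC); \Res_{\Ini} M)$; this already replaces the abutment of the desired spectral sequence by $H^{p+q}(s(\cC); \Res_{\Ini} M)$. As usual when working with EI-categories, I would then pass to a skeleton, using Corollary \ref{cor:Skeletal} so that neither side changes, and thereby assume that $s(\cC)$ is skeletal, that $\Ob(s(\cC)) = \Ob([s(\cC)])$, and that $\pi: s(\cC) \to [s(\cC)]$ is a functor which is the identity on objects.

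Next, Proposition \ref{pro:SC-SourceRegular} tells us that $s(\cC)$ is a target regular EI-category, so Theorem \ref{thm:RegularSS}(1) applies with the coefficient $Rs(\cC)$-module $N := \Res_{\Ini} M$. It produces a spectral sequence
$$ E_2^{p,q} = H^p\bigl( [s(\cC)]; H^q(\Aut_{s(\cC)}(-); N(-)) \bigr) \Rightarrow H^{p+q}(s(\cC); N),$$
together with the statement that the assignment $[\sigma] \mapsto H^q(\Aut_{s(\cC)}(\sigma); N(\sigma))$ carries a well-defined $R[s(\cC)]$-module structure — namely the one coming from the target regular extension $1 \to \cK \to s(\cC) \to [s(\cC)] \to 1$ via the induced maps of Lemma \ref{lem:InducedHom}. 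Splicing in the isomorphism from the first step identifies the abutment with $H^{p+q}(\cC; M)$, as required.

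It then remains to rewrite the $E_2$-term. By Lemma \ref{lem:Modules}, for each object $\sigma$ of $s(\cC)$ one has $N(\sigma) = (\Res_{\Ini} M)(\sigma) = \Res_{\pi_0}(M(\sigma_0))$ as a module over $\Aut_{s(\cC)}(\sigma)$, where $\pi_0: \Aut_{s(\cC)}(\sigma) \to \Aut_{\cC}(\sigma_0)$ is restriction of an automorphism of the chain to its initial object. Substituting this into the coefficient system shows that $H^q(\Aut_{s(\cC)}(-); N(-))$ is precisely the $R[s(\cC)]$-module $\cA^q$ with $\cA^q([\sigma]) = H^q(\Aut_{s(\cC)}(\sigma); \Res_{\pi_0} M(\sigma_0))$, which is exactly the module named in the statement. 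This yields the spectral sequence $E_2^{p,q} = H^p([s(\cC)]; \cA^q) \Rightarrow H^{p+q}(\cC; M)$.

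The only point needing genuine care is the bookkeeping of module structures: one must confirm that after the objectwise identification of Lemma \ref{lem:Modules} the $R[s(\cC)]$-module built abstractly by Theorem \ref{thm:RegularSS}(1) is indeed the $\cA^q$ of the statement. Since the statement \emph{defines} $\cA^q$ via the automorphism groups $\Aut_{s(\cC)}(\sigma)$ and the restriction $\Res_{\pi_0}$, this reduces to checking that Lemma \ref{lem:Modules} is compatible with the transition maps (liftings of order relations in $[s(\cC)]$), which is routine given the explicit description of the extension action in Section \ref{sect:Extensions}. I do not expect any real obstacle here: the theorem is a concatenation of Propositions \ref{pro:SC-Cohomology} and \ref{pro:SC-SourceRegular}, Theorem \ref{thm:RegularSS}(1), and Lemma \ref{lem:Modules}, together with the standard skeleton reduction.
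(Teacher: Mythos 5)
Your proposal is correct and follows essentially the same route as the paper: apply Theorem \ref{thm:RegularSS}(1) to the target regular category $s(\cC)$ (Proposition \ref{pro:SC-SourceRegular}) with coefficients $\Res_{\Ini} M$, identify the coefficient system via Lemma \ref{lem:Modules}, and transfer the abutment using Proposition \ref{pro:SC-Cohomology}, with the skeleton reduction handled as in the paper. No gaps.
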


\begin{proof} By Proposition \ref{pro:SC-SourceRegular}, the category $s(\cC)$ is target regular.
Applying Theorem \ref{thm:RegularSS}(1) to $s(\cC)$ with coefficients in $\Res _{\Ini} M$, we 
obtain a spectral sequence $$ E_2 ^{p,q}= H^p ( [s(\cC)] ;  H^q ( \Aut_{s(\cC) } (-) ; (\Res_{\Ini} M)(-) ) 
\Rightarrow H^{p+q}  ( s( \cC) ;  \Res _{\Ini} M).$$ 
By Lemma \ref{lem:Modules}, for every $\sigma\in s(\cC)$, we can replace 
$(\Res _{\Ini} M) (\sigma)$ with $\Res _{\pi_0 } (M(\sigma_0))$.
Now  the result follows from the isomorphism in Proposition \ref{pro:SC-Cohomology}.
\end{proof}

This spectral sequence constructed in  \cite[Cor 1.13]{Slominska-Homotopy} has a
different expression for the $E_2$-term, but it is easy to see the $E_2$-term of the sequence 
in \cite[Cor 1.13]{Slominska-Homotopy} is isomorphic to the terms of the spectral sequence 
given above.  This spectral sequence with trivial coefficients is also constructed in \cite{Linckelmann-OnH}.

%%---------------------

\section{Spectral sequences for the transporter category}\label{sect:Transporter}

Let $G$ be a discrete group and $\cH$ be a collection of subgroups of $G$ (closed under conjugation).  
The \emph{transporter category} $\cT_{\cH} (G)$ is the category whose objects are
the subgroups $H \in \cH$ and whose morphisms are given by 
$$\Mor _{\cT _{\cH} (G) } ( H, K) =N_G (H, K):=\{ g\in G \, | \, gHg^{-1} \leq K\}.$$
Composition of two morphisms is defined by group multiplication. We are interested in the following quotient categories
of the transporter category.

\begin{definition} 
Let $G$ be a discrete group and $\cH$ be a collection of subgroups of $G$.
\begin{enumerate}
\item The \emph{orbit category} $\cO _{\cH} (G)$ is the category 
whose objects are the subgroups $H \in \cH$ and whose 
morphisms are given by $$\Mor _{\cO _{\cH} (G) } ( H, K) =\{ Kg \, | \, g\in G, gHg^{-1} \leq K\}.$$
\item The \emph{fusion category} $\cF _{\cH} (G)$ is the category whose objects are the subgroups $H \in \cH$ and whose 
morphisms are given by $$\Mor _{\cF _{\cH} (G) } ( H, K) =\{ gC_G(H) \, | \, g\in G, gHg^{-1} \leq K\}.$$
\item The \emph{fusion orbit category} $\overline \cF _{\cH} (G))$ is the category 
whose objects are the subgroups $H \in \cH$ and whose morphisms 
are given by $$\Mor _{\overline \cF _{\cH} (G)} ( H, K) =\{ KgC_G(H) \, | \, g\in G, gHg^{-1} \leq K\}.$$
\end{enumerate}
\end{definition}

The morphisms in the orbit category can be identified with the $G$-maps $G/H \to G/K$ between 
the $G$-sets of the left cosets $G/H$ and $G/K$.  The morphisms in the fusion category can be viewed 
as group homomorphisms $c_g : H \to K$ induced by conjugation with $g \in G$. Note that the fusion 
orbit category is a quotient category of the fusion category. We have
$$\Mor _{\overline \cF _{\cH} (G)} ( H, K)\cong \Inn (K) \backslash \Mor _{\cF _{\cH} (G) } ( H, K) $$
where $\Inn (K)\cong K/ Z(K)$ is the group of conjugations $c_k : K \to K$ induced by elements $k \in K$. 
The relationship between these categories can be described by the following diagram where each arrow 
represents the projection functor to the corresponding quotient category.
$$\xymatrix{  & \cT _{\cH} (G) \ar[dl]_{\pi_1} \ar[dr]^{\pi_2}&  \\ 
\cO _{\cH} (G)  \ar[dr]_{\pi_3} & & \cF _{\cH} (G)  \ar[dl]^{\pi_4} \\
& \overline \cF_{\cH} (G) & \\} $$

The quotient maps $\pi_1$ and $\pi_2$ define extensions of categories and the corresponding LHS-spectral sequences 
for these extensions are well-known spectral sequences. They are related to the subgroup and centralizer decompositions
for the cohomology of the group $G$. 

\subsection{Subgroup decomposition} For every $H, K\in \cH$, the left $K$-action on $N_G(H, K)$ is free.
Hence the quotient map $\pi _1: \cT _{\cH } (G) \to \cO _{\cH } (G)$ defines a target regular extension
$$\cE:  \{ K \} _{K \in \cH } \to \cT_{\cH } (G) \xrightarrow{\pi_1} \cO _{\cH } (G).$$
Applying Theorem \ref{thm:XuSSCoh} to this extension, we obtain:

\begin{proposition}\label{pro:SubgroupDec} For every $R\cT _{\cH } (G)$-module $M$, there is a spectral sequence
$$ E_2 ^{p,q}= H^p ( \cO _{\cH} (G) ;  \cH^q _M ) \Rightarrow H^{p+q} ( \cT _{\cH} (G) ;  M)$$
where $\cH^q _M$ is the $R\cO_{\cH} (G)$-module such that $\cH ^q_M(K)=H^q(K; M(K))$ for every $K\in \cH$.
\end{proposition}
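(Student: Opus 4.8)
The proof is a direct application of Xu's LHS-spectral sequence, so the plan has two parts: first check that $\pi_1$ is a target regular extension with kernel $\cK = \{K\}_{K\in\cH}$, and then quote Theorem~\ref{thm:XuSSCoh} and identify the coefficient module.

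For the first part I would begin by pinning down the relevant automorphism groups. Since morphisms in $\cT_\cH(G)$ compose by multiplication in $G$, an endomorphism $g \in N_G(K,K)$ of $K$ is invertible exactly when $g^{-1} \in N_G(K,K)$, i.e.\ when $gKg^{-1} = K$; thus $\Aut_{\cT_\cH(G)}(K) = N_G(K)$, while $\Aut_{\cO_\cH(G)}(K) = K\backslash N_G(K)$. Consequently $K(K) := \ker\{\pi_1 : \Aut_{\cT_\cH(G)}(K) \to \Aut_{\cO_\cH(G)}(K)\}$ is the subgroup $K \le N_G(K)$, included by $i_K$ as left multiplications, and the kernel of the extension is the discrete category $\cK = \{K\}_{K\in\cH}$. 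Then I would verify the two conditions in Definition~\ref{def:RegularExt}(1): for $H,K\in\cH$ the post-composition action of $K(K)=K$ on $\Mor_{\cT_\cH(G)}(H,K) = N_G(H,K)$ is left multiplication inside $G$ (well defined since conjugation by $k\in K$ preserves $K$), hence free; and for each $g \in N_G(H,K)$ its $K$-orbit is the full right coset $Kg$, so $\pi_1 : N_G(H,K) \to \{Kg\}$ is precisely the orbit map $N_G(H,K) \to K\backslash N_G(H,K)$. Together with $\Ob(\cT_\cH(G)) = \Ob(\cO_\cH(G)) = \cH$ and $\pi_1 = \id$ on objects, this shows
$$1 \to \{K\}_{K\in\cH} \xrightarrow{i} \cT_\cH(G) \xrightarrow{\pi_1} \cO_\cH(G) \to 1$$
is a target regular extension.

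For the second part I would apply Theorem~\ref{thm:XuSSCoh} to this extension and the given $R\cT_\cH(G)$-module $M$, which yields the spectral sequence $E_2^{p,q} = H^p(\cO_\cH(G); H^q(\cK;M)) \Rightarrow H^{p+q}(\cT_\cH(G); M)$. It remains to match coefficient systems: by the definition of the $R\cO_\cH(G)$-module $H^q(\cK;M)$ (see Theorem~\ref{thm:Intro-SSXu} and Lemma~\ref{lem:InducedHom}), its value at $K\in\cH$ is $H^q(K(K); \Res_{i_K} M)$; since $K(K) = K \le N_G(K) = \Aut_{\cT_\cH(G)}(K)$ by inclusion, $\Res_{i_K} M$ is the $RK$-module underlying $M(K)$, so $H^q(\cK;M)(K) = H^q(K; M(K)) = \cH^q_M(K)$, and the $R\cO_\cH(G)$-module structures agree by construction (given by the maps $M(\widetilde\varphi)_* K(\widetilde\varphi)^*$ of Lemma~\ref{lem:InducedHom}). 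There is no genuine obstacle here; the only point needing a touch of care is confirming that $\cH^q_M$ as defined in the statement coincides with Xu's $H^q(\cK;M)$ on morphisms as well as on objects, but this is forced by the definition of $H^q(\cK;M)$ in Section~\ref{sect:Extensions}, so no new computation is required. One could also remark that feeding the same extension into Theorem~\ref{thm:XuSSExt} produces the corresponding Ext-group refinement of this spectral sequence.
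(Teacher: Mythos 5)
Your proposal is correct and follows the same route as the paper: the paper likewise observes that the left $K$-action on $N_G(H,K)$ by post-composition is free with orbit map $\pi_1$, so that $1 \to \{K\}_{K\in\cH} \to \cT_{\cH}(G) \to \cO_{\cH}(G) \to 1$ is a target regular extension, and then applies Theorem~\ref{thm:XuSSCoh}. You simply supply more of the routine verification (e.g.\ identifying $\Aut_{\cT_{\cH}(G)}(K)=N_G(K)$ and matching the coefficient module on objects and morphisms), which the paper leaves implicit.
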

In this case, the $R\cO _{\cH} (G)$ module structure can be described as follows: Given a $G$-map
$f:G/H\to G/K$, let $g\in G$ such that $f(H)=g^{-1} K$. Then $gHg^{-1}\leq K$. The $\cT_{\cH} (G)$ action
on $\cK=\{ K\}_{K\in \cH}$ is given by $c_g : H \to K$ that sends $h\in H$ to $ghg^{-1}$ in $K$. Hence, 
$f$ induces a homomorphism 
$$H^* (K; M(K)) \xrightarrow{c_g^*}  H^*(H; \Res _{c_g }  M(K) ) \xrightarrow{M(f)_*} H^* (H; M(H)).$$  
This is the $R\cO_{\cH } (G)$-module structure for the assignment $K \to H^q (K; M(K))$.

Let $K_{\cH}$ denote the simplicial complex
whose simplices are chains in $\cH$ with the order relation given by inclusion of subgroups. 
The realization of the transporter category $|\cT_{\cH} (G)|$ is homotopy 
equivalent to the Borel construction $EG\times _G K_\cH$ (see \cite[Rem 2.2]{Grodal-Endo}). 
If we take the $R\cT_{\cH} (G)$-module $M$ to be the constant functor 
$A$, then we obtain a spectral sequence
 $$ E_2 ^{p,q}= \underset{\cO _{\cH} (G)}{\lim {}^p}  \ H^q (-; A)  \Rightarrow H^{p+q} ( EG \times _G K_{\cH} ;  A).$$
where $H^q(-; A)$ denotes the $q$-th group cohomology functor considered as an $R\cO_{\cH} (G)$-module.

\begin{definition}\label{ample} A collection of subgroups $\cH$ of a finite group $G$ is called \emph{ample} if 
$$H^* (EG \times _{G} K_{\cH} ; \bbF_p)\cong H^* (BG; \bbF_p).$$
\end{definition}

Any collection that includes the trivial subgroup is an ample collection, hence the collection of all $p$-subgroups
in $G$ is an ample collection. The collection of all nontrivial $p$-subgroups in a finite group 
$G$ is also ample when $p$ divides the order of $G$ (see Brown \cite[Sect X.7]{Brown-Book}).
A list of all ample collections of subgroups in a finite group 
can be found in \cite{GrodalSmith} and \cite{BensonSmith-Book}.

When $\cH$ is an ample collection, the spectral sequence above gives a spectral sequence of the form
$$ E_2 ^{p,q}= \underset{\cO _{\cH} (G)}{\lim {} ^p} \ H^q (-; \bbF_p)  \Rightarrow H^{p+q} (BG; \bbF_p).$$
This spectral sequence is usually constructed as the Bousfield-Kan spectral sequence of the mod-$p$ subgroup 
decomposition of $BG$ (see \cite[Sect 7]{Dwyer-Book}). For infinite groups with finite virtual cohomological dimension (vcd), a similar 
spectral sequence involving Farrell cohomology is constructed in \cite[Thm 4.6]{Lee-Subgroup}.

%--------

\subsection{Centralizer decomposition} For every $H, K\in \cH$, the right $C_G (H)$-action on 
$N_G(H, K)$ is free. Hence the functor $\pi _2: \cT _{\cH } (G) \to \cF _{\cH } (G)$ 
defines a source regular extension 
$$\cE:  \{ C_G(H) \} _{H \in \cH } \to \cT_{\cH } (G) \xrightarrow{\pi_2} \cF _{\cH } (G).$$
Applying Corollary \ref{cor:XuSSExt-Source} to this extension, we obtain:

\begin{proposition}\label{pro:CentralizerDec}
For every left $R \cT _{\cH } (G)$-module $M$, there is a spectral sequence
$$ E_2 ^{p,q}= H^p ( \cF _{\cH} (G) ;  \cH ^q _{C, M} ) \Rightarrow H^{p+q} ( \cT _{\cH} (G) ;  M)$$
where $\cH^q _{C, M}$ is the left $R \cF_{\cH} (G)$-module such that
$\cH ^q _{C, M} (H)=H^q ( C_G (H) ; M(H))$ for every $H \in \cH$.
\end{proposition}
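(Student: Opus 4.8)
The plan is to obtain this spectral sequence as a special case of the source regular LHS-spectral sequence from Corollary~\ref{cor:XuSSExt-Source}, applied to the extension
$$1 \to \{C_G(H)\}_{H\in\cH} \maprt{i} \cT_{\cH}(G) \maprt{\pi_2} \cF_{\cH}(G) \to 1$$
with the constant module $N = \underline{R}$ in the $\cD$-variable.

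First I would check the hypotheses, i.e. that $\pi_2 : \cT_{\cH}(G) \to \cF_{\cH}(G)$ is genuinely a source regular functor with kernel $\{C_G(H)\}_{H\in\cH}$. Both categories have object set $\cH$ and $\pi_2$ is the identity on objects. For $H, K \in \cH$ one has $\Mor_{\cF_{\cH}(G)}(H,K) = \{gC_G(H) : g \in N_G(H,K)\}$, so $\pi_{2}$ on $\Mor(H,K)$ is exactly the quotient by the right (precomposition) multiplication action of $C_G(H) \leq N_G(H) = \Aut_{\cT_{\cH}(G)}(H)$ on $N_G(H,K) = \Mor_{\cT_{\cH}(G)}(H,K)$, and this action is free since $C_G(H)$ is a subgroup of $G$ acting by translation. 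Taking $H = K$ identifies $\Aut_{\cF_{\cH}(G)}(H) \cong N_G(H)/C_G(H)$, so $K(H) := \ker\{\Aut_{\cT_{\cH}(G)}(H) \to \Aut_{\cF_{\cH}(G)}(H)\} = C_G(H)$. Most of this is already recorded in the paragraph preceding the statement, so this step is short.

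With the extension in hand I would invoke Corollary~\ref{cor:XuSSExt-Source} for the given left $R\cT_{\cH}(G)$-module $M$ and $N = \underline{R}$: since $\Res_{\pi_2}\underline{R} = \underline{R}$, the abutment becomes $\Ext^{p+q}_{R\cT_{\cH}(G)}(\underline{R}, M) = H^{p+q}(\cT_{\cH}(G); M)$ and the $E_2$-page becomes $\Ext^p_{R\cF_{\cH}(G)}(\underline{R}, H^q(\cK;M)) = H^p(\cF_{\cH}(G); H^q(\cK;M))$. Finally I would match the coefficient module: by definition $H^q(\cK;M)$ sends $H$ to $H^q(K(H); \Res_{i_H}M) = H^q(C_G(H); M(H)) = \cH^q_{C,M}(H)$, and the left $R\cF_{\cH}(G)$-module structure it carries (via the target regular opposite extension, as in Corollary~\ref{cor:XuSSExt-Source}) is the one described for $\cH^q_{C,M}$.

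I do not expect a substantive obstacle here: the content is entirely in verifying that $\pi_2$ satisfies Definition~\ref{def:RegularExt}(2) and in unwinding the definition of $H^q(\cK;M)$, both routine. The only point deserving care is the left-versus-right module bookkeeping — confirming that the left $R\cF_{\cH}(G)$-structure produced by Corollary~\ref{cor:XuSSExt-Source} (coming through the opposite extension $1 \to \cK \to \cT_{\cH}(G)^{op} \to \cF_{\cH}(G)^{op} \to 1$) is literally the structure on $\cH^q_{C,M}$, which amounts to tracing how a morphism $gC_G(H) : H \to K$ in $\cF_{\cH}(G)$ acts, via the lift $g$ and the induced conjugation $C_G(K) \to C_G(H)$, on group cohomology.
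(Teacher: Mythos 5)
Your proposal is correct and follows the paper's own route exactly: the paper verifies that the right $C_G(H)$-action on $N_G(H,K)$ is free so that $\pi_2$ gives a source regular extension with kernel $\{C_G(H)\}_{H\in\cH}$, and then applies Corollary~\ref{cor:XuSSExt-Source} with $N=\underline{R}$, identifying the coefficient module just as you do. The only remark the paper adds is the same bookkeeping point you flag at the end, namely that the source regularity makes the action on the kernel contravariant and hence the induced action on cohomology covariant, giving the left $R\cF_{\cH}(G)$-structure on $\cH^q_{C,M}$.
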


The left $R\cF _{\cH} (G)$-module structure can be defined in a similar way to the orbit category case. The difference here is 
that since the extension is source regular, the action of $T_{\cH} (G)$ on $\cK=\{ C_G (H)\} _{H \in \cH}$ is given by
a contravariant functor, i.e. $c_g : H \to K$ induces a group homomorphism $C_G(K) \to C_G(H)$. So the induced action on cohomology
is given by a covariant functor.  

As in the subgroup decomposition case, if we take the left $R\cT_{\cC} (G)$-module $M$ to be the constant functor 
$\bbF_p$, and $\cH$ to be an ample collection, then we obtain a spectral sequence
 $$ E_2 ^{p,q}= \underset{\cF _{\cH} (G)}{\lim {}^p} \ H^q ( C_G (-);  \bbF_p)  \Rightarrow H^{p+q} ( BG;  \bbF_p).$$
This spectral sequence is the same spectral sequence as the Bousfield-Kan spectral sequence associated 
to the centralizer decomposition of $BG$ (see \cite[\S 7]{Dwyer-Book}). 
For infinite groups with finite vcd, a spectral sequence for the centralizer decomposition 
over the collection of all nontrivial elementary abelian $p$-subgroups is constructed in \cite{Lee-Centralizer}.

%-----------------------

\subsection{Normalizer decomposition} 
 
The transporter category $\cT_{\cH} (G)$ is an EI-category, so we can apply 
Theorem \ref{thm:SlomSS} to $\cT_{\cH} (G)$ to obtain a spectral sequence 
$$E_2 ^{p.q} = H^p ([s(\cT_{\cH} (G))]; \cA_{\cT} ^q ) \Rightarrow H^{p+q} (\cT _{\cH } (G); M)$$ 
where $M$ is a right $\cT _{\cH } (G)$-module and $\cA_{\cT} ^q$ is the $R[s(\cT _{\cH } (G)]$-module such that  
$$ \cA_{\cT} ^q ([\sigma])=H^q (\Aut_{s(\cT _{\cH } (G)) } (\sigma) ; \Res_{\pi_0} M (\sigma_ 0 ) ).$$ 
Every strict chain $$ \sigma := ( H_0 \xrightarrow{\alpha_1} H_1 \xrightarrow{\alpha_2} \cdots  \xrightarrow{\alpha_n} H_n ) $$
in $\cT_{\cH } (G)$ is isomorphic to a strict chain where all the morphisms $\alpha_i$ are inclusion of subgroups.
Hence we can take the representatives of isomorphism classes of chains of morphisms to be 
the strictly increasing chains of subgroups $\sigma := (H_0 < H_1<  \cdots < H_n)$ in $\cH$. 
For such a chain $\sigma$, we have 
$$\Aut _{s(\cT_{\cH} (G))} (\sigma) =N_G (\sigma) :=\{ g \in G \, | \, gH_i g^{-1} \leq H_i \text{ for all } i \}.$$ 
Let  $S(\cH)$ be the poset category of strictly increasing chains in $\cH$ with order relation  
given by inclusion of subchains, and $s(\cH)$ denote its opposite category. 
The category of the $G$-orbits of chains in $\cH$, denoted by $s(\cH)/G$, is the category
whose objects are $G$-orbits of strict chains in $\cH$ and whose morphisms are defined in such a way that
there is a morphism $[\sigma]\to [\tau]$ if there is a $g\in G$ such that $\tau$ is subchain of $g\sigma$.
The poset category  $[s(\cT_{\cH} (G))]$ of isomorphism classes of chains in $s(\cT _{\cH} (G))$
can be identified with $s(\cH )/G$. Hence we conclude:

\begin{proposition}\label{pro:NormalizerDec}
For every $R \cT _{\cH } (G)$-module $M$, there is a spectral sequence of the form
$$E_2 ^{p.q} = H^p ( s( \cH)/G ; \cN^q) \Rightarrow H^{p+q} (\cT _{\cH } (G); M)$$
where $\cN^q$ is the $R(s(\cH)/G)$-module such that $\cN ^q([\sigma])= H^q ( N_G (\sigma) ; \Res_{\pi_0} M(\sigma _0) )$
for every $[\sigma]_G$ in $s(\cH)/G$.
\end{proposition}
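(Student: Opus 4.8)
The plan is to obtain this spectral sequence directly from Theorem~\ref{thm:SlomSS} applied to the EI-category $\cT_{\cH}(G)$, and then to rewrite its $E_2$-term using the combinatorial description of the subdivision category of a transporter category recorded in the discussion preceding the statement. Recalling that $\cT_{\cH}(G)$ is an EI-category (an endomorphism of $H$ is a $g\in G$ with $gHg^{-1}\le H$, which forces $g\in N_G(H)$), Theorem~\ref{thm:SlomSS} produces, for the right $R\cT_{\cH}(G)$-module $M$, a spectral sequence $E_2^{p,q}=H^p([s(\cT_{\cH}(G))];\cA_{\cT}^q)\Rightarrow H^{p+q}(\cT_{\cH}(G);M)$ with $\cA_{\cT}^q([\sigma])=H^q(\Aut_{s(\cT_{\cH}(G))}(\sigma);\Res_{\pi_0}M(\sigma_0))$. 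The remaining work is to identify the poset $[s(\cT_{\cH}(G))]$ with $s(\cH)/G$ and the local data $(\Aut_{s(\cT_{\cH}(G))}(\sigma),\pi_0)$ with $(N_G(\sigma),\, N_G(\sigma)\hookrightarrow N_G(H_0))$, and then to substitute.

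For the identification of the index category I would proceed in three steps. First, normalize: given a strict chain $\sigma=(H_0\xrightarrow{\alpha_1}\cdots\xrightarrow{\alpha_n}H_n)$ in $\cT_{\cH}(G)$, conjugate the lower terms (replacing $H_i$, for $i<n$, by $(\alpha_n\cdots\alpha_{i+1})H_i(\alpha_n\cdots\alpha_{i+1})^{-1}$, which again lies in $\cH$) to obtain a chain isomorphic to $\sigma$ in $S(\cT_{\cH}(G))$ all of whose morphisms are inclusions of subgroups; strictness then makes this a strictly increasing chain $H_0<H_1<\cdots<H_n$ in $\cH$. Second, unwind the definition of morphism in $S(\cT_{\cH}(G))$: two normalized chains are isomorphic precisely when they are related by a natural isomorphism, which amounts to being conjugate under the diagonal $G$-action on chains, and a general morphism $(j,\mu)$ between normalized chains, read on isomorphism classes, says exactly that the target chain occurs as a subchain of some $G$-conjugate of the source chain; this yields an isomorphism of posets $[s(\cT_{\cH}(G))]\cong s(\cH)/G$. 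Third, for a normalized chain $\sigma=(H_0<\cdots<H_n)$ I would compute $\Aut_{s(\cT_{\cH}(G))}(\sigma)$ from the description recalled before the statement: an automorphism is a tuple $(a_i)$ with $a_i\in\Aut_{\cT_{\cH}(G)}(H_i)=N_G(H_i)$ and $a_i\circ\iota_i=\iota_i\circ a_{i-1}$; since composition in $\cT_{\cH}(G)$ is multiplication in $G$ and each inclusion $\iota_i$ is represented by $1\in G$, this forces $a_0=a_1=\cdots=a_n=:g$ with $g\in\bigcap_i N_G(H_i)=N_G(\sigma)$, and conversely every such $g$ gives an automorphism. Under this identification $\pi_0$ is the inclusion $N_G(\sigma)\hookrightarrow N_G(H_0)$, so $\cA_{\cT}^q$ becomes the $R(s(\cH)/G)$-module $\cN^q$ with $\cN^q([\sigma])=H^q(N_G(\sigma);\Res_{\pi_0}M(H_0))$, and the spectral sequence of Theorem~\ref{thm:SlomSS} is the asserted one.

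I expect the main obstacle to be the bookkeeping in the identification of the index category: carrying out the normalization of an arbitrary strict chain while tracking how the morphisms transform, and verifying that the $S(\cT_{\cH}(G))$-morphism data $(j,\mu)$ — an order-embedding together with a natural isomorphism — matches, both on objects and on morphisms, the orbit-category structure of $s(\cH)/G$, so that the poset, and hence the module $\cN^q$ with its structure maps, is exactly the one in the statement. By contrast, the computation of $\Aut_{s(\cT_{\cH}(G))}(\sigma)$ and of $\pi_0$ is immediate once composition in $\cT_{\cH}(G)$ is written out explicitly.
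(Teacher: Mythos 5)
Your proposal is correct and follows essentially the same route as the paper: apply Theorem \ref{thm:SlomSS} to the EI-category $\cT_{\cH}(G)$, normalize every strict chain to one whose morphisms are inclusions of subgroups, identify $[s(\cT_{\cH}(G))]$ with $s(\cH)/G$ and $\Aut_{s(\cT_{\cH}(G))}(\sigma)$ with $N_G(\sigma)$, and substitute into the $E_2$-term. The extra detail you supply (the explicit conjugating elements in the normalization and the computation forcing $a_0=\cdots=a_n$) is exactly the bookkeeping the paper leaves implicit.
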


As before, if we take $M$ to be the constant functor $\bbF_p$, and $\cH$ to be an ample collection, then we obtain a 
spectral sequence
$$E_2 ^{p.q} = \underset{s(\cH)/G}{\lim {}^p} \ H^q (N_G (-); \bbF_p ) \Rightarrow H^{p+q} (BG ; \bbF_p).$$
 This is the same spectral sequence as the Bousfield-Kan spectral sequence for the normalizer decomposition 
 of the classifying space of $G$ (see \cite[\S 7]{Dwyer-Book}). 

A collection of subgroups $\cH$ is called \emph{normalizer sharp} if in the above spectral sequence 
we have $E_2 ^{p,q}=0$ for all $p> 0$ and $q\geq 0$.
For finite groups, there are many normalizer sharp collections of $p$-subgroups 
(see \cite{GrodalSmith}). When the collection $\cH$ is normalizer sharp, this spectral sequence 
allows us to calculate the cohomology groups of $G$ in terms of the cohomology of the normalizers of chains of subgroups in $\cH$.

\subsection{Orbit and fusion orbit category} 
In general the projection functors $\pi_3 : \cO _{\cH} (G) \to \overline \cF _{\cH} (G)$ and $\pi_4 : \cF _{\cH} (G) \to \overline \cF _{\cH} (G)$ 
that are defined above are not regular functors as the following counterexamples illustrate:

\begin{example} For $\pi_3: \cO _{\cH} (G)\to \overline  \cF _{\cH} (G)$, let $\cH$ be any collection such that $1, K \in \cH$
with $K \neq 1$.
Then $\Mor _{\cO _{\cH} (G)} (1, K)\cong G/K$ and the centralizer $C_G (1)=G$ does not act freely on $G/K$.
This shows that $\pi_3$ is not source regular. For $\pi_4: \cF _{\cH} (G) \to \overline \cF _{\cH} (G)$, let
$\cH$ be a collection such that $1, K \in \cH$ where $K$ is a nonabelian group. 
Then $\Mor _{\cF _{\cH} (G)} (1, K) \cong 
G/C_G(1)\cong 1$ and  $\Inn (K) \cong K/ Z(K) \neq 1$, so $\Inn(K)$ does not act freely on $\Mor _{\cF _{\cH} (G)} (1, K)$.
Hence $\pi_4$ is not a target regular functor for the collection $\cH$.
\end{example}

In some special cases the functors $\pi_3$ and $\pi_4$ define regular extensions. One important special case is the following:
Let $G$ be a finite group. A $p$-subgroup $P \leq G$ is called \emph{$p$-centric} if $Z(P)$ is a Sylow $p$-subgroup 
of the centralizer $C_G(P)$. In this case we have $C_G(P)\cong Z(P) \times C'_G (P)$ where $C'_G (P)$ is a subgroup 
of $C_G(P)$ with order coprime to $p$. Let ${\mathcal Ce}$ denote the collection of $p$-centric subgroups in $G$. 
When $\cH={\mathcal Ce}$, we denote 
$\cO_{\cH} (G)$ by $\cO ^c (G)$ and $\overline \cF_{\cH} (G)$ by $\overline \cF ^c (G)$. 

\begin{lemma}\label{lem:SourceReg} The projection functor $\pi _3: \cO ^c (G) \to \overline \cF^c (G)$ defines a source regular 
extension
$$ \{ C'_G (P) \} _{P\in {\mathcal Ce} } \to \cO ^c (G) \maprt{\pi_3}\overline \cF ^c (G).$$
\end{lemma}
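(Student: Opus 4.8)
The plan is to verify the three conditions of Definition~\ref{def:RegularExt} directly, using the coset descriptions of morphisms fixed in Section~\ref{sect:Transporter}. First I would record the structural facts: $\Ob(\cO^c(G))=\Ob(\overline\cF^c(G))={\mathcal Ce}$, the functor $\pi_3$ is the identity on objects, and on morphisms it is the map $\Mor_{\cO^c(G)}(P,Q)=\{Qg : gPg^{-1}\leq Q\}\to\{QgC_G(P) : gPg^{-1}\leq Q\}=\Mor_{\overline\cF^c(G)}(P,Q)$ sending $Qg$ to $QgC_G(P)$, which is visibly surjective. Then I would identify the kernel groups. Since $P\trianglelefteq N_G(P)$ one has $\Aut_{\cO^c(G)}(P)\cong N_G(P)/P$, and a short computation with cosets (using that $PC_G(P)\trianglelefteq N_G(P)$) gives $\Aut_{\overline\cF^c(G)}(P)\cong N_G(P)/PC_G(P)$; hence $K(P)=\ker\{\pi_{P,P}\}=PC_G(P)/P\cong C_G(P)/(C_G(P)\cap P)=C_G(P)/Z(P)$. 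Because $P$ is $p$-centric, $C_G(P)=Z(P)\times C'_G(P)$, so $K(P)\cong C'_G(P)$, which identifies the kernel category with $\{C'_G(P)\}_{P\in{\mathcal Ce}}$ as claimed.

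Next I would describe the right $K(P)$-action on $\Mor_{\cO^c(G)}(P,Q)$ and prove it is free with $\pi_3$ as its orbit map. Precomposition by the class of $Pa$ with $a\in PC_G(P)$ sends $Qg$ to $Qga$ (the composition in $\cT_{\cH}(G)$, hence in $\cO_{\cH}(G)$, being given by multiplication of representatives); writing $a=pc$ with $p\in P$, $c\in C_G(P)$, and noting $gpg^{-1}\in gPg^{-1}\leq Q$, we get $Qga=Qgc$, so the action factors through right multiplication by $C_G(P)$, and then through $C_G(P)/Z(P)$ since $Z(P)\leq P$ forces $Qgz=Qg$. For freeness, suppose $Qga=Qg$, i.e. $gag^{-1}\in Q$; reducing as above to $gcg^{-1}\in Q$ and writing $c=zc'$ with $z\in Z(P)$, $c'\in C'_G(P)$, we obtain $gc'g^{-1}\in Q$. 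But $c'$ has order prime to $p$ while $Q$ is a $p$-group, so $gc'g^{-1}=1$, hence $c'=1$, hence $a=pz\in P$ and the class of $Pa$ in $K(P)$ is trivial. Finally, the fibre of $\pi_3$ over $QgC_G(P)$ is $\{Qg' : QgC_G(P)=Qg'C_G(P)\}=\{Qgc : c\in C_G(P)\}$, which is exactly the $K(P)$-orbit of $Qg$; together with surjectivity this shows $\pi_{P,Q}$ induces a bijection $K(P)\backslash\Mor_{\cO^c(G)}(P,Q)\cong\Mor_{\overline\cF^c(G)}(P,Q)$, so $\pi_3$ is a source regular functor with the stated kernel, giving the asserted source regular extension.

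The only step that requires an idea rather than bookkeeping is the freeness argument, and its entire content is the interplay of two coprimality facts: $C'_G(P)$ has order prime to $p$ \emph{precisely because} $P$ is $p$-centric, while every object of ${\mathcal Ce}$ is a $p$-group, so a $p'$-element sitting inside one must be trivial. Everything else — the identifications $\Aut_{\cO^c(G)}(P)\cong N_G(P)/P$ and $\Aut_{\overline\cF^c(G)}(P)\cong N_G(P)/PC_G(P)$, and the check that "precomposition" matches a genuine right action — is routine once the coset and composition conventions of Section~\ref{sect:Transporter} are in place.
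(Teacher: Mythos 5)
Your proposal is correct and follows essentially the same route as the paper: identify the kernel as $PC_G(P)/P\cong C_G(P)/Z(P)\cong C'_G(P)$ using the $p$-centric splitting $C_G(P)=Z(P)\times C'_G(P)$, and deduce freeness of the right action from the fact that a conjugate of a $p'$-element cannot lie in the $p$-group $Q$ unless it is trivial. You simply fill in more of the bookkeeping (the coset identifications of the automorphism groups and the explicit check that the fibres of $\pi_3$ are the $K(P)$-orbits) than the paper does.
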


\begin{proof} For every $p$-centric subgroup $P \leq G$, we have
 $$\ker\{ (\pi_3)_{P,P} : \Aut _{\cO ^c (G)} (P) \to \Aut _{\overline \cF ^c(G) } (P) \}$$
is isomorphic to $C_G(P) P/P \cong C_G (P)/Z(P)\cong C'_G(P)$. Note that
$$\Mor _{\cO ^c (G)}  (P, Q) \cong Q\backslash N_G(P, Q)   \cong \{ Qg \, | \, g\in G, \, gPg^{-1} \leq Q\}.$$   
For every $c\in C'_G (P)$, the right action of $c$ on $\Mor _{\cO ^c (G)} (P, Q)$ is defined by 
$(Qg)\cdot c=Qgc$. If for some $Qg \in \Mor _{\cO ^c (G)} (P, Q)$ and $c\in C'_G (P)$, the equality $Qgc=Qg$ holds, then 
$gcg^{-1} \in Q$ which implies that $c=1$ because $c$ has order coprime to $p$. 
Therefore $C'_G (P)$ acts freely on 
$\Mor _{\cO ^c (G)}  (P, Q)$, hence $\pi_3$ defines a source regular 
extension.
\end{proof}

Applying the spectral sequence in Corollary \ref{cor:SSGY} to the extension in Lemma \ref{lem:SourceReg}, 
for every $R\overline  \cF _{\cH } (G)$-module $M$,
we obtain a spectral sequence
$$ E_2 ^{p,q}= \Ext^p _{R \overline \cF ^c (G) } ( H_q ( C'_G(-); R) , M ) 
\Rightarrow H^{p+q} (\cO ^c (G) ; \Res _{\pi} M).$$ 
If $R$ is a $p$-local ring, i.e., all the primes other than $p$ are invertible in $R$, 
then $$H_q(C' _G (P); R)\cong \begin{cases} R & \text{if  } \ q=0 \\ 0 &\text{if  } \ q>0.\end{cases}$$   
In this case, the spectral sequence collapses at the $E_2$-page to the horizontal line at $q=0$.
As a consequence we obtain the following:

\begin{proposition}[{Broto-Levi-Oliver \cite[Lem 1.3]{BLO1}}] 
Let $G$ be a finite group and $R$ be a $p$-local ring. Then for every 
$R\overline  \cF _{\cH } (G)$-module $M$, there is an isomorphism
$$H^* (\overline \cF ^c  (G) ; M) \cong H^* (\cO ^c (G); \Res _{\pi_3} M)$$
induced by the functor $\pi_3 : \cO _{\cH } (G) \to \overline \cF _{\cH} (G)$.
\end{proposition}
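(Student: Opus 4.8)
The plan is to read the isomorphism off the LHS-spectral sequence for the source regular extension provided by Lemma~\ref{lem:SourceReg}, after checking that it collapses. First I would recall from Lemma~\ref{lem:SourceReg} that $\pi_3 : \cO^c(G) \to \overline\cF^c(G)$ sits inside a source regular extension
$$1 \to \{ C'_G(P)\}_{P\in {\mathcal Ce}} \to \cO^c(G) \maprt{\pi_3} \overline\cF^c(G) \to 1$$
whose kernel groups $C'_G(P)$ have order prime to $p$. Applying the spectral sequence of Corollary~\ref{cor:SSGY} to this extension and to the module $M$ gives a first quadrant spectral sequence
$$E_2^{p,q} = \Ext^p_{R\overline\cF^c(G)}\bigl( H_q(\cK; R), M \bigr) \Rightarrow H^{p+q}(\cO^c(G); \Res_{\pi_3} M),$$
where $H_q(\cK; R)$ is the $R\overline\cF^c(G)$-module sending $P$ to the group homology $H_q(C'_G(P); R)$.

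Next I would compute the coefficient module $H_q(\cK; R)$. Since $R$ is $p$-local and $|C'_G(P)|$ is invertible in $R$, we have $H_q(C'_G(P); R) = 0$ for all $q > 0$, while $H_0(C'_G(P); R) = R$. Hence $H_q(\cK; R) = 0$ for $q > 0$, and $H_0(\cK; R)$ is the constant $R\overline\cF^c(G)$-module $\underline R$ (all structure maps being identities, as they come from trivial coefficients). Consequently the spectral sequence collapses onto the row $q = 0$, whose single nonzero entry is
$$E_2^{p,0} = \Ext^p_{R\overline\cF^c(G)}(\underline R, M) = H^p(\overline\cF^c(G); M).$$
Convergence then produces an isomorphism $H^*(\overline\cF^c(G); M) \cong H^*(\cO^c(G); \Res_{\pi_3} M)$ realized by the edge homomorphism.

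Finally I would identify this edge homomorphism with the map $\pi_3^*$ induced by the functor $\pi_3$ as in Proposition~\ref{pro:Functorial}. This is carried out exactly as in the last paragraph of the proof of Theorem~\ref{thm:JackSlom}: unwinding the construction of the Gabriel--Zisman / LHS-spectral sequence in terms of a chain map $\Ind_{\pi_3} P_* \to Q_*$ between projective resolutions of the constant functors over $\cO^c(G)$ and $\overline\cF^c(G)$, the edge map $E_2^{*,0} \to H^*(\cO^c(G); \Res_{\pi_3} M)$ coincides with $\pi_3^*$ under the identification $E_2^{*,0} \cong H^*(\overline\cF^c(G); M)$. I expect this last bookkeeping to be the only genuinely subtle point; everything else is immediate once the vanishing $H_q(C'_G(P); R) = 0$ for $q > 0$ is established, which is exactly where the $p$-local hypothesis on $R$ enters.
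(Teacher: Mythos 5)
Your proposal is correct and follows exactly the paper's argument: apply Corollary \ref{cor:SSGY} to the source regular extension of Lemma \ref{lem:SourceReg}, observe that $H_q(C'_G(P);R)=0$ for $q>0$ since $|C'_G(P)|$ is invertible in the $p$-local ring $R$, and conclude the collapse onto the $q=0$ row. Your extra care in identifying the edge homomorphism with $\pi_3^*$ is a point the paper leaves implicit, but it is handled correctly by the same argument as in Theorem \ref{thm:JackSlom}.
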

  
When $G$ is an infinite discrete group that realizes 
a fusion system $\cF$,  there is a similar extension of categories and 
a spectral sequence. In this case the kernel is given by a signalizer functor and 
one obtains a spectral sequence with two horizontal lines
(see \cite[\S 7]{GundoganYalcin}).

%------------------------

\section{Spectral sequences for linking systems}\label{sect:Linking}

Let $G$ be a finite group and $S$ be a Sylow $p$-subgroup of $G$. 
The \emph{centric linking system} $\cL ^c _S (G)$ is the category whose objects are 
the $p$-centric subgroups $P$ of $G$ that lies in $S$, and whose morphisms are given by 
$$\Mor _{\cL ^c _{S} (G)} (P, Q) =\{ g C'_G (P) \, |\, g \in G, gPg^{-1} \leq Q\}$$
where $C'_G (P)$ is the subgroup of $C_G(P)$ of order coprime to $p$ such that $C_G (P) \cong Z(P) \times C'_G (P)$.
From this definition, it is easy to see that there is a source regular extension of categories
$$\cE: \{ C' _G (P) \} \xrightarrow{i} \cT _S ^c (G) \xrightarrow{\pi} \cL _S ^c (G)$$
where $\cT^c _S(G)$ is the transporter category whose objects are the $p$-subgroups $P\leq S$ that are $p$-centric, 
and whose morphisms $P\to Q$ are given by $N_G (P, Q)$.
By Corollary \ref{cor:SSGY}, for every $R \cL _S ^c (G)$-module $M$, there is a spectral sequence
$$ E_2 ^{p,q}= \Ext^p _{R \cL^c  _{S} (G) } ( H_q ( C'_G(-); R) , M ) \Rightarrow H^{p+q} (\cT ^c_{S} (G) ; \Res _{\pi} M).$$ 
Using this spectral sequence, we obtain the following:

\begin{proposition}[{\cite[Prop 1.1]{BLO1}}] Let $R$ be a $p$-local ring. Then for every 
$R\cL ^c _{S} (G)$-module $M$, there is an isomorphism
$$H^* (\cL ^c _S (G) ; M) \cong H^* (\cT ^c  _{S} (G); \Res _{\pi} M).$$
\end{proposition}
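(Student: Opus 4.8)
The plan is to apply the spectral sequence of Corollary \ref{cor:SSGY} to the source regular extension
$$1 \to \{ C'_G (P) \} \xrightarrow{i} \cT ^c_S (G) \xrightarrow{\pi} \cL ^c _S (G) \to 1$$
and show that, because $R$ is $p$-local, it collapses at the $E_2$-page onto the single row $q=0$, which is exactly $H^*(\cL^c_S(G);M)$.

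First I would compute the $R\cL^c_S(G)$-modules $H_q(\cK;R)$, where $\cK = \{ C'_G(P)\}$. For each $p$-centric $P\leq S$ the group $C'_G(P)$ has order prime to $p$, hence invertible in $R$; thus $R C'_G(P)$ is semisimple, $\underline R$ is a projective $RC'_G(P)$-module, and $H_q(C'_G(P);R)=0$ for $q>0$ while $H_0(C'_G(P);R)=R$ (the coefficients being trivial). So the $R\cL^c_S(G)$-module $H_q(\cK;R)$ vanishes for $q>0$. For $q=0$ I would check that its module structure, as described before Lemma \ref{lem:IsomCommaFuncHomology}, is the trivial one: for a morphism $\varphi$ in $\cL^c_S(G)$ with a lift $\widetilde\varphi$ in $\cT^c_S(G)$, the induced map on $H_0$ is $M(\varphi)_*\circ K(\widetilde\varphi)_*$ with $M=\underline R$, where $M(\varphi)=\id_R$ and $K(\widetilde\varphi)_*$ is the map induced on $H_0(-;R)=R$ by a group homomorphism, hence the identity. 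Therefore $H_0(\cK;R)\cong \underline R$ as an $R\cL^c_S(G)$-module.

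With these inputs, the spectral sequence
$$E_2^{p,q} = \Ext^p_{R\cL^c_S(G)}\bigl(H_q(\cK;R),M\bigr) \Rightarrow H^{p+q}\bigl(\cT^c_S(G);\Res_\pi M\bigr)$$
has $E_2^{p,q}=0$ for $q>0$ and $E_2^{p,0}=\Ext^p_{R\cL^c_S(G)}(\underline R,M)=H^p(\cL^c_S(G);M)$. Being concentrated on one row it degenerates, and the edge homomorphism yields the desired isomorphism $H^*(\cL^c_S(G);M)\xrightarrow{\ \cong\ }H^*(\cT^c_S(G);\Res_\pi M)$. As in the proof of Theorem \ref{thm:JackSlom} one can moreover identify this edge homomorphism with the map $\pi^*$ induced by $\pi$, though for the statement it suffices to exhibit some isomorphism.

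There is no serious obstacle beyond bookkeeping here: the entire content sits in Corollary \ref{cor:SSGY} together with the vanishing of $H_{>0}(C'_G(P);R)$ for a $p$-local ring $R$ and a group of order prime to $p$. The one point needing a line of care is the identification of the surviving row $E_2^{*,0}$ with $H^*(\cL^c_S(G);M)$, i.e.\ that $H_0(\cK;R)$ is the constant functor $\underline R$; this is the same verification already carried out in Section \ref{sect:Transporter} for the extension $\cO^c(G)\to\overline\cF^c(G)$, and the present argument is parallel to that one.
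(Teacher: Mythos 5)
Your proposal is correct and follows exactly the route the paper takes: apply Corollary \ref{cor:SSGY} to the source regular extension $1 \to \{C'_G(P)\} \to \cT^c_S(G) \to \cL^c_S(G) \to 1$ and observe that, since $R$ is $p$-local and $|C'_G(P)|$ is prime to $p$, the groups $H_q(C'_G(P);R)$ vanish for $q>0$ and equal $R$ for $q=0$, so the spectral sequence collapses onto the row $q=0$. The paper's proof is just a one-line citation of this vanishing; your additional verifications (that $H_0(\cK;R)$ is the constant functor $\underline R$ as an $R\cL^c_S(G)$-module, and the identification of the edge map) are correct fillings-in of details the paper leaves implicit.
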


\begin{proof} This follows from the fact that if $R$ is $p$-local, then $H_q(C' _G (P); R)\cong R$ if  $q=0$, and  
$H_q(C' _G (P); R)=0$ if  $q>0$.
\end{proof} 

For a finite group $G$ with Sylow $p$-subgroup $S$, \emph{the fusion system of $G$ defined on $S$} 
is the category $\cF_S(G)$ whose objects 
are the subgroups of $S$ and whose morphisms $P\to Q$ are given by 
the group homomorphisms $c_g: P\to Q$ defined by conjugation with an element $g\in G$. 
Let ${\mathcal Ce}_S$ denote the collection of $p$-centric subgroups of $G$ that lies in $S$ 
and $\cF_S^c (G)$ denote the subcategory whose object set is the collection ${\mathcal Ce}_S$.
From the definitions, one can easily see that the quotient functor $\cL ^c _S(G) \to \cF^c_{S} (G)$ 
defines a source regular extension
\begin{equation}\label{eqn:ExtLinkingFinite}
\cE:  \{ Z(P) \} _{P \in {\mathcal Ce}_S} \to \cL ^c _S (G) \to \cF_S ^c (G).
\end{equation}
There is a spectral sequence associated to this source regular extension. In the next section, we discuss this extension 
in more general context of $p$-local finite groups.

%-----------------------

\subsection{$p$-Local finite groups}

A \emph{fusion system} $\cF$ over a $p$-group $S$ is a category whose objects are subgroups of $S$ 
and whose morphisms $\Mor _{\cF} (P, Q)$ are injective group homomorphisms $P\to Q$ satisfying certain axioms 
(see \cite{AKO-Book} for a definition). A fusion system is saturated if it satisfies some additional axioms. In this paper we
assume all fusion systems are saturated. The fusion system $\cF_S(G)$ of a finite group $G$ is an 
example of a fusion system. A subgroup $P \leq S$ is called \emph{$\cF$-centric} if  $C_S(Q)\leq Q$ 
for every $Q\leq S$ which is $\cF$-isomorphic to $P$. The full subcategory of $\cF$ 
whose objects are the $\cF$-centric subgroups of $S$ is denoted by $\cF^c$.

\begin{definition}[{\cite[Def 3.2]{AshOliver}}]\label{def:LinkingSystem}
Let $\cF$ be a fusion system over the finite $p$-group $S$. A \emph{centric linking system}
associated to $\cF$ is the category $\cL$ whose objects are the $\cF$-centric subgroups 
of $S$, together with a pair of functors $$\cT _S ^{c} (S) \xrightarrow{\epsilon}  \cL \xrightarrow{\pi} \cF^c$$
satisfying the following conditions:
\begin{enumerate}
\item The functors $\epsilon$ and $\pi$ are the identity on objects and $\epsilon _{P, Q} : N_{S}(P, Q) \to \Mor _{\cL} (P, Q)$ is injective
and $\pi _{P, Q} : \Mor _{\cL} (P, Q) \to \Mor _{\cF} (P, Q)$ is surjective.
\item For each $P, Q \in \Ob(\cL)$, $\epsilon _{P, P} (P) \leq \Aut _{\cL} (P)$ acts freely on the set 
$\Mor _{\cL } (P, Q)$ by precomposition and $\pi _{P, Q}$ induces a bijection from 
$\Mor _{\cL } (P, Q)/ \epsilon_{P,P} (Z(P))$ onto $\Mor _{\cF ^c} (P, Q)$.
\item For each $P, Q \in \Ob (\cL)$ and $g \in N_S (P, Q)$, the composite functor $\pi \circ \epsilon$ 
sends $g \in N_S(P, Q)$ to $c_g \in \Mor _{\cF} (P, Q)$.
\item For $P, Q \in \Ob (\cL)$, $\psi \in \Mor _{\cL} (P, Q)$, and $g \in P$, the following square commutes in $\cL$.
$$\xymatrix{P \ar[r]^{\psi} \ar[d]_{\epsilon _{P,P} (g)} & Q \ar[d]^{\epsilon _{Q,Q} (\pi (\psi ) (g)) } \\ P \ar[r]^{\psi} & Q.}$$ \qed
\end{enumerate}
\end{definition} 

For every fusion system $\cF$, there is a unique associated centric linking system $\cL$ (see \cite{Chermak}). 
The triple $(S, \cF, \cL)$ is called a \emph{$p$-local finite group}. For every finite group $G$ with Sylow $p$-subgroup $S$, 
the triple $(S, \cF_S(G), \cL_S ^c (G))$ is a $p$-local finite group (see \cite[\S 1]{BLO2}). 
By condition (2) in Definition \ref{def:LinkingSystem}, we can conclude the following:

\begin{lemma}\label{lem:ExtLinkingSource}
For every $p$-local finite group $(S, \cF, \cL)$, 
the projection map $\pi : \cL \to \cF^c$ defines a source regular extension
 $$\cE:  \{ Z(P) \}_{P \in \cF^c} \maprt{\delta} \cL  \maprt{\pi} \cF^c $$
where $\delta_P : Z(P) \to \Aut _{\cL } (P)$ is the homomorphism defined as the restriction 
of the homomorphism $\epsilon_{P,P}: N_S(P) \to \Aut _{\cL } (P)$ to $Z(P)$.  
\end{lemma}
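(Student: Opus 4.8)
The plan is to verify directly that $\pi : \cL \to \cF^c$ satisfies the conditions in Definition~\ref{def:RegularExt} for a source regular extension, with kernel groups $K(P) = Z(P)$ realized inside $\Aut_{\cL}(P)$ via $\delta_P = \epsilon_{P,P}|_{Z(P)}$. First I would identify the kernel: by definition $K(P) = \ker\{\pi_{P,P} : \Aut_{\cL}(P) \to \Aut_{\cF^c}(P)\}$, and I must check $\im(\delta_P) = K(P)$. Condition~(3) of Definition~\ref{def:LinkingSystem} says $\pi \circ \epsilon$ sends $g \in N_S(P)$ to $c_g$, so for $g \in Z(P)$ we get $\pi(\epsilon_{P,P}(g)) = c_g = \id_P$, hence $\im(\delta_P) \subseteq K(P)$. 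For the reverse inclusion, condition~(2) says $\pi_{P,Q}$ induces a bijection $\Mor_{\cL}(P,Q)/\epsilon_{P,P}(Z(P)) \xrightarrow{\cong} \Mor_{\cF^c}(P,Q)$; taking $Q = P$ and restricting to automorphisms, any $\alpha \in K(P)$ maps to $\id_P = \pi(\id_P^{\cL})$, so $\alpha$ and $\id_P^{\cL}$ lie in the same $\epsilon_{P,P}(Z(P))$-orbit, i.e. $\alpha = \epsilon_{P,P}(z)$ for some $z \in Z(P)$. Thus $K(P) = \im(\delta_P)$, and $\delta_P$ is injective by condition~(1) (injectivity of $\epsilon_{P,P}$), so $K(P) \cong Z(P)$.

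Next I would check the freeness and orbit-map conditions. For source regularity I need the \emph{right} $K(P)$-action on $\Mor_{\cL}(P,Q)$ (precomposition) to be free, and $\pi_{P,Q}$ to be its orbit map. Freeness is exactly the first half of condition~(2): $\epsilon_{P,P}(P) \leq \Aut_{\cL}(P)$ acts freely on $\Mor_{\cL}(P,Q)$ by precomposition, and $\epsilon_{P,P}(Z(P)) \leq \epsilon_{P,P}(P)$, so the subgroup $K(P) = \epsilon_{P,P}(Z(P))$ also acts freely. That $\pi_{P,Q}$ is the orbit map is the second half of condition~(2): it is surjective and induces a bijection $\Mor_{\cL}(P,Q)/\epsilon_{P,P}(Z(P)) \cong \Mor_{\cF^c}(P,Q)$, which says precisely that two morphisms have the same image under $\pi$ iff they differ by right precomposition with an element of $K(P)$. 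Finally, $\pi$ is the identity on objects by condition~(1), and $\Ob(\cL) = \Ob(\cF^c)$ = the set of $\cF$-centric subgroups of $S$, so the object-set hypothesis of Definition~\ref{def:RegularExt} holds.

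Having verified all three conditions of Definition~\ref{def:RegularExt}(2), I conclude that $\pi : \cL \to \cF^c$ is a source regular functor with kernel $\cK = \{Z(P)\}_{P \in \cF^c}$, and hence
$$
1 \to \{Z(P)\}_{P \in \cF^c} \maprt{\delta} \cL \maprt{\pi} \cF^c \to 1
$$
is a source regular extension of categories in the sense defined earlier. I do not anticipate a genuine obstacle here: the statement is essentially a dictionary translation, matching conditions~(1)--(3) of the linking-system axioms against the defining conditions of a source regular extension. The only point requiring a small amount of care is the reverse inclusion $K(P) \subseteq \im(\delta_P)$, where one must use the bijectivity in condition~(2) applied to $Q = P$ rather than just surjectivity of $\pi_{P,P}$; this is the step I would write out explicitly, and condition~(4) is not needed for this particular lemma.
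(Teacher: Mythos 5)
Your proof is correct and follows the same route the paper takes: the paper states this lemma as an immediate consequence of condition (2) of Definition \ref{def:LinkingSystem}, and your argument is simply a careful spelling-out of that verification (kernel identification via conditions (2) and (3), freeness from the free $\epsilon_{P,P}(P)$-action restricted to the subgroup $\epsilon_{P,P}(Z(P))$, and the orbit-map property from the stated bijection). Nothing is missing.
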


Note that this is the $p$-local finite group version of the extension given in \ref{eqn:ExtLinkingFinite}.
This extension is one of the examples given by Xu in \cite[Example 4.1.1]{Xu-CohSmall} for source regular extensions of categories.
Applying Theorem \ref{thm:RegularSS} to this extension gives two spectral sequences,
one with coefficients in a left $R\cL$-module, one with (right) $R\cF^c$-modules. Here we state the one with $R\cF^c$-modules:

\begin{proposition}\label{pro:Z(P)SS}
Let $(S, \cF, \cL)$ be a $p$-local finite group. For every $R\cF^c$-module $M$, there is 
a spectral sequence
$$ E_2 ^{p,q}= \Ext _{R\cF^c} ^p ( H_q (Z(-); R), M)  \Rightarrow H^{p+q} (\cL ; \Res _{\pi} M)$$ 
where $H_q (Z(-); R)$ denotes the $R\cF^c$-module such that $P \to H_q (Z(P); R)$ for every $P\in \cF^c$.
\end{proposition}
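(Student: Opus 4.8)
The plan is to feed the source regular extension of Lemma \ref{lem:ExtLinkingSource},
$$1 \to \{ Z(P) \}_{P \in \cF^c} \maprt{\delta} \cL \maprt{\pi} \cF^c \to 1,$$
directly into the LHS-spectral sequence for source regular extensions, Corollary \ref{cor:SSGY} (which is the case $N = \underline R$ of Theorem \ref{thm:SSGYGen}). Taking $\cC = \cL$, $\cD = \cF^c$ and $\cK = \{ Z(P) \}_{P \in \cF^c}$ there, and applying it to the given right $R\cF^c$-module $M$, produces a first-quadrant cohomology spectral sequence
$$E_2^{p,q} = \Ext^p_{R\cF^c}\bigl(H_q(\cK; R), M\bigr) \Rightarrow H^{p+q}(\cL; \Res_\pi M).$$
This is the asserted spectral sequence provided we identify $H_q(\cK; R)$ with the right $R\cF^c$-module $H_q(Z(-); R)$.

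That identification is the only step that is not a verbatim citation, and it is routine. On objects it is tautological: the kernel group at $P \in \cF^c$ is $\delta_P(Z(P)) \cong Z(P)$, and $\Res_{i_P}\underline R$ is $R$ with trivial $Z(P)$-action, so $H_q(\cK; R)$ takes the value $H_q(Z(P); R)$ at $P$. On morphisms, for $\varphi \colon P \to Q$ in $\cF^c$ and any lift $\widetilde\varphi \colon P \to Q$ in $\cL$, the structure map of $H_q(\cK; R)$ (as defined just before Lemma \ref{lem:IsomCommaFuncHomology}) is the composite $M(\widetilde\varphi)_* \circ K(\widetilde\varphi)_*$; the coefficient factor is the identity because the coefficients are trivial, so the map is just the homomorphism induced on $H_q(-; R)$ by the conjugation homomorphism $K(\widetilde\varphi) \colon Z(Q) \to Z(P)$, that is, exactly the $R\cF^c$-module structure carried by $H_q(Z(-); R)$. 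Independence of the chosen lift is the analogue of Lemma \ref{lem:InducedHom} already used to make $H_q(\cK; -)$ a functor; here it is immediate anyway, since $Z(P)$ is abelian and so inner automorphisms act trivially on $H_q(Z(P); R)$.

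There is no serious obstacle; the whole content is the correct application of Corollary \ref{cor:SSGY}. The one thing to keep straight is variance: since $\pi \colon \cL \to \cF^c$ is source regular rather than target regular, the relevant tool is the homology-of-the-kernel spectral sequence of Corollary \ref{cor:SSGY}, not the cohomology-of-the-kernel version of Theorem \ref{thm:XuSSCoh}, and correspondingly $H_q(Z(-); R)$ must be handled as a \emph{right} $R\cF^c$-module built from the conjugation action, exactly as in the construction preceding Lemma \ref{lem:IsomCommaFuncHomology}.
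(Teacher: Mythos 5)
Your proposal is correct and follows the same route as the paper: the paper obtains this spectral sequence by feeding the source regular extension of Lemma \ref{lem:ExtLinkingSource} into the LHS-spectral sequence for source regular extensions with right-module coefficients (Corollary \ref{cor:SSGY}, equivalently Theorem \ref{thm:RegularSS}(3)), and identifies the $R\cF^c$-module structure on $H_q(Z(-);R)$ via the conjugation homomorphisms $Z(\widetilde\varphi)\colon Z(Q)\to Z(P)$ exactly as you describe. Your extra remark that lift-independence is automatic because $Z(P)$ is abelian is a correct refinement of the general argument in Lemma \ref{lem:InducedHom}.
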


The $R\cF ^c$-module structure on $H_q (Z(P); R)$ is defined by the $\cL$-action on the kernel $\cK=\{ Z(P)\}$.
Note that for every $\varphi: P\to Q$ in $\cF^c$, there is a morphism $\widetilde \varphi: P\to Q$ in $\cL$
and it induces a group homomorphism $Z(\widetilde \varphi): Z(Q)\to Z(P)$ since the above extension is source regular.
The induced map $Z(\widetilde \varphi)_*: H_q (Z(Q); R) \to H_q (Z(P); R)$ defines the (right) $R\cF^c$-module structure used 
in the Proposition \ref{pro:Z(P)SS}.

\subsection{Subgroup decomposition for $\cL$} 
 
Let $(S, \cF, \cL)$ be a $p$-local finite group. The \emph{orbit category of $\cF$} is the category $\cO (\cF)$
whose objects are subgroups of $S$ and the morphisms are given by 
$$\Mor _{\cO (\cF) } (P,Q) :=\Inn (Q) \backslash \Mor _{\cF} (P, Q).$$
We denote by $\cO ^c (\cF)$ 
the full subcategory of $\cO (\cF)$ whose objects are the $\cF$-centric subgroups of $S$. 
There is a quotient functor $\pi': \cF ^c \to \cO ^c (\cF)$ which sends a morphism $P\to Q$ in $\cF^c$ to its orbit under $\Inn (Q)$-action. 
Let $\widetilde \pi: \cL \to \cO ^c (\cF)$ denote
the composition
$$\widetilde\pi : \cL \maprt{\pi} \cF ^c \maprt{\pi'} \cO ^c (\cF).$$

\begin{lemma}\label{lemma:ExtLinkingTarget}
For every $p$-local finite group $(S, \cF, \cL)$, 
the functor $\widetilde \pi : \cL \to \cO(\cF^c)$ defines a target regular extension
%\begin{equation}\label{eqn:ExtLinking}
$$\cE:  \{ P \}_{P \in \cF^c} \maprt{j} \cL  \maprt{\widetilde \pi} \cO ^c (\cF)$$
%\end{equation} 
where for every $P\in \cF^c$, the homomorphism $j_P : P \to \Aut _{\cL } (P)$ is the restriction of $\epsilon_{P,P}: N_S(P) \to \Aut _{\cL } (P)$
to the subgroup $P$.  
\end{lemma}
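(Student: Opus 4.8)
The plan is to verify directly that the composite $\widetilde\pi = \pi' \circ \pi : \cL \to \cO^c(\cF)$ meets the requirements of a target regular functor in Definition~\ref{def:RegularExt}. The easy part: $\pi$ and $\pi'$ are each the identity on objects and surjective on morphism sets, hence so is $\widetilde\pi$, and $\Ob(\cL) = \Ob(\cO^c(\cF))$ is the set of $\cF$-centric subgroups of $S$. It then remains to pin down the kernel groups $K(Q) = \ker\{\widetilde\pi_{Q,Q}:\Aut_\cL(Q)\to\Aut_{\cO^c(\cF)}(Q)\}$, to identify each with $Q$ via $j_Q$, and to check that the left $K(Q)$-action on $\Mor_\cL(P,Q)$ is free with orbit map $\widetilde\pi_{P,Q}$.

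First I would compute $K(Q)$. Since $\Aut_{\cO^c(\cF)}(Q) = \Aut_\cF(Q)/\Inn(Q)$, an $\alpha \in \Aut_\cL(Q)$ lies in $K(Q)$ precisely when $\pi(\alpha) \in \Inn(Q)$. Condition (3) of Definition~\ref{def:LinkingSystem} gives $\pi\epsilon_{Q,Q}(q) = c_q$ for $q \in Q$, so $j_Q(Q) = \epsilon_{Q,Q}(Q) \subseteq K(Q)$; conversely, if $\pi(\alpha) = c_q$ then $\pi(\alpha\,\epsilon_{Q,Q}(q)^{-1}) = \id$, and by Lemma~\ref{lem:ExtLinkingSource} (source regularity of $\pi$) the kernel of $\pi_{Q,Q}$ is $\delta_Q(Z(Q)) = \epsilon_{Q,Q}(Z(Q)) \subseteq \epsilon_{Q,Q}(Q)$, whence $\alpha \in \epsilon_{Q,Q}(Q)$. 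Thus $K(Q) = j_Q(Q)$, and since $\epsilon_{Q,Q}$ is injective (condition (1) of Definition~\ref{def:LinkingSystem}) so is $j_Q$, identifying $K(Q)$ with $Q$ and confirming the asserted kernel $\{P\}_{P\in\cF^c}$.

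The main obstacle is the freeness of the \emph{left} (postcomposition) action of $K(Q) = \epsilon_{Q,Q}(Q)$ on $\Mor_\cL(P,Q)$: the linking system axioms directly provide only freeness of the precomposition action of $\epsilon_{P,P}(P)$, so freeness must be transported using axiom (4) and the centric hypothesis. Concretely, suppose $\epsilon_{Q,Q}(h)\circ\psi = \psi$ with $h \in Q$. Applying $\pi$ and condition (3) gives $c_h\circ\pi(\psi) = \pi(\psi)$, i.e.\ $h \in C_Q(\pi(\psi)(P))$; as $\pi(\psi)$ is an $\cF$-isomorphism onto $\pi(\psi)(P)$ and $P$ is $\cF$-centric, $C_S(\pi(\psi)(P)) \leq \pi(\psi)(P)$, so $h \in Z(\pi(\psi)(P)) = \pi(\psi)(Z(P))$; write $h = \pi(\psi)(g)$ with $g \in Z(P)$. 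Condition (4) of Definition~\ref{def:LinkingSystem} then yields $\psi\circ\epsilon_{P,P}(g) = \epsilon_{Q,Q}(\pi(\psi)(g))\circ\psi = \epsilon_{Q,Q}(h)\circ\psi = \psi$, so freeness of the precomposition action (condition (2)) plus injectivity of $\epsilon_{P,P}$ force $g = 1$ and hence $h = 1$.

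Finally I would show $\widetilde\pi_{P,Q}$ is the orbit map of this free action. Surjectivity is inherited from $\pi_{P,Q}$ and the quotient $\pi'_{P,Q}$. If $\widetilde\pi(\psi') = \widetilde\pi(\psi)$ then $\pi(\psi') = c_h\circ\pi(\psi) = \pi(\epsilon_{Q,Q}(h)\circ\psi)$ for some $h \in Q$; since the fibres of $\pi_{P,Q}$ are the $\epsilon_{P,P}(Z(P))$-orbits under precomposition (condition (2)), there is $g \in Z(P)$ with $\psi' = \epsilon_{Q,Q}(h)\circ\psi\circ\epsilon_{P,P}(g)$, and rewriting $\psi\circ\epsilon_{P,P}(g) = \epsilon_{Q,Q}(\pi(\psi)(g))\circ\psi$ by condition (4) gives $\psi' = \epsilon_{Q,Q}\bigl(h\,\pi(\psi)(g)\bigr)\circ\psi = j_Q\bigl(h\,\pi(\psi)(g)\bigr)\circ\psi$ with $h\,\pi(\psi)(g) \in Q$; conversely $\psi' = j_Q(h')\circ\psi$ forces $\widetilde\pi(\psi') = \widetilde\pi(\psi)$ via condition (3). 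Combined with the freeness above, this shows $\widetilde\pi_{P,Q}$ induces a bijection $K(Q)\backslash\Mor_\cL(P,Q) \cong \Mor_{\cO^c(\cF)}(P,Q)$, completing the verification that $\cE$ is a target regular extension with kernel $\{P\}_{P\in\cF^c}$.
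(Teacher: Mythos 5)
Your proof is correct, and it is genuinely more self-contained than the one in the paper. The paper disposes of the two delicate points by citation: freeness of the left $j_Q(Q)$-action follows there from the fact that every morphism of $\cL$ is an epimorphism (quoting Libman), and injectivity of the induced map $j_Q(Q)\backslash\Mor_{\cL}(P,Q)\to\Mor_{\cO^c(\cF)}(P,Q)$ is obtained from \cite[Lemma 1.10(b)]{BLO2}, which asserts exactly the existence and uniqueness of $g\in Q$ with $\varphi_2=j_Q(g)\varphi_1$ once $\pi(\varphi_2)=c_x\circ\pi(\varphi_1)$. You instead derive both facts directly from axioms (1)--(4) of Definition~\ref{def:LinkingSystem} together with $\cF$-centricity: for freeness you transport a postcomposition stabilizer to a precomposition one via axiom (4) after locating $h$ in $Z(\pi(\psi)(P))=\pi(\psi)(Z(P))$ using $C_S(\pi(\psi)(P))\le\pi(\psi)(P)$, and for the orbit-map property you combine the description of the fibres of $\pi_{P,Q}$ as $\epsilon_{P,P}(Z(P))$-orbits with axiom (4) to convert the residual precomposition by $\epsilon_{P,P}(g)$ into postcomposition by $\epsilon_{Q,Q}(\pi(\psi)(g))$. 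You also make explicit the identification $K(Q)=j_Q(Q)$ of the kernel, which the paper leaves implicit in the bijection. What your route buys is independence from the external references (essentially reproving the relevant part of BLO2's Lemma 1.10); what the paper's route buys is brevity. Both arguments are sound.
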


\begin{proof} In $\cL$, every morphism is a monomorphism and an epimorphism in the categorical sense 
(see  \cite[Remark 2.10 and Prop 2.11]{Libman-NormDec}). So, the left $j_Q (Q)$ action on $\Mor _{\cL } (P, Q)$ is free.
Now we will show that the projection functor $\widetilde \pi : \cL \to \cO ^c (\cF)$ 
induces a bijection $$j_Q (Q) \backslash \Mor _{\cL} (P, Q) \cong \Mor _{\cO ^c (\cF)} (P, Q).$$
Suppose that $\varphi_1, \varphi_2 \in \Mor _{\cL} (P, Q)$ are such that $\varphi _2=j_Q (g)\varphi_1$ for some $g \in Q$. 
By (3) in Definition \ref{def:RegularExt}, $\pi (j_Q(g))= c_g$ in $\Aut _{\cF} (Q)$. This gives $$\widetilde \pi _{P,Q} (\varphi _2)=\pi' (c_g) \widetilde \pi _{P,Q} (\varphi_1)=\widetilde \pi _{P,Q} (\varphi _1)$$
since $\Mor _{\cO ^c (\cF) } (P,Q) :=\Inn (Q) \backslash \Mor _{\cF} (P, Q).$ So the map 
$$j_Q (Q) \backslash \Mor _{\cL} (P, Q) \to  \Mor _{\cO^c  (\cF)} (P, Q)$$
is well-defined. It is surjective because both $\pi$ and $\pi'$ are surjective on morphisms.
For injectivity assume that $\varphi_1, \varphi_2 \in \Mor _{\cL} (P, Q)$ are such that 
$\widetilde \pi (\varphi_1)=\widetilde \pi (\varphi )$. then there is a $x\in Q$ such that $\pi (\varphi _2) = c_x \circ \pi (\varphi _1)$ in $\Mor _{\cF } (P,Q)$.
Then by \cite[Lem 1.10(b)]{BLO2}, there is a unique $g \in Q$ such that $\varphi_2=j_Q(g) \varphi _1$ in $\Mor _{\cL } (P, Q)$. This proves the bijection hence the functor $\widetilde \pi$ is target regular.
\end{proof}

Applying Theorem \ref{thm:RegularSS} to this extension gives the following spectral sequence
(see \cite[Lem 4.2]{OliverVentura}):

\begin{proposition}\label{pro:SubgroupSS}
Let $(S, \cF, \cL)$ be a $p$-local finite group. Then, for every $R\cL$-module $M$, there is 
a spectral sequence
$$ E_2 ^{p,q}= H^p (\cO ^c (\cF)  ;  \cH^q_M )  \Rightarrow H^{p+q} (\cL ; M))$$ 
where $\cH^q _M$ denotes the $R\cO ^c (\cF)$-module such that  for every $P\in \cF^c$, 
we have $\cH^q_M (P)=H^q (P; \Res _{j_P} M)$.
For every morphism $\varphi : P\to Q$ in $\cO^c (\cF)$, the homomorphism 
$\cH ^q _M (\varphi) : \cH ^q _M (Q)\to \cH ^q _M (P)$ is defined by the composition
$$H^* (Q ; \Res _{j_Q} M) \xrightarrow{\widetilde \varphi^*} 
H^* (P; \Res _{\widetilde \varphi} \Res _{j_Q} M ) \xrightarrow{M(\widetilde {\widetilde \varphi})_* }  
H^* (P;  \Res _{j _P} M)$$ where $\widetilde \varphi : P \to Q$ 
be a morphism in $\cF^c$ lifting $\varphi$, and $\widetilde {\widetilde \varphi}$ be the morphism 
in $\cL$ lifting $\widetilde \varphi$.
\end{proposition}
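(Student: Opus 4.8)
The plan is to apply the general LHS-spectral sequence for target regular extensions, namely Theorem \ref{thm:XuSSCoh}, to the target regular extension
$$1 \to \{ P\}_{P\in\cF^c} \maprt{j} \cL \maprt{\widetilde\pi} \cO^c(\cF) \to 1$$
established in Lemma \ref{lemma:ExtLinkingTarget}. First I would note that the kernel of this extension at the object $P$ is the group $K(P)=P$ itself, embedded in $\Aut_{\cL}(P)$ via $j_P=\epsilon_{P,P}|_P$; this is precisely what Lemma \ref{lemma:ExtLinkingTarget} records. With this identification, Theorem \ref{thm:XuSSCoh} immediately yields a spectral sequence
$$E_2^{p,q}=H^p(\cO^c(\cF);H^q(\cK;M))\Rightarrow H^{p+q}(\cL;M),$$
and it only remains to unwind what the $R\cO^c(\cF)$-module $H^q(\cK;M)$ is. By the definition of $H^q(\cK;M)$ given just before Theorem \ref{thm:XuSSExt}, this is the $R\cO^c(\cF)$-module sending $P$ to $H^q(K(P);\Res_{j_P}M)=H^q(P;\Res_{j_P}M)$, which is exactly the module $\cH^q_M$ in the statement. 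So the bulk of the proof is a direct citation of Theorem \ref{thm:XuSSCoh} together with the identification of the kernel.

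Second, I would spell out the module structure on $\cH^q_M$ to match the formula in the statement, since that is the only part requiring genuine verification. For a morphism $\varphi:P\to Q$ in $\cO^c(\cF)$, one must lift it: first choose $\widetilde\varphi:P\to Q$ in $\cF^c$ with $\pi'(\widetilde\varphi)=\varphi$, and then choose $\widetilde{\widetilde\varphi}:P\to Q$ in $\cL$ with $\pi(\widetilde{\widetilde\varphi})=\widetilde\varphi$. By Lemma \ref{lem:InducedHom}, the induced map on $H^q(\cK;M)$ is the composition $M(\widetilde{\widetilde\varphi})_*\circ K(\widetilde{\widetilde\varphi})^*$, where $K(\widetilde{\widetilde\varphi}):K(P)\to K(Q)$, i.e. $P\to Q$, is the group homomorphism determined by the $\cL$-action of $\widetilde{\widetilde\varphi}$ on the kernel. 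I would check that this group homomorphism $K(\widetilde{\widetilde\varphi})$ agrees with $\widetilde\varphi:P\to Q$ itself: this follows from condition (4) in Definition \ref{def:LinkingSystem}, which says precisely that $\widetilde{\widetilde\varphi}\circ\epsilon_{P,P}(g)=\epsilon_{Q,Q}(\pi(\widetilde{\widetilde\varphi})(g))\circ\widetilde{\widetilde\varphi}=\epsilon_{Q,Q}(\widetilde\varphi(g))\circ\widetilde{\widetilde\varphi}$, i.e. $k_Q\circ\widetilde{\widetilde\varphi}=\widetilde{\widetilde\varphi}\circ k_P$ with $k_Q=\widetilde\varphi(k_P)$ in the notation of the induced action. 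Hence $K(\widetilde{\widetilde\varphi})^*$ is the inflation-type map $\widetilde\varphi^*:H^q(Q;\Res_{j_Q}M)\to H^q(P;\Res_{\widetilde\varphi}\Res_{j_Q}M)$, and composing with $M(\widetilde{\widetilde\varphi})_*:H^q(P;\Res_{\widetilde\varphi}\Res_{j_Q}M)\to H^q(P;\Res_{j_P}M)$ gives exactly the formula in the statement. By Lemma \ref{lem:InducedHom} this composition is independent of the choice of lift $\widetilde{\widetilde\varphi}$; independence of the choice of $\widetilde\varphi$ follows because two such lifts differ by an element of $\Inn(Q)$ and conjugation acts trivially on $H^q(Q;\Res_{j_Q}M)$ (again by \cite[Proposition II.8.3]{Brown-Book}, as in the proof of Lemma \ref{lem:InducedHom}).

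The main obstacle, such as it is, is purely bookkeeping: making sure the two-stage lifting (first to $\cF^c$, then to $\cL$) is handled cleanly and that condition (4) of Definition \ref{def:LinkingSystem} is invoked correctly to identify $K(\widetilde{\widetilde\varphi})$ with $\widetilde\varphi$. There is no hard analytic or homological content beyond what is already packaged in Theorem \ref{thm:XuSSCoh} and Lemma \ref{lemma:ExtLinkingTarget}. I would therefore write the proof as: ``By Lemma \ref{lemma:ExtLinkingTarget}, $\widetilde\pi:\cL\to\cO^c(\cF)$ is a target regular extension with kernel $\{P\}_{P\in\cF^c}$. Applying Theorem \ref{thm:XuSSCoh} gives the spectral sequence, and by definition $H^q(\cK;M)$ is the module $\cH^q_M$ with $\cH^q_M(P)=H^q(P;\Res_{j_P}M)$. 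The transition maps are as described in Lemma \ref{lem:InducedHom}; using condition (4) of Definition \ref{def:LinkingSystem} to identify the induced action of a lift on the kernel, one obtains the stated formula.''
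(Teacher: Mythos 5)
Your proposal is correct and follows essentially the same route as the paper: apply the LHS-spectral sequence for target regular extensions to the extension of Lemma \ref{lemma:ExtLinkingTarget}, then use condition (4) of Definition \ref{def:LinkingSystem} to identify the induced action of the lift $\widetilde{\widetilde\varphi}$ on the kernel with $\widetilde\varphi$ itself, which yields the stated transition maps. If anything, your citation of Theorem \ref{thm:XuSSCoh} is the more precise one (the paper invokes Theorem \ref{thm:RegularSS}(1), which is stated for the functor $\cC\to[\cC]$ rather than for a general target regular extension), and your remarks on independence of the choices of lifts are a welcome extra check.
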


\begin{proof} The spectral sequence follows from Lemma \ref{lemma:ExtLinkingTarget}
and Theorem \ref{thm:RegularSS}(1). The $R\cO ^c (\cF)$-module structure on $\cH ^q _M$ 
is the one induced by the $\cO^c (\cF)$-action on the kernel $\cK=\{P\}_{P\in \cF^c}$. We need to show that this module structure coincides 
with the one given above.  

For every $\varphi: P \to Q$ in $\cO ^c (\cF)$, let  $\widetilde \varphi$ and $\widetilde{\widetilde \varphi}$ are the lifts of $\varphi$
as in the proposition. The group homomorphism $K(\widetilde \varphi): P\to Q$ 
defined by $K(\widetilde \varphi ) (g)=g'$ for every $g\in P$ where $g'\in Q$ is the unique element such 
that $j_Q(g')  \circ \widetilde \varphi=\widetilde \varphi \circ j_Q(g)$. By condition (4) in 
Definition \ref{def:LinkingSystem}, we have $g'=\varphi (g)$. So the induced action of $K(\widetilde \varphi)$ on the cohomology
can be described by the composition given in the proposition.
\end{proof}

This spectral sequence also follows from the subgroup decomposition for the
linking system $\cL$ (see \cite[Prop 2.2]{BLO2}). For nontrivial coefficients this spectral sequence is a special 
case of the spectral sequence given for transporter categories (see \cite[Lem 4.2]{OliverVentura}).
There are two open problems related to the spectral sequence given in Proposition \ref{pro:SubgroupSS}. 
 
\begin{conjecture}[Sharpness Problem \cite{DiazPark}]
Let $(S, \cF, \cL)$ be a $p$-local finite group. Then for every $p>0$ and $q\geq 0$, 
$$E_2 ^{p,q} = \underset{\cO ^c (\cF )}{\lim {}^p}  \ H^q(-; \bbF_p)=0.$$
\end{conjecture}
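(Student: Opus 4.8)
Since this is the D\'iaz--Park sharpness conjecture, I will sketch the line of attack I would pursue rather than a finished argument. First I would reformulate: the conjecture says exactly that the spectral sequence of Proposition~\ref{pro:SubgroupSS} with $M=\underline{\bbF_p}$ degenerates at $E_2$. Its abutment is $H^*(\cL;\bbF_p)$, which is isomorphic to $H^*(\cF;\bbF_p)$ by Broto--Levi--Oliver, and its bottom row $E_2^{*,0}=\lim_{\cO^c(\cF)}H^*(-;\bbF_p)$ already equals $H^*(\cF;\bbF_p)$ via the stable--elements description of fusion--system cohomology (using Alperin's fusion theorem to see that stability over $\cF$-centric morphisms forces full $\cF$-stability). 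Hence the conjecture is equivalent to the vanishing $\lim^p_{\cO^c(\cF)}H^q(-;\bbF_p)=0$ for all $p\geq 1$ and $q\geq 0$, and the goal becomes to exhibit these modules -- each annihilated by a power of $p$ -- as retracts of higher limits over a contractible category.

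The natural comparison functor is the inclusion $\iota:\cO^c_S(S)\hookrightarrow\cO^c(\cF)$ of the orbit category of the fusion system of $S$ itself. The category $\cO^c_S(S)$ has $S$ as a terminal object, so $\lim^{>0}_{\cO^c_S(S)}(-)=0$ on every coefficient system, in particular on $\Res_\iota H^q(-;\bbF_p)$. Using the characteristic $(S,S)$-biset $\Omega$ of $\cF$ of Broto--Levi--Oliver -- $\cF$-generated, $\cF$-stable, with $|\Omega|/|S|$ prime to $p$ -- I would construct a natural ``wrong-way'' map $\mathrm{tr}_\Omega:\lim^*_{\cO^c_S(S)}\Res_\iota H^q(-;\bbF_p)\to\lim^*_{\cO^c(\cF)}H^q(-;\bbF_p)$ whose composite with the restriction map is multiplication by the $p'$-unit $|\Omega|/|S|$; as this composite is then an isomorphism of $p$-torsion groups and the source vanishes in positive degrees, so does the target. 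An alternative implementation runs through the normalizer decomposition: by Libman's decomposition of $|\cL|$ together with Theorem~\ref{thm:Intro-NormDecLinking} it would suffice to show $\lim^p_{\overline{s}d(\cF^c)}\cA^q_\cL=0$ for $p\geq 1$, where $\cA^q_\cL([\sigma])=H^q(\Aut_\cL(\sigma);\bbF_p)$ is the mod-$p$ cohomology of a genuine finite group, so one could hope to reduce to Dwyer's theorem on sharpness of centric subgroup decompositions of finite groups and then control the behaviour along the subdivision poset.

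The main obstacle is that for exotic $\cF$ the biset $\Omega$ is only an abstract bimodule, not the transfer of an actual map of spaces, so $\mathrm{tr}_\Omega$ cannot simply be imported from topology and must be built compatibly with the higher-limit functors; concretely one must show that the ``non-bifree'' summands of $\Omega$, which do act on cohomology through the stable-elements formalism, act trivially on every $\lim^{>0}$, and this is precisely the point at which all known proofs become a case analysis -- the conjecture is established for realizable $\cF$ (where $\mathrm{tr}_\Omega$ is the Becker--Gottlieb transfer of $BG\to BS$ and the claim reduces to Dwyer's theorem), for $\cF$ over abelian or otherwise small $p$-groups, and for normalizer-sharp collections, but not in general. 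The normalizer-decomposition route has the parallel weakness that higher limits over the subdivision poset $\overline{s}d(\cF^c)$ are nonzero for arbitrary coefficient systems, so the argument must use very specifically that the coefficients are mod-$p$ cohomology with $\cF$-compatible action -- for instance through Grodal's $p$-local acyclicity results or a transfer internal to the linking system $\cL$ -- which is why a uniform proof remains elusive and the statement is still only a conjecture.
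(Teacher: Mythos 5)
This statement is labelled a \emph{conjecture} in the paper (the D\'iaz--Park sharpness problem), and the paper supplies no proof of it: it only records that the case of fusion systems realized by a finite group is due to D\'iaz and Park, and that \cite{Yalcin-Sharp} reduces the problem to fusion systems with nontrivial center. Your submission is, by your own explicit admission, not a proof but a survey of attack strategies, and you correctly conclude that the statement remains open. So there is no proof in the paper to compare yours against, and your honesty about the status of the problem is exactly right; what you have written cannot be accepted as a proof of the statement, but it is not pretending to be one.

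As a survey your sketch is broadly accurate and consistent with the literature the paper cites: the reformulation via the stable-elements theorem of Broto--Levi--Oliver, the transfer argument through the characteristic $(S,S)$-biset (which is how the realizable case reduces to Dwyer's theorem), and the equivalence with sharpness of the normalizer decomposition (Theorem 1.9 of \cite{Yalcin-Sharp}, quoted after Theorem \ref{thm:NormDecLinking}) are all the standard known routes, and the obstruction you identify --- that for exotic $\cF$ the biset is not the transfer of a map of spaces and its non-bifree summands must be shown to act trivially on higher limits --- is indeed where these arguments stall. One small correction: the edge term that computes $H^*(\cF;\bbF_p)$ by stable elements is the column $E_2^{0,*}=\lim_{\cO^c(\cF)}H^*(-;\bbF_p)$, not the ``bottom row $E_2^{*,0}$'' as you wrote; the bottom row is $E_2^{p,0}=\lim^p\underline{\bbF_p}$. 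The conjecture asserts vanishing of everything off the $p=0$ column, which is strictly stronger than degeneration at $E_2$.
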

 
For fusion systems realized by a finite group, the statement of the conjecture is proved 
by Diaz and Park \cite{DiazPark}. For a reduction of this problem to fusion systems 
with nontrivial center see \cite{Yalcin-Sharp}.

The second problem related to subgroup decomposition is about cohomology of linking systems 
with nontrivial local coefficient systems. For details of this problem we refer the reader to \cite{Molinier-Twisted}
and \cite{Molinier-Solvable}.

%------------------------- 

\subsection{Normalizer decomposition for $p$-local finite groups}

Let $(S, \cF, \cL)$ be a $p$-local finite group. The centric linking system $\cL$ is an EI-category, so we can apply  
Theorem \ref{thm:SlomSS} to $\cL$. For every $R\cL$-module $M$,
we obtain a spectral sequence of the form
\begin{equation}\label{eqn:NormSSLinking}
E_2 ^{p.q} = H^p ([s(\cL)]; \cA_{\cL} ^q ) \Rightarrow H^{p+q} (\cL ; M)
\end{equation}
where  $\cA_{\cL} ^q$ is the $R[s(\cL)]$-module such that $$\cA ^q _{\cL} ([\sigma])=
H^q (\Aut _{s(\cL) } (\sigma); \Res _{\pi_0} M (\sigma_0) )$$ for every $[\sigma] \in [s(\cL)]$. 
The poset category $[s(\cL)]$ and the functor $\pi_0: s(\cL)\to \cL$ are as defined in 
Section \ref{sect:RegularEI} for any EI-category.

Let $sd(\cF^c)$ denote the poset category of all chains $\sigma =(P_0 < P_1 < \cdots < P_n)$  with $P_i \in \cF^c$ 
where there is a unique morphism $\sigma \to \tau$ in $sd(\cF^c)$ if $\tau$ is a subchain 
of $\sigma$.  
Two chains $\sigma =(P_0 < P_1 < \cdots < P_n)$ and
$\tau =(Q_0 < Q_1 < \cdots < Q_n)$ are \emph{$\cF$-conjugate} if there is an isomorphism 
$\varphi : P_n \to Q_n$ in $\cF$ such that $\varphi (P_i)=Q_i$ for all $i\in [n]=\{0, \dots, n\}$. 
 
\begin{definition}\label{def:CatConjClasses} The category $\overline{s}d(\cF^c)$ of \emph{$\cF$-conjugacy 
classes of chains in $\cF ^c$} is the poset category 
whose objects are $\cF$-conjugacy classes $[\sigma]$  of chains $\sigma$ in $sd(\cF^c)$ 
where there is a unique morphism $[\sigma] \to [\tau]$ in $\overline{s}d(\cF ^c)$  if $\tau$ is $\cF$-conjugate to a subchain 
of $\sigma$.  
\end{definition}

We have the following:

\begin{lemma}[{\cite[\S 5]{Libman-NormDec}}] 
The poset categories $[s(\cL )]$ and $\overline{s}d(\cF^c)$ are isomorphic.
\end{lemma}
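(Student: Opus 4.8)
The plan is to construct an explicit isomorphism of poset categories $\overline{s}d(\cF^c) \to [s(\cL)]$ and to check that it is well defined, bijective on objects, and compatible with the (unique) morphisms on both sides. First I would observe that, because $\cL$ is an EI-category with skeleton as arranged in Section~\ref{sect:RegularEI}, an object of $s(\cL)$ is a strict chain $\sigma = (P_0 \to P_1 \to \cdots \to P_n)$ of non-isomorphisms in $\cL$, and since every morphism in $\cL$ is a monomorphism and epimorphism lying over an injective group homomorphism, each such chain is isomorphic in $S(\cL)$ to one where every $P_i$ is a subgroup of $S$ and every arrow is $\epsilon$ of an inclusion $P_{i-1} \hookrightarrow P_i$. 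Thus every isomorphism class $[\sigma]$ in $[s(\cL)]$ has a representative of the form $(P_0 < P_1 < \cdots < P_n)$ with $P_i \in \cF^c$, i.e. an object of $sd(\cF^c)$.

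Next I would pin down when two such chains $(P_0<\cdots<P_n)$ and $(Q_0<\cdots<Q_n)$ represent the same object of $[s(\cL)]$, i.e. are isomorphic in $S(\cL)$ via an order-preserving bijection $[n]\to[n]$ (necessarily the identity, by strictness) together with a natural isomorphism. A natural isomorphism between the two chains in $\cL$ is a tuple $(\psi_i)$ with $\psi_i \in \Mor_{\cL}(P_i, Q_i)$ an isomorphism, commuting with the arrows. Applying the functor $\pi : \cL \to \cF^c$ gives an isomorphism $\varphi_n : P_n \to Q_n$ in $\cF$ with $\varphi_n(P_i) = Q_i$ for all $i$ (using conditions (1) and (2) of Definition~\ref{def:LinkingSystem} to see that the $\pi(\psi_i)$ assemble into the restrictions of a single $\cF$-isomorphism). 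Conversely, given an $\cF$-conjugacy $\varphi : P_n \to Q_n$ with $\varphi(P_i) = Q_i$, I would lift each restriction $\varphi|_{P_i}$ to a morphism in $\cL$ and use condition (2) of Definition~\ref{def:LinkingSystem} (the bijectivity of $\pi_{P,Q}$ modulo $\epsilon_{P,P}(Z(P))$, together with the uniqueness statement in \cite[Lemma 1.10]{BLO2}) to adjust the lifts so that they form a genuine natural isomorphism of chains. This shows the assignment $[\sigma] \mapsto [\sigma]$ is a well-defined bijection from objects of $\overline{s}d(\cF^c)$ to objects of $[s(\cL)]$.

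Finally I would check the morphism side: in both $\overline{s}d(\cF^c)$ and $[s(\cL)]$ there is at most one morphism between any two objects, and it exists in $\overline{s}d(\cF^c)$ precisely when $\tau$ is $\cF$-conjugate to a subchain of $\sigma$, while it exists in $[s(\cL)]$ precisely when the isomorphism class $[\tau]$ receives a morphism from $[\sigma]$ in $s(\cL) = S(\cL)^{op}$, i.e. when some representative of $\tau$ is isomorphic in $S(\cL)$ to a subchain of some representative of $\sigma$. By the translation in the previous paragraph between $S(\cL)$-isomorphism of subgroup chains and $\cF$-conjugacy, these two conditions coincide, so the bijection on objects is automatically a functor and an isomorphism of categories. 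The main obstacle I anticipate is the careful bookkeeping in the second step: showing that an $\cF$-conjugacy of chains can be lifted to a \emph{coherent} natural isomorphism in $\cL$ (and, conversely, that every $S(\cL)$-isomorphism of subgroup chains descends to such a conjugacy), which is exactly where axioms (1)--(4) of the linking system and the rigidity statement of \cite[Lemma 1.10]{BLO2} have to be invoked with some care; everything else is essentially formal. Since this is recorded as a result of Libman \cite{Libman-NormDec}, I would also simply cite \cite[Sect 5]{Libman-NormDec} for the details once the shape of the argument is in place.
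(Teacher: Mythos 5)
Your proposal is correct and follows essentially the same route as the paper: reduce every strict chain in $s(\cL)$ to one whose arrows are images of inclusions via the factorization of morphisms in $\cL$ (an isomorphism followed by $\iota_{P'}^{Q}$), and then identify isomorphism classes of such chains with $\cF$-conjugacy classes. In fact you supply more detail than the paper does on the key middle step — verifying that $S(\cL)$-isomorphism of inclusion chains corresponds exactly to $\cF$-conjugacy, using the lifting/uniqueness properties of the linking system — which the paper leaves implicit and delegates to the citation of Libman.
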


\begin{proof}
For each $P\leq Q$ in $S$, let $\iota _P  ^Q$ denote the image of the inclusion map 
$P \hookrightarrow Q$ under the functor $\epsilon : \cT ^c _S (S) \to \cL$. 
Every morphism $\varphi : P \to Q$ in $\cL$ factors as an isomorphism $\varphi': P \to P'$ 
in $\cL$ followed by a morphism 
 $\iota _{P'} ^Q$ with $P'=\pi (\varphi) (P)$ (see \cite[2.9]{Libman-NormDec}).
It follows that every strict chain 
$$ \sigma :=( \sigma_ 0 \xrightarrow{\alpha_1} \sigma_1  \xrightarrow{\alpha_2} 
\cdots  \xrightarrow{} \sigma_{n-1} \xrightarrow{\alpha_n} \sigma_n)$$
in $s(\cL)$ is isomorphic to a chain of the form
 $$ P_0 \xrightarrow{\iota_{P_0} ^{P_1}} P_1 \xrightarrow{\iota _{P_1} ^{P_2} } \cdots  \xrightarrow{} P_{n-1}
 \xrightarrow{\iota _{P_{n-1}} ^{P_n}} P_n$$ with $P_i\neq P_{i+1}$. 
 We can take representatives of $\cL$-isomorphism classes of chains of morphisms in $\cL$ to be 
the chains of this form. This shows that these poset categories $[s(\cL)]$ and $\overline{s}d(\cF^c)$ are isomorphic.
\end{proof}

For each chain $\sigma=(P_0<\dots < P_n)$ of $\cF$-centric subgroups of $S$, let $\Aut _{\cL } (\sigma)$ be the subgroup of $
\prod _{i=1} ^n  \Aut _{\cL} (P_i)$ formed by tuples $(\alpha _0, \dots, \alpha _n )$ of automorphisms 
$\alpha _i \in \Aut _{\cL } ( P_i)$ such that 
the following diagram commutes
$$\xymatrix{P_0   \ar[r]^{i^{P_1}_{P_0}}  \ar[d]^{\alpha_0}  &  P_1 \ar[r]^{i^{P_2} _{P_1}} \ar[d]^{\alpha_1}  
& \cdots  \ar[r] &  P_{n-1} \ar[d]^{\alpha_{n-1}}     \ar[r]^{i^{P_n} _{P_{n-1}}}  & P_n \ar[d]^{\alpha_n} \\  
P_0   \ar[r]_{i^{P_1}_{P_0}}  &  P_1 \ar[r]_{i^{P_2} _{P_1}} & \cdots  \ar[r] & P_{n-1} \ar[r]_{i ^{P_n} _{P_{n-1}} }  & P_n}$$
(see \cite[Def 1.4]{Libman-NormDec} for details). For each $j \in [n]$, the group homomorphism
$$\pi _j: \Aut _{\cL } (\sigma )\to \Aut _{\cL} (P_j )$$ is defined by $(\alpha_0, \dots, \alpha_n )\to \alpha_j$. 
With these definitions, the spectral
sequence in \ref{eqn:NormSSLinking} gives a spectral sequence stated in the following theorem.

\begin{theorem}\label{thm:NormDecLinking}
Let $(S, \cF, \cL)$ be a $p$-local finite group. For every $R\cL$-module $M$,
there is a spectral sequence 
$$E_2 ^{p.q} = H^p ( \overline{s}d(\cF^c) ; \cA_{\cL} ^q) \Rightarrow H^{p+q} (\cL ; M)$$ 
where $\cA_{\cL} ^q$ is the $R\overline{s}d(\cF^c)$-module such that $\cA^q ([\sigma])= H^q (\Aut _{\cL } (\sigma); \Res _{\pi_0} M(P_0))$
for every $[\sigma]=[P_0<\cdots<P_n]$ in $\overline{s}d(\cF^c)$.
\end{theorem}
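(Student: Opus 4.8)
The plan is to obtain this as essentially a direct instance of the spectral sequence \ref{eqn:NormSSLinking}, which is Slomi\'nska's spectral sequence (Theorem \ref{thm:SlomSS}) applied to the EI-category $\cL$; the only work is to rewrite its indexing poset and coefficient system in the stated form. First I would recall that $\cL$ is an EI-category: in $\cL$ every morphism is both a monomorphism and an epimorphism, so every endomorphism is an isomorphism. Thus Theorem \ref{thm:SlomSS} applies and, for an $R\cL$-module $M$, yields the spectral sequence $E_2^{p,q}=H^p([s(\cL)];\cA_{\cL}^q)\Rightarrow H^{p+q}(\cL;M)$ with $\cA_{\cL}^q([\sigma])=H^q(\Aut_{s(\cL)}(\sigma);\Res_{\pi_0}M(\sigma_0))$ for $[\sigma]\in[s(\cL)]$.

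Next I would invoke the Libman lemma recorded just above to identify $[s(\cL)]\cong\overline{s}d(\cF^c)$, taking the representatives of isomorphism classes of objects in $s(\cL)$ to be the chains $P_0\to P_1\to\cdots\to P_n$ of inclusion-type morphisms with $P_i\in\cF^c$ pairwise distinct. For such a representative chain $\sigma$, the group $\Aut_{s(\cL)}(\sigma)$ — described in Section \ref{sect:RegularEI} as the group of tuples $(\alpha_0,\dots,\alpha_n)$ with $\alpha_i\in\Aut_\cL(P_i)$ commuting with the structure morphisms of the chain — is exactly the group $\Aut_\cL(\sigma)$ defined before the theorem; under this identification $\sigma_0=P_0$ and the homomorphism $\pi_0\colon\Aut_{s(\cL)}(\sigma)\to\Aut_\cL(\sigma_0)$ of Section \ref{sect:RegularEI} is the projection $(\alpha_0,\dots,\alpha_n)\mapsto\alpha_0$ onto $\Aut_\cL(P_0)$. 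Hence $\cA_{\cL}^q([\sigma])=H^q(\Aut_\cL(\sigma);\Res_{\pi_0}M(P_0))$, which is precisely the coefficient system in the statement.

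It then remains to verify that, under Libman's isomorphism of categories $[s(\cL)]\cong\overline{s}d(\cF^c)$, the $R[s(\cL)]$-module structure on $\cA_\cL^q$ produced by Theorem \ref{thm:SlomSS} is carried to the structure on $\overline{s}d(\cF^c)$ in which a morphism $[\sigma]\to[\tau]$ (given by $\tau$ being $\cF$-conjugate to a subchain of $\sigma$) acts via the restriction homomorphism $\Aut_\cL(\sigma)\to\Aut_\cL(\tau)$ — restriction of automorphisms of $\sigma$ to the subchain, followed by conjugation by a lift to $\cL$ of the relevant $\cF$-isomorphism — together with the coefficient map on $M(P_0)$. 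This is a naturality statement: the $R[s(\cL)]$-module structure on $\cA_\cL^q$ is built from the functor $\Ini\colon s(\cL)\to\cL$ via Lemma \ref{lem:Modules} and from the first homotopy property (Proposition \ref{pro:FirstHomotopy}, Lemma \ref{lem:InducedHom}), all of which transport functorially along an isomorphism of categories; in particular the independence of the chosen lift is governed exactly by Lemma \ref{lem:InducedHom}. Substituting these identifications into \ref{eqn:NormSSLinking} gives the asserted spectral sequence. I expect this final bookkeeping — tracing the $R[s(\cL)]$-module structure defined through Xu's construction and the first homotopy property, and matching it term by term with the subchain-inclusion structure on $\overline{s}d(\cF^c)$ — to be the main obstacle; the remaining ingredients (the EI property of $\cL$, the application of Theorem \ref{thm:SlomSS}, and the identification $\Aut_{s(\cL)}(\sigma)\cong\Aut_\cL(\sigma)$ with the correct $\pi_0$) are formal.
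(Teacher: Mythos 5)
Your proposal is correct and takes essentially the same route as the paper: apply Theorem \ref{thm:SlomSS} to the EI-category $\cL$ to get the spectral sequence \ref{eqn:NormSSLinking}, identify $[s(\cL)]$ with $\overline{s}d(\cF^c)$ via the lemma quoted from Libman, and match $\Aut_{s(\cL)}(\sigma)$ with $\Aut_{\cL}(\sigma)$ together with the projection $\pi_0$ and the induced module structures (the paper in fact leaves this last bookkeeping implicit, so your extra care there is welcome). One small caveat: ``every morphism is mono and epi, hence every endomorphism is an isomorphism'' is not a valid inference in an arbitrary category; here one should instead note that $\cL$ has finite hom-sets, so a monic endomorphism of $P$ induces an injective, hence bijective, self-map of $\Mor_{\cL}(P,P)$ by precomposition and is therefore invertible.
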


This spectral sequence can also be obtained as the Bousfield-Kan spectral 
sequence for the normalizer decomposition of $|\cL|$ proved by Libman in \cite[Thm A]{Libman-NormDec}.

\begin{definition} The normalizer decomposition for $\cL$ over the collection of $\cF$-centric subgroups is \emph{sharp}
if for $M=\bbF_p$, we have 
$$E_2 ^{p,q}=\underset{\overline{s}d (\cF^c)}{\lim {}^p} \  H^q (\Aut _{\cL} (-); \bbF_p) =0$$ for all $p>0$ and $q\geq 0$. 
\end{definition}

It is an open problem whether or not the normalizer decomposition is sharp. It is proved in \cite[Thm 1.9]{Yalcin-Sharp} that
the normalizer decomposition for $\cL$ is sharp if and only if the subgroups decomposition for $\cL$ is sharp.

%----------------

\subsection{Normalizer decomposition of $\cO ^c (\cF)$}

Let $\cF$ be a fusion system over $S$ and let $\cO ^c (\cF)$ denote the orbit category of $\cF^c$.
If we apply  Theorem \ref{thm:SlomSS} to $\cO ^c (\cF)$, we obtain a spectral sequence
\begin{equation}\label{eqn:OrbitCatSS}
E_2 ^{p.q} = H^p ([s( \cO ^c (\cF))]; \cA_{\cO} ^q ) 
\Rightarrow H^{p+q} (\cO ^c (\cF) ; M)
\end{equation}
where $M$ is an $R\cO (\cF^c)$-module and 
$\cA^q_{\cO}$ is the $[s(\cO ^c (\cF ) ]$-module such that
$$\cA_{\cO } ^q ([\sigma])=H^q (\Aut _{ s(\cO ^c (\cF ) )} (\sigma); \Res _{\pi_0} M(\sigma_0))$$
for every $[\sigma ] \in [s (\cO (\cF^c) ) ]$. 
We have the following observation:

\begin{lemma}\label{lem:Last} The poset category $[s(\cO^c (\cF) )]$ is isomorphic to the category $\overline{s}d(\cF^c )$.
\end{lemma}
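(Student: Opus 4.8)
The plan is to build an explicit isomorphism of poset categories between $[s(\cO^c(\cF))]$ and $\overline{s}d(\cF^c)$ by mimicking the argument already used for $[s(\cL)]$ in the previous lemma. First I would recall that $\cO^c(\cF)$ is an EI-category: every morphism factors as an isomorphism followed by the image of an inclusion. Concretely, for $P\leq Q$ in $S$ with $P,Q\in\cF^c$, write $\bar\iota_P^Q$ for the image in $\cO^c(\cF)$ of the inclusion $P\hookrightarrow Q$; then any morphism $\varphi\colon P\to Q$ in $\cO^c(\cF)$ factors as an isomorphism $\varphi'\colon P\to P'$ in $\cO^c(\cF)$ followed by $\bar\iota_{P'}^Q$, where $P'=\pi(\varphi)(P)$ is the image subgroup. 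This is the analogue for $\cO^c(\cF)$ of \cite[2.9]{Libman-NormDec}, and it follows from the corresponding factorization in $\cF^c$ after passing to $\Inn(Q)$-orbits.

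Next I would use this factorization to pick canonical representatives of isomorphism classes in $s(\cO^c(\cF))$. As in the $[s(\cL)]$ case, every strict chain in $s(\cO^c(\cF))$
$$\sigma := (\sigma_0 \xrightarrow{\alpha_1} \sigma_1 \xrightarrow{\alpha_2} \cdots \xrightarrow{\alpha_n} \sigma_n)$$
is isomorphic, in $s(\cO^c(\cF))$, to one of the form
$$P_0 \xrightarrow{\bar\iota_{P_0}^{P_1}} P_1 \xrightarrow{\bar\iota_{P_1}^{P_2}} \cdots \xrightarrow{\bar\iota_{P_{n-1}}^{P_n}} P_n$$
with $P_0<P_1<\cdots<P_n$ a strictly increasing chain of $\cF$-centric subgroups of $S$. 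So isomorphism classes of objects of $s(\cO^c(\cF))$ are in bijection with such chains modulo the isomorphisms of $\cO^c(\cF)$ relating them. I would then check that two such chains of inclusions are isomorphic in $s(\cO^c(\cF))$ precisely when they are $\cF$-conjugate in the sense of Definition~\ref{def:CatConjClasses}: an isomorphism between the chains amounts to a compatible family of isomorphisms $P_i\to Q_i$ in $\cO^c(\cF)$, hence in $\cF$ (up to inner automorphisms of the $Q_i$, which do not affect the conjugacy relation on chains), restricting from a single isomorphism $P_n\to Q_n$ with $\varphi(P_i)=Q_i$. Finally, the morphisms on both sides are inclusions of subchains, so the bijection on objects extends to an isomorphism of poset categories $[s(\cO^c(\cF))]\cong\overline{s}d(\cF^c)$.

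The main obstacle I anticipate is the bookkeeping in the claim that isomorphism in $s(\cO^c(\cF))$ between two chains of inclusions corresponds exactly to $\cF$-conjugacy of chains. One must be careful that morphisms in $\cO^c(\cF)$ are $\Inn(Q)$-orbits of $\cF$-morphisms, so an isomorphism in $\cO^c(\cF)$ of subgroups $P\cong P'$ lifts to a (non-canonical) isomorphism in $\cF$, and the compatibility conditions built into a morphism of $s(\cC)$ (the natural-isomorphism data $(j,\mu)$ of Definition~\ref{def:Subdivision}) must be translated correctly; once one observes that the ambiguity is exactly by inner automorphisms, which are already quotiented out in both $\cF$-conjugacy of chains and in $\cO^c(\cF)$, the correspondence is clean. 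Since this is entirely parallel to the $[s(\cL)]$ computation—with $Z(P)$-cosets replaced by $\Inn(P)$-cosets—the argument is routine, and I would present it by citing \cite[Sect 5]{Libman-NormDec} for the structural facts and then indicating the required modifications.
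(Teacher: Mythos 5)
Your proposal is correct and follows essentially the same route as the paper: lift morphisms of $\cO^c(\cF)$ to $\cF^c$, factor them as an isomorphism followed by an inclusion, reduce every chain to a chain of (images of) inclusions, and identify the resulting isomorphism classes with $\cF$-conjugacy classes of chains. If anything, you are more explicit than the paper about the final step—that isomorphism in $s(\cO^c(\cF))$ of two inclusion chains corresponds exactly to $\cF$-conjugacy, the inner-automorphism ambiguity being absorbed on both sides—which the paper leaves implicit.
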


\begin{proof} Let $\varphi : P\to Q$ be a morphism in $\cO^c (\cF)$ and $\widetilde \varphi : P\to Q$ be a lifting
of $\varphi$ in $\cF^c$ under the quotient functor $\pi': \cF^c \to \cO ^c (\cF)$. The morphism $\widetilde \varphi: P \to Q$  
can be written as a composition of an isomorphism
$\widetilde \varphi': P \to \widetilde \varphi (P)$ and an inclusion map $inc: \widetilde \varphi (P) \hookrightarrow Q$. Hence $\varphi: P \to Q$ 
can be written as a composition of $\pi' (\widetilde \varphi ') $ and $\pi' (inc)$. Moreover a morphism in $\cO ^c (\cF)$ is an isomorphism 
if and only if its lifting in $\cF^c$ is an isomorphism. This shows that every chain of morphisms $P_0 \maprt{\varphi _1} P_1 \to \cdots \maprt{\varphi _n}  P_n$ in $s(\cO^c (\cF))$ is isomorphic to a chain where all the maps $\varphi_i$ are of the images of inclusion maps under $\pi'$. This gives an isomorphism between poset categories $s(\cO^c (\cF) )$ and $\overline{s}d (\cF^c )$.
\end{proof}

Combining \ref{eqn:OrbitCatSS} with Lemma \ref{lem:Last} we obtain
 the following spectral sequence:

\begin{theorem}\label{thm:OrbitFusion}
Let $(S, \cF, \cL)$ be a $p$-local finite group. Then for every right $R\cO (\cF^c)$-module $M$, there is a spectral sequence
$$E_2 ^{p.q} = H ^p ( \overline{s}d (\cF^c) ; \cA_{\cO} ^q) 
\Rightarrow H^{p+q} (\cO (\cF ^c) ; M)$$ 
where $\cA_{\cO} ^q$ is the left $R\overline{s}d(\cF^c)$-module such that
 $\cA_{\cO} ^q ([\sigma])= H^q (\Aut _{s(\cO (\cF^c))} (\sigma); \Res _{\pi_0} M(P_0))$ for every $[\sigma]$ in $\overline{s}d (\cF ^c)$. 
\end{theorem}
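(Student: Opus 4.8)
The plan is to observe that the desired spectral sequence is already contained in \ref{eqn:OrbitCatSS}, which was obtained by applying Theorem~\ref{thm:SlomSS} to the category $\cO^c(\cF)$, so that all that remains is to re-express its indexing category by means of the isomorphism supplied in Lemma~\ref{lem:Last}. First I would record that $\cO^c(\cF)$ is an EI-category: any morphism $P\to P$ in $\cO^c(\cF)$ lifts to a morphism $P\to P$ in $\cF$, which is an injective endomorphism of a finite group and hence an automorphism, so the original morphism is an isomorphism. After replacing $\cO^c(\cF)$ by its skeleton, which changes nothing by Corollary~\ref{cor:Skeletal}, Theorem~\ref{thm:SlomSS} applies with coefficients in the right $R\cO^c(\cF)$-module $M$ and produces the spectral sequence \ref{eqn:OrbitCatSS}, whose $E_2$-page is $H^p\big([s(\cO^c(\cF))];\cA^q_\cO\big)$ with $\cA^q_\cO$ sending $[\sigma]$ to $H^q\big(\Aut_{s(\cO^c(\cF))}(\sigma);\Res_{\pi_0}M(\sigma_0)\big)$, and which converges to $H^{p+q}(\cO^c(\cF);M)$.

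Next I would invoke Lemma~\ref{lem:Last} to identify $[s(\cO^c(\cF))]$ with $\overline{s}d(\cF^c)$. As in the proof of that lemma, every isomorphism class of a strict chain in $s(\cO^c(\cF))$ is represented by a chain all of whose arrows are images of inclusions of $\cF$-centric subgroups, and two such chains are isomorphic precisely when they are $\cF$-conjugate; thus the objects of $[s(\cO^c(\cF))]$ are identified with the $\cF$-conjugacy classes $[P_0<\cdots<P_n]$ of Definition~\ref{def:CatConjClasses}, compatibly with morphisms. Since an isomorphism of small categories induces an isomorphism on module categories and hence on cohomology (a special case of Theorem~\ref{thm:EquivCat}), transporting \ref{eqn:OrbitCatSS} along this isomorphism leaves both sides unchanged: the abutment is $H^{p+q}(\cO(\cF^c);M)$ because $\cO^c(\cF)=\cO(\cF^c)$, and the $E_2$-page becomes $H^p\big(\overline{s}d(\cF^c);\cA^q_\cO\big)$, where $\cA^q_\cO$ is now regarded as a module over $\overline{s}d(\cF^c)$. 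On a representative $[\sigma]=[P_0<\cdots<P_n]$ its value is $H^q\big(\Aut_{s(\cO(\cF^c))}(\sigma);\Res_{\pi_0}M(P_0)\big)$; here I would use Lemma~\ref{lem:Modules} to rewrite $(\Res_{\Ini}M)(\sigma)$ as $\Res_{\pi_0}M(\sigma_0)$ together with the fact that the chosen representative has first object $\sigma_0=P_0$.

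I expect no genuine obstacle. The substantive work is already done in Theorem~\ref{thm:SlomSS} and in the combinatorial identification $[s(\cO^c(\cF))]\cong\overline{s}d(\cF^c)$ of Lemma~\ref{lem:Last}; what remains is the routine check that the module structure on $\cA^q_\cO$ coming from Theorem~\ref{thm:RegularSS}(1) — induced by the $[s(\cO^c(\cF))]$-action on the kernel of the subdivision extension — is carried to the asserted structure on $\overline{s}d(\cF^c)$, and this is automatic since the identification of Lemma~\ref{lem:Last} is an honest isomorphism of categories and therefore transports the functor $\cA^q_\cO$ together with all of its structure maps. The only bookkeeping worth spelling out is the passage to a skeleton and the verification that $\cO^c(\cF)$ is an EI-category, both of which are immediate.
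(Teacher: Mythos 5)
Your proposal is correct and follows exactly the paper's route: apply Theorem \ref{thm:SlomSS} to the EI-category $\cO^c(\cF)$ to obtain the spectral sequence \ref{eqn:OrbitCatSS}, then transport it along the isomorphism $[s(\cO^c(\cF))]\cong\overline{s}d(\cF^c)$ of Lemma \ref{lem:Last}. The extra checks you spell out (that $\cO^c(\cF)$ is an EI-category, the passage to a skeleton, and that the module structure is carried over by the isomorphism of posets) are left implicit in the paper but are exactly the right points to verify.
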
 
 
This spectral sequence has some similarity to the spectral sequence 
constructed in Theorem \cite[Thm 1.7]{Yalcin-Sharp}, but in general the $E_2$-terms of these two 
spectral sequences are different.

\section*{Conflict of interest}

The author declare that they have no conflict of interest.

%------------------------------

\end{document}